\theoremstyle{plain}
\newtheorem{theorem}{Theorem}
\newtheorem{lemma}{Lemma}[section]
\newtheorem{proposition}[lemma]{Proposition}
\newtheorem{definition}[lemma]{Definition}
\newtheorem*{definition*}{Definition}
\theoremstyle{remark}
\newtheorem{remark}[lemma]{Remark}
\newtheorem*{claim*}{Claim}
\newtheorem*{remark*}{Remark}
\newtheorem*{example*}{Example}
\newtheorem*{notation*}{Notation}
\numberwithin{equation}{section}
\def\R{{\mathbb R}}
\renewcommand{\d}{\delta}
\renewcommand{\phi}{\varphi}
\newcommand{\TTd}{{\mathbb{T}^d}}
\newcommand{\e}{\epsilon}
\newcommand{\TND}{\mathbb{T}^d_N}
\newcommand{\PP}{\mathbb{P}}
\newcommand{\cQ}{\mathcal{Q}}
\newcommand{\cM}{\mathcal{M}}
\newcommand{\cH}{\mathcal{H}}
\newcommand{\cB}{\mathcal{B}}
\newcommand{\cD}{\mathcal{D}}
\newcommand{\cW}{\mathcal{W}}
\newcommand{\cI}{\mathcal{I}}
\newcommand{\cL}{\mathcal{L}}
\newcommand{\cU}{\mathcal{U}}
\newcommand{\cDD}{\mathbb{D}}
\newcommand{\cS}{\mathcal{S}}
\newcommand{\cG}{\mathcal{G}}
\newcommand{\cX}{\mathcal{X}}
\newcommand{\cE}{\mathcal{E}}
\newcommand{\cA}{\mathcal{A}}
\newcommand{\RRd}{\mathbb{R}^d}
\newcommand{\RR}{\mathbb{R}}
\newcommand{\cF}{\mathcal{F}}
\newcommand{\cV}{\ensuremath{\mathcal{V}}}
\newcommand{\hs}{\ensuremath{\hspace{1cm}}}
\newcommand{\h}{\ensuremath{\hspace{0.1cm}}}
\newcommand{\EE}{\ensuremath{\mathbb{E}}}
\newcommand{\NN}{\ensuremath{\mathbb{N}}}
\newcommand{\indiq}{\hbox{\rm 1}{\hskip -2.8 pt}\hbox{\rm I}}
\newcommand{\cR}{{\mathcal{R}}}
\newcommand{\Law}{{\rm Law}}
\newcommand{\lllangle}{\left\langle \left\langle}
\newcommand{\rrrangle}{\right\rangle\right\rangle}
\newcommand{\tf}{{t_{\mathrm{fin}}}}
\newcommand\QQ{\mathbb{Q}}
\begin{document}
\title[Porous Medium Equation: LDP and Gradient Flow]{The Porous Medium Equation: Large Deviations and Gradient Flow with Degenerate and Unbounded Diffusion}

\author{Benjamin Gess}
\author{Daniel Heydecker}

\address{B. Gess: Fakult\"at f\"ur Mathematik, Universit\"at Bielefeld, 33615 Bielefeld, Germany \\ Max Planck Institute for Mathematics in the Sciences, 04103 Leipzig, Germany.}
\email{Benjamin.Gess@mis.mpg.de}
\address{D. Heydecker: Department of Mathematics, Imperial College London, London, SW7 2AZ.}
\email{d.heydecker@imperial.ac.uk}

\subjclass[2010]{35Q84,60F10 (primary), 60K35, 82B21, 82B31, 82B35.}

\keywords{Zero-range process, porous medium equation, large deviations, gradient flow.}

\begin{abstract}
The problem of deriving a gradient flow structure for the porous medium equation which is {\em thermodynamic}, in that it arises from the large deviations of some microscopic particle system is studied. To this end, a rescaled zero-range process with jump rate $g(k)=k^\alpha, \alpha>1$ is considered, and its hydrodynamic limit and dynamical large deviations are shown in the presence of both degenerate and unbounded diffusion. The key superexponential estimate is obtained using pathwise discretised regularity estimates in the spirit of the Aubin-Lions-Simons lemma. This allows to exhibit the porous medium equation as the gradient flow of the entropy in a thermodynamic metric via the energy-dissipation inequality. 
\end{abstract}

\newpage \maketitle

\def\jrnl{0} 

\section{Introduction}
\label{sec: intro}
While the derivation of nonlinear but uniformly parabolic equations from microscopic dynamics \cite{kipnis1989hydrodynamics}, fluctuations around these limits \cite{kipnis1990large,kipnis1998scaling,quastel1999large} and the corresponding canonical choice of a gradient flow structure \cite{dirr2016entropic,adams2011large,adams2013large} are now well-understood, much less\footnote{We refer to some recent works \cite{goncalves2009hydrodynamic,blondel2018convergence,goncalves2023exclusion}  and references therein; see the literature review in Section \ref{sec: Lit}.} is known for equations with either degenerate, or unbounded, diffusivity.  Indeed, for the model case of the \emph{porous medium equation} (PME)
\begin{equation}
	\label{eq: PME} \partial_t u_t=\frac12\Delta(u_t^\alpha),\qquad x\in \TTd, \qquad \alpha>1 
\end{equation} 
multiple gradient flow structures
have been known since the works of Br\'ezis and Otto \cite{brezis1971monotonicity,otto2001geometry}, but it is not known which, if any, are {\em thermodynamic}, in that they arise through the large deviations of a microscopic model. A necessary first step to a rigorous answer is to identify the dynamical large deviations of a suitable microscopic model whose hydrodynamic limit is \eqref{eq: PME}. 

The degeneracy and unboundedness of the diffusivity pose serious challenges on the probabilistic side. Indeed, the key property of rapid relaxation into local equilibrium, which
underlies the usual arguments for the so-called superexponential estimate, becomes problematic for two reasons: Firstly, the presence of degenerate diffusivity leads to degenerate mixing rates, for which the property of rapid local equilibration underlying the `one-block
estimate' may break down. {Secondly, the faster-than-linear growth of the local mobility may lead to the explosion of the total mobility\footnote{See the discussion on page 4 below for more details.} in finite time.} For these reasons the established methods for proving the superexponential replacement lemma \cite{kipnis1998scaling,benois1995large,kipnis1989hydrodynamics} do not apply. 
A contribution of this work is to resolve these difficulties for a suitable model, leading to an LDP and to a thermodynamic gradient flow structure for \eqref{eq: PME}. 

In this work, in order to prove that the (formal) geometric picture we obtain is thermodynamic, we consider a rescaling of the zero-range process (ZRP) which converges to \eqref{eq: PME}. The first main contribution then is to resolve both the hydrodynamic limit and the large deviations for this rescaled ZRP and \eqref{eq: PME}, in the presence of both degenerate and unbounded diffusivity. To the best of the authors' knowledge, this represents the first time that large deviations around \eqref{eq: PME}, or any other equation with degenerate and unbounded diffusivity, have been identified for a particle system. A key technical tool developed in this work in order to overcome the difficulties described above, is a new approach to the superexponential estimate via {\em pathwise regularity estimates}.

The second main contribution of this work is the rigorous identification of a gradient flow structure for \eqref{eq: PME} arising from the large deviation picture, which thus completes the programme described above. As a result of the degenerate and unbounded diffusivity, and in contrast to \cite{dirr2016entropic}, one cannot define a global geometric structure induced by the rescaled ZRP. Instead, we rigorously validate the generalisation to the case of degenerate  and unbounded  diffusivity of the connection \cite{dirr2016entropic,adams2011large,adams2013large,mielke2014relation} between dynamic large deviations and the {\em entropy-dissipation inequality} (EDI). Since in non-degenerate cases, the EDI is an equivalent formulation of the gradient flow \cite{ambrosio2005gradient,sandier2004gamma,serfaty2011gamma}, this expresses the PME \eqref{eq: PME} as the gradient flow of a thermodynamic entropy in a degenerate geometry induced by the ZRP. The geometric structure which we derive is thermodynamic, since it is derived from the ZRP, while being more universal, as it describes the fluctuations of any other particle system with the same dynamic large deviations. The proof of the identity of the large deviations rate function and the EDI without restriction on regular paths is novel even for the case of nondegenerate and bounded diffusivity.

We first describe the rescaling of the ZRP with which we work. We start from the standard ZRP $\hat{\eta}^N(t, x)$, defined on the periodic lattice $\{0,1\dots,N-1\}^d$ with periodic boundary conditions and local jump rate $ g(k):= d k^\alpha$, see \cite{kipnis1989hydrodynamics,kipnis1990large}. In order to see macroscopic behaviour, as usual, we parabolically rescale time and space by $(N^{2}t, N^{}x)$. In addition, in order to make visible small masses, and, thus, degenerate diffusion, we rescale particle size and time by setting, for $x\in \TND=\{0, N^{-1},\dots 1-N^{-1}\}^d$ and $t\ge 0$, $\eta^N_t(x):=\chi_N\hat{\eta}^N(N^{2}\chi_N^{\alpha-1}t, Nx)$. In this way, $\chi_N$ plays the role of particle size, and at rate $$ d\eta^N_t(x)^\alpha N^2/\chi_N$$ the value at $x\in \TND$ is decreased by $\chi_N$, and the value at a neighbouring site is simultaneously increased by the same amount. 
 We impose the scaling relation on the parameters \begin{equation}
	\label{eq: scaling hypothesis} N^2\chi^{\min(1,\alpha/2)}_N\text{ is bounded as }N\to \infty.
\end{equation}  We identify the particle configuration with a nonnegative integrable function $\eta^N \in L^1_{\ge 0}(\TTd)$ by assigning the value $\eta^N(x)$ at all points $z\in c^N_x$ in a the box of side length $N^{-1}$ centred at $x\in \TND$. We will always work on a fixed, finite time interval $[0,\tf]$. With these conventions fixed, we can now describe the main results, which correspond to the strategy described below \eqref{eq: PME}.\paragraph{\textbf{Hydrodynamic Limit.}} The first step is to justify the implicit claim that the hydrodynamic limits of $\eta^N_\bullet$ are indeed governed by \eqref{eq: PME}. The following theorem resolves this under the condition only that $u_0\in L^1_{\ge 0}(\TTd)$ has finite entropy, and mild conditions on the initial data $\eta^N_0$.


\begin{theorem}[Hydrodynamic Limit]\label{th: hydrodynamic limit} Assume (\ref{eq: scaling hypothesis}). Fix $u_0\in L^1_{\ge 0}(\TTd)$ with finite entropy $\cH(u_0)<\infty$, and let $u_\bullet=(u_t)_{t\ge 0}$ be the unique weak solution to the PME (\ref{eq: PME}) starting at $u_0$. Let $\PP$ be a probability measure, let $\eta^N_\bullet$ be the ZRP with initial data satisfying \begin{equation}
	\label{eq: entropy UI hyp} \limsup_{M\to \infty} \limsup_{N\to \infty} \PP\left(\cH(\eta^N_0)>M\right)=0;
\end{equation} and, for all $\e>0$,
\begin{equation}
	\label{eq: weak convergence hyp} \langle \varphi, \eta^N_0\rangle \to \langle \varphi, u_0\rangle \text{ in probability, for all }\varphi \in C(\TTd).
\end{equation}  Then \begin{equation}
	\eta^N_\bullet \to u_\bullet
\end{equation} in probability in the topology of the Skorokhod space $\mathbb{D}:=\mathbb{D}\left([0,\tf],(L^1_{\ge 0}(\TTd))_{\rm w}\right)$, where $L^1_{\ge 0}(\TTd)$ is equipped with the weak topology, and in the topology of $L^\alpha([0,\tf]\times\TTd)$.	
\end{theorem}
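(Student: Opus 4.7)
My plan is to follow the classical martingale plus compactness strategy for hydrodynamic limits, with the essential new input being the \emph{pathwise} Aubin-Lions-Simons type regularity estimate announced in the introduction, which replaces the standard superexponential replacement lemma. First, I would write down the Dynkin martingale decomposition: for each $\phi\in C^\infty(\TTd)$,
$$\langle \eta^N_t,\phi\rangle = \langle \eta^N_0,\phi\rangle + \frac{1}{2}\int_0^t \langle (\eta^N_s)^\alpha, \Delta^N\phi\rangle\,ds + M^{N,\phi}_t,$$
where $\Delta^N$ is the discrete Laplacian associated to the kernel $p^N$ and $M^{N,\phi}_\bullet$ is a mean-zero martingale. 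Computing the bracket and keeping track of the time-rescaling factor $\chi_N^{\alpha-1}N^2$ and the particle size $\chi_N$ shows that $\E[M^{N,\phi}]_\tf$ is bounded by a constant times $\chi_N\,\|\nabla\phi\|_\infty^2\,\E\int_0^\tf \int(\eta^N_s)^\alpha\,dx\,ds$, which under (\ref{eq: scaling hypothesis}) vanishes as $N\to\infty$ once the required a priori control on the $(\eta^N)^\alpha$ mass is in hand.

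Second, the finite-entropy hypothesis (\ref{eq: entropy UI hyp}) together with the entropy production inequality for the ZRP (decay of the relative entropy with respect to a reference product invariant measure) yields uniform-in-$N$ control of the discrete Dirichlet form, which translates into an $L^2([0,\tf];H^1)$-type bound on a suitable nonlinear function of $\eta^N$---the natural power coming from $g(k)=k^\alpha$ being $(\eta^N)^{\alpha/2}$. Combined with the equicontinuity in time from the martingale decomposition, this gives tightness of $\eta^N_\bullet$ in $\mathbb{D}$ and furnishes the required inputs for the pathwise Aubin-Lions-Simons lemma.

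The third step applies that lemma. The scaling condition (\ref{eq: scaling hypothesis}) is precisely what enables the discrete objects, and in particular the corresponding discrete gradients, to be compared with their continuous counterparts on $\TTd$ without losses that would destroy control of the nonlinearity. Once this is done, an Aubin-Lions-Simon argument upgrades weak compactness to strong compactness of $\eta^N$ in $L^\alpha([0,\tf]\times\TTd)$ along subsequences, so that $(\eta^N)^\alpha \to u^\alpha$ in $L^1$. Passing to the limit in the Dynkin identity then yields
$$\langle u_t,\phi\rangle = \langle u_0,\phi\rangle + \frac{1}{2}\int_0^t \langle u_s^\alpha,\Delta\phi\rangle\,ds,$$
so that every subsequential limit is a weak solution of (\ref{eq: PME}). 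Lower semicontinuity of $\cH$ preserves the finite-entropy bound in the limit, and the quoted uniqueness of finite-entropy weak solutions identifies the full limit, promoting convergence along subsequences to convergence in probability.

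I anticipate the main obstacle to be the discrete-to-continuous step in the Aubin-Lions argument, and in particular transferring the discrete Dirichlet form bounds to genuine estimates on continuous objects on $\TTd$ without incurring losses that dominate the scaling. The hypothesis (\ref{eq: scaling hypothesis}) must be used sharply here: when $\alpha\ge 2$ it controls the particle-size error entering the martingale bracket, while for $1\le \alpha<2$ it is the threshold at which the discrete Dirichlet form controls the continuum one at the correct power. Carefully handling the superlinear, non-Lipschitz jump rate $g(k)=k^\alpha$ when bounding the martingale bracket and the Taylor remainders in $\Delta^N\phi\to\Delta\phi$ is where the main technical work will lie.
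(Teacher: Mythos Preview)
Your strategy matches the paper's: Dynkin martingale decomposition, a pathwise entropy--entropy-dissipation estimate, a discrete Aubin--Lions--Simons argument upgrading weak convergence in $\mathbb{D}$ to strong $L^\alpha([0,\tf]\times\TTd)$ convergence, and identification of the limit via uniqueness of weak solutions.

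One point needs sharpening. The entropy-production input must be \emph{pathwise}: the paper computes the generator applied to the Boltzmann entropy $\cH(\eta^N)$ of the \emph{configuration} and obtains
\[
\cG_N\Big(\tfrac{\alpha}{2}\cH\Big)(\eta^N)\le -\tfrac{\alpha}{2}\cD_{\alpha,N}(\eta^N)+C\,N^2\chi_N^{\min(1,\alpha/2)}\big(1+\|\eta^N\|_{L^\alpha(\TTd)}^\alpha\big),
\]
and this error term is exactly where the scaling hypothesis~(\ref{eq: scaling hypothesis}) enters---not in the martingale bracket (which carries the harmless factor $\chi_N/N^d$), nor in the discrete-to-continuum Aubin--Lions comparison, which is purely deterministic. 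Your parenthetical ``decay of the relative entropy with respect to a reference product invariant measure'' suggests the classical law-level estimate on $H(\mu^N_t\mid\Pi^N_\rho)$, but that controls the Dirichlet form of the \emph{density} $d\mu^N_t/d\Pi^N_\rho$, not the pathwise quantity $\int_0^{\tf}\cD_{\alpha,N}(\eta^N_s)\,ds$ that the Aubin--Lions step consumes. The paper also first conditions on $\{\cH(\eta^N_0)\le M,\ \langle 1,\eta^N_0\rangle\le a\}$, runs the argument under the conditioned law (where the exponential entropy hypothesis is trivially satisfied), and removes the conditioning at the end via~(\ref{eq: entropy UI hyp}).
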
 This theorem treats much more general initial data than other models considered in the literature; the works of Ekehaus and Sepp\"al\"ainen \cite{ekhaus1995stochastic} and Feng, Iscoe and Sepp\"al\"ainen \cite{feng1997microscopic} require a more regular initial datum $u_0\in C^{2+\e}(\TTd), \e>0$, and the work of Gon\c{c}alves, Landim and Toninelli \cite{goncalves2009hydrodynamic} require that $\inf_{x\in \TTd}u_0(x)>0$, so that the initial data remains in the region where (\ref{eq: PME}) is nondegenerate parabolic. 

\paragraph{\textbf{Large Deviations.}} Having found the hydrodynamic limit, we consider the dynamical large deviations of the ZRP $\eta^N_\bullet$ on a time interval $[0,\tf]$. This completes the probabilistic elements of the strategy described below \eqref{eq: PME}, in that we have identified the large deviations of a particle system around this limit. If  the initial data $\eta^N_0$ are sampled from an equilibrium distribution or a slowly-varying local equilibrium $\Pi^N_\rho$, see (\ref{eq: eq}), then the large deviations rate functional associated to large deviations of the initial data is the rescaled relative entropy $\alpha \cH_\rho(u_0)$ relative to $\rho$.  The other component is a dynamic cost $\mathcal{J}$ through the {\em skeleton equation} which extends the definitions of Benois, Kipnis and Landim \cite[p.70]{benois1995large}, \cite[Equation (5.1)]{kipnis1998scaling}. The {skeleton equation} around (\ref{eq: PME})  is\begin{equation}\label{eq: Sk}
				\partial_t u_t = \frac12\Delta(u_t^\alpha)-\nabla\cdot(u^{\alpha/2}_t g_t), \qquad g\in {L^2_{t,x}}:= L^2([0, \tf]\times\TTd, \RRd)
			\end{equation} where, since we often work with the space $L^2([0, \tf]\times\TTd, \RRd)$, we introduce the notation ${L^2_{t,x}}$ and $\|\cdot\|_{L^2_{t,x}}$ for the associated norm.  The dynamic cost of a trajectory $u_\bullet=(u_t)_{0\le t\le \tf}$ is then given by \begin{equation}\label{eq: dynamic cost}
		\mathcal{J}(u_\bullet):=\frac{1}{2}\inf\left\{\|g\|^2_{L^2_{t,x}}: \h u_\bullet \text{ solves the skeleton equation (\ref{eq: Sk}) for }g\right\}
	\end{equation} where, as for (\ref{eq: PME}) above, we use the notion of weak solutions in \cite{fehrman2019large}, recalled in Definition \ref{def: solutions}. The total rate function $\cI_\rho(u_\bullet)$ for a path $u_\bullet$ relative to an initial profile $\rho\in C(\TTd, (0,\infty))$ is thus given by \begin{equation}\label{eq: rate function}
		\cI_\rho(u_\bullet):=\alpha\cH_\rho(u_0)+\mathcal{J}(u_\bullet).
	\end{equation} With this notation, the result is as follows. \begin{theorem}[Large Deviations Principle]\label{thrm: LDP} Let $\rho\in C(\TTd, (0,\infty))$, let $\PP$ be a probability measure, and for all $N$ let $\eta^N_\bullet$ be the rescaled ZRP with initial data sampled according to $\Law_{\PP}(\eta^N_0)=\Pi^N_\rho$. Then, with speed $\frac{N^d}{\chi_N}$, the processes $\eta^N_\bullet$ are exponentially tight and satisfy a large deviation principle in $\mathbb{D}$ with rate function $\mathcal{I}_\rho$; that is, for all $\cDD$-closed sets $\mathcal{E}$, \begin{equation}
	\label{eq: UB statement} \limsup_N \frac{\chi_N}{N^d}\log \PP\left(\eta^N_\bullet \in \mathcal{E}\right)\le -\inf\left\{\cI_\rho(u_\bullet): u_\bullet \in \mathcal{E}\right\}
\end{equation} and all $\cDD$-open sets $\cU$, \begin{equation}
	\label{eq: LB statement} \liminf_N \frac{\chi_N}{N^d}\log \PP\left(\eta^N_\bullet \in \cU\right)\ge -\inf\left\{\cI_\rho(u_\bullet): u_\bullet \in \cU\right\}.
\end{equation} Moreover, the function $\mathcal{I}_\rho$ is lower-semicontinuous with respect to $\mathbb{D}$ and has compact sublevel sets. \end{theorem}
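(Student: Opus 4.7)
My plan is to follow the classical Donsker--Varadhan / Kipnis--Landim strategy for hydrodynamic large deviations, adapted to the degenerate, super-linear setting here. The scheme has four components: (i) exponential tightness in $\mathbb{D}$; (ii) an exponential-martingale upper bound together with a duality identification of the resulting variational cost with $\mathcal{J}$; (iii) a tilted-measure lower bound, reducing matters to a hydrodynamic limit under a perturbation of the dynamics; and (iv) the regularity/compactness properties of $\mathcal{I}_\rho$. The rate function for the initial data, $\alpha \cH_\rho(u_0)$, will come out of the computation for $\Pi^N_\rho$ essentially as in \cite{benois1995large,kipnis1998scaling}, so the real work concerns the dynamic cost $\mathcal{J}$.

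For the upper bound, I would introduce, for each test vector field $H\in C^{1,2}([0,\tf]\times\TTd;\RRd)$, the exponential martingale
\begin{equation*}
M^{H,N}_t=\exp\Bigl(\tfrac{N^d}{\chi_N}\bigl\langle\Phi^N_t(H),\eta^N_\bullet\bigr\rangle\Bigr)
\end{equation*}
obtained from Dynkin's formula applied to a linear functional of $\eta^N_\bullet$ tested against a divergence of $H$. A Taylor expansion of the generator, using $g(k)=k^\alpha$ and the scaling $\chi_N^{\alpha-1}N^2$, produces a quadratic correction whose leading order is $\frac12 \int_0^{\tf}\!\int_{\TTd}(\eta^N_t)^\alpha |H_t|^2\,\dd x\,\dd t$ and a linear term identifying the ZRP flow with the formal adjoint of $\nabla\cdot((\cdot)^{\alpha/2} \cdot)$ against a candidate control $g=H\,(\eta^N_t)^{\alpha/2}$, modulo error terms that must be shown negligible. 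The entropy/Chebyshev inequality for $M^{H,N}$ then yields the upper bound with speed $N^d/\chi_N$ and cost
\begin{equation*}
\sup_H\Bigl\{\langle H,\partial_t u - \tfrac12\Delta u^\alpha\rangle - \tfrac12\|H\,u^{\alpha/2}\|_{L^2_{t,x}}^2\Bigr\},
\end{equation*}
and a Legendre-duality / density argument (exploiting that the nonlinear test-function set $H\,u^{\alpha/2}$ is dense in an appropriate tangent space under the finite-cost assumption) will identify this supremum with $\mathcal{J}(u_\bullet)$ defined via (\ref{eq: Sk})--(\ref{eq: dynamic cost}). Exponential tightness in $\mathbb{D}$ I would obtain by combining the standard entropy-method bounds on spatial regularity of $\eta^N_\bullet$ with Aldous-type estimates for the time increments, derived from exponential martingale control on $\langle \varphi,\eta^N_t\rangle$ for smooth $\varphi$.

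For the lower bound I would fix a trajectory $u_\bullet$ with $\mathcal{I}_\rho(u_\bullet)<\infty$ realised by $g\in L^2_{t,x}$ in (\ref{eq: Sk}), and construct a perturbed ZRP $\tilde \eta^N_\bullet$ by tilting the jump rates by a space-time function $H^N$ designed so that the tilted drift reproduces the control $-\nabla\cdot(u^{\alpha/2}g)$ up to asymptotically negligible error. Girsanov for jump processes gives an explicit Radon--Nikodym derivative whose logarithm, normalised by $\chi_N/N^d$, converges to $\cI_\rho(u_\bullet)$, and a tilted version of the hydrodynamic limit of Theorem \ref{th: hydrodynamic limit} shows that $\tilde\eta^N_\bullet$ concentrates on $u_\bullet$ in $\mathbb{D}$. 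A standard approximation step---first for smooth, bounded-away-from-zero $u_\bullet$ using mollification of $u_0$ and $g$, then extended to general finite-cost paths via a density result for $\mathcal{I}_\rho$---concludes the lower bound. The lower semicontinuity and compactness of sublevel sets of $\mathcal{I}_\rho$ should follow by combining the $\mathbb{D}$-tightness from the entropy bound on $u_0$ and an Aubin--Lions-type argument applied to the skeleton equation (\ref{eq: Sk}) with uniformly bounded $\|g\|_{L^2_{t,x}}$, using again $u^{\alpha/2}\nabla u^{\alpha/2}\in L^2_{t,x}$ from energy estimates.

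The main obstacle is the step that ordinarily is handled by the super-exponential replacement lemma: one must replace the microscopic quantities $(\eta^N_t)^\alpha$ and $(\eta^N_t)^{\alpha/2}$ by their macroscopic counterparts under exponential estimates, despite the non-Lipschitz rates $g(k)=k^\alpha$ with $\alpha\ge 1$ and the vanishing particle size $\chi_N$ sending the invariant measure towards degeneracy. As flagged in the introduction, I would replace this by \emph{pathwise} discretised regularity estimates in the spirit of \cite{fehrman2019large,fehrman2021well}: prove, with probability whose log decays faster than $N^d/\chi_N$, that $(\eta^N_t)^{\alpha/2}$ has uniformly bounded $L^2_tH^1_x$-type norms (at the discrete level) and use an Aubin--Lions--Simon compactness argument along subsequences. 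This compactness, together with the identification of limits via the exponential martingale above, lets one pass to the limit in the variational cost without the traditional block-averaging/two-block estimates, and is what makes matching upper and lower bounds possible under the borderline scaling hypothesis (\ref{eq: scaling hypothesis}).
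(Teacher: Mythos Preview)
Your proposal is correct and follows essentially the same route as the paper: exponential tightness via an exponential martingale for the entropy $\cH$ yielding large-deviation control on the discretised $L^2_tH^1_x$ norm of $(\eta^N)^{\alpha/2}$ (the paper's Lemmas \ref{lemma: computation of entropy dissipation}--\ref{lemma: ET}); a local upper bound via exponential martingales for linear test functionals combined with the pathwise weak-to-strong principle (Theorem \ref{thrm: WtS}) in place of the superexponential replacement lemma; a lower bound via a tilted process and a Fokker--Planck hydrodynamic limit, first on the class $\cX$ of smooth bounded-away-from-zero paths and then extended by the density result Proposition \ref{prop: lsc envelope}. The only cosmetic difference is that the paper tests against scalar potentials $\varphi\in C^{1,2}$ rather than vector fields $H$, which slightly streamlines the variational identification (Lemma \ref{lemma: variational form}) and the construction of the tilt in the lower bound.
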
 We emphasise that we obtain a \emph{full} large deviations principle in which the infimum in the lower bound runs over the whole set $\cU$, rather than being restricted to a class of `good' paths $\mathcal{Q}$ as in the works \cite{quastel1999large,landim1995large,landim1997hydrodynamic}. This issue will be discussed in detail in the literature review. \\\paragraph{\textbf{A regularity-based approach to the replacement lemma}} We now turn to the main technical tool which will be needed in the derivation of Theorems \ref{th: hydrodynamic limit} - \ref{thrm: LDP}. As is common in the hydrodynamic limits and large deviations of lattice gasses, we will need to take a limit of terms similar to $\int_0^t\langle (\Delta \varphi), (\eta^N_s)^\alpha\rangle ds$ which are nonlinear in the particle configuration $\eta^N_s$, and where convergence of $\eta^N_\bullet$ in the topology of $\mathbb{D}$ is not sufficient; this is usually called a \emph{replacement lemma} or \emph{superexponential estimate} (e.g. \cite[Theorem 3.1]{kipnis1998scaling}, Theorem 2 of Benois, Kipnis and Landim \cite{benois1995large}, or Theorem 2 in Kipnis, Olla and Varadhan \cite{kipnis1989hydrodynamics}). 

As already remarked, the usual approach to the superexponential estimate in the previously cited works cannot be applied for the rescaled ZRP, and the difficulties correspond exactly to the features of degenerate and unbounded diffusivity that we aim to study. Firstly, the presence of degenerate diffusivity leads to a vanishing minimum rate; if one attempts to carry out the usual argument leading to the one-block estimate, the control provided by the Dirichlet form degenerates as $N\to \infty$, and the requisite compactness is lost. {At the same time, the superlinear growth of the local jump rate $(\eta^N(x))^\alpha$ means that the functionals $\langle \Delta \varphi, (\eta^N_t)^\alpha\rangle$ could diverge in the limit $N\to \infty$ for a nontrivial set of times $t$. Since the local jump rate $(\eta^N(x))^\alpha$ corresponds to the mobility $u^\alpha$ in the limit, this is the possibility of the explosion of the total mobility described above \eqref{eq: PME}. In order to establish the superexponential estimate, we must establish a sufficiently strong integrability estimate to show that such divergences occur at most for a small set of times, and in fact the contributions to the error from any small space-time region are small, with superexponentially high probability. }
\\ The proof developed in this work is inspired by techniques in the theory of stochastic partial differential equations \cite{dirr2020conservative,fehrman2019large,fehrman2021well} and the Aubin-Lions-Simon lemma \cite{aubin1963theoreme,lions1969quelques,simon1986compact}. For the case of a lattice-based model, as we deal with here, the situation is complicated by the fact that the regularity is measured by different functionals $\mathcal{F}_{\alpha,N}$ for each $N$, whose $\Gamma$-convergence in the limit is not immediate. This issue further cannot be remedied by a convenient choice of $\mathcal{F}_{\alpha,N}$, since these are dictated by the fundamental regularity and cannot be freely chosen.\\ \\ Before stating the theorem, let us first introduce the entropy dissipation which is central to the argument.  It is a classical computation that, along solutions to (\ref{eq: PME}) $\partial_t \cH(u_t)=-\cD_\alpha(u_t) \le 0$ for the entropy dissipation given by \begin{equation}\label{eq: entropy dissipation}
			\cD_\alpha(u)=\frac2\alpha \int_{\TTd} |\nabla (u^{\alpha/2})|^2(x) dx. \end{equation} This estimate is also central to the theory of (\ref{eq: Sk}) in \cite{fehrman2019large}, where for some $c=c(d,\alpha)$ \begin{equation}\label{eq: basic entropy estimate} \begin{split} & \sup_{t\le \tf}\cH(u_t) + \int_0^\tf \cD_\alpha(u_t) dt  \le  \cH(u_0) + c\|g\|^2_{L^2_{t,x}}.\end{split} \end{equation}  In particular, solutions to (\ref{eq: Sk}) for any choice of $g$ always take values in the space $\cR \subset L^\alpha([0,\tf]\times \TTd)$ of paths which are continuous in $d$ with respect to time and where $\int_0^\tf \cD_\alpha(u_s)ds<\infty$, see (\ref{eq: energy class}). We will use a similar path-by-path estimate, with lattice discretisations $\cD_{\alpha,N}$ of (\ref{eq: entropy dissipation}), to exclude paths outside $\cR$ from the large deviations, and to obtain the convergence in the norm of $L^\alpha([0,\tf]\times\TTd)$. The first property will be a step towards obtaining a true large deviation principle, as advertised below Theorem \ref{thrm: LDP}, and the latter property plays the same role as the superexponential estimate in taking limits of the nonlinear terms.   \begin{theorem}[Large Deviations Weak-to-Strong Principle]\label{thrm: WtS}
	Suppose the scaling relation (\ref{eq: scaling hypothesis}) holds, and let $\mathbb{P}$ be a probability measure under which $\eta^N_\bullet$ is the rescaled ZRP with initial data satisfying  \begin{equation} \label{eq: entropy finite hyp}
	\limsup_N \frac{\chi_N}{N^d}\log\EE\left[\exp\left(\frac{N^d}{\chi_N}\gamma \cH(\eta^N_0)\right)\right]<\infty
\end{equation} for all $\gamma<\alpha$. Then the following hold. \begin{enumerate}[label=\roman*).]
		\item (Open Sets Version) For any $u_\bullet \in \mathbb{D}\cap \cR \subset L^\alpha([0,\tf]\times\TTd)$, any open neighbourhood $\cV$ of $u_\bullet$ in the (norm) topology of $L^\alpha([0,\tf]\times\TTd)$ and any $z<\infty$, there exists an open neighbourhood $\cU \ni u_\bullet$ in the topology of $\mathbb{D}$ such that \begin{equation}\label{eq: WtS good case}
		\limsup_N \frac{\chi_N}{N^d}\log \PP\left(\eta^N_\bullet \in \cU\setminus \cV\right) \le -z.
	\end{equation} If instead $u_\bullet \not \in \cR$, then for all $z<\infty$, there exists a $\cDD$-open set $\cU\ni u_\bullet$ such that \begin{equation} \label{eq: QY2}
		\limsup_N \frac{\chi_N}{N^d}\log \PP\left(\eta^N_\bullet \in \cU\right) \le -z.
	\end{equation} \item (Almost-Sure-Convergence Version) For $\PP$ as above, let $\mathbb{Q}$ be a probability measure with the bound \begin{equation}\label{eq: entropy bound hypothesis}
		\limsup_N \frac{\chi_N}{N^d}H\left(\Law_{\QQ}[\eta^N_\bullet]\right|\left.\Law_{\PP}[\eta^N_\bullet]\right)<\infty
	\end{equation} where $H(\cdot|\cdot)$ denotes the relative entropy of probability measures on $\mathbb{D}$. Suppose further that, $\QQ$-almost surely, on a subsequence $N_k\to \infty$, $\eta^{N_k}_\bullet$ converges to a random variable $\eta_\bullet$ in the topology of $\mathbb{D}$. Then there exists a further subsequence $N'_k$ such that $\eta^{N'_k}_\bullet \to \eta_\bullet$ almost surely in the topology of $L^\alpha([0,\tf]\times\TTd)$. 
	\end{enumerate}
 \end{theorem}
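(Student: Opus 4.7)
The strategy is to establish a pathwise analogue of the energy identity \eqref{eq: basic entropy estimate} that holds under the ZRP with failure probability exponentially small at speed $N^d/\chi_N$, and then to use a discrete Aubin--Lions--Simon compactness argument to upgrade $\mathbb{D}$-convergence to $L^\alpha$-convergence on the sublevel sets of this estimate. Applying Dynkin's formula to $\cH(\eta^N_t)$, the ZRP generator acting on the logarithm of the invariant density $\Pi^N_\rho$ produces a Dirichlet-type form which, up to manageable corrections, coincides with the natural lattice discretisation $\cD_{\alpha,N}$ of \eqref{eq: entropy dissipation}, giving a decomposition
\[
\cH(\eta^N_t) + \int_0^t \cD_{\alpha,N}(\eta^N_s)\,ds = \cH(\eta^N_0) + M^N_t + E^N_t,
\]
with $M^N$ a martingale of quadratic variation of order $\chi_N/N^d$ and $E^N$ a lower-order remainder. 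An exponential-martingale (Chernoff) bound at the correct scale then shows that, in combination with the initial-data bound \eqref{eq: entropy finite hyp}, for each $R<\infty$ the event
\[
\A_R = \left\{\sup_{t\le \tf}\cH(\eta^N_t) + \int_0^\tf \cD_{\alpha,N}(\eta^N_s)\,ds \le R\right\}
\]
has $\PP$-complement of probability at most $e^{-R'N^d/\chi_N}$ with $R' = R'(R)\to\infty$ as $R\to\infty$.

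The heart of the argument is a discrete Aubin--Lions--Simon compactness statement: if $\eta^{N_k}_\bullet \in \A_R$ converge to some $u_\bullet$ in $\mathbb{D}$, then the convergence in fact holds in $L^\alpha([0,\tf]\times\TTd)$ and $u_\bullet \in \cR$. The bound on $\cD_{\alpha,N_k}$ furnishes a lattice $L^2_t H^1_x$ control of $(\eta^{N_k})^{\alpha/2}$, while the martingale representation of $\eta^{N_k}_\bullet$ provides a negative-order-in-time regularity estimate (the associated quadratic variation being of order $\chi_N/N^d$). Interpolating these two bounds in the manner of Aubin--Lions--Simon, adapted to the lattice, yields precompactness of $(\eta^{N_k})^{\alpha/2}$ in $L^2_{t,x}$, whence of $\eta^{N_k}$ in $L^\alpha_{t,x}$; Skorokhod convergence identifies the limit, and weak lower-semicontinuity of $\cD_\alpha$ places $u_\bullet$ in $\cR$.

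Given these two ingredients, the theorem follows by case analysis. For (i) with $u_\bullet \in \mathbb{D}\cap \cR$: if no $\mathbb{D}$-open neighbourhood $\cU\ni u_\bullet$ satisfies \eqref{eq: WtS good case}, then choosing $R$ large enough that $\PP(\A_R^c)\le \tfrac12 e^{-zN^d/\chi_N}$ one can extract $\eta^{N_k}_\bullet \in \A_R\cap \cU_k\setminus \cV$ for $\mathbb{D}$-neighbourhoods $\cU_k$ shrinking to $\{u_\bullet\}$, and the compactness statement forces $L^\alpha$-convergence to $u_\bullet$, contradicting $\eta^{N_k}_\bullet\notin \cV$. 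For $u_\bullet \notin \cR$: the same compactness ensures $u_\bullet$ is not a $\mathbb{D}$-limit point of $\bigcup_N \A_R$, so some $\mathbb{D}$-open neighbourhood of $u_\bullet$ is eventually disjoint from $\A_R$, and its $\PP$-mass is bounded by $\PP(\A_R^c)$, yielding \eqref{eq: QY2}. For (ii), the classical entropic inequality transfers exponential tail bounds from $\PP$ to $\QQ$: by \eqref{eq: entropy bound hypothesis}, $\QQ(\A_R^c)\to 0$ as $R\to \infty$ uniformly in $N$; along any $\QQ$-a.s.\ $\mathbb{D}$-convergent subsequence, Borel--Cantelli extracts a further subsequence eventually trapped in some fixed $\A_R$, on which the compactness lemma gives the desired $L^\alpha$-convergence.

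The principal obstacle is the discrete Aubin--Lions--Simon step: the discretisation $\cD_{\alpha,N}$ has to be chosen so that it both arises from Dynkin's formula with controllable error and furnishes a usable lattice Sobolev seminorm for the piecewise-constant extension of $(\eta^N)^{\alpha/2}$, and the compactness must survive the nonlinearity $\eta\mapsto \eta^{\alpha/2}$, which is merely H\"older when $\alpha<2$ and superlinear when $\alpha>2$. This interplay between the microscopic martingale calculations and macroscopic functional-analytic estimates is what makes the regularity-based replacement argument subtle, and is the technical substitute for the classical superexponential replacement lemma.
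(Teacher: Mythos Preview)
Your overall architecture is correct and matches the paper: an exponential-martingale estimate controlling a discrete energy functional $\cF_{\alpha,N}(\eta^N_\bullet)=\sup_t\cH(\eta^N_t)+\int_0^\tf\cD_{\alpha,N}(\eta^N_s)\,ds$, a deterministic ``weak-to-strong'' compactness step on its sublevel sets, and a contradiction argument. Two points of divergence are worth flagging.

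\textbf{The a priori estimate.} You frame this as linear Dynkin for $\cH$ plus a Chernoff bound on the resulting additive martingale. The paper instead works directly with the \emph{nonlinear} (exponentially transformed) generator: it computes $\cG_N(\tfrac{\alpha}{2}\cH)$ and shows
\[
\cG_N\Big(\tfrac{\alpha}{2}\cH\Big)(\eta^N)\le -\tfrac{\alpha}{2}\cD_{\alpha,N}(\eta^N)+C\,N^2\chi_N^{\min(1,\alpha/2)}\bigl(1+\|\eta^N\|_{L^\alpha}^\alpha\bigr),
\]
so that the associated exponential supermartingale $\exp\bigl(\tfrac{N^d}{\chi_N}[\tfrac{\alpha}{2}\cH(\eta^N_t)-\int_0^t\cG_N(\tfrac{\alpha}{2}\cH)]\bigr)$ directly yields the large-deviation bound via Doob. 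Your ``Dynkin + Chernoff'' route is not wrong, but the Chernoff step at scale $N^d/\chi_N$ is \emph{exactly} the computation of $\cG_N(\tfrac{\alpha}{2}\cH)$, so you are not avoiding it. Note also that $\cL_N\cH$ does not produce $\cD_{\alpha,N}$ but rather a Fisher-type quantity $\sum p^N(x,y)((\eta^N(x))^\alpha-(\eta^N(y))^\alpha)(\log\eta^N(x)-\log\eta^N(y))$; the clean appearance of $(\eta^N)^{\alpha/2}$-differences is specific to the nonlinear generator acting on $\tfrac{\alpha}{2}\cH$. This computation is where the scaling hypothesis \eqref{eq: scaling hypothesis} enters, via the error term above; you should make explicit where it is used in your scheme.

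\textbf{The compactness step.} You propose a full discrete Aubin--Lions--Simon, using the $\cD_{\alpha,N}$ bound for space and a martingale-based negative-order bound in time. The paper takes a simpler route: its deterministic lemma \emph{assumes} $\mathbb{D}$-convergence to some $u_\bullet\in\mathcal{C}$ and uses only spatial mollification. One bounds $\|\eta^N-\overline{\eta}^{N,r}\|_{L^\alpha}$ via $\cD_{\alpha,N}$, uses $\mathbb{D}$-convergence to get uniform convergence $\overline{\eta}^{N,r}\to\overline{u}^r$, and lets $r\downarrow 0$. No time-derivative estimate is needed because both items of the theorem already supply a $\mathbb{D}$-limit (a fixed target in (i), the a.s.\ limit in (ii)). This decoupling is cleaner and sidesteps the issue that a pathwise $H^{-k}_t$ bound on the martingale increments is not immediate from the quadratic-variation bound alone. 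Correspondingly, the case $u_\bullet\notin\mathcal{C}$ is handled in the paper by a \emph{separate} equicontinuity estimate (an exponential-martingale argument on linear test functions), not by the compactness lemma; your sketch absorbs this into the Aubin--Lions step, which would require you to actually prove the time regularity you invoke.
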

 
\paragraph{\textbf{From Large Deviations to Gradient Flows}} {
Having achieved the probabilistic parts of the strategy, we turn to the derivation of a gradient flow structure, hence resolving the problem described below \eqref{eq: PME}. The final theorem extends the correspondence between the dynamical large deviation cost and the EDI \cite{fathi2016gradient,dirr2016entropic,adams2011large} corresponding to a suitable formal Riemannian structure for \eqref{eq: PME}; see also the literature review below for further discussion.} {In addition to extending this identity to the case of degenerate and unbounded diffusion, we also prove, for the first time, that the two functionals appearing in Theorem \ref{thrm: gradient flow} below are equal whenever either is finite. Notably, this is a novel result even in the setting of non-degenerate and bounded diffusion.} 

We consider the space of absolutely continuous measures $\mathcal{M}_{ac, \lambda}(\mathbb{T}^d)$ on $\mathbb{T}^d$ with a prescribed total mass $\lambda>0$, and define the \emph{action} of a curve $u_\bullet \in \cDD$ to be \begin{equation}\label{eq: action 2}
		\cA(u_\bullet)=\frac12\inf\left\{\left\|\theta\right\|_{L^2_{t,x}}^2\right\}. 
	\end{equation} In this definition, the infimum runs over all $\theta \in {L^2_{t,x}}$ such that $u_\bullet$ solves the continuity equation \begin{equation} \label{eq: CE}
		\partial_t u_t+\nabla\cdot(\frac{1}{2}u^{\alpha/2}_t\theta_t)=0
	\end{equation} setting $\mathcal{A}(u_\bullet)=\infty$ if no such $\theta$ exists. With this defined, the result is as follows.
 \begin{theorem}\label{thrm: gradient flow}
	Let $u_\bullet \in \mathbb{D}$ with $\cH(u_0)<\infty$ and fix a constant $\rho>0$. Then we have the identity
	\begin{equation}\label{eq: conclusion of gf}
		\mathcal{J}(u_\bullet)=\frac12\left(\alpha\cH_\rho(u_\tf)-\alpha\cH_\rho(u_0)+\frac\alpha2\int_0^\tf \cD_\alpha(u_s)ds +\frac12 \cA(u_\bullet)\right)
	\end{equation} allowing both sides to be infinite. In particular, the functional on the right-hand side is nonnegative, and vanishes if and only if $u_\bullet$ is a solution to (\ref{eq: PME}). Moreover, if $u_\bullet$ is such that $\mathcal{J}(u_\bullet)<\infty$, then for almost all $0\le t\le \tf$,  $u_t^{\alpha/2}\in H^1(\TTd)$ and \begin{equation} \label{eq: tangent space useful def} \nabla u_t^{\alpha/2}\in {T}_{u_t}\mathcal{M}_{\text{ac}, \lambda}(\TTd):=\overline{\left\{u_t^{\alpha/2}\nabla \varphi: \varphi\in C^2(\TTd)\right\}}^{L^2(\TTd)}.\end{equation} We can also choose $g$ such that the skeleton equation (\ref{eq: Sk}) holds, attaining the minimum (\ref{eq: dynamic cost}), and such that $g_t\in T_{u_t}\cM_{\text{ac},\lambda}(\TTd)$ for almost all $t\le \tf$. Finally, in the special case where $u_\bullet$ is a solution to the PME (\ref{eq: PME}) and $\rho>0$, it holds for all $0\le t\le \tf$ that \begin{equation}
		\label{eq: entropy dissipation equality}\cH_\rho(u_t) +\int_0^t \cD_\alpha(u_s)ds=\cH_\rho(u_0).
	\end{equation}\end{theorem}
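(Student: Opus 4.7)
The plan is to derive \eqref{eq: conclusion of gf} from a chain-rule identity for $\cH_\rho$ along the skeleton equation, combined with an affine change of velocity variables relating \eqref{eq: Sk} and \eqref{eq: CE}. The key structural observation is that the PME drift factors as the divergence of a gradient flux, $\tfrac12\Delta u^\alpha = \nabla\cdot(u^{\alpha/2}\nabla u^{\alpha/2})$, so whenever $u_s^{\alpha/2}\in H^1(\TTd)$ for a.e.\ $s$, the skeleton equation \eqref{eq: Sk} is precisely the continuity equation \eqref{eq: CE} under the substitution $\theta_s = 2(g_s - \nabla u_s^{\alpha/2})$, which for $u_\bullet$ fixed is a bijection on $L^2_{t,x}$. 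This regularity is automatic when $\mathcal{J}(u_\bullet)<\infty$ via the entropy estimate \eqref{eq: basic entropy estimate}, and is encoded in the right-hand side of \eqref{eq: conclusion of gf} through finiteness of $\int_0^\tf\cD_\alpha(u_s)\,ds$.

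The analytic heart will be a chain-rule identity of the form
\[
\alpha\cH_\rho(u_\tf) - \alpha\cH_\rho(u_0) \;=\; -\alpha\int_0^\tf \cD_\alpha(u_s)\,ds + 2\int_0^\tf\!\int_{\TTd}\nabla u_s^{\alpha/2}\cdot g_s\,dx\,ds,
\]
obtained formally by testing \eqref{eq: Sk} against $\log(u/\rho)$ and using $u^{\alpha/2-1}\nabla u = \tfrac{2}{\alpha}\nabla u^{\alpha/2}$. I would justify it rigorously by truncating the logarithm (for instance replacing it by $\log((u+\delta)/\rho)$), controlling remainders via \eqref{eq: basic entropy estimate} and the $L^2_{t,x}$-bound on $\nabla u^{\alpha/2}$, and passing to $\delta\downarrow 0$ in the spirit of \cite{fehrman2019large}. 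Inserting this into the expansion $\|g\|_{L^2_{t,x}}^2 = \|\nabla u^{\alpha/2} + \theta/2\|_{L^2_{t,x}}^2$ together with $\|\nabla u^{\alpha/2}\|_{L^2_{t,x}}^2 = \tfrac\alpha2\int_0^\tf \cD_\alpha(u_s)\,ds$ identifies the cross term with the entropy difference above, and taking the infimum over $g$ (equivalently over $\theta$, with $\cA(u_\bullet) = \tfrac12\inf\|\theta\|_{L^2_{t,x}}^2$) delivers \eqref{eq: conclusion of gf} immediately.

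The remaining assertions then follow quickly. Nonnegativity is immediate from $\mathcal{J}(u_\bullet)\geq 0$, and vanishing of the right-hand side forces $\mathcal{J}(u_\bullet)=0$, so $g\equiv 0$ is admissible and $u_\bullet$ solves \eqref{eq: PME}. To show $\nabla u_t^{\alpha/2}\in T_{u_t}\cM_{\mathrm{ac},\lambda}(\TTd)$, I would approximate the formal potential $\tfrac\alpha2\log u_t$ by $C^2$ mollifications $\varphi_n$ of $\tfrac\alpha2\log(u_t+\tfrac1n)$; on $\{u_t>0\}$ one has $u_t^{\alpha/2}\nabla\varphi_n\to\nabla u_t^{\alpha/2}$ pointwise, on $\{u_t=0\}$ both sides vanish by Stampacchia's theorem applied to $u_t^{\alpha/2}\in H^1$, and $L^2$-convergence follows by dominated convergence. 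For the optimal $g$ to lie in the tangent space, any $g_\perp\in L^2_{t,x}$ orthogonal to $\{u^{\alpha/2}\nabla\varphi:\varphi\in C^2\}$ satisfies $\nabla\cdot(u^{\alpha/2}g_\perp)=0$ distributionally, so it can be subtracted from any admissible $g$ without affecting \eqref{eq: Sk} while not increasing $\|g\|_{L^2_{t,x}}$; the minimum-norm (hence minimising) $g$ is therefore the orthogonal projection onto $T_{u_t}\cM_{\mathrm{ac},\lambda}(\TTd)$. Finally, the entropy dissipation equality \eqref{eq: entropy dissipation equality} is precisely the chain rule applied with $g\equiv 0$ on $[0,t]$. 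The principal obstacle will be the rigorous chain rule: the natural test function $\log(u/\rho)$ is singular on $\{u=0\}$, but only the well-behaved combination $u^{\alpha/2-1}\nabla u \in L^2_{t,x}$ remains after integration by parts, so a careful truncation and limiting procedure within the regularity class of \cite{fehrman2019large} should close the gap.
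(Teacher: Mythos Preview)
Your approach is the natural analytic route and is genuinely different from the paper's. You want a chain rule for $\cH_\rho$ along \eqref{eq: Sk}, so that the cross term in $\|g\|^2=\|\nabla u^{\alpha/2}+\theta/2\|^2$ depends only on $u_\bullet$ and the minimisations defining $\mathcal{J}$ and $\cA$ decouple. The paper instead argues probabilistically: it uses the large deviation principle (Theorem~\ref{thrm: LDP}) together with microscopic reversibility to obtain $\cI_\rho(u_\bullet)=\cI_\rho(\mathcal{T}u_\bullet)$ for the time-reversal $\mathcal{T}$, and then computes both sides algebraically via the characterisation of optimal controls in $\Lambda_{u_\bullet}$ (Lemmata~\ref{lemma: variational form}, \ref{lemma: variational A}). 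The paper explicitly notes (Section~\ref{sec: Lit}) that this lets them \emph{avoid} the chain rule, which they call ``the most technical step'', and remarks in Section~\ref{sec: gradient flow} that they know no non-probabilistic proof of the time-reversal identity. Your route trades all the LDP machinery for a delicate regularisation argument in the spirit of \cite{erbar2016gradient}; both strategies are legitimate, but your sketch of the truncation-and-limit step is not a proof, and the paper's emphasis suggests that is where the real work lies.

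There is, however, a genuine gap in your argument for the tangent-space inclusion \eqref{eq: tangent space useful def}. For $\alpha>2$ the hypothesis $u_t^{\alpha/2}\in H^1$ does not imply $u_t\in W^{1,1}$: the formal identity $\nabla u_t=\tfrac2\alpha u_t^{1-\alpha/2}\nabla u_t^{\alpha/2}$ carries a factor blowing up near $\{u_t=0\}$, and nothing forces $u_t^{2-\alpha}\in L^1$. Consequently $\log(u_t+\tfrac1n)$ need not have an integrable weak gradient, your mollified $\varphi_n$ does not converge in the way you claim, and the dominated-convergence step is unavailable. The paper discusses precisely this obstruction in Appendix~\ref{sec: formal GF}, calling the direct approach ``highly nontrivial'' and observing that an $L^\infty$ bound on $u_t$ (which is not available for generic large-deviation paths) would suffice. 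Their proof obtains $\nabla u^{\alpha/2}\in\Lambda_{u_\bullet}$ only \emph{a posteriori}: they first establish \eqref{eq: conclusion of gf} with $\tfrac\alpha2\int_0^{\tf}\cD_\alpha$ replaced by the projected quantity $\|\Pi[u_\bullet]\nabla u^{\alpha/2}\|_{L^2_{t,x}}^2$, then derive the reverse inequality with the unprojected $\int\cD_\alpha$ by approximating through strictly positive $u^{(n)}_\bullet\in\cX$ (Proposition~\ref{prop: lsc envelope}), on which the projection acts trivially; equality of the two versions then forces $\nabla u^{\alpha/2}\in\Lambda_{u_\bullet}$.
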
 Although the statement (\ref{eq: conclusion of gf}) involves only objects defined at the level of the limiting trajectories, the proof will be probabilistic and exploit Theorem \ref{thrm: LDP}.  
	\subsection{Plan of the Paper} The paper is structured as follows. Section \ref{sec: Lit} discusses related works in the literature and contextualises the results of the paper. Section \ref{sec: prelim} collects some preliminaries: We give formal definitions of all objects used in the paper and give an overview of the tools which we will use, but which are not novel in the present work, including summarising some results of \cite{fehrman2019large}  regarding (\ref{eq: Sk}) in Section \ref{sec: skg}. Since all results in this section are of very little novelty, proofs are ommited in favour of references to similar proofs in the literature. Section \ref{sec: equilibrium} gives some estimates and computations for static large deviations in slowly-varying local equilibria{, which differ from the standard large deviations in equilibrium due to the additional rescaling of particle size by $\chi_N$.} \medskip \\ The main technical novelties of the paper, culminating in Theorem \ref{thrm: WtS}, are in Sections \ref{sec: DTC} - \ref{sec: WTS}. The key ingredients are separated into the functional-analytic aspects in Section \ref{sec: DTC} and the key large deviations estimate in Section \ref{sec: a priori}; the proof of Theorem \ref{thrm: WtS} is assembled in Section \ref{sec: WTS}. \medskip \\ 
 \ifx\jrnl\undefined
undefed
\else
  \if\jrnl1
With these ingredients at hand, Theorems \ref{th: hydrodynamic limit} - \ref{thrm: LDP} follow: the proofs will closely follow those of the works \cite{kipnis1989hydrodynamics,kipnis1998scaling}, with the difference that we use Theorem \ref{thrm: WtS} to play the role of a superexponential estimate.  Theorem \ref{th: hydrodynamic limit} is proven in Section \ref{sec: hydrodynamic}. The proof of the large deviations upper bound (\ref{eq: UB statement}) is given in Section \ref{sec: UB}. The proof of the lower bound is very similar to the standard argument, although a priori complicated by the double-rescaling and the presence of unbounded diffusivity; in Section \ref{sec: LB} we give the details of these technical points, using Theorem \ref{thrm: WtS} for technical support. Finally, Theorem \ref{thrm: gradient flow} is deduced in Section \ref{sec: gradient flow}. 
  \else
With these ingredients at hand, Theorems \ref{th: hydrodynamic limit} - \ref{thrm: LDP} follow: the proofs will closely follow those of the works \cite{kipnis1989hydrodynamics,kipnis1998scaling}, with the difference that we use Theorem \ref{thrm: WtS} to play the role of a superexponential estimate.  Theorem \ref{th: hydrodynamic limit} is proven in Section \ref{sec: hydrodynamic}. The proof of the large deviations upper bound (\ref{eq: UB statement}) is given in Section \ref{sec: UB}, and the lower bound is proven in Section \ref{sec: LB}. Finally, Theorem \ref{thrm: gradient flow} is deduced in Section \ref{sec: gradient flow}. 
  \fi
\fi 
	
	\section{Discussion \& Literature Review}\label{sec: Lit}
\paragraph{\textbf{1. PME from Particle Systems}}  Suzuki and Ushiyama \cite{suzuki1993hydrodynamic} introduce a model, called `stick-breaking' in later works, where each site $x\in \TND$ is assigned  a `stick-length' in $[0,\infty)$, and show the convergence in probability to the PME with homogeneity $\alpha=2$. The same convergence was also proven under slightly different assumptions in \cite{ekhaus1995stochastic}, who also introduced a version of the model where the stick lengths take discrete values, and the model was generalised to relax the restriction to $\alpha=2$ in \cite{feng1997microscopic}. More recently, the work \cite{goncalves2009hydrodynamic} and Blondel, Canc{\`e}s, Sasada and Simon\footnote{At the time of writing, this preprint contains a disclaimer that the work is incomplete.} \cite{blondel2018convergence} considered the hydrodynamic limit of an exclusion-type particle system with kinetic constraints, which can be constructed to lead to (\ref{eq: PME}) for any choice of the exponent $\alpha\in \mathbb{N}$. In the special case $\alpha=1$, this model is the simple exclusion process, where the limit was proven in \cite{kipnis1989hydrodynamics}. The restriction to integer $\alpha$ for similar processes was lifted by \cite{goncalves2023exclusion}, who derive both fast and slow diffusion $\partial_t u= \Delta u^\alpha, \alpha\in (0,2]$ from an exclusion-type particle system. Let us also mention the works by Seo \cite{seo2017large} and Dembo, Shkolnikov, Varadhan and Zeitouni \cite{dembo2016large}, who proved matching upper and lower large deviation bounds for interacting particle systems driven by Brownian motions. The work \cite{seo2017large} proves a large deviation principle around a system of nonlinear, nondegenerate parabolic PDEs for the interaction of `colours' of particles, and \cite{dembo2016large} characterises the large deviations from a non-degenerate PME in dimension $d=1$.   \bigskip \\ The works \cite{ekhaus1995stochastic,feng1997microscopic} are based on an entropy method first introduced by Varadhan \cite{varadhan1991scaling}, while the works \cite{goncalves2009hydrodynamic,blondel2018convergence} on the exclusion-type model use a relative entropy method due to Yau \cite{yau1991relative}. All of these works deal exclusively with the hydrodynamic limit equivalent to Theorem \ref{th: hydrodynamic limit}, and the method cannot reach large deviations. Indeed, as discussed above Theorem \ref{thrm: WtS}, the key step is to replace the nonlinearity $(\eta^N(x))^\alpha$ by $(\overline{\eta}^{N,r}(x))^\alpha$, which is the same nonlinearity applied to the spatial averages $\overline{\eta}^{N,r}$ on a macroscopic spatial scale $r$, with only a small error. In the case of large deviations, the error probability must be superexponentially small, whereas the methods in the cited papers can at most show a $L^1(\PP)$-convergence of these errors, and are unable to say anything at the large deviation level. 
\bigskip \\  The hypotheses (\ref{eq: entropy UI hyp} - \ref{eq: weak convergence hyp}) of Theorem \ref{th: hydrodynamic limit} allow significant freedom in choosing the initial data. For example, given $u_0\in C(\TTd,[0,\infty))$, we will see in Section \ref{sec: equilibrium} that both conditions hold with $\Law_{\PP}[\eta^N_0]=\Pi^N_{u_0}$ given by the slowly-varying local equilibrium (\ref{eq: eq}), which is a typical construction for hydrodynamic limits. On the other hand, we can also take $\eta^N_0$ to be deterministic, which is usually out of the reach of (relative) entropy methods.  We are also able to treat much more general initial data than previous works: the work \cite{goncalves2009hydrodynamic} required that the initial profile $u_0$ be bounded away from $0$ and $1$; in \cite{ekhaus1995stochastic,feng1997microscopic} some more regularity of the initial data $u_0\in C^{2+\e}(\TTd), \e>0$ is required. In contrast, we can treat (for example) deterministic $\eta^N_0$ converging to an irregular $u_0$ with support only on a subset of $\TTd$, provided only that $\cH(u_0)<\infty$.  Similarly to the works \cite{ekhaus1995stochastic,feng1997microscopic}, we will use some integrability estimates, i.e. estimates on $\|\eta^N_t\|_{L^\beta(\TND)}$-norms with $\beta>\alpha$, in a tightness argument. In the present context, we need estimates at the large-deviations level, which are rather more subtle than the estimates in expectation.\bigskip \\ Finally, we note that the large deviations are much more sensitive to the fine details of the particle model than the hydrodynamic limit, and different ideas would be needed to treat the large deviations of the other models mentioned above. For example, the equilibrium distribution of the `stick-breaking' model investigated by  
\cite{ekhaus1995stochastic,feng1997microscopic} leads to large deviations with the same speed, but where the finite-rate large deviations may be measures singular with respect to the Lebesgue measure. Correspondingly, it appears to be much harder to establish a large deviations principle for this model. \bigskip \\  \paragraph{\textbf{2. Relation to the Zero-Range Process}   The limits of the zero-range process for general jump rates, assuming bounded diffusivity, as well as fluctuation theorems about these limits, have been widely studied. The hydrodynamic limit, see \cite{kipnis1998scaling}, is a nonlinear parabolic equation $\partial_t u=\Delta \Phi(u)$, with globally bounded and locally nondegenerate diffusivity $\sup_\rho \Phi'(\rho)<\infty, \inf_{\rho\le M}\Phi'(\rho)>0$. Let us cite the works by Menegaki \cite{menegaki2021quantitative} and Menegaki and Mouhot \cite{menegaki2022consistence} for a more recent hydrodynamic limit with an explicit rate of convergence. Equilibrium large deviations for a variant of the zero-range process have been studied by Bernardin, Gon{\c{c}}alves, Jim\'enez-Oviedo and Scotta in \cite{bernardin2022non}. In infinite volume, which we do not consider in the present work, the large deviations and hydrodynamic limit have been studied by Landim and Yau \cite{landim1995large} and Landim and Mourragui \cite{landim1997hydrodynamic}. Quastel, Rezakhanlou and Varadhan \cite{quastel1999large} found the analagous rate function for the simple symmetric exclusion process.  \bigskip \\As already mentioned below Theorem \ref{thrm: LDP}, an important difference to previous works is that we obtain a full large deviation principle, with matching upper and lower bounds and the infimum of (\ref{eq: LB statement}) running over the whole open set $\cU$, rather than being restricted to $\cU \cap \cQ$ for a class of `good' paths $\cQ$. This is achieved through two steps, namely the exclusion of paths outside $\cR$ by Theorem \ref{thrm: WtS}i), and the characterisation of $\cI_\rho|_\cR$ as the lower semicontinuous envelope of a restriction $\cI_\rho|_\cX$(Proposition \ref{prop: lsc envelope}), which we recall from \cite{fehrman2019large}. This property is known for relatively few of the models whose large deviations have been studied. In the case \cite{kipnis1989hydrodynamics}, the rate function is globally convex and such approximations can be obtained by convolution, and in works on reaction-diffusion systems \cite{jona-lasino1993large,bodineau2012,landim2018large,farfan2019}, the rate is a pertubation of a convex functional by a lower-order term. In \cite{quastel1999large,bertini2009non}, the required property is proven for exclusion-type processes, using {\em a priori} $L^\infty_{t,x}$ bounds and the boundedness and nondegeneracy of the diffusion.} In other works, for example Quastel and Yau \cite{quastel1998lattice}, the rate function is defined as a lower semicontinuous envelope, so that this property holds by definition, but lack a corresponding explicit characterisation. A counterexample in a different setting, where the na\"ive rate function does not coincide with the lower semicontinuous envelope of its restriction, has been found by the second author \cite{heydecker2021large}. \bigskip \paragraph{\textbf{3. Macroscopic Fluctuation Theory and Large Deviations of Conservative SPDE}} The same rate function $\mathcal{I}_\rho$ studied in this work appears naturally  as the informal large deviation rate function for the stochastic partial differential equation (SPDE) with conservative noise \begin{equation}
	\label{eq: SPDE} \partial_t u^\e_t = \frac12 \Delta \left((u^\epsilon_t)^\alpha\right)-\sqrt{\e} \nabla\cdot\left((u^\e_t)^{\alpha/2}\xi^K\right)
\end{equation} where $\xi^K$ is the convolution of a space-time white noise $\xi$ with a spatial mollifier on a scale $K^{-1}$, see the works of Dirr, Stamatakis and Zimmer \cite{dirr2016entropic} and Giacomin, Lebowitz and Presutti \cite{giacomin1999deterministic}. The well-posedness of SPDEs with a conservative noise term, including (\ref{eq: SPDE}) as a special case, was established by  \cite{fehrman2021well} as soon as $K<\infty$, so the noise has nontrivial spatial correlation. In general, the regularisation of the noise is necessary, since the case $\alpha=1$ is the Dean-Kawasaki equation, corresponding to non-interacting particles \cite{dean1996langevin,donev2014reversible,donev2014dynamic}, which was shown by Konarovskyi, Lehmann and von Renesse \cite{konarovskyi2019dean} to admit only trivial solutions if $K=\infty$. Large deviations for singular SPDEs have been studied in the works \cite{cerrai2011approximation,faris1982large,jona-lasinio1990large,hairer2015large}; in general, renormalisation constants may enter the rate function, as shown by Hairer and Weber \cite{hairer2015large}. In the work \cite{fehrman2019large}, a large deviation principle \cite[Theorem 6.8]{fehrman2019large} with rate function $\mathcal{J}$ for fixed initial data is proven under a scaling relation between $\e$ and $K$ which prevents renormalisation constants from appearing in the rate function. The cited theorem proves the large deviation principle for a much broader class of SPDEs, replacing $u^\alpha, u^{\alpha/2}$ by $\Phi(u), \Phi^{1/2}(u)$ for a class of $\Phi$ which includes the porous-medium nonlinearity considered here. Dirr, Fehrman and the first author \cite{dirr2020conservative} proved the analagous large deviation principle, now with $u^\alpha$ replaced by $\Phi(u)=u(1-u)$, for the symmetric exclusion process. This current work therefore extends the ideas of \cite{dirr2020conservative,fehrman2019large} to a particle setting, and Theorems \ref{th: hydrodynamic limit}-\ref{thrm: LDP}, together with \cite[Theorem 6.8]{fehrman2019large}, show that the SPDE (\ref{eq: SPDE}) has the same small-noise limit and large deviations as the ZRP we consider. We may therefore consider (\ref{eq: SPDE}) as a phenomenological model for the ZRP.  \bigskip \\ The large deviation rate functional is also closely related to macroscopic fluctuation theory (MFT) \cite{bertini2009non,bertini2015macroscopic,derrida2007non}, which postulates an Ansatz for the large-deviations rate function, for example in Bertini \cite[Equation 1.3]{bertini2015macroscopic}. We also refer to \cite{hohenberg1977theory,spohn2012large} for a rigorous justification. In this context, the time-reversal argument in Section \ref{sec: gradient flow} is due to Onsager \cite{onsager1931reciprocali,onsager1931reciprocalii}. \bigskip \\  \paragraph{\textbf{4. The scaling hypothesis (\ref{eq: scaling hypothesis}).}} We conclude with a discussion of the scaling hypothesis (\ref{eq: scaling hypothesis}). This hypothesis will play a key role in this paper, as it allows the path-by-path estimates on $\int_0^\tf \cD_{\alpha,N}(\eta^N_s)ds$ for a discrete entropy dissipation $\cD_{\alpha,N}$ defined in (\ref{eq: DNA}).  \bigskip \\ We observe that we could also take the limits of shrinking particle size $\chi\to 0$ and growing lattice $N\to \infty$ with decoupled parameters, and in either order, to obtain meaningful limits, which we summarise in Figure \ref{fig: all limits} below. The six relevant limits are described below: This paper is concerned with the curved arrow (e), and we leave the problems of making other limits rigorous to future work. 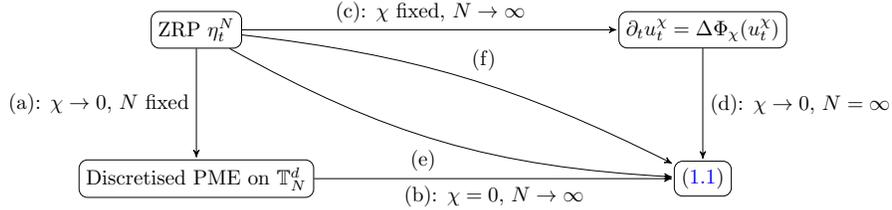
\begin{figure}[htb]
\centering
\resizebox{12cm}{!}{ \begin{tikzpicture}[->,>=stealth',shorten >=1pt,node distance=2.5cm,auto,main node/.style={rectangle,rounded corners,draw,align=center}]

\node[main node] (1) {ZRP $\eta^N_t$};
\node[main node] (2) [below of=1] {Discretised PME on $\TND$};
\node[main node] (3) [right of=1, xshift = 6cm] {$\partial_t u^\chi_t = \Delta \Phi_\chi(u^\chi_t)$};
\node[main node] (4) [below of=3] {PME (\ref{eq: PME})};

\path
(1) edge node [swap] {(a): $\chi\to 0$, $N$ fixed} (2)

(2) edge node [swap] {(b): $\chi=0$, $N\to \infty$} (4)
(1) edge node [above] {(c): $\chi$ fixed, $N\to \infty$} (3)
(3) edge node [right] {(d): $\chi\to 0$, $N=\infty$} (4)

(1) edge [bend left=10] node  {(f)} (4)
(1) edge [bend right=13] node[swap]  {(e)} (4);

\end{tikzpicture}}  \caption{All possible double-limits. The horizontal arrows represent $N\to \infty$ with the hydrodynamic rescaling of space and time, and the vertical arrows represent $\chi\to 0$ with the value-time rescaling. The two curved arrows are the two double limits, depending on whether $\chi_N\to 0$ sufficiently quickly (e) or sufficiently slowly (f) with $N$. } \label{fig: all limits}\end{figure} 
 \begin{enumerate}[label=(\alph*)]
	\item {\bf $\chi\to 0$, $N$ fixed} gives a discrete version of the PME (\ref{eq: PME}). Fluctuations are described by the Freidlin-Wentzell theory in a $N$-dependent space. \item {\bf Subsequently taking $ N\to \infty$}  should recover the continuum PME (\ref{eq: PME}) from the discretised version, and $\Gamma$-convergence of the rate functionals. \item {\bf $\chi$ fixed, $N\to \infty$} is the usuals scaling limit of the zero-range process, in the spirit of \cite{kipnis1998scaling}. In this case, we expect to find the superexponential estimate through the same mecahnism as \cite{kipnis1989hydrodynamics} (rapid equilibration on macroscopic boxes), in contrast to the current work.  \item  {\bf Subsequently taking $\chi\to 0$} will recover the nonlinearity $\Phi(u)=u^\alpha$ from the nonlinearities $\Phi_\chi(u)$.  As in (b), a natural approach would be to seek the  $\Gamma$-convergence of the associated rate functions.  \item  {\bf Double-limit $N\to 0, \chi_N \to 0$ sufficiently fast}  is the content of the current paper, with the scaling relation imposed by (\ref{eq: scaling hypothesis}). In this case we find the superexponential estimate via regularity estimates, in contrast to (c). \item {\bf Double-limit $N\to 0, \chi_N \to 0$ sufficiently slowly} is the opposite type of relation from the ideas of this paper. As in (c) above, different ideas will be needed for the superexponential estimate. 

\end{enumerate}

\paragraph{\textbf{5. PME as a gradient flow}} {As discussed in the introduction, the problem of identifying a thermodynamic gradient flow structure for \eqref{eq: PME} has stood open since the works of Br\'ezis and Otto \cite{brezis1971monotonicity,otto2001geometry}. Theorem \ref{thrm: gradient flow} is one resolution of this problem by extending, to the case of degenerate and unbounded diffusivity, the connection between large deviations and gradient flow \cite{adams2011large,mielke2014relation,fathi2016gradient}, and more specifically the connection via the EDI \cite{adams2013large,dirr2016entropic}.\\
We first start by recalling the state of the art in the nondegenerate and boundedly diffusive setting. In \cite{dirr2016entropic}, for nondegenerate and sublinear nonlinearity $\Phi(u)$ replacing $u^\alpha$ in \eqref{eq: PME}, it is shown how the identity analogous to \eqref{eq: conclusion of gf} arises from large deviations of the ZRP with Lipschitz jump rates, and how this identity corresponds to the EDI. The corresponding gradient flow is in a Riemannian structure determined by the underlying ZRP, which is defined by modifying the Otto calculus \cite{otto2001geometry} by replacing the weight $u$ by $\Phi(u)$ in the Riemannian tensor. This Riemannian structure leads to a global metric which may be defined in the same way as the Benamou-Brenier formulation for the Wasserstein distance: One defines the action $\mathcal{A}_\Phi$ of a curve in the same way as \eqref{eq: action 2}, with $\Phi(u)$ in place of $u^\alpha$, and defines the distance between two profiles $u_0, u_1$ as the infimal action $\mathcal{A}_\Phi(u_\bullet)$ of all curves starting at $u_0$ and ending at $u_1$. The modified Otto calculus also permits to identify the action as $\int_0^\tf g_{u_t}(\dot{u}_t, \dot{u}_t)dt$, for a Riemannian metric $g_u$ for which the tangent space may be identified in the same way as (\ref{eq: tangent space useful def}). For these cases, where a global metric structure may be defined, the identity corresponding to \eqref{eq: conclusion of gf} is the entropy dissipation identity, which gives an equivalent formulation of the gradient flow property \cite{ambrosio2005gradient,sandier2004gamma,serfaty2011gamma, fathi2016gradient}. In addition, in these cases further connections between the large deviations rate function and gradient flow structure are possible, for instance the convergence of a JKO scheme \cite{jordan1998variational, adams2011large,mielke2014relation}.} \\\\ {Let us now contrast this {nondegenerate and boundedly diffusive case} with the {case of degenerate and unbounded diffusivity of interest here.}   Replacing the nonlinearity $\Phi(u)$ in the arguments of Dirr et al. \cite{dirr2016entropic} by the porous medium nonlinearity $u^\alpha, \alpha>1$, introduces both degenerate diffusivity at $u=0$, and unbounded diffusivity as $u\to \infty$. As a result, the formal Riemann structure no longer gives a well-defined global geometry: The distance between two profiles may become zero, as a result of unbounded diffusivity, or infinite, as a result of degeneracy.  \ifx\jrnl\undefined
\else
  \if\jrnl1
  \else
{For completeness, the formal Riemannian metric and gradient flow structure are recalled in Appendix \ref{sec: formal GF}.}
  \fi
\fi As a result, characterisations of the gradient flow involving the metric, such as the JKO scheme, are no longer meaningful. \\ The approach in Theorem \ref{thrm: gradient flow} therefore consists of rigorously validating the EDI via the connection to the rate function, extending the connection \cite{dirr2016entropic,adams2011large,adams2013large,mielke2014relation}, and in this way we resolve the problem of selecting a gradient flow corresponding to a microscopic model for the PME. \\ 
The second part of Theorem \ref{thrm: gradient flow} shows that the gradient of the entropy in the formal Otto calculus is indeed an element of the tangent space according to (\ref{eq: tangent space useful def}). The final part of the theorem makes the formal computation above (\ref{eq: entropy dissipation}) precise for solutions to the PME (\ref{eq: PME}) no weaker than the solution concept of \cite{fehrman2019large}.\\
Notably, Theorem \ref{thrm: gradient flow} even extends what is known in the setting of nondegenerate and bounded diffusivity, since the formal calculations of \cite{adams2011large,dirr2016entropic} are implicitly restricted to fluctuations where each $u_t$ is regular enough to satisfy a `chain rule for the entropy', see the display above \cite[Equation (2.3)]{dirr2016entropic} and Erbar \cite[Proposition 4.1]{erbar2016gradient} for a similar calculation in a different context. 
The argument in Section \ref{sec: gradient flow} circumvents these difficulties by using the large deviations principle and the machinery of the skeleton equation \eqref{eq: Sk} from \cite{fehrman2019large}}, thereby removing any additional assumptions on the path $u$ aside from the finiteness of the objects in \eqref{eq: conclusion of gf}.  \\\\   Gradient flow structures for the PME (\ref{eq: PME}) are well-known: Br\'ezis \cite{brezis1971monotonicity} formulated it as the gradient flow of the functional $\cE_\alpha(u)=\frac{1}{\alpha+1}\int u^{\alpha+1}$ with respect to a flat $H^{-1}$-metric, and Otto \cite{otto2001geometry} showed that is the gradient flow of $\cE'_\alpha(u)=\frac1{\alpha-1}\int u^\alpha$ with respect to a geometry inducing the Wasserstein$_2$ distance $\cW_2$. As already remarked, the microscopic model studied here is not the only model which leads to the PME (\ref{eq: PME}) as a macroscopic limit, and other choices of the underlying particle system may lead to different large deviation rate functions, and hence other gradient flow formulations. In particular, the energy functional $\alpha \cH$ and the geometry defined by $\cA$ should be thought of as intrinsic {\em to the class of particle systems whose large deviations are given by Theorem \ref{thrm: LDP}}, and not to the PME (\ref{eq: PME}). 

 \ifx\jrnl\undefined
undefed
\else
  \if\jrnl1
\input{main_text_short.tex}
  \else
\section{Preliminaries}\label{sec: prelim}
We will now introduce some non-novel preliminary concepts before addressing the main proofs of the Theorems. Subsection \ref{sec: formal} will fix definitions and notation of constant use, including some deferred from the statements in the introduction, and a formal definition of the ZRP $\eta^N$ is given. In subsection \ref{sec: eq def} we give an explicit form for the equilibrium, respectively slowly-varying local equilibrium, distributions $\Pi^N_\rho$ and introduce some associated notation. Subsection \ref{sec: computation of generators} recalls a basic computation for the linear generator $\cL_N$ and exponentially transformed generator $\cG_N$ applied to linear functionals $F^\varphi_N$ of the particle configuration. Subsection \ref{sec: skg} recalls the theory of the skeleton equation (\ref{eq: Sk}) from \cite{fehrman2019large} and recalls results which will be relevant for the present work. Finally, subsection \ref{sec: variational} will give an alternative formulation of the functional $\mathcal{J}$, which is well-known to specialists in large deviation theory, and some properties of the functional $\cA$. Since all results in this section are of limited novelty, the proofs are deferred to Appendix \ref{sec: misc proofs}.

\subsection{Formal Definitions.}\label{sec: formal} We now give some technical definitions, which allow us to give precise statements of the main results, as well as fixing some frequently used notation. \paragraph{\textbf{1. Lebesgue Spaces on the Discrete Lattice and Torus; Particle State Space.}} We will frequently work with Lebesgue spaces on the discrete lattice, which we define so as to be included in $L^p(\TTd)$. For each $N$, let $(c^N_x, x\in \TND)$ be a partition of $\TTd$ into translations of $[0, N^{-1})^d$, with centres $x\in \TND$, and let $\cB_N$ be the $\sigma$-algebra generated by $\{c^N_x, x\in \TND\}$. We then take $L^p(\TND):=L^p(\TTd, \cB_N)$ to be the $p$-integrable functions on the whole torus $\TTd$ which are measurable with this $\sigma$-algebra, and hence constant on each $c^N_x$. This is trivially isometrically isomorphic to the usual meaning of $L^p(\TND)$, and this viewpoint is helpful in that we can view $\eta^N \in L^p(\TND)$ as \emph{either} a function on the lattice (for example, using Sobolev inequalities on the lattice in Lemma \ref{prop: interpolation}), or as a function on the whole torus $\TTd$. We will indicate $\cdot_{\ge 0}$ the subset of a Lebesgue space consisting of nonnegative functions. \medskip \\ We now define the state space for the particle configurations as \begin{equation*}
	X_N:=\left\{\eta^N\in L^1_{\ge 0}(\TND): \eta^N(x)\in \chi_N\NN \text{ for all }x\in \TTd\right\}
\end{equation*} which corresponds to the informal description of the ZRP by setting $\eta^N=k\chi_N$ on $c^N_x$ to represent $k$ particles of height $\chi_N$ at the lattice site $x\in \TND$. The identification of $L^1(\TND)\subset L^1(\TTd)$ above has the immediate benefit that all particle state spaces are embedded into a common space, namely $L^1_{\ge 0}(\TTd)$. For $a\in [0,\infty)$, we write $X_{N,a}$ for the subset \begin{equation}\label{eq: XNA} X_{N,a}:= \left\{\eta^N\in X_N: \int_{\TTd} \eta^N dx \le a\right\}.\end{equation} 
 \paragraph{\textbf{2. Generator and Exponentially Transformed Generator.}} We now make precise the definition of the particle dynamics by specifying the generator, for $F_N:X_N\to \R$, by \begin{equation}
	\label{eq: linear generator} \cL_N F_N(\eta^N):=\frac{dN^2}{\chi_N}\sum_{x,y\in \TND} p^N(x,y)(\eta^N(x))^\alpha\left(F_N(\eta^{N,x,y})-F_N(\eta^N)\right)
\end{equation}
where $\eta^{N,x,y}$ is the particle configuration obtained by moving one particle from site $x$ to site $y$: \begin{equation}
	\label{eq: def change at jump}
\eta^{N,x,y}=\eta^N-\chi_N e^N_x+ \chi_N e^N_y \end{equation} where $e^N_x$ is the indicator function for the cube $c^N_x$.  It is immediate to check that the rates of $\cL_N$ are bounded on each space $X_{N,a}$, and these spaces are invariant because $\cL_N F_N \equiv 0$ for any functional of the form $F_N(\eta^N):=f_N(\langle 1, \eta^N\rangle)$. It follows that $\cL_N$ generates a Markovian semigroup, that for each initial measure the generator corresponds to a unique-in-law process $\eta^N_\bullet$, and that the process is globally defined. \bigskip \\ 
We will frequently work with exponential martingales constructed by the (nonlinear) exponentially transformed  generator, see the book of Feng and Kurtz \cite{feng2006large}. For a functional $F_N: X_N\to \RR$, the action of the exponentially transformed generator is defined by \begin{equation}\label{eq: nonlinear generator}
	\cG_N F_N(\eta^N):=\frac{\chi_N}{N^d}e^{-N^dF_N(\eta^N)/\chi_N} \left(\cL_N e^{N^d F_N/\chi_N}\right)(\eta^N)
\end{equation} which is defined such that \begin{equation}
	\label{eq: exponential entropy martingale} \exp\left(\frac{N^d}{\chi_N}\left(F_N(\eta^N_t)-\int_0^t \cG_N F_N(\eta^N_s)ds\right)\right)
\end{equation} is a nonnegative local martingale, and hence a supermartingale.

  \paragraph{\textbf{3. Skorokhod Space $\cDD$.}}We now specify the topology underlying underlying Theorems \ref{th: hydrodynamic limit} - \ref{thrm: LDP}. We let $d$ be a metric on $L^1_{\ge 0}(\TTd)$ inducing the weak-$\star$ topology, and write $\langle\cdot, \cdot\rangle$ for the duality pairing of $L^\infty(\TTd)$ and $L^1(\TTd)$. We consider a forever fixed time horizon $\tf<\infty$ and write $\mathbb{D}$ for the Skorokhod space  \begin{equation*}
	\mathbb{D}:=\mathbb{D}\left([0,\tf], (L^1_{\ge 0}(\TTd,[0,\infty)), d)\right)
\end{equation*} and let $\mathbb{D}_N\subset \mathbb{D}$ be the subset of processes taking values in $X_N$, so that the realisations $\eta^N_\bullet$ of the ZRP take values in $\mathbb{D}_N$. We will occasionally use the fact that the topology of $\cDD$ admits a metric; see the work of \cite{jakubowski1986skorokhod}. We will find that all the limiting fluctuations take values in the space $\mathcal{C}\subset\mathbb{D}$ for the paths $u_\bullet$ which are continuous in the weak-$\star$ topology.

\paragraph{\textbf{4. Entropy and Entropy Dissipation.}} For $u, \rho\in L^1_{\ge 0}(\TTd)$, we define the Boltzmann entropy \begin{equation}\label{eq: entropy def} \cH_\rho(u):=\int_{\TTd}\left(\frac{u(x)}{\rho(x)}\log \frac{u(x)}{\rho(x)} - \frac{u(x)}{\rho(x)}+1\right)dx \end{equation} which we allow to take the value $\infty$ if the integral fails to converge or if $u(x)dx\not \ll \rho(x)dx$. If the argument $\rho$ is omitted, it is taken to be the constant function $1$; note that for any $u_0\in L^1_{\ge 0}(\TTd)$, $\cH(u_0)<\infty$ if and only if $\cH_\rho(u_0)<\infty$ for all $\rho \in C(\TTd, (0,\infty))$. Recalling the definition of the entropy dissipation $\cD_\alpha$ in (\ref{eq: entropy dissipation}), we extend the definition to a functional on $L^1_{\ge 0}(\TTd)$, taking the value $\infty$ if $u^{\alpha/2}\not \in \dot{H}^1(\TTd)$. The energy class $\cR$ which arises from the entropy/entropy-dissipation estimate is \begin{equation}
			\label{eq: energy class}\begin{split} \cR:&=\left\{u_\bullet \in \mathcal{C}: u^{\alpha/2}\in L^2([0,\tf],\dot{H}^1(\TTd))\right\}\\ & =\left\{u_\bullet \in \mathcal{C}: \int_0^\tf \cD_\alpha(u_s)ds<\infty\right\} \end{split} \end{equation} where $\mathcal{C}$ is defined above and we have used the definition (\ref{eq: entropy dissipation}) of $\cD_\alpha$. Let us remark, for future reference, that we have the equality \begin{equation} \label{eq: energy class'}
		\begin{split} \cR & =\left\{u_\bullet \in \mathcal{C}: u^{\alpha/2}\in L^2([0,\tf],H^1(\TTd))\right\} \\ & =\left\{u_\bullet \in \mathcal{C}\cap L^\alpha([0,\tf]\times\TTd): \int_0^\tf \cD_\alpha(u_s)ds<\infty \right\} \end{split}	\end{equation} by using the fact that $\mathcal{C}\subset L^\infty([0,\tf],L^1_{\ge 0}(\TTd))$, the Sobolev embedding and an interpolation; see \cite[Lemma 5.2]{fehrman2019large} or Remark \ref{rmk: continnum nonstandard convolution} below. This will also be the class for the regularity which is natural for the skeleton equation (\ref{eq: Sk}) with $g\in L^2([0,\tf]\times\TTd,\RRd)$, see the discussion in \cite{fehrman2019large} and Definition \ref{def: solutions} below. Let us emphasise that the functionals $\mathcal{J}, \cA$ are defined on the whole space $\mathbb{D}$, understanding the equations (\ref{eq: Sk}, \ref{eq: CE}) in the sense of Definition \ref{def: solutions} below. In particular, since the solution concept for these equation is defined only in $\cR$, the definition (\ref{eq: dynamic cost}) implies that $\mathcal{J}(u_\bullet)=\infty$ whenever $u_\bullet \not \in \cR$.

\subsection{Equilibrium \& Slowly Varying Local Equilibrium Distributions} \label{sec: eq def} As is common in the theory of the zero-range process, a straightforward computation shows that the process $\eta^N_\bullet$ admit a one-parameter family of reversible invariant measures, which we write as \begin{equation*}\label{eq: eq}
	\Pi^N_\rho(\eta^N):=\prod_{x\in \TND} \pi^N_\rho(\eta^N(x)), \qquad  \rho>0, \eta^N\in X_N
\end{equation*} Here, $\pi^N_\rho$ is the single-site measure: \begin{equation*}
		\pi^N_\rho(\eta):=\frac{(\rho/\chi_N)^{\alpha(\eta/\chi_N)}}{((\eta/\chi_N)!)^\alpha Z_\alpha(\rho/\chi_N)}, \qquad \rho\ge 0, \hs \eta \in \{0, \chi_N, 2\chi_N,\dots\}  \end{equation*} and where $Z_\alpha$ is the normalisation factor\footnote{This is linked to the partition function $Y(\varphi)$ by $Y(\varphi)=Z_\alpha(\varphi^{1/\alpha})$. This change of variables is convenient for the calculations in Section \ref{sec: equilibrium}.} \begin{equation} \label{eq: single site'} Z_\alpha(\varphi):=\sum_{m\ge 0}\frac{\varphi^{\alpha m}}{(m!)^\alpha}, \qquad \varphi\ge 0.
	\end{equation} The fact that the distributions $\Pi^N_\rho$ are reversible is easily verified by checking the detailed balance conditions, and implies that the law of the ZRP started from $\Pi^N_\rho$ is invariant under a time-reversal, a fact which will be important in the derivation of Theorem \ref{thrm: gradient flow}. In order to encode inhomogeneous initial data $u_0\in C([0,\infty), \TTd)$ for the limiting equation (\ref{eq: PME}), we replace the constant $\rho$ by a function $\rho \in C(\TTd,[0,\infty))$ and consider the \emph{slowly-varying local equilibrium}, in which the parameter of the local equilibria varies only on the macroscopic spatial scale: \begin{equation*}
		\label{eq: slowly varying local equilibrium} {\Pi}^N_{\rho}(\eta^N):=\prod_{x\in \TTd} \pi^N_{\rho(x)}\left(\eta^N(x)\right)
	\end{equation*} see, for instance, \cite[Definition 2.1]{kipnis1998scaling}.
 \subsection{A Basic Computation for the Linear \& Exponential Generators} \label{sec: computation of generators} The following computation is largely standard for the zero-range process; see, for example \cite[equation 1.7] {kipnis1998scaling} for the first claim and \cite[equation (2.6)]{kipnis1998scaling} for a claim similar to the second. The results are adapted here to the present situation, where the superlinear growth of the jump rates lead to a different form of the error. \begin{lemma}\label{lemma: computation of generators}
	Fix a test function $\varphi\in C^2(\TTd)$ and let $F^\varphi_N$ be the evaluation \begin{equation} \label{eq: evaluation} F^\varphi_N(\eta^N):=\langle \varphi, \eta^N\rangle=\int_{\TTd} \eta^N(x)\varphi(x)dx. \end{equation} Then the linear and nonlinear generators satisfy, for $\eta^N\in X_N$, \begin{equation}
		\left|\cL_N F^\varphi_N(\eta^N)-\frac{1}{2}\int_{\TTd}(\eta^N(x))^\alpha \Delta \varphi(x)dx\right|\le \theta_N\left\|\eta^N\right\|_{L^\alpha(\TTd)}^\alpha
	\end{equation} and \begin{equation}
		\left|\cG_N F^\varphi_N(\eta^N)-\frac{1}{2}\int_{\TTd} (\eta^N(x))^\alpha\left(\Delta \varphi(x)+|\nabla \varphi(x)|^2\right)dx\right|\le \theta_N \|\eta^N\|_{L^\alpha(\TTd)}^\alpha
	\end{equation} for some sequence $\theta_N$, depending only on $\|\varphi\|_{C^2(\TTd)}$ and converging to $0$ as $N\to \infty$. 
\end{lemma}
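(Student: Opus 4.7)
The key observation is that for the evaluation functional $F^\varphi_N(\eta^N) = \langle \varphi, \eta^N\rangle$ and the one-step change (\ref{eq: def change at jump}), we have
$$F^\varphi_N(\eta^{N,x,y}) - F^\varphi_N(\eta^N) = \chi_N\left(\langle\varphi, e^N_y\rangle - \langle\varphi, e^N_x\rangle\right) = \frac{\chi_N}{N^d}\left(\bar\varphi(y) - \bar\varphi(x)\right),$$
where $\bar\varphi(x) := N^d\int_{c^N_x}\varphi\, dz$ is the cell average of $\varphi$. For nearest neighbours $y = x \pm e_i/N$ in the support of $p^N(x,\cdot)$, Taylor's theorem, combined with the uniform continuity of $\nabla^2\varphi$ on the compact torus, yields
$$\bar\varphi(x \pm e_i/N) - \bar\varphi(x) = \pm\frac{1}{N}\overline{\partial_i\varphi}(x) + \frac{1}{2N^2}\overline{\partial_i^2\varphi}(x) + o(N^{-2}),$$
uniformly in $x$ and in the direction $i$, with residual controlled by the modulus of continuity of $\nabla^2\varphi$.

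For the first claim, substituting into (\ref{eq: linear generator}), the odd contributions cancel under the symmetry of $p^N$, and the symmetric second-order terms produce
$$\cL_N F^\varphi_N(\eta^N) = dN^{2-d}\sum_x(\eta^N(x))^\alpha \left[\frac{1}{2dN^2}\overline{\Delta\varphi}(x) + o(N^{-2})\right].$$
Since $(\eta^N)^\alpha$ is $\cB_N$-measurable, $N^{-d}\sum_x (\eta^N(x))^\alpha \overline{\Delta\varphi}(x) = \int_{\TTd}(\eta^N)^\alpha \Delta\varphi\, dx$ \emph{exactly}, and the error factorises as $o(1)\cdot\|\eta^N\|_{L^\alpha(\TTd)}^\alpha$.

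For the second claim, the quantity $N^d[F^\varphi_N(\eta^{N,x,y}) - F^\varphi_N(\eta^N)]/\chi_N = \bar\varphi(y) - \bar\varphi(x)$ is uniformly $O(N^{-1})$, so from (\ref{eq: nonlinear generator}),
$$\cG_N F^\varphi_N(\eta^N) = dN^{2-d}\sum_x(\eta^N(x))^\alpha \sum_y p^N(x,y)\left[e^{\bar\varphi(y) - \bar\varphi(x)} - 1\right].$$
Expanding $e^h - 1 = h + \tfrac{1}{2}h^2 + O(h^3)$ with $h = \bar\varphi(y)-\bar\varphi(x)$, the linear-in-$h$ part reproduces the computation of $\cL_N F^\varphi_N$ and contributes $\tfrac12\int(\eta^N)^\alpha\Delta\varphi$. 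The quadratic part gives, using $(\bar\varphi(x\pm e_i/N)-\bar\varphi(x))^2 = N^{-2}(\partial_i\varphi(x))^2 + o(N^{-2})$ and again absorbing cell averages,
$$\frac{1}{2}\sum_y p^N(x,y)(\bar\varphi(y)-\bar\varphi(x))^2 = \frac{1}{2dN^2}|\nabla\varphi(x)|^2 + o(N^{-2}),$$
which combines with the $dN^{2-d}$ prefactor to yield $\tfrac12\int(\eta^N)^\alpha|\nabla\varphi|^2$. Finally, the cubic remainder is $O(N^{-3})$ per jump, hence $O(N^{-1})\|\eta^N\|^\alpha_{L^\alpha(\TTd)}$ after the prefactor and summation.

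The only obstacle is bookkeeping: ensuring that the residual $\theta_N$ depends only on $\varphi$ and never couples badly to $\eta^N$. This is automatic because every error term factorises as a pointwise $o(1)$ quantity (depending on the moduli of continuity of $\varphi, \nabla\varphi, \nabla^2\varphi$) multiplied by $(\eta^N(x))^\alpha$; the normalisation $N^{-d}\sum_x(\eta^N(x))^\alpha = \|\eta^N\|_{L^\alpha(\TTd)}^\alpha$ then delivers precisely the weighting appearing in the claimed bounds.
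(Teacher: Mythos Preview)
Your proof is correct and follows essentially the same approach as the paper: both introduce cell averages $\bar\varphi$, Taylor-expand the differences $\bar\varphi(x\pm e_i/N)-\bar\varphi(x)$ to second order, use the symmetry of $p^N$ to kill the first-order terms, and for $\cG_N$ additionally Taylor-expand the exponential $e^h-1$ to second order. The only cosmetic difference is that the paper evaluates $\nabla\varphi,\nabla^2\varphi$ at lattice points and then invokes continuity at the end to pass to the true integral, whereas you keep cell averages of the derivatives throughout and observe that $N^{-d}\sum_x(\eta^N(x))^\alpha\overline{\Delta\varphi}(x)=\int(\eta^N)^\alpha\Delta\varphi$ exactly; both routes are equivalent.
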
 

\subsection{The Skeleton and Fokker-Planck Equations} \label{sec: skg} 
				
				This section will focus on the properties of the skeleton equation (\ref{eq: Sk}) and the Fokker-Planck-type equation\begin{equation}\label{eq: FP} \partial_t u_t=\frac12\Delta(u_t^\alpha)-\nabla\cdot(u_t^\alpha \nabla h_t) \end{equation} with $h\in C^{1,3}([0,\tf]\times \TTd)$. The theory of the skeleton equation (\ref{eq: Sk}) was developed in \cite{fehrman2019large}, and is recalled here for convenience, with mild reformulations which are convenient for the applications in this paper. We will also need a new result on the well-posedness of (\ref{eq: FP}), which we obtain by adapting the corresponding proof from \cite{fehrman2019large}. 
				
				Let us begin with a brief discussion on the skeleton equation (\ref{eq: Sk}). This equation is of parabolic-hyperbolic type, and since we work in the full generality $g\in {L^2_{t,x}}:=L^2([0,\tf]\times\TTd,\RRd)$, the drift term is highly irregular. As is argued in \cite[Section 2]{fehrman2019large}, the equation (\ref{eq: Sk}) is critical for values $u\in L^1(\TTd)$ and supercritical for values $u\in L^r(\TTd)$ for all $r>1$ for scalings which preserve the ${L^2_{t,x}}$-norm of the control $g$; we should therefore not hope for an \emph{a priori} estimate better than (\ref{eq: basic entropy estimate}). This corresponds exactly to the fact that we can find large-deviations estimates for the entropy $\cH(\eta^N_t)$ in Section \ref{sec: a priori}, but not for any functional $\cH'$ which dominates it\footnote{We would say that $\cH'$ dominates $\cH$ if $\sup_{u_0}\frac{\cH'}{\cH}=\infty$. It follows from the calculations in Section \ref{sec: equilibrium} that, if $\cH'$ is also lower semicontinuous, we would have $\limsup_N \frac{\chi_N}{N^d}\log \EE_{\Pi^N_\rho}[\exp(N^d \cH'(\eta^N_0)/\chi_N)]=\infty$, and there is no hope of repeating the arguments leading to Lemma \ref{lemma: QY1}.}. Moreover, in studying large deviations, we are forced on probabilistic grounds to work with scalings of (\ref{eq: Sk}) on which the two terms are of the same order. Indeed, we will see in Section \ref{sec: gradient flow} that the microscopic reversibility at the level of the ZRP has the effect of changing $g$ to absorb an \emph{anti}-diffusive term $-\frac12\Delta u_t^{\alpha}$  into the hyperbolic term. It also follows that, for a given $t>0$, we should not expect $u_t$ obtained from the large deviations of $\eta^N_t$ to have better regularity than $u_0$.
				
			\begin{definition}\label{def: solutions} 
				We say that a function $u_\bullet\in \mathcal{R}$ is a \emph{weak solution} to the skeleton equation (\ref{eq: Sk}) starting at $u_0$ if, for all $t\in [0,\tf]$ and all $\varphi\in C^\infty(\TTd)$, \begin{equation}\label{eq: weak form}\begin{split}  
						\int_{\TTd} \varphi(x)(u_t(x)-u_0(x))dx & = -\int_0^t\int_{\TTd} u^{\alpha/2}_s(x)\nabla u^{\alpha/2}_s(x)\cdot \nabla \varphi(x) dxds.\\& \hspace{2cm} + \int_0^t\int_{\TTd} g_s(x)u^{\alpha/2}_s(x)\cdot \nabla \varphi(x) dxds. 
					\end{split} \end{equation}
			The hypothesis $u_\bullet \in \cR$ ensures that all terms are meaningful thanks to (\ref{eq: energy class'}).  We write the set of weak solutions $u_\bullet$ starting at a given $u_0$ as $\mathcal{S}_g(u_0)$, or $\mathcal{S}(u_0)$ for the case $g=0$, in which case the skeleton equation (\ref{eq: Sk}) is the PME (\ref{eq: PME}).    \end{definition} 
				This definition has the same content as the work \cite{fehrman2019large}, but has been repackaged for  convenience. Provided that $g\in {L^2_{t,x}}$ and $\cH(u_0)<\infty$, \cite{fehrman2019large} proves existence and the entropy estimate (\ref{eq: basic entropy estimate})  (\cite[Proposition 5.7]{fehrman2019large}) and uniqueness (\cite[Theorem 3.3, Theorem 4.3]{fehrman2019large}).	 The definition of solution given here is more stringent than the cited work, in that we impose $u_\bullet \in \mathcal{C}$, but this does not lead to differences in the theory, since every solution in the sense of \cite{fehrman2019large} can be modified on a $dt$-measure 0 set to lie in 	$\mathcal{C}$ \cite[Proposition 3.4]{fehrman2019large}.		
				The next result concerning the skeleton equation (\ref{eq: Sk}) we will need is as follows, given by \cite[Theorem 8.6]{fehrman2019large}. \begin{proposition}\label{prop: lsc envelope} Fix $\rho\in C(\TTd, (0,\infty))$ as in Theorem \ref{thrm: LDP}. Let $\mathcal{X}$ consist of all $u_\bullet \in \mathbb{D}$ such that $u_t$ is uniformly (pointwise) bounded and uniformly bounded away from $0$, such that $u_0\in C^\infty(\TTd)$, and solving the skeleton equation (\ref{eq: Sk})  for some $g$ of the form $$g_t(x)=u^{\alpha/2}_t(x)\nabla h_t(x), \qquad h\in C^{1,3}([0,\tf]\times\TTd). $$ Then $\cI_\rho$ is the lower semicontinuous envelope of its restriction to $\cX$ in the topology of $\cDD$, that is, $\cI_\rho$ is lower semicontinuous in this topology and for any $u_\bullet \in \mathbb{D}$, there exists a sequence $u^{(n)}_\bullet \in \mathcal{X}$ with $u^{(n)}_\bullet \to u_\bullet$ in the topology of $\mathbb{D}$ and \begin{equation}
					\mathcal{I}_\rho(u^{(n)}_\bullet)\to \mathcal{I}_\rho(u_\bullet). 
				\end{equation}					
				\end{proposition}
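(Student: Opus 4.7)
The plan has two parts corresponding to the two claims in the statement: lower semicontinuity of $\cI_\rho$ on $\mathbb{D}$, and the construction of a recovery sequence in $\cX$.

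\textbf{Lower semicontinuity.} First I would observe that $\cH_\rho$ is weak-$\star$ lower semicontinuous on $L^1_{\ge 0}(\TTd)$ by its convex-dual representation, and that any $u_\bullet$ with $\cI_\rho(u_\bullet)<\infty$ lies in $\cR\subset\mathcal{C}$ so that $t=0$ is a continuity point and lsc of $\cH_\rho(u_0)$ transfers from $L^1_{\ge 0}(\TTd)$ to $\cDD$. For $\mathcal{J}$: given $u^{(n)}_\bullet \to u_\bullet$ in $\mathbb{D}$ with $C := \liminf_n \mathcal{J}(u^{(n)}_\bullet) < \infty$, pick near-minimisers $g^{(n)}$ with $\tfrac12\|g^{(n)}\|^2_{L^2_{t,x}} \le \mathcal{J}(u^{(n)}_\bullet) + 1/n$ and extract a weak subsequential limit $g$. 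The entropy estimate (\ref{eq: basic entropy estimate}) yields uniform bounds on $\sup_t \cH(u^{(n)}_t)$ and on $\int_0^\tf \cD_\alpha(u^{(n)}_s)ds$, so $(u^{(n)})^{\alpha/2}$ is bounded in $L^2([0,\tf],H^1(\TTd))$. Combining this with the Skorokhod convergence via an Aubin--Lions argument upgrades it to strong convergence of $(u^{(n)})^{\alpha/2}$ in $L^2_{t,x}$, which suffices to pass to the limit in the weak formulation (\ref{eq: weak form}). Hence $u_\bullet \in \mathcal{S}_g(u_0)$, and weak lsc of the norm gives $\mathcal{J}(u_\bullet) \le \tfrac12\|g\|^2_{L^2_{t,x}} \le C$.

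\textbf{Recovery sequence.} If $\cI_\rho(u_\bullet)=\infty$ the claim is trivial; assume therefore that $\cH_\rho(u_0)<\infty$ and that $u_\bullet$ solves (\ref{eq: Sk}) for an optimal $g \in L^2_{t,x}$. I would approximate in three coupled stages. (i) Mollify the initial datum to $u^{\delta}_0 := \phi_\delta \ast u_0 + \delta \in C^\infty(\TTd)$, smooth and bounded away from zero; standard convexity and mollification arguments give $\cH_\rho(u^{\delta}_0) \to \cH_\rho(u_0)$. (ii) Mollify in space-time to obtain smooth $g^\delta \to g$ in $L^2_{t,x}$. (iii) Let $u^\delta_\bullet$ be the unique solution to the skeleton equation with data $(u^\delta_0, g^\delta)$; since $u^\delta_0 \ge \delta$ and $g^\delta$ is smooth, the equation is uniformly parabolic, $u^\delta_\bullet$ is smooth, and comparison principles give a uniform lower bound $u^\delta \ge c(\delta)>0$ together with an $L^\infty$ upper bound. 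To rewrite $g^\delta$ in the gradient form required by $\cX$, I solve the elliptic equation $\nabla\cdot(u^{\delta,\alpha} \nabla h^\delta) = \nabla\cdot(u^{\delta,\alpha/2} g^\delta)$ on $\TTd$ pointwise in time; this is uniformly elliptic because $u^\delta \ge c(\delta)>0$, producing $h^\delta \in C^{1,3}([0,\tf]\times\TTd)$. Replacing $g^\delta$ by $\tilde g^\delta := u^{\delta,\alpha/2}\nabla h^\delta$ leaves the PDE for $u^\delta$ unchanged, and orthogonality of the weighted $L^2$-projection gives $\|\tilde g^\delta\|_{L^2_{t,x}} \le \|g^\delta\|_{L^2_{t,x}}$. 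Stability of the skeleton equation from \cite{fehrman2019large} finally yields $u^\delta_\bullet \to u_\bullet$ in $\mathbb{D}$, and combining (i)--(iii) gives $\cI_\rho(u^\delta_\bullet) \le \alpha\cH_\rho(u^\delta_0) + \tfrac12\|g^\delta\|^2_{L^2_{t,x}} \to \cI_\rho(u_\bullet)$; the already-established lsc forces equality in the limit.

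\textbf{Main obstacle.} The hardest step is the stability claim for (\ref{eq: Sk}) under joint perturbation of the initial datum and of the control, needed both in identifying the limiting solution in the lsc argument and in closing stage (iii) of the recovery. As discussed in Section \ref{sec: skg}, (\ref{eq: Sk}) is parabolic-hyperbolic, critical/supercritical in the relevant Lebesgue spaces, and no better \emph{a priori} regularity than (\ref{eq: basic entropy estimate}) is available; stability accordingly cannot be obtained by standard energy methods and is the technical core of \cite{fehrman2019large}, relying on renormalisation/kinetic techniques compatible with the entropy dissipation.
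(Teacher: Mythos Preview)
The paper does not give its own proof of this proposition: it is quoted from \cite[Theorem~8.6]{fehrman2019large}, with only the additional remark that the uniform positivity of the approximants is already present in the construction there, and that the convergence $\cH_\rho(u^{(n)}_0)\to\cH_\rho(u_0)$ follows on inspection of that proof together with $\rho\in C(\TTd,(0,\infty))$. So there is no in-paper argument to compare against; your outline is effectively a sketch of what the cited theorem does.

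Your lower-semicontinuity argument is sound and matches what the paper's own tools would give: the entropy--dissipation bound (\ref{eq: basic entropy estimate}) plus the continuum Aubin--Lions statement in Remark~\ref{rmk: continnum nonstandard convolution} yield strong $L^\alpha$ convergence of $u^{(n)}$, after which one passes to the limit in (\ref{eq: weak form}) against the weak limit of the controls. (A small simplification: evaluation at $t=0$ is always continuous on $\cDD$, so the detour through $\cR\subset\mathcal{C}$ is unnecessary for the $\cH_\rho(u_0)$ term.)

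In the recovery-sequence part there is one step you should not take for granted: the assertion in stage~(iii) that ``comparison principles give a uniform lower bound $u^\delta\ge c(\delta)>0$''. Constants are \emph{not} subsolutions of (\ref{eq: Sk}) once $g^\delta\not\equiv 0$, since $\tfrac12\Delta c^\alpha-\nabla\cdot(c^{\alpha/2}g^\delta)=-c^{\alpha/2}\nabla\cdot g^\delta$ has no sign; and the parabolicity you invoke is degenerate exactly where $u$ is small, so appealing to it to \emph{establish} the lower bound is circular. For $\alpha>2$ in particular, exponentially decaying barriers $ce^{-Mt}$ do not obviously work either. In the cited construction the positivity is arranged by the regularisation procedure itself rather than recovered a posteriori from the dynamics, and this is one of the places where the actual argument is more delicate than your sketch indicates. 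Your identification of the stability of (\ref{eq: Sk}) as the principal obstacle is, however, exactly right.
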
 The cited statement \cite[Theorem 8.6]{fehrman2019large} does not include the uniform positivity of the approximants $u^{(n)}_\bullet$, and proves only convergence of\footnote{In the notation of the current work.} the dynamic cost $\mathcal{J}(u^{(n)}_\bullet)$, rather than the whole rate function $\cI_\rho$. The strict positivity is already in the proof in \cite{fehrman2019large}, and examining the proof shows the convergence  $\cH_\rho(u^{(n)}_0)\to \cH_\rho(u_0)$ using that $\log \rho \in C(\TTd)$ thanks to the assumption $\rho \in C(\TTd, (0,\infty))$. Let us emphasise that this property is, in general, highly non trivial, and it is this result that allows us to find a true large deviation principle; see the discussion in Section \ref{sec: Lit}. 

\bigskip We will also need a uniqueness theory for equations of the form (\ref{eq: FP}), which appear when we perform a change of measures for the lower bound (c.f. Step 2 of Lemma \ref{lemma: local lower bound}). For $h\in C^{1,3}([0,\tf]\times\TTd)$, we define a weak solution to (\ref{eq: FP}) to be $u_\bullet \in \cR$ which satisfies the skeleton equation (\ref{eq: Sk}) in the sense of Definition \ref{def: solutions} with $g_t(x):=u^{\alpha/2}_t(x)\nabla h_t(x)$; the hypothesis $u_\bullet \in \cR$ implies that $g\in {L^2_{t,x}}$. In keeping with the notation above, we write $\cS^\mathrm{FP}_h(u_0)$ for the set of weak solutions with prescribed $h$ and $u_0$.  The result is as follows. \begin{proposition}\label{prop: uniqueness FP}[Well-Posedness of (\ref{eq: FP})] Fix $h\in C^{0,1}([0,\tf]\times\TTd)$ and $u_0\in L^1(\TTd)$ with finite entropy $\cH(u_0)<\infty$. Then there exists at most one solution to (\ref{eq: FP}) with this initial data, so $\cS^\mathrm{FP}_h(u_0)$ is either a singleton or empty.\end{proposition}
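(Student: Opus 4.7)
The plan is to adapt the uniqueness argument for the skeleton equation (\ref{eq: Sk}) from \cite{fehrman2019large} (Theorems 3.3, 4.3), exploiting the fact that here the control is tied to the solution in a Lipschitz fashion. Let $u, v \in \cR$ be two weak solutions with $u_0 = v_0 = u_0$. By Definition \ref{def: solutions}, both $u$ and $v$ are weak solutions to the skeleton equation (\ref{eq: Sk}) with controls $g_u := u^{\alpha/2}\nabla h$ and $g_v := v^{\alpha/2}\nabla h$ respectively, which belong to ${L^2_{t,x}}$ because $u^{\alpha/2}, v^{\alpha/2} \in L^2([0,\tf]\times \TTd)$ (by the $\cR$-regularity and (\ref{eq: energy class'})) and $\|\nabla h\|_{L^\infty_{t,x}} < \infty$ under the hypothesis $h \in C^{0,1}$. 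Subtracting weak formulations, the difference $w := u-v$ satisfies
\begin{equation*}
\int_{\TTd}\varphi\, w_t\,dx = \int_0^t\int_{\TTd}\left[-\tfrac12 \nabla(u_s^\alpha - v_s^\alpha)\cdot\nabla\varphi + (u_s^\alpha - v_s^\alpha)\nabla h_s\cdot\nabla\varphi\right]dxds
\end{equation*}
for every $\varphi \in C^\infty(\TTd)$.

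The idea is that the additional transport term $\nabla\cdot((u^\alpha - v^\alpha)\nabla h)$, created by the $u$-dependence of the control, is a lower-order perturbation of the degenerate-parabolic structure because $\nabla h \in L^\infty$. Following \cite{fehrman2019large}, I would recast both solutions in a kinetic formulation with kinetic function $\chi(u,\xi) = \mathbf{1}_{0 \le \xi < u} - \mathbf{1}_{u < \xi \le 0}$, which in our setting yields a kinetic equation with an additional transport term $\alpha \xi^{\alpha-1}\nabla h \cdot \nabla_x \chi$ (formally) arising from the $\nabla\cdot(u^\alpha \nabla h)$ contribution. The defect measure on the right-hand side is controlled by $\int_0^\tf \cD_\alpha(u_s)ds<\infty$ (and analogously for $v$), which is finite by $u,v\in\cR$. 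Doubling variables in $(x,y,\xi,\zeta)$, pairing the kinetic equations for $u$ and $v$ against a symmetric mollifier, and passing to the limit as the mollifier concentrates on the diagonal reproduces the $L^1$-contraction of the pure PME. The new transport contribution is bounded, after integration by parts, by $C(\|\nabla h\|_{L^\infty_{t,x}})\int_0^t\|w_s\|_{L^1(\TTd)}ds$, so a Gronwall argument applied to $t \mapsto \|u_t-v_t\|_{L^1(\TTd)}$ together with $u_0 = v_0$ yields $u \equiv v$.

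The main obstacle is to verify rigorously that the kinetic-formulation machinery of \cite{fehrman2019large} extends to the perturbed equation (\ref{eq: FP}) at the regularity guaranteed by $u,v \in \cR$: this requires showing that any weak solution of (\ref{eq: FP}) in the sense of Definition \ref{def: solutions} is automatically a kinetic solution of the perturbed PME, controlling the kinetic defect measures uniformly in the doubling parameter, and justifying the passage to the limit in the nonlinear terms. The degeneracy of $u \mapsto u^\alpha$ at $u=0$ is precisely the reason a naive $L^2$-energy estimate on $w$ (multiplying by $u^\alpha - v^\alpha$) does not close, and motivates importing the kinetic techniques of \cite{fehrman2019large} rather than attempting a direct testing argument; the $C^{0,1}$-regularity of $h$ — in particular that $\nabla h\in L^\infty$ — is exactly what is needed to keep the extra transport term a subordinate perturbation throughout this procedure.
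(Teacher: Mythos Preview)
Your proposal takes essentially the same route as the paper: recast (\ref{eq: FP}) in the kinetic formulation of \cite{fehrman2019large}, double variables, and show that the extra contribution coming from the $u$-dependence of the control is subordinate. The one place where the paper diverges from your sketch is in how that contribution is closed. You aim for a Gronwall bound $C(\|\nabla h\|_\infty)\int_0^t\|w_s\|_{L^1}\,ds$; the paper instead shows that the control term \emph{vanishes} in the triple limit $\e,\d\to 0$ then $M\to\infty$, yielding the pure $L^1$-contraction $\|u_t-v_t\|_{L^1}\le\|u_0-v_0\|_{L^1}$ with no Gronwall. The mechanism is that on the support of the mollifiers and the velocity cutoff $\zeta^M$ one has $u_t(x),v_t(x')\in[(2M)^{-1},2M]$ and $|x-x'|\le 2\e$, so
\[
|g^u_t(x)-g^v_t(x')|=\big|u_t^{\alpha/2}(x)\nabla h_t(x)-v_t^{\alpha/2}(x')\nabla h_t(x')\big|\le \vartheta(\e,\d,M)\to 0,
\]
using only the local Lipschitz property of $\xi\mapsto\xi^{\alpha/2}$ on $[(2M)^{-1},2M]$ and uniform continuity of $\nabla h$. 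The control term is then bounded by $c(M)\,\vartheta(\e,\d,M)\,\|\nabla u^{\alpha/2}_t\|_{L^2}\|\nabla v^{\alpha/2}_t\|_{L^2}$, with the last two factors in $L^2_t$ by the $\cR$-regularity.

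This difference is not merely cosmetic. A Kruzkov-style estimate for the first-order part $\nabla\cdot(u^\alpha\nabla h)$, carried out via the transport coefficient $\alpha\xi^{\alpha-1}\nabla h(x)$ that you write down, typically produces after the doubling a factor $|\nabla h(x)-\nabla h(x')|\cdot|\nabla\rho_\e|$; collapsing the mollifier then costs a derivative on $\nabla h$, i.e.\ effectively $\Delta h\in L^\infty$, which the hypothesis $h\in C^{0,1}$ does not provide. The paper avoids this by keeping $g=u^{\alpha/2}\nabla h$ as an external control in the skeleton-equation kinetic formulation, so that the derivative falls on $u^{\alpha/2}$ (available from $\cR$) rather than on the mollifier. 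Your approach would go through cleanly under the stronger assumption $h\in C^{0,2}$ (which is anyway satisfied in the application in Section~\ref{sec: LB}, where $h\in C^{1,3}$), but at the stated regularity the paper's vanishing argument is the safer route.
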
 Let us note that we make no claim about existence, although this can be established from step 2 of Lemma \ref{lemma: local lower bound}. As soon as $h\in C^{1,3}_{t,x}$ is fixed, the equation may be treated as a parabolic perturbation of the PME \eqref{eq: PME}, and one could prove integrability, and then uniqueness in a way similar to \cite[Theorem 5.3]{vazquez2007porous}, see also \cite{bris2008existence} for uniqueness arguments for Fokker-Planck type equations. The most efficient proof which we have found for the hypotheses of Proposition \ref{prop: uniqueness FP} in the literature is a simple modification of the arguments leading to \cite[Theorem 3.3, Theorem 4.3]{fehrman2019large}, given in Appendix \ref{sec: misc proofs}.

\subsection{Properties of $\mathcal{J}$ and $\cA$.}\label{sec: variational} We next give a dual formulation of $\mathcal{J}$, which will be important both for the upper bound of Theorem \ref{thrm: LDP} in Section \ref{sec: UB} and the lower bound in Section \ref{sec: LB}. The proof also leads to an efficient proof of the existence, uniqueness and characterisation of an optimal $g$, which will be important in Section \ref{sec: gradient flow}. The following is a mild extension of \cite[Therorem 8.6]{fehrman2019large}. We recall the notation ${L^2_{t,x}}, \|\cdot\|_{L^2_{t,x}}$ for $L^2([0,\tf]\times\TTd,\RRd)$ and the associated norm from the introduction. \begin{lemma}\label{lemma: variational form} For $u_\bullet \in \cR$, the dynamic cost (\ref{eq: dynamic cost}) admits the variational representation \begin{equation}
	\begin{split}
			\label{eq: variational} \mathcal{J}(u_\bullet)=& \frac{1}{2}\inf\left\{\left\|g\right\|_{L^2_{t,x}}^2: u_\bullet \in \cS_g(u_0)\right\}=\sup_{\varphi \in C^{1,2}([0,\tf]\times \TTd)}\Xi_1(\varphi, u_\bullet)\end{split} \end{equation} where $\Xi_1(\varphi, u_\bullet)$ is the functional \begin{equation} \begin{split}\label{eq: def xi1} \Xi_1(\varphi, u_\bullet) = \langle \varphi_{\tf}, u_{\tf}\rangle & - \langle \varphi_0, u_0\rangle - \int_0^\tf \langle \partial_t \varphi_t, u_t\rangle \\& - \frac12\int_0^\tf \int_{\TTd}\left( \Delta \varphi_t(x) + |\nabla \varphi_t(x)|^2\right) u_t(x)^\alpha dt dx. \end{split}\end{equation} Moreover, the infimum over $g$ is uniquely attained, and the optimiser is the unique such $g$ which makes (\ref{eq: Sk}) hold and which belongs to \begin{equation}\label{eq: characterisation}
				\Lambda_{u_\bullet}:=\overline{\left\{u^{\alpha/2}\nabla \psi: \psi\in C^{1,2}([0,\tf]\times\TTd)\right\}}^{L^2_{t,x}}
			\end{equation} where the closure is in  the norm topology of ${L^2_{t,x}}$. Finally, the total rate function has the representation \begin{equation}\label{eq: whole rate function variational}
				\mathcal{I}_\rho(u_\bullet):=\sup\left\{\Xi_0(\psi,u_0,\rho)+\Xi_1(\varphi, u_\bullet): \psi \in C(\TTd), \varphi\in C^{1,2}([0,\tf]\times\TTd)\right\}
			\end{equation} where $\Xi_0$ is defined as \begin{equation}
				\Xi_0(\psi,u_0, \rho):=\langle \psi, u_0\rangle - \alpha \int_{\TTd} \rho(x)\left(e^{\psi(x)/\alpha}-1\right) dx. 
			\end{equation} \end{lemma}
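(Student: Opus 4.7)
The plan is to realise $\mathcal{J}(u_\bullet)$ as a constrained Hilbert-space minimisation in $L^2_{t,x}$; the variational formula, the uniqueness of the minimiser, and its characterisation in $\Lambda_{u_\bullet}$ will then follow from an orthogonal-projection argument.

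First I would integrate by parts in the weak formulation (\ref{eq: weak form}) against a time-dependent test function $\varphi\in C^{1,2}([0,\tf]\times\TTd)$, using the identity $u^{\alpha/2}\nabla u^{\alpha/2}=\tfrac12\nabla u^\alpha$, to obtain, for any $g\in L^2_{t,x}$ with $u_\bullet\in \cS_g(u_0)$, the key rewriting
\begin{equation*}
\Xi_1(\varphi, u_\bullet) \;=\; \langle g,\, u^{\alpha/2}\nabla\varphi\rangle_{L^2_{t,x}} \;-\; \tfrac12\,\|u^{\alpha/2}\nabla\varphi\|^2_{L^2_{t,x}}.
\end{equation*}
By linearity of the skeleton equation in $g$, any two admissible controls differ by an element of $\Lambda_{u_\bullet}^\perp$, so when nonempty, the admissible set $\{g: u_\bullet\in \cS_g(u_0)\}$ is a closed affine subspace of the form $g_0+\Lambda_{u_\bullet}^\perp$.

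If this set is nonempty, Hilbert-space projection onto $\Lambda_{u_\bullet}$ yields a unique minimum-norm element $g^\star\in \Lambda_{u_\bullet}$ with $\|g^\star\|^2_{L^2_{t,x}}=2\mathcal{J}(u_\bullet)$; it is also the unique admissible control lying in $\Lambda_{u_\bullet}$ since $\Lambda_{u_\bullet}\cap\Lambda_{u_\bullet}^\perp=\{0\}$. Taking the supremum of the displayed identity over $\varphi$ then amounts to maximising $\langle g^\star, \xi\rangle - \tfrac12\|\xi\|^2$ with $\xi:=u^{\alpha/2}\nabla\varphi$ ranging over a dense subset of $\Lambda_{u_\bullet}$; completing the square and invoking density yields exactly $\tfrac12\|g^\star\|^2_{L^2_{t,x}}=\mathcal{J}(u_\bullet)$. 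If instead the admissible set is empty, then the densely-defined linear functional obtained by isolating the $g$-independent part of $\Xi_1+\tfrac12\|u^{\alpha/2}\nabla\varphi\|^2$ cannot extend boundedly to $\Lambda_{u_\bullet}$ (were it bounded, Riesz representation would furnish an admissible $g\in \Lambda_{u_\bullet}\subset L^2_{t,x}$), so a rescaling argument produces $\varphi_n$ with $\Xi_1(\varphi_n, u_\bullet)\to\infty$, matching the convention $\mathcal{J}(u_\bullet)=\infty$.

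For the representation (\ref{eq: whole rate function variational}), the joint supremum over $(\psi,\varphi)$ separates, so it suffices to show $\sup_{\psi\in C(\TTd)}\Xi_0(\psi,u_0,\rho)=\alpha\cH_\rho(u_0)$. This is classical Legendre duality: pointwise optimisation in $\psi$ gives the critical point $\psi=\alpha\log(u_0/\rho)$ (with the supremum diverging precisely when $u_0\not\ll\rho$, consistent with the convention $\cH_\rho(u_0)=\infty$), which on substitution produces $\alpha\cH_\rho(u_0)$. Summing with $\mathcal{J}(u_\bullet)$ yields $\mathcal{I}_\rho(u_\bullet)$.

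The main obstacle is the empty-admissible-set branch of the dichotomy above: one must verify that failure of Riesz representation of the extracted functional $L$ corresponds genuinely to unboundedness in the $L^2_{t,x}$-norm, rather than mere algebraic undefinedness on the abstract test-function space. The hypothesis $u_\bullet\in\cR$ is what rescues this, since by (\ref{eq: energy class'}) every term defining $\Xi_1$ is finite for $\varphi\in C^{1,2}$; $L$ is then well-defined on the dense pre-image subspace $\{u^{\alpha/2}\nabla\varphi: \varphi\in C^{1,2}\}$, and the unboundedness interpretation, together with the $(n - \tfrac12 n^2)$-style rescaling of the quadratic functional, forces $\sup_\varphi \Xi_1(\varphi,u_\bullet)=\infty$ as required.
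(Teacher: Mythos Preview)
Your argument is correct and follows the same Hilbert-space strategy as the paper's proof, but the packaging differs slightly. The paper first proves the inequality $\sup_\varphi \Xi_1 \le \tfrac12\inf\|g\|^2$ by Cauchy--Schwarz, then for the converse builds an abstract completion $H^1_{u_\bullet}$ of $C^{1,2}$ under the bilinear form $\lllangle\varphi,\psi\rrrangle_{u_\bullet}:=\int u^\alpha\nabla\varphi\cdot\nabla\psi$, applies Riesz there, and finally extracts the optimal control via an approximation $\psi_n\to\psi$ in $H^1_{u_\bullet}$ and a weak $L^2_{t,x}$ limit of $g_n=u^{\alpha/2}\nabla\psi_n$. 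You instead observe directly that the admissible set, when nonempty, is the closed affine subspace $g_0+\Lambda_{u_\bullet}^\perp\subset L^2_{t,x}$, so the optimiser is simply $g^\star=P_{\Lambda_{u_\bullet}}g_0$; the completing-the-square identity with $g=g^\star$ and density of $\{u^{\alpha/2}\nabla\varphi\}$ in $\Lambda_{u_\bullet}$ then yield both directions of the variational equality at once. This avoids the abstract completion and the weak-limit extraction step, and makes the characterisation of the minimiser immediate rather than a separate argument.

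One small imprecision in your empty-admissible-set branch: you write that $L$ is ``well-defined on the dense pre-image subspace $\{u^{\alpha/2}\nabla\varphi\}$'', but this is not automatic---$L$ is a priori defined on $C^{1,2}$, and may fail to factor through $\varphi\mapsto u^{\alpha/2}\nabla\varphi$. The clean way to phrase your dichotomy is as the contrapositive of the paper's Step~2: if $\sup_\varphi\Xi_1<\infty$, then $L_{u_\bullet}$ is bounded in the seminorm $\|u^{\alpha/2}\nabla\varphi\|_{L^2_{t,x}}$, hence \emph{automatically} vanishes on the kernel of that seminorm, so it descends to the quotient and Riesz on $\Lambda_{u_\bullet}$ furnishes an admissible $g$. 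Your rescaling argument still works in either failure mode (non-factoring or unbounded), so the conclusion $\sup_\varphi\Xi_1=\infty$ is unaffected.
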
 \begin{remark}\label{rmk: g tangent space}
				If $\mathcal{J}(u_\bullet)<\infty$, then testing against the function $1$ implies that $\lambda=\langle 1, u_t\rangle$ is constant in time. The characterisation of $g$ by (\ref{eq: characterisation}) implies that, under this assumption, the optimal choice of $g$ is such that $g_t$ belongs to the space $T_{u_t}\cM_{\text{ac},\lambda}$ defined in (\ref{eq: tangent space useful def}) for almost all $0\le t\le \tf$, which is part of the content of Theorem \ref{thrm: gradient flow}.		\end{remark} For completeness, a self-contained proof is given in Appendix \ref{sec: misc proofs}. The equality (\ref{eq: variational}) is part of the content of 
				 \cite[Equation (8.6)]{fehrman2019large}, and examining the standard proof of the equality one in fact constructs a control $g \in \Lambda_{u_\bullet}$ attaining the minimum. The characterisation (\ref{eq: whole rate function variational}) follows from (\ref{eq: variational}), a similar representation of the entropy $\cH_\rho$ and Lemma \ref{lemma: cgf} below. The uniqueness of the minimiser and the observation that there exists at most one $g$ with $u_\bullet \in \cS_g(u_0)$ and $g\in \Lambda_{u_\bullet}$ follow from elementary arguments.  For similar ideas in large deviations problems, let us cite \cite[Lemma 5.1]{kipnis1989hydrodynamics}, \cite[Section 10]{kipnis1998scaling} among many other references \cite{basile2015donsker,dawson1987large,kipnis1990large}. In Lemma \ref{lemma: variational form}, as in \cite{fehrman2019large}, we only the equality of the Laplace transform on the restricted set $\cR$, since limit trajectories outside $\cR$ are excluded separately by Theorem \ref{thrm: WtS}.\bigskip \\ 
			We will use the corresponding properties of the action functional $\cA(u_\bullet)$ defined in (\ref{eq: action 2}), which follow from a similar dual representation. \begin{lemma} \label{lemma: variational A} For any curve $u_\bullet \in \cDD$ with finite action $\cA$, the infimum over $\theta$ in the definition (\ref{eq: action 2}) is uniquely attained by a $\theta:[0,\tf]\times\TTd\to \RRd$, which is the unique such $\theta$ belonging to the same set $\Lambda_{u_\bullet}$ as in (\ref{eq: characterisation}). Moreover, if $u^{(n)}_\bullet \to u_\bullet$ in the topology of $\mathbb{D}$ with $\sup_n \cI_\rho(u^{(n)}_\bullet)<\infty$ for a constant $\rho \in (0,\infty)$, then \begin{equation}
					\label{eq: LSC of A} \cA(u_\bullet)\le \liminf_n \cA(u^{(n)}_\bullet). 
				\end{equation}\end{lemma}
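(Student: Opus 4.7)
\emph{Part 1: uniqueness and characterisation.} The plan is to follow the two-part template of Lemma \ref{lemma: variational form}. For the first part, I would note that finiteness of $\cA(u_\bullet)$ forces $u^{\alpha/2}\in {L^2_{t,x}}$ (any admissible $\theta$ satisfies $u^{\alpha/2}\theta\in L^1$ and $\theta\in L^2$), so that $\Lambda_{u_\bullet}$ is a well-defined closed subspace of ${L^2_{t,x}}$. The set $K$ of admissible controls for $u_\bullet$ is the affine translate $K=\theta_0+N$ of
\[
N:=\bigl\{v\in {L^2_{t,x}}: \nabla\cdot(u^{\alpha/2}v)=0\text{ in }\mathcal{D}'([0,\tf]\times\TTd)\bigr\},
\]
which is closed in ${L^2_{t,x}}$. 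Strict convexity and coercivity of $\|\cdot\|^2$ on the closed convex set $K$ then gives a unique minimiser, namely the orthogonal projection of the origin, which lies in $K\cap N^\perp$. To identify $\Lambda_{u_\bullet}=N^\perp$, integration by parts yields $\Lambda_{u_\bullet}\subseteq N^\perp$, while vanishing of $\int w\cdot u^{\alpha/2}\nabla\psi\, dt\, dx$ for every $\psi\in C^{1,2}$ forces $w\in N$ whenever $w\in\Lambda_{u_\bullet}^\perp$; closedness of $\Lambda_{u_\bullet}$ then yields equality via the double-perp identity. Uniqueness of an admissible $\theta\in \Lambda_{u_\bullet}$ follows at once, since two such would differ by an element of $N\cap N^\perp=\{0\}$.

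\emph{Part 2: lower semicontinuity.} For the LSC claim, I would pass to a subsequence (not relabelled) realising the liminf, which may be assumed finite, and let $\theta^{(n)}$ be the unique optimiser for $u^{(n)}_\bullet$ from Part 1, so that $\|\theta^{(n)}\|^2_{L^2_{t,x}}=2\cA(u^{(n)}_\bullet)$ is bounded; then extract a further subsequence with $\theta^{(n)}\rightharpoonup \theta_\infty$ weakly in ${L^2_{t,x}}$. The hypothesis $\sup_n \cI_\rho(u^{(n)}_\bullet)<\infty$ combined with the entropy estimate (\ref{eq: basic entropy estimate}) applied to the skeleton equation (\ref{eq: Sk}) witnessing $\mathcal{J}(u^{(n)}_\bullet)<\infty$ gives a uniform bound on $\int_0^\tf \cD_\alpha(u^{(n)}_s)\,ds$, and hence on $\|(u^{(n)})^{\alpha/2}\|_{L^2([0,\tf],H^1(\TTd))}$. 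Combining this spatial regularity with the weak temporal regularity supplied by (\ref{eq: CE}) for the bounded sequence $(u^{(n)},\theta^{(n)})$, an Aubin-Lions-Simon-style compactness (in the vein of Section \ref{sec: DTC}) would give, along a further subsequence, strong convergence $(u^{(n)})^{\alpha/2}\to u^{\alpha/2}$ in $L^2([0,\tf]\times\TTd)$. I would then pass to the limit in the weak form of (\ref{eq: CE}): the product $(u^{(n)})^{\alpha/2}\theta^{(n)}$ converges weakly in $L^1$ to $u^{\alpha/2}\theta_\infty$ (strong times weak), which suffices to pair against $\nabla\varphi\in L^\infty$ for any $\varphi\in C^{1,2}$. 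Therefore $\theta_\infty$ is admissible for $u_\bullet$, and weak lower semicontinuity of the norm concludes
\[
2\cA(u_\bullet)\le \|\theta_\infty\|^2_{L^2_{t,x}}\le \liminf_n \|\theta^{(n)}\|^2_{L^2_{t,x}}=2\liminf_n\cA(u^{(n)}_\bullet).
\]

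\emph{Main obstacle.} The hard step will be the upgrade from $\cDD$-convergence to strong $L^2_{t,x}$-convergence of $(u^{(n)})^{\alpha/2}$, needed to pass the bilinear expression $u^{\alpha/2}\theta$ to the limit. This is precisely an Aubin-Lions-type compactness marrying the spatial regularity afforded by the entropy-dissipation bound with the weak temporal regularity coming from (\ref{eq: CE}), and should be handled with the same functional tools developed in Section \ref{sec: DTC}. The uniqueness-and-characterisation part, by contrast, is routine Hilbert-space geometry once the identification $\Lambda_{u_\bullet}=N^\perp$ has been verified.
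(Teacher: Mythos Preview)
Your Part~1 is correct and essentially equivalent to the paper's argument: both identify the unique minimiser via Hilbert-space geometry, you via the explicit decomposition $K=\theta_0+N$ with $\Lambda_{u_\bullet}=N^\perp$, the paper via a Riesz-representation construction in an auxiliary Hilbert space $H^1_{u_\bullet}$ built from the bilinear form $\lllangle\varphi,\psi\rrrangle_{u_\bullet}=\int u^\alpha\nabla\varphi\cdot\nabla\psi$. One small quibble: your justification that $\cA(u_\bullet)<\infty$ forces $u^{\alpha/2}\in L^2_{t,x}$ is not the right reason; the correct one is that the continuity equation (\ref{eq: CE}) is \emph{defined} in the sense of Definition~\ref{def: solutions}, which already requires $u_\bullet\in\cR$ and hence $u\in L^\alpha$.

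For Part~2 you take a genuinely different route from the paper. The paper first establishes the dual representation
\[
\cA(u_\bullet)=\sup_{\varphi\in C^{1,2}}\widetilde\Xi_1(\varphi,u_\bullet),
\]
with $\widetilde\Xi_1$ the analogue of $\Xi_1$ without the $\Delta\varphi$ term, and then observes that the entropy-dissipation bound plus Remark~\ref{rmk: continnum nonstandard convolution} upgrades $\cDD$-convergence to strong $L^\alpha$-convergence, making each $\widetilde\Xi_1(\varphi,\cdot)$ continuous along the sequence; lower semicontinuity of a supremum of continuous functions is then immediate. Your primal approach---extracting a weak limit $\theta_\infty$ of the optimisers and passing to the limit in (\ref{eq: CE}) via a strong-times-weak product---also works and is perfectly natural, but is slightly longer. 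Note also that you do not need to invoke (\ref{eq: CE}) for temporal regularity: the $\cDD$-convergence is already \emph{assumed}, and together with the entropy-dissipation bound this is exactly the input to Remark~\ref{rmk: continnum nonstandard convolution}, yielding strong $L^\alpha$-convergence of $u^{(n)}$ and hence strong $L^2$-convergence of $(u^{(n)})^{\alpha/2}$ directly. The dual route buys brevity; your primal route has the merit of being self-contained without the variational representation.
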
 We will see in Appendix \ref{sec: misc proofs} that this follows from essentially the same ideas as Lemma \ref{lemma: variational form}.

				\section{Static Large Deviations at Equilibrium}\label{sec: equilibrium}In this section, we will prove some lemmata regarding the  equilibrium, respectively slowly varying local equilibrium, distributions  which we introduced in (\ref{eq: eq}) as \begin{equation} \label{eq: suitable equilibrium}
	\Pi^N_\rho(\eta^N):=\prod_{x\in \TND} \pi^N_{\rho}\left({\eta^N_x}\right), \qquad \eta^N\in X_N \end{equation} depending on whether $\rho$ is a constant or may vary on the macroscopic scale. In Lemma \ref{lemma: analysis of single-site}, we will control the mean and variance of the single-site distributions $\pi^N_\rho$, which produces a natural choice of initial data satisfying the hypotheses of Theorem \ref{th: hydrodynamic limit}. In Lemma \ref{lemma: exponential integrability of entropy}, we verify that the condition (\ref{eq: entropy finite hyp}) holds under the initial measure $\eta^N_0\sim \Pi^N_\rho$, which allows us to use Theorem \ref{thrm: WtS} in the proof of Theorem \ref{thrm: LDP}. Finally, Lemma \ref{lemma: cgf} finds the cumulant generating function of linear functionals $\langle \psi, \eta^N_0\rangle$ of the initial data under measures $\Pi^N_\rho$, which will allow us to identify the contribution to $\cI_\rho$ from the large deviations of the initial data.  \bigskip \\ In the course of proving these, we will frequently use an analysis of the normalisation functions $Z_\lambda$ defined to be \begin{equation} \label{eq: single site} Z_\lambda(\varphi):=\sum_{m\ge 0}\frac{\varphi^{\lambda m}}{(m!)^\lambda}, \qquad \varphi\ge 0
	\end{equation} which appears with $\lambda=\alpha$ as the normalisation constants in the single-site distributions $\pi^N_\rho$. The sum (\ref{eq: single site}) converges for any $\varphi\in [0,\infty)$ for any $\lambda>0$, so these are well-defined sums. \begin{lemma}\label{lemma: useful asymptotics}  \begin{enumerate}[label=\roman*).] \item For any $\lambda>0$,   \begin{equation}\label{eq: claim 2' single site} \frac{1}{\varphi}\log Z_\lambda(\varphi)\to \lambda \end{equation} as $\varphi\to \infty$. As a result, for any $a,b\in (0,\infty)$, \begin{equation}
	\label{eq: claim 2 single site} \chi_N\left(\log Z_\lambda\left(\frac{b}{\chi_N}\right)-\log Z_\lambda\left(\frac{a}{\chi_N}\right)\right) \to \lambda\left(b-a\right).
\end{equation} and the convergence is uniform in $a,b$ belonging to compact subsets of $(0,\infty)$. \item For $\lambda\ge 1$ and $k=1,2$, the functions \begin{equation} S_{k,\lambda}(\varphi):=\sum_{m\ge 0} \frac{m^k\varphi^{m\lambda-k}}{(m!)^\lambda} \end{equation} satisfy \begin{equation}\label{eq: claim 1 single site}
	\frac{S_{k,\lambda}(\varphi)}{Z_\lambda(\varphi)} \to 1 \text{ as }\varphi\to\infty. \end{equation}  \end{enumerate}
\end{lemma}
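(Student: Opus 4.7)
The plan is to extract sharp asymptotics of $Z_\lambda(\varphi)$ as $\varphi\to\infty$ from Stirling's formula, exploiting the fact that the terms $a_m(\varphi):=\varphi^m/m!$ peak at $m\approx\varphi$ with maximum value $\sim e^\varphi/\sqrt{2\pi\varphi}$, and that $a_m$ is well-approximated by a Gaussian profile of width $O(\sqrt{\varphi})$ around this peak.

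For part (i), the target estimate is $\log Z_\lambda(\varphi) = \lambda\varphi + O(\log\varphi)$, obtained by matching upper and lower bounds. For the upper bound when $\lambda\ge 1$, the inequality $\varphi^m/m!\le e^\varphi$ (each Taylor summand is bounded by the whole) gives $(\varphi^m/m!)^\lambda \le (\varphi^m/m!)\cdot e^{(\lambda-1)\varphi}$ and hence $Z_\lambda(\varphi)\le e^{\lambda\varphi}$; the case $\lambda<1$ is handled with an additional polynomial prefactor. For the lower bound it suffices to retain only the term $m=\lfloor\varphi\rfloor$ and apply Stirling, giving $Z_\lambda(\varphi)\ge c\,e^{\lambda\varphi}\varphi^{-\lambda/2}$. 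Dividing by $\varphi$ and sending $\varphi\to\infty$ yields (\ref{eq: claim 2' single site}), and the rescaled statement (\ref{eq: claim 2 single site}) follows by substituting $\varphi=a/\chi_N$ and $\varphi=b/\chi_N$, since the $O(\log\varphi)$ error is annihilated by the prefactor $\chi_N$. Uniformity on compact subsets of $(0,\infty)$ follows from Dini's monotone convergence theorem: each $c\mapsto \chi_N\log Z_\lambda(c/\chi_N)$ is nondecreasing, and the pointwise limit $c\mapsto \lambda c$ is continuous.

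For part (ii), I would adopt the probabilistic interpretation: write $\mu_\varphi(m):=(\varphi^m/m!)^\lambda/Z_\lambda(\varphi)$ for the normalised measure on $\NN$, so that $S_{k,\lambda}(\varphi)/Z_\lambda(\varphi) = \EE_{\mu_\varphi}[(m/\varphi)^k]$, and (\ref{eq: claim 1 single site}) becomes the statement that $m/\varphi\to 1$ in $L^k(\mu_\varphi)$ for $k=1,2$. A refined Stirling expansion
\[ \log(\varphi^m/m!) = \varphi - \frac{(m-\varphi)^2}{2\varphi} - \frac{1}{2}\log(2\pi\varphi) + O\!\left(\frac{(m-\varphi)^3}{\varphi^2}\right), \]
valid on $|m-\varphi|\le \varphi^{2/3}$, shows that $\mu_\varphi$ restricted to this window is approximately Gaussian with mean $\varphi$ and variance $\varphi/\lambda$; outside this window, the ratio $\mu_\varphi(m+1)/\mu_\varphi(m) = (\varphi/(m+1))^\lambda$ enforces superexponential decay in either direction, so the tail contribution to the moments of interest is negligible. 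This produces $\EE_{\mu_\varphi}[m/\varphi] = 1+o(1)$ and $\Var_{\mu_\varphi}(m/\varphi) = O(1/\varphi)$, hence $\EE_{\mu_\varphi}[(m/\varphi)^k]\to 1$ for $k=1,2$.

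The principal difficulty I anticipate is the careful quantitative tail bound needed to upgrade the local Gaussian approximation to an $L^k$-convergence statement. An attractive alternative which sidesteps this step is to exploit the differentiation identities $Z_\lambda'(\varphi) = \lambda S_{1,\lambda}(\varphi)$ and $Z_\lambda''(\varphi) = \lambda^2 S_{2,\lambda}(\varphi) - (\lambda/\varphi)S_{1,\lambda}(\varphi)$, together with a refined Laplace expansion $\log Z_\lambda(\varphi) = \lambda\varphi + \tfrac{1-\lambda}{2}\log(2\pi\varphi) + O(1)$ carried out with enough regularity in the remainder to differentiate term by term. This yields $(\log Z_\lambda)'(\varphi)=\lambda+O(1/\varphi)$ and, via the second identity, the analogue for $S_{2,\lambda}$, thereby bypassing the direct probabilistic tail analysis.
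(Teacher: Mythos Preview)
Your proposal is correct, and for part~(i) it is close to the paper's argument: the paper also bounds $Z_\lambda$ above and below by $e^{\lambda\varphi}$ times a power of $\varphi$ via Stirling at the peak $m\approx\varphi$. The paper gives a single upper-bound argument valid for all $\lambda>0$ (splitting the sum at $m=2\varphi$ and counting terms), whereas your $\ell^p$-comparison bound $Z_\lambda\le e^{\lambda\varphi}$ is cleaner but only works for $\lambda\ge 1$; the paper notes this alternative in a footnote and points out that the small-$\lambda$ case is genuinely needed later. Your Dini/P\'olya argument for uniformity (monotonicity in $c$, continuous limit) is a nice touch not made explicit in the paper.

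For part~(ii) the paper takes a shorter route that avoids both the Gaussian local approximation and the differentiation argument. Write
\[
S_{k,\lambda}(\varphi)-Z_\lambda(\varphi)=\sum_{m\ge 0}\Bigl(\frac{m^k}{\varphi^k}-1\Bigr)\frac{\varphi^m}{m!}\left(\frac{\varphi^m}{m!}\right)^{\lambda-1}.
\]
The key observation is that, since $\lambda\ge 1$, the factor $(\varphi^{m+1/2}e^{-\varphi}/m!)^{\lambda-1}$ is bounded uniformly in $m$ and $\varphi$ (Stirling again). Dividing by the lower bound $Z_\lambda\ge c\,e^{\lambda\varphi}\varphi^{(1-\lambda)/2}$ therefore leaves a sum against the \emph{Poisson}$(\varphi)$ measure, and Cauchy--Schwarz together with the exact Poisson moments gives $|S_{k,\lambda}/Z_\lambda-1|\le C\varphi^{-1/2}$ directly. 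This bypasses both the tail analysis you flagged as the main difficulty and the need to control derivatives of the Laplace remainder. Your alternative via $Z_\lambda'=\lambda S_{1,\lambda}$ and $Z_\lambda''=\lambda^2 S_{2,\lambda}-(\lambda/\varphi)S_{1,\lambda}$ is correct in principle, but to extract $(\log Z_\lambda)'=\lambda+O(1/\varphi)$ from a pointwise expansion with $O(1)$ remainder you would need that remainder to be differentiable with $O(1/\varphi)$ derivative, which is not free and essentially requires redoing the Laplace analysis at the level of derivatives.
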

\begin{proof}
	We deal with the two claims separately. In the course of proving (\ref{eq: claim 2' single site}), we will find some non-asymptotic bounds of $Z_\lambda(\varphi)$, which we also use in the proof of the second claim (\ref{eq: claim 1 single site}). \paragraph{\textbf{Step 1. Upper and Lower bounds for $Z_\lambda$, and proof of (\ref{eq: claim 2 single site}).}} The ratio of successive terms in the sum is $(\frac{\varphi}{m+1})^\lambda$, and so the summand of $Z_\lambda(\varphi)$ is maximised at $m=\lfloor \varphi\rfloor$. Stirling's formula shows that\begin{equation}\label{eq: Y lower bound 1}  ce^{\lambda \varphi}\varphi^{-\lambda/2} \le \left(\frac{\varphi^{\lfloor \varphi\rfloor}}{\lfloor \varphi\rfloor!}\right)^\lambda \le  c^{-1}e^{\lambda \varphi}\varphi^{-\lambda/2} \end{equation} for some absolute constant $c$ and all sufficiently large $\varphi$. We will now use these observations to prove upper and lower bounds. \paragraph{\textbf{Step 1a. Upper Bound.}} We split the sum into $m\le \lfloor 2\varphi\rfloor, m>2\varphi$. In  $m\le \lfloor 2\varphi\rfloor$, there are at most $1+2\varphi$ terms, each of which is bounded above by (\ref{eq: Y lower bound 1}). For $m>2\varphi$, the ratio of successive terms is at most $2^{-\lambda}$, which implies that we can bound the tail above by a geometric series to obtain \begin{equation}
		\sum_{m> \lfloor 2\varphi\rfloor}\left(\frac{\varphi^m}{m!}\right)^\lambda \le C \left(\frac{\varphi^{\lfloor 2\varphi\rfloor}}{\lfloor 2\varphi\rfloor!}\right)^\lambda \le C e^{\lambda \varphi}\varphi^{-\lambda/2}
	\end{equation} for some constant $C=C(\lambda)$. In total, there are therefore at most $2\varphi+2$ such terms, all with the same upper bound as in (\ref{eq: Y lower bound 1}), and we conclude\footnote{Let us remark that, for the case $\lambda\ge 1$, an upper bound $Z_\lambda(\varphi)\le e^{\lambda \varphi}$ is readily obtained by a comparison of $l^p(\NN)$ norms. However, if we restricted this proof to $\lambda \ge 1$, then Lemma \ref{lemma: exponential integrability of entropy} would only hold for $\alpha\ge 2$ and we would have to restrict the main results to this range.} that, for all sufficiently large $\varphi$, \begin{equation}\label{eq: Y upper bound}
		Z_\lambda(\varphi)\le C\varphi^{1-\lambda/2}e^{\lambda \varphi}.
	\end{equation} \paragraph{\textbf{Step 1b. Lower Bound.}} We consider the contributions from the range $m\in I_\varphi=[\varphi-\sqrt{\varphi}, \varphi+\sqrt{\varphi}]\cap \NN$. For $m\ge \lfloor \varphi\rfloor+1, m\in I_\varphi$, \begin{equation}\begin{split}
		\left|\log \left(\frac{\varphi^{\lambda m}}{(m!)^\lambda}\right)-\log \left(\frac{\varphi^{\lambda \lfloor \varphi\rfloor}}{(\lfloor \varphi\rfloor!)^\lambda}\right)\right| & =\lambda\sum_{k=\lfloor \varphi\rfloor}^{m} \log \left(1+\frac{k+1-\varphi}{\varphi}\right) \\& \le 2\lambda\sum_{k=\lfloor \varphi\rfloor+1}^{m-1}  \frac{k+1-\varphi}{\varphi}\end{split} 
	\end{equation} as soon as $\varphi$ is large enough that $\{\frac{k+1-\varphi}{\varphi}: k\in I_\varphi\} \subset (-\frac12,\frac12)$. This sum has at most $\sim \sqrt{\varphi}$ terms, each of which is of the order $\varphi^{-1/2}$, and so is bounded uniformly in $m\in I_\varphi$ and all sufficiently large $\varphi$. The same argument applies to $m\in I_\varphi, m\le \lfloor \varphi\rfloor$, so for all sufficiently large $\varphi$, \begin{equation} \sup_{m\in I_\varphi}\left|\log \left(\frac{\varphi^{\lambda m}}{(m!)^\lambda}\right)-\log \left(\frac{\varphi^{\lambda \lfloor \varphi\rfloor}}{(\lfloor \varphi\rfloor!)^\lambda}\right)\right|  \le M \end{equation} for $M=M(\lambda)$. Up to a new value of $c$, (\ref{eq: Y lower bound 1}) gives \begin{equation}
		\label{eq: Y lower bound 2} \inf_{m\in I_\varphi} \left(\frac{\varphi^m}{m!}\right)^\lambda \ge c e^{\lambda \varphi}\varphi^{-\lambda/2}.
	\end{equation} Since there are $|I_\varphi|\sim 2\sqrt{\varphi}$ such terms, we conclude that, changing $c$ again, \begin{equation}\label{eq: Y lower bound}
		Z_\lambda(\varphi)\ge ce^{\lambda \varphi}\varphi^{(1-\lambda)/2}.
	\end{equation}Gathering (\ref{eq: Y upper bound}, \ref{eq: Y lower bound}), we conclude that $\varphi^{-1}\log Z_\lambda(\varphi)\to \lambda$, which is the claim (\ref{eq: claim 2' single site}).  The second claim (\ref{eq: claim 2 single site}) follows immediately. \paragraph{\textbf{Step 2. Proof of (\ref{eq: claim 1 single site}).}} We recall that, for the second claim, we specialise to $\lambda\ge 1$. Let us start by writing \begin{equation}S_{k,\lambda}(\varphi)-Z_\lambda(\varphi)=\sum_{m\ge 0} (m^k-\varphi^k)\frac{\varphi^{m-k}}{m!}\left(\frac{\varphi^m}{m!}\right)^{\lambda-1}
	\end{equation} whence, using (\ref{eq: Y lower bound}), there exists a constant $C$ such that\begin{equation}\begin{split} \left|\frac{S_{k,\lambda}(\varphi)}{Z_\lambda(\varphi)}-1\right| & \le Ce^{-\lambda \varphi}\varphi^{(\lambda-1)/2}|S_{k,\lambda}(\varphi)- Z_\lambda(\varphi)| \\ &=C\sum_{m\ge 0} \left|\frac{m^k}{\varphi^k}-1\right|\frac{\varphi^m e^{-\varphi}}{m!}\left(\frac{\varphi^{m+1/2} e^{-\varphi}}{m!}\right)^{\lambda-1}. \end{split} 
	\end{equation} Since $\lambda\ge 1$, the argument of step 1 shows that the final parenthesis is bounded, uniformly in $m, \varphi$, and so can be absorbed into the constant $C$. We now use Cauchy-Schwarz to see that, for $k\in \{1,2\}$, we can bound the remaining sum above by \begin{equation} \begin{split}
		\max_{k=1,2}\left|\frac{S_{k,\lambda}(\varphi)}{Z_\lambda(\varphi)}-1\right| & \le C\left(\sum_{m\ge 0} \left(\frac{m}{\varphi}-1\right)^2 \frac{\varphi^m e^{-\varphi}}{m!}\right)^{1/2}\left(\sum_{\ell \ge 0} \left(\frac{\ell}{\varphi}+1\right)^2 \frac{\varphi^\ell e^{-\varphi}}{\ell!}\right)^{1/2} \\[0.5ex]&\hspace{-2cm} = C\text{Var}\left[\left(\frac{X}{\varphi}\right): X\sim \text{Poisson}(\varphi)\right]^{1/2}\EE\left[\left(\frac{X}{\varphi}+1\right)^2: X\sim \text{Poisson}(\varphi)\right]^{1/2} \\[0.5ex] &\le C\varphi^{-1/2}.\end{split} \end{equation} and (\ref{eq: claim 1 single site}) is proven.  \end{proof}We now proceed with the lemmata. The first result concerns the single-site marginals $\pi^N_\rho$. \begin{lemma} \label{lemma: analysis of single-site} As $N\to \infty$, the rescaled mean of the single-site measure converges\begin{equation}\label{eq: single site mean}
\EE_{\pi^N_\rho}\left[\eta^N(0)\right]=\sum_{\eta \in \chi_N \mathbb{N}} \eta \h \pi^N_{\rho}(\eta) \to \rho
\end{equation} and \begin{equation} \label{eq: single site l2 bound}
	\EE_{\pi^N_\rho}\left[|\eta^N(0)-\rho|^2\right] \to 0 
\end{equation} where both convergences are uniform in $\rho$ belonging to compact subsets of $(0,\infty)$.  \end{lemma}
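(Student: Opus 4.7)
The plan is to rewrite both the mean and second moment of $\pi^N_\rho$ in terms of the auxiliary sums $S_{k,\alpha}$ and $Z_\alpha$ appearing in Lemma \ref{lemma: useful asymptotics}, after which both convergences follow immediately from part (ii) of that lemma. Setting $\varphi := \rho/\chi_N$ and writing $\eta = m\chi_N$ in the sum defining $\pi^N_\rho$, I would compute directly
\begin{equation*}
\EE_{\pi^N_\rho}[\eta^N(0)] = \sum_{m\ge 0} m\chi_N \cdot \frac{\varphi^{\alpha m}}{(m!)^\alpha Z_\alpha(\varphi)} = \chi_N \varphi \cdot \frac{S_{1,\alpha}(\varphi)}{Z_\alpha(\varphi)} = \rho \cdot \frac{S_{1,\alpha}(\varphi)}{Z_\alpha(\varphi)},
\end{equation*}
so that (\ref{eq: single site mean}) is reduced to the asymptotic $S_{1,\alpha}(\varphi)/Z_\alpha(\varphi)\to 1$ given by (\ref{eq: claim 1 single site}) applied at $\varphi=\rho/\chi_N\to\infty$.

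For the second moment the identical manipulation gives $\EE_{\pi^N_\rho}[(\eta^N(0))^2] = \rho^2 S_{2,\alpha}(\varphi)/Z_\alpha(\varphi)$, and expanding the square yields
\begin{equation*}
\EE_{\pi^N_\rho}[|\eta^N(0)-\rho|^2] = \rho^2\left(\frac{S_{2,\alpha}(\varphi)}{Z_\alpha(\varphi)} - 2\frac{S_{1,\alpha}(\varphi)}{Z_\alpha(\varphi)} + 1\right),
\end{equation*}
which tends to $0$ by two applications of (\ref{eq: claim 1 single site}), proving (\ref{eq: single site l2 bound}).

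For the uniformity claim, I would inspect the proof of part (ii) of Lemma \ref{lemma: useful asymptotics}: the final Cauchy--Schwarz bound yields $|S_{k,\alpha}(\varphi)/Z_\alpha(\varphi) - 1| \le C\varphi^{-1/2}$ with a constant depending only on $\alpha$, so the convergence is at a quantitative rate in $\varphi$. If $\rho$ ranges over a compact subset $K\Subset (0,\infty)$, then $\varphi = \rho/\chi_N$ is uniformly large as $N\to\infty$, and the prefactors $\rho, \rho^2$ are bounded on $K$; this gives the required uniformity with no extra work. There is no substantive obstacle in this proof: once the sums defining the moments are recognised as $\rho \cdot S_{1,\alpha}/Z_\alpha$ and $\rho^2 \cdot S_{2,\alpha}/Z_\alpha$, the work is done by Lemma \ref{lemma: useful asymptotics}.
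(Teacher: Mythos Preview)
Your proposal is correct and follows essentially the same approach as the paper: both reduce the $k$-th moment to $\rho^k S_{k,\alpha}(\rho/\chi_N)/Z_\alpha(\rho/\chi_N)$ and invoke (\ref{eq: claim 1 single site}). Your justification of the uniformity via the explicit rate $C\varphi^{-1/2}$ is slightly more detailed than the paper's, which simply notes that the convergence in (\ref{eq: claim 1 single site}) is uniform once $\rho$ is bounded away from $0$.
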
 \begin{remark}\label{rmk: slowly varying local equilibrium} 
	 This produces two natural ways of constructing initial data satisfying the hypotheses of Theorem \ref{th: hydrodynamic limit} - \ref{thrm: LDP}.  Fix $\rho\in C^\infty(\TTd, (0,\infty))$ and consider either the slowly-varying local equilibrium $\Pi^N_\rho$ or \begin{equation}
		\label{eq: truncated svle} {\Pi}^N_{\rho_0, M}(\eta^N):=\prod_{x\in \TND} \pi^N_{\rho_0(x),M}\left({\eta^N(x)}\right) 
	\end{equation} with $M>\sup_{x\in \TTd} \rho(x)$ and where $\pi^N_{\rho, M}$ is given by conditioning $\pi^N_\rho$ to $[0,M]$. It follows from (\ref{eq: single site l2 bound}) that $\EE_{\pi^N_{\rho,M}}[|\eta^N(0)-\rho|^2]\to 0$, uniformly in $\rho$ in compact subsets of $[0,M)$, and applying either conclusion to each lattice site, \begin{equation}
		\sup_{x\in \TND} \EE_{\Pi^N_{\rho}}\left[\left|\eta^N(x)-\rho(x)\right|^2\right]\to 0,\qquad  \sup_{x\in \TND} \EE_{\Pi^N_{\rho,M}}\left[\left|\eta^N(x)-\rho(x)\right|^2\right]\to 0. 
	\end{equation} 
	\end{remark} \begin{proof}[Proof of Lemma \ref{lemma: analysis of single-site}]  Both the mean and the variance of the single-site distribution may be expressed in terms of the sums $S_{k,\alpha}, Z_\alpha$ analysed in Lemma \ref{lemma: analysis of single-site}, now with $\lambda=\alpha$. For $k=1,2$, \begin{equation}
	\begin{split}\label{eq: rescaled mean 1} 
		\EE_{\pi^N_\rho}\left[\eta^N(0)^k\right] & =\sum_{m\ge 0} \chi_N^k \h  m^k\h  \frac{(\rho/\chi_N)^{\alpha m}}{(m!)^\alpha Z_\alpha(\rho/\chi_N)} \\ & =\rho^k \sum_{m\ge 0} \frac{m^k(\rho/\chi_N)^{\alpha m-k}}{(m!)^\alpha Z_\alpha(\rho/\chi_N)} =\frac{\rho^k S_{k,\alpha}(\rho/\chi_N)}{Z_\alpha(\rho/\chi_N)}.
	\end{split}
\end{equation} Thanks to (\ref{eq: claim 1 single site}), the quotient $(S_{k,\alpha}/Z_\alpha)(\rho/\chi_N)\to 1$ as $N\to \infty$, uniformly in $\rho$ bounded away from $0$, and hence the whole expression converges to $\rho^k$, uniformly in compact subsets of $(0,\infty)$. Both claims (\ref{eq: single site mean}, \ref{eq: single site l2 bound}) now follow.  \end{proof}We next check that the tails of the single-site marginals $\pi^N_{\rho}$ decay sufficiently rapidly for the estimate (\ref{eq: entropy finite hyp}) to hold. In particular, the hypothesis (\ref{eq: entropy finite hyp}) is natural, since it will hold for either true equilibrium or slowly-varying local equilibrium distributions. \begin{lemma}\label{lemma: exponential integrability of entropy} 
Let $\rho\in C(\TTd,[0,\infty))$. Then for all $\gamma<\alpha$  \begin{equation}\label{eq: entropy finite at equilibrium}
		\limsup_N \frac{\chi_N}{N^d}\log \EE_{\Pi^N_\rho}\left[\exp\left(\frac{N^d}{\chi_N}\gamma\cH(\eta^N_0)\right)\right] < \infty.
	\end{equation}
\end{lemma}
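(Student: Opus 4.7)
The plan is to exploit the product structure of $\Pi^N_\rho$, reducing everything to a single-site Laplace-type estimate that is then controlled by the asymptotics of $Z_\alpha$ established in Lemma \ref{lemma: useful asymptotics}.

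First, since $\eta^N_0$ is constant on each cell $c^N_x$ of volume $N^{-d}$, the entropy functional decomposes as a sum over lattice sites:
\[
\frac{N^d}{\chi_N}\gamma\cH(\eta^N_0) \;=\; \frac{\gamma}{\chi_N}\sum_{x\in\TND} h_{\chi_N}\!\left(\eta^N_0(x)\right), \qquad h_\chi(\eta) := \eta\log\eta - \eta + 1.
\]
Under $\Pi^N_\rho$ the marginals $\{\eta^N_0(x)\}_{x\in\TND}$ are independent with $\eta^N_0(x)\sim \pi^N_{\rho(x)}$, and hence
\[
\frac{\chi_N}{N^d}\log\EE_{\Pi^N_\rho}\!\left[\exp\!\left(\tfrac{N^d\gamma}{\chi_N}\cH(\eta^N_0)\right)\right] \;=\; \frac{1}{N^d}\sum_{x\in\TND}G_N(\rho(x)),
\]
where I write $G_N(r):=\chi_N\log\EE_{\pi^N_r}\!\left[\exp(\gamma h_{\chi_N}(\eta)/\chi_N)\right]$. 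This is a Riemann sum over $\TND$, so it suffices to prove that $G_N(r)$ is bounded above, uniformly for $r$ in the compact set $[0,\|\rho\|_\infty]$ and $N$ large.

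Second, I carry out a term-by-term Laplace estimate of $G_N(r)$ for $r$ bounded away from $0$. Writing $\eta=m\chi_N$ explicitly,
\[
\EE_{\pi^N_r}\!\left[e^{\gamma h_{\chi_N}(\eta)/\chi_N}\right] \;=\; \frac{e^{\gamma/\chi_N}}{Z_\alpha(r/\chi_N)}\sum_{m\ge 0}\frac{(r/\chi_N)^{\alpha m}(m\chi_N)^{\gamma m}e^{-\gamma m}}{(m!)^\alpha},
\]
with the $m=0$ term defined to equal $1$. Applying Stirling's formula to $(m!)^{-\alpha}$ and passing to the variable $t=m\chi_N$, each summand equals $(2\pi m)^{-\alpha/2}\exp\!\left(F(t;r)/\chi_N\right)$ up to multiplicative factors that are $\exp(o_N(1)/\chi_N)$ uniformly in $m$, where
\[
F(t;r) := t\bigl[\alpha\log r + (\alpha-\gamma)(1-\log t)\bigr].
\]
Because $\gamma<\alpha$, the function $F(\cdot;r)$ attains its finite maximum at $t^\ast(r)=r^{\alpha/(\alpha-\gamma)}$, with $\max_{t\ge 0}F(t;r)=(\alpha-\gamma)r^{\alpha/(\alpha-\gamma)}$. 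Combining this with the lower bound (\ref{eq: Y lower bound}), which gives $\chi_N\log Z_\alpha(r/\chi_N)\to \alpha r$ uniformly for $r$ in compact subsets of $(0,\infty)$, and bounding the sum above by $O(\chi_N^{-1})$ times its maximum summand, I obtain
\[
G_N(r) \;\le\; \gamma - \alpha r + (\alpha-\gamma)r^{\alpha/(\alpha-\gamma)} + o_N(1)
\]
uniformly for $r\in[a,\|\rho\|_\infty]$ for any fixed $a>0$. The right-hand side extends continuously to $r=0$ with value $\gamma$, which is consistent with the trivial computation $G_N(0)=\gamma$ coming from $\pi^N_0=\delta_0$.

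Third, to handle values $r\in(0,a]$ for $a$ small, I use a crude direct bound: dropping $Z_\alpha(r/\chi_N)\ge 1$ and bounding the partition function,
\[
\EE_{\pi^N_r}\!\left[e^{\gamma h_{\chi_N}/\chi_N}\right] \le e^{\gamma/\chi_N}\left(1 + \sum_{m\ge 1}\frac{(r/\chi_N)^{\alpha m}(m\chi_N)^{\gamma m}e^{-\gamma m}}{(m!)^\alpha}\right),
\]
and the sum on the right is dominated, by the same Stirling/Laplace analysis, by $\exp\!\left(\max_t F(t;r)/\chi_N\right)$ up to a polynomial-in-$\chi_N^{-1}$ factor; since $\max_t F(t;r)\to 0$ as $r\to 0$, this gives $G_N(r)\le \gamma + 1$ for $r\in(0,a]$ and $N$ large, with $a$ chosen small. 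Inserting these uniform bounds into the Riemann sum yields
\[
\limsup_N \frac{\chi_N}{N^d}\log\EE_{\Pi^N_\rho}[\,\cdot\,] \;\le\; \int_{\TTd}\Lambda(\rho(x))\,dx \;<\; \infty, \qquad \Lambda(r):=\gamma-\alpha r+(\alpha-\gamma)r^{\alpha/(\alpha-\gamma)}.
\]

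The main obstacle is ensuring that the various polynomial-in-$\chi_N^{-1}$ corrections from Stirling, from the width of the Laplace peak, and from the asymptotic (\ref{eq: claim 2' single site}) for $Z_\alpha$, collapse to $o_N(1)$ under the $\chi_N\log(\cdot)$ scaling with uniformity in $r$ down to arbitrarily small positive values; this is why the short $r\in(0,a]$ tail argument is needed separately. The hypothesis $\gamma<\alpha$ enters critically as the condition for $F(\cdot;r)$ to have a finite maximum, equivalently for the exponential weight $e^{\gamma h/\chi_N}$ to be integrable against the super-exponentially decaying single-site measure.
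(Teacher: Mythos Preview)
Your plan is correct and follows the same opening move as the paper: exploit the product structure of $\Pi^N_\rho$ to reduce to a uniform single-site bound on $G_N(r)$. The difference lies in how you control the single-site sum. You proceed by a direct Laplace analysis: apply Stirling to $(m!)^{-\alpha}$, identify the continuous rate $F(t;r)$, locate its maximiser $t^\ast=r^{\alpha/(\alpha-\gamma)}$, and then bound the lattice sum by a polynomial-in-$\chi_N^{-1}$ multiple of the peak value. The paper instead uses Stirling in the form $(m!)^\gamma\ge C m^{\gamma m}e^{-\gamma m}$ to \emph{sacrifice} a factor $(m!)^\gamma$ from the denominator and absorb the troublesome $m^{\gamma m}$, which collapses the sum exactly into $Z_{\alpha-\gamma}$ evaluated at $\rho(x)^{\alpha/(\alpha-\gamma)}e^{\gamma/(\alpha-\gamma)}/\chi_N$; the bound then follows immediately from the asymptotic $\varphi^{-1}\log Z_\lambda(\varphi)\to\lambda$ already proven in Lemma~\ref{lemma: useful asymptotics}, with uniformity in $r\in[0,\|\rho\|_\infty]$ coming for free from monotonicity of $Z_\lambda$.

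Your route yields the sharper explicit limit $\Lambda(r)=\gamma-\alpha r+(\alpha-\gamma)r^{\alpha/(\alpha-\gamma)}$, which the paper does not extract, at the price of having to manage separately the uniformity of Stirling, of the Laplace peak width, and of the small-$r$ regime where $t^\ast<\chi_N$ and the discrete maximiser sits at $m=1$; you flag this correctly as the main obstacle. The paper's algebraic trick sidesteps all of that bookkeeping in one line, since the resulting bound involves only $\sup_{t\ge1}t^{-1}\log Z_{\alpha-\gamma}(t)$ and $\sup_{s\ge1}s^{-1}\log Z_\alpha(s)$, both finite by Lemma~\ref{lemma: useful asymptotics}, with no case analysis in $r$. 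Either argument proves the lemma; the paper's is shorter, yours is more informative about the actual limiting value.
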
  \begin{remark}\label{rmk: no better than entropy}
	  Let us note that this would be false if we replaced $u\log u$ by any more rapidly growing function in (\ref{eq: entropy finite at equilibrium}); indeed, by inspecting the proof, it can be seen that  \begin{equation}
		\EE\left[\exp\left(\frac{N^d}{\chi_N}\gamma\cH(\eta^N_0)\right)\right]=\infty
	\end{equation} as soon as\footnote{The sum in (\ref{eq: exponential expectation}) is readily seen to diverge if $\gamma>\alpha$ by using Stirling's formula and collecting powers of $m^m$ in the summand. In the edge case $\gamma=\alpha$, the factors of $m^m$ cancel, and the dominant term is a growing exponential, so the sum diverges.} $\gamma\ge \alpha$. This is consistent with the discussion of the role of entropy above Definition \ref{def: solutions}. \end{remark} \begin{proof}[Proof of Lemma \ref{lemma: exponential integrability of entropy}] First, let us remark that it is sufficient to prove the estimate with $\cH(\eta^N)$ replaced by $$\cH_+(\eta^N_0)=\int_{\TTd} \Psi(\eta^N_0(x))dx =\frac{1}{N^d}\sum_{x\in \TND} \Psi(\eta^N_0(x))$$ for $\Psi(u)=u\log u$, since $\cH-\cH_+ \le 1$ is bounded above, uniformly in $N$. For $\gamma<\alpha$, we write \begin{equation}\begin{split}
	\exp\left(\frac{N^d}{\chi_N}\gamma \cH_+(\eta^N_0)\right)&=\exp\left(\sum_{x\in \TND} \frac{\gamma \eta^N_{0}(x)}{\chi_N}\log \eta^N_{0}(x)\right)\\& = \prod_{x\in \TND} \exp\left(\frac{\gamma \eta^N_{0}(x)}{\chi_N}\log \eta^N_{0}(x)\right). \end{split}
\end{equation} Since $\eta^N_{0}(x), x\in \TND$ are independent under $\Pi^N_\rho$, the expectation of the last expression factorises into $N^d$ terms  $$ \EE_{\pi^N_{\rho(x)}}\left[\exp\left(\frac{\gamma \eta^N_{0}(x)}{\chi_N}\log \eta^N_{0}(x)\right)\right].$$ Putting everything together, we find that \begin{equation}\label{eq: reduction to 1 site} 
	\frac{\chi_N}{N^d}\log \EE_{\Pi^N_\rho}\left[\exp\left(\frac{N^d}{\chi_N}\gamma \cH_+(\eta^N_0)\right)\right] = \frac1{N^d}\sum_{x\in \TND} {\chi_N}\log \EE_{\pi^N_{\rho(x)}}\left[\exp\left(\frac{\gamma \eta^N_0(x)}{\chi_N}\log \eta^N_0(x)\right)\right].
\end{equation} It is now sufficient to find an upper bound on the summand, uniform in $x, N$. We have \begin{equation} \label{eq: exponential expectation}\begin{split}
	\EE_{\pi^N_{\rho(x)}}\left[\exp\left(\frac{\gamma \eta^N_0(x)}{\chi_N}\log \eta^N_0(x)\right)\right] &= \sum_{m\ge 0} \frac{e^{\gamma m\log(m\chi_N)}\rho(x)^{\alpha m}}{\chi_N^{\alpha m}(m!)^\alpha Z_\alpha(\rho(x)/\chi_N)} \\& = \sum_{m\ge 0}\frac{m^{\gamma m} \rho(x)^{\alpha m}}{\chi_N^{(\alpha-\gamma)m} (m!)^{\alpha} Z_\alpha(\rho(x)/\chi_N)}.\end{split}
\end{equation} We now sacrifice $(m!)^\gamma$ from the factor of $(m!)^\alpha$ in the denominator and use $$ (m!)^\gamma \ge C m^{\gamma m}e^{-\gamma m}$$ for all $m$ and some constant $C=C(\gamma)$ to find   \begin{equation}\begin{split}
	\frac{m^{\gamma m} \rho(x)^{\alpha m}}{\chi_N^{(\alpha-\gamma)m} (m!)^{\alpha} Z_\alpha(\rho(x)/\chi_N)}&\le C \frac{\rho(x)^{\alpha m}e^{\gamma m}}{\chi_N^{(\alpha-\gamma)m}(m!)^{\alpha-\gamma}Z_\alpha(\rho(x)/\chi_N)}\\& = \frac{C}{Z_\alpha(\rho(x)/\chi_N)}\left(\frac{(\rho(x)^{\alpha/(\alpha-\gamma)}e^{\gamma/(\alpha-\gamma)}))^m}{\chi_N^m m!}\right)^{\alpha-\gamma}.\end{split}\end{equation} Since $\alpha-\gamma>0$, the sum in (\ref{eq: exponential expectation}) now produces the function $Z_{\alpha-\gamma}$, and we find the upper bound  \begin{equation}
	\begin{split} \EE_{\pi^N_{\rho(x)}}\left[\exp\left(\frac{\gamma \eta^N_0(x)}{\chi_N}\log \eta^N_0(x)\right)\right] \le C \frac{Z_{\alpha-\gamma}(\rho(x)^{\alpha/(\alpha-\gamma)}e^{\gamma/(\alpha-\gamma)}/\chi_N)}{Z_\alpha(\rho(x)/\chi_N)}. \end{split} 
\end{equation} Taking the logarithm and multiplying by $\chi_N$, we find the upper bound  \begin{equation}\begin{split}
	& \chi_N\log \EE_{\pi^N_{\rho(x)}}\left[\exp\left(\frac{\eta^N_0(x)}{\chi_N}\log \eta^N_0(x)\right)\right] \\ & \hspace{3cm}  \le (1+\|\rho\|_\infty^{\alpha/(\alpha-\gamma)}e^{\gamma/(\alpha-\gamma)})\sup_{t\ge 1}\left[ t^{-1} \log Z_{\alpha-\gamma}(t)\right]  \\ & \hspace{3cm}  + (1+\|\rho\|_\infty)\sup_{s\ge 1}\left[ s^{-1} \log Z_{\alpha}(s)\right] + \chi_N \log C
\end{split} \end{equation}  where both of the suprema are finite, thanks to the first point of Lemma \ref{lemma: useful asymptotics}, and where $C$ is still allowed to depend on $\gamma<\alpha$.  This bound is now independent of $N$ and of $x\in \TND$, so returning to (\ref{eq: reduction to 1 site}), the claim is proven.  \end{proof}   Finally, we will use the following cumulant generating function: \begin{lemma}\label{lemma: cgf} For $\rho\in C(\TTd,(0,\infty))$ and any $\psi\in C(\TTd)$,\begin{equation}\label{eq: cgf}
	\frac{\chi_N}{N^d}\log \EE_{\Pi^N_\rho}\left[\exp\left(\frac{N^d}{\chi_N}\langle \psi, \eta^N_0\rangle \right)\right] \to \alpha \int_{\TTd}\rho(x)(e^{\psi(x)/\alpha}-1)dx. \end{equation} \end{lemma}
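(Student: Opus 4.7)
The plan is to exploit the product structure of $\Pi^N_\rho$ to reduce everything to a single-site calculation, then express the moment generating function of a single site explicitly in terms of $Z_\alpha$, and finally apply the asymptotics from Lemma \ref{lemma: useful asymptotics}(i).

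First I would observe that, since $\eta^N_0$ is constant on each cell $c^N_x$, the pairing $\langle\psi,\eta^N_0\rangle$ equals $\frac{1}{N^d}\sum_{x\in\TND}\bar\psi^N(x)\eta^N_0(x)$, where $\bar\psi^N(x):=N^d\int_{c^N_x}\psi(y)dy$ is the cell-average of $\psi$. By independence of the sites under $\Pi^N_\rho$,
\begin{equation*}
\EE_{\Pi^N_\rho}\!\left[\exp\!\left(\tfrac{N^d}{\chi_N}\langle\psi,\eta^N_0\rangle\right)\right]=\prod_{x\in\TND}\EE_{\pi^N_{\rho(x)}}\!\left[\exp\!\left(\tfrac{\bar\psi^N(x)\eta^N_0(x)}{\chi_N}\right)\right].
\end{equation*}
A direct computation with the definition (\ref{eq: single site'}) of $Z_\alpha$ gives
\begin{equation*}
\EE_{\pi^N_{\rho(x)}}\!\left[\exp\!\left(\tfrac{\bar\psi^N(x)\eta^N_0(x)}{\chi_N}\right)\right]=\frac{Z_\alpha\!\bigl(e^{\bar\psi^N(x)/\alpha}\rho(x)/\chi_N\bigr)}{Z_\alpha\!\bigl(\rho(x)/\chi_N\bigr)},
\end{equation*}
since factoring $e^{\bar\psi^N(x)m}=(e^{\bar\psi^N(x)/\alpha})^{\alpha m}$ into the summand of $\pi^N_{\rho(x)}$ simply rescales the argument of $Z_\alpha$.

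Next I would apply (\ref{eq: claim 2 single site}) with $\lambda=\alpha$, $a=\rho(x)$ and $b=e^{\bar\psi^N(x)/\alpha}\rho(x)$. Because $\rho\in C(\TTd,(0,\infty))$ on the compact torus, $\rho$ takes values in a fixed compact subset of $(0,\infty)$; similarly $\psi\in C(\TTd)$ is uniformly bounded and uniformly continuous, so $\bar\psi^N\to\psi$ uniformly in $x$, and hence $e^{\bar\psi^N(x)/\alpha}\rho(x)$ stays in a fixed compact subset of $(0,\infty)$. The uniformity of (\ref{eq: claim 2 single site}) on such compacts then yields
\begin{equation*}
\sup_{x\in\TND}\left|\chi_N\log\EE_{\pi^N_{\rho(x)}}\!\left[\exp\!\left(\tfrac{\bar\psi^N(x)\eta^N_0(x)}{\chi_N}\right)\right]-\alpha\rho(x)\bigl(e^{\bar\psi^N(x)/\alpha}-1\bigr)\right|\xrightarrow[N\to\infty]{}0.
\end{equation*}

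To conclude, I take the logarithm of the factorised expectation, multiply by $\chi_N/N^d$, and view the result as a Riemann sum:
\begin{equation*}
\frac{\chi_N}{N^d}\log\EE_{\Pi^N_\rho}[\cdots]=\frac{1}{N^d}\sum_{x\in\TND}\alpha\rho(x)\bigl(e^{\bar\psi^N(x)/\alpha}-1\bigr)+o(1).
\end{equation*}
The uniform convergences $\bar\psi^N\to\psi$ and continuity of $\rho$ give convergence of this Riemann sum to $\int_{\TTd}\alpha\rho(x)(e^{\psi(x)/\alpha}-1)dx$, which is the claim. There is no real obstacle here: the only technical point is verifying that the arguments of $Z_\alpha$ stay in a compact subset of $(0,\infty)$ uniformly in $N$ and $x$, which is immediate from the compactness of $\TTd$ and the hypothesis $\rho\in C(\TTd,(0,\infty))$ strictly positive.
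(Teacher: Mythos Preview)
Your proof is correct and follows essentially the same approach as the paper's: factorise via independence, identify each single-site moment generating function as a ratio $Z_\alpha(e^{\bar\psi^N(x)/\alpha}\rho(x)/\chi_N)/Z_\alpha(\rho(x)/\chi_N)$, apply the uniform asymptotics (\ref{eq: claim 2 single site}), and pass to the limit in the resulting Riemann sum. The only cosmetic difference is that you spell out slightly more explicitly why the arguments of $Z_\alpha$ remain in a compact subset of $(0,\infty)$.
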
  \begin{proof} Fix $\psi\in C(\TTd)$, and let us write $\overline{\psi}^N(x)$ for the averages over the cubes $c^N_x$, so that \begin{equation} \label{eq: representation of evaluation'} \langle \psi, \eta^N_0\rangle=\frac{1}{N^d}\sum_{x\in \TND} \overline{\psi}^N(x) \eta^N(x). \end{equation} As in the previous lemma, we factorise the variable in the integrand and use independence to see that \begin{equation}
	\begin{split} \frac{\chi_N}{N^d} \log \EE_{\Pi^N_\rho}\left[\exp\left(\frac{N^d}{\chi_N} \langle \psi, \eta^N_0\rangle\right) \right]& = \frac{\chi_N}{N^d}\log\left(\prod_{x\in \TND} \EE_{\Pi^N_\rho}\left[\exp\left(\frac{\overline{\psi}^N(x)\eta^N_0(x)}{\chi_N} \right) \right] \right) \\ & =\frac{1}{N^d}\sum_{x\in \TND} \chi_N\log \EE_{\pi^N_{\rho(x)}}\left[\exp\left(\frac{\overline{\psi}^N(x)\eta^N_0(x)}{\chi_N} \right) \right]. \end{split}
\end{equation}  We now compute the contribution from each site as \begin{equation}
	\EE_{\pi^N_{\rho(x)}}\left[\exp\left(\frac{\overline{\psi}^N(x)\eta^N_0(x)}{\chi_N} \right) \right]  = \sum_{m\ge 0}\frac{\rho(x)^{\alpha m}e^{m\overline{\psi}^N(x)}}{\chi_N^{\alpha m}Z_\alpha(\rho(x)/\chi_N)} =\frac{Z_\alpha(\rho(x) e^{\overline{\psi}^N(x)/\alpha}/\chi_N)}{Z_\alpha(\rho(x)/\chi_N)} \end{equation} so thanks to (\ref{eq: claim 2 single site}), \begin{equation}
	\chi_N \h \sup_{x\in \TND} \left|	\log \EE_{\pi^N_{\rho(x)}}\left[\exp\left(\frac{\overline{\psi}^N(x)\eta^N_0(x)}{\chi_N} \right) \right] - \alpha \rho(x) (e^{\overline{\psi}^N(x)/\alpha}-1)\right| \to 0\end{equation} where the uniformity follows from the fact that $\rho(x) e^{\overline{\varphi}^N(x)/\alpha}, \rho(x)$ are bounded and bounded away from $0$, uniformly in $x\in \TND$ and $N<\infty$. Summing, \begin{equation}
	\left|\frac{\chi_N}{N^d} \log \EE_{\Pi^N_\rho}\left[\exp\left(\frac{N^d}{\chi_N} \langle \psi, \eta^N_0\rangle\right) \right] - \frac{1}{N^d}\sum_{x\in \TND} \alpha \rho(x)(e^{\overline{\psi}^N(x)/\alpha}-1)\right| \to 0
\end{equation} while using continuity of $\rho,\psi$, \begin{equation}
	\frac{1}{N^d}\sum_{x\in \TND} \alpha \rho (x)(e^{\overline{\psi}^N(x)/\alpha}-1) \to \alpha \int_{\TTd}\rho(x)(e^{\psi(x)/\alpha}-1)dx
\end{equation} as desired.  \end{proof} 

\section{Discrete to Continuous Analysis}\label{sec: DTC} In this section, we will prove some facts about functions on the discrete lattice. We study the functionals $\cF_{\alpha,N}:\mathbb{D}_N\to [0,\infty)$, defined on the discrete path space $\mathbb{D}_N$:\begin{equation}\label{eq: FNA}
	\cF_{\alpha,N}(\eta^N_\bullet):=\sup_{t\le \tf} \cH(\eta^N_t) + \int_0^\tf \cD_{\alpha,N}(\eta^N_s)ds;
\end{equation} \begin{equation} \label{eq: DNA}\cD_{\alpha,N}(\eta^N):=\frac{1}{\alpha N^{d-2}}\sum_{x,y\in \TND} ((\eta^N(x)^{\alpha/2}-\eta^N(y)^{\alpha/2})^2 dp^N(x,y)\end{equation} and the continuum counterpart $\cF_\alpha:\cDD\to [0,\infty]$ \begin{equation} \label{eq: cF} \cF_\alpha(u_\bullet):=\sup_{t\le \tf} \cH(u_t)+\int_0^\tf \cD_\alpha(u_s)\end{equation} for $\cD_\alpha$ given by (\ref{eq: entropy dissipation}) and extended to $L^1_{\ge 0}(\TTd)$ as discussed in Section \ref{sec: formal}. The most important result of this section is the following lemma, which contains the functional analytic aspects of Theorem \ref{thrm: WtS}. 

\begin{lemma}[Deterministic Weak-to-Strong Principle]\label{lemma: DWtS}  Suppose $\eta^N_\bullet \in \mathbb{D}_N$ enjoy a uniform bound  
	 $\cF_{\alpha, N}(\eta^N_\bullet)\le z$ and converge in the topology of $\mathbb{D}$ to $u_\bullet \in \mathcal{C}$. Then, for some $\beta>\alpha$ given in Lemma \ref{prop: interpolation}, $\eta^N_\bullet, N\ge 1, u_\bullet$ are bounded in $L^\beta([0,\tf]\times\TTd)$ and $\eta^N_\bullet \to u_\bullet$ strongly in $L^\alpha([0,\tf]\times\TTd)$. Moreover, it also holds that $\cF_{\alpha}(u_\bullet)\le z$, so for any $u_\bullet \in \mathcal{C}$ we have the $\Gamma$-lower bound  \begin{equation}\label{eq: Gamma lower bound}\inf\left\{\liminf_N \cF_{\alpha, N}(\eta^N_\bullet): \eta^N_\bullet \in \mathbb{D}_N, (\eta^N_\bullet)_{N\ge 1} \to u_\bullet \right\} \ge \cF_\alpha(u_\bullet). \end{equation} Moreover, \begin{enumerate}[label=\roman*).] 
	 	\item If $u_\bullet \in \cR$ and $\lambda<\infty$, then for all open neighbourhoods $\cV\ni u_\bullet$ in the topology of $L^\alpha([0,\tf]\times\TTd)$, there exists $\cU\ni u_\bullet$ open in the topology of $\cDD$ such that, for all sufficiently large $N$, \begin{equation}\label{eq: ALS claim 1}
		\left\{\eta^N_\bullet\in \mathbb{D}_N: \cF_{\alpha, N}(\eta^N_\bullet)\le \lambda\right\} \cap \cU \subset \cV.
	\end{equation} \item If instead $\cF_\alpha(u_\bullet)=\infty$, then for any finite $\lambda>0$, there exists an open neighbourhood $\cU\ni u_\bullet$ in the topology of $\cDD$ such that, for all sufficiently large $N$, \begin{equation}\label{eq: ALS claim 2} \cU\cap \cDD_N\subset \{\eta^N_\bullet \in \cDD_N: \cF_{\alpha, N}(\eta^N_\bullet)>\lambda\}. \end{equation} 
	 \end{enumerate}\end{lemma}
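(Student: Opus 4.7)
The plan is an Aubin-Lions-Simon style argument: the hypothesis $\cF_{\alpha,N}(\eta^N_\bullet)\le z$ provides quantitative spatial regularity through the discrete entropy dissipation $\cD_{\alpha,N}$, while $\cDD$-convergence provides a weak form of time equicontinuity, and together these upgrade weak convergence to strong $L^\alpha$-convergence. First I read off both $\sup_{t\le\tf}\cH(\eta^N_t)\le z$, which controls $\|\eta^N_t\|_{L^1(\TTd)}$ uniformly in $t$ and $N$, and $\int_0^\tf \cD_{\alpha,N}(\eta^N_s)\,ds \le z$. Applying the discrete interpolation inequality of Lemma \ref{prop: interpolation} to $(\eta^N_s)^{\alpha/2}$ then yields the uniform bound $\|\eta^N_\bullet\|_{L^\beta([0,\tf]\times\TTd)}\le C(z)$ with the specified $\beta>\alpha$, and the corresponding bound for $u_\bullet$ follows by weak-$\star$ lower semicontinuity of the $L^\beta$-norm at fixed $t$ (or by Fatou once the strong convergence below is in hand).

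For the strong $L^\alpha$-convergence, I introduce the spatial box-average $\bar\eta^{N,r}_t$ on scale $r$. A discrete Poincar\'e-type estimate applied to $(\eta^N_t)^{\alpha/2}$, combined with the $L^\beta$-bound of the previous step to pass between the linear and nonlinear scales, should produce a quantitative estimate of the form
\begin{equation*}
\int_0^\tf \|\eta^N_t - \bar\eta^{N,r}_t\|_{L^\alpha(\TTd)}^\alpha\, dt \le C(z)\, r^\theta
\end{equation*}
for some $\theta>0$, uniformly in $N$, with the analogous bound for $u_\bullet$. For each fixed $r>0$, the averaging map $w\mapsto \bar w^r$ is continuous from the weak-$\star$ topology of $L^1_{\ge 0}(\TTd)$ into the finite-dimensional space of step functions on the partition at scale $r$, so the $\cDD$-convergence combined with the uniform integrability from the first paragraph yields $\bar\eta^{N,r}_\bullet \to \bar u^r_\bullet$ strongly in $L^\alpha([0,\tf]\times\TTd)$ as $N\to\infty$ for each fixed $r$. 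Sending $N\to\infty$ first and then $r\to 0$, and using the triangle inequality, closes the argument.

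For the lower-semicontinuity claim $\cF_\alpha(u_\bullet)\le z$, the entropy part follows from weak-$\star$ lower semicontinuity of $\cH$ applied at each $t$, using that $u_\bullet \in \mathcal{C}$ so every $t$ is a continuity point. The dissipation part uses that the strong $L^\alpha$-convergence gives a.e.\ convergence $\eta^N\to u$ on a subsequence; the discretised gradients of $(\eta^N)^{\alpha/2}$ are bounded in $L^2([0,\tf]\times\TTd;\RRd)$ by $\cD_{\alpha,N}$, hence converge weakly on a further subsequence to a limit which one identifies as $\nabla u^{\alpha/2}$ in the distributional sense, and weak lower semicontinuity of the $L^2$-norm gives $\int_0^\tf \cD_\alpha(u_s)\,ds \le \liminf_N \int_0^\tf \cD_{\alpha,N}(\eta^N_s)\,ds$. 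The $\Gamma$-inequality (\ref{eq: Gamma lower bound}) is an immediate consequence, applied to any subsequence almost attaining the $\liminf$ with the bound $z+\epsilon$ and letting $\epsilon\downarrow 0$. Parts (i) and (ii) then follow by contradiction along a countable decreasing base of $\cDD$-neighbourhoods of $u_\bullet$: for (i), the extracted $\eta^{N_k}_\bullet \to u_\bullet$ with $\cF_{\alpha,N_k}(\eta^{N_k}_\bullet)\le \lambda$ is forced into $\cV$ for large $k$ by the main strong convergence, contradicting $\eta^{N_k}_\bullet \notin \cV$; for (ii), the same extraction combined with the $\Gamma$-lower bound forces $\cF_\alpha(u_\bullet)\le \lambda<\infty$, contradicting the assumption.

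The main obstacle is the strong $L^\alpha$-convergence in the second paragraph. The $\cDD$-topology only encodes pointwise-in-time weak-$\star$ convergence of the marginals, whereas the target topology is strong in space and time, so the spatial compactness must be constructed entirely out of the discrete Dirichlet form. The nonlinearity $u\mapsto u^{\alpha/2}$ in $\cD_{\alpha,N}$ forces the Poincar\'e estimate at scale $r$ to be carried out in a nonlinear Sobolev setting rather than in the linear Aubin-Lions framework, and one must carefully match the discrete averaging error with the passage $N\to\infty$ for fixed $r$ before taking $r\to 0$, keeping the rate $\theta>0$ uniform in the discretisation parameter.
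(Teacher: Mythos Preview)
Your proposal is correct and follows essentially the same route as the paper: both arguments couple the quantitative scale-$r$ approximation estimate $\int_0^{\tf}\|\eta^N_t-\bar\eta^{N,r}_t\|_{L^\alpha}^\alpha\,dt \lesssim r^\theta(1+z)$ coming from the discrete Dirichlet form (the paper's Lemma \ref{lemma: nonstandard block convolution via dirichlet}) with the fact that, for fixed $r$, the averaged processes converge under $\cDD$-convergence to a continuous limit, and then derive parts (i)--(ii) by contradiction along a decreasing base of $\cDD$-neighbourhoods. The only cosmetic differences are that the paper uses mollifier-based averages (and proves $\sup_{t,x}|\bar\eta^{N,r}_t(x)-\bar u^r_t(x)|\to 0$ via an Arzel\`a--Ascoli argument, Lemma \ref{lemma: convergence of convolution}) rather than your finite-dimensional block averages, and that for the dissipation part of the $\Gamma$-lower bound the paper cites the well-known $\Gamma$-convergence $\Gamma\text{-}\lim\cD_{2,N}=\tfrac12\|\cdot\|_{\dot H^1}^2$ applied to $\xi^N=(\eta^N)^{\alpha/2}\to u^{\alpha/2}$ in $L^1$, whereas you unpack this via weak $L^2$-compactness of the discrete gradients and distributional identification of the limit; these are the same argument at different levels of detail.
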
 
	 Together with the probabilistic estimates which we will obtain in Section \ref{sec: a priori}, the proof of Theorem \ref{thrm: WtS} will follow rapidly in Section \ref{sec: WTS}. \bigskip \\  Although we have not found a version of the Aubin-Lions-Simons lemma \cite{aubin1963theoreme,lions1969quelques,simon1986compact} from which the first assertion of Lemma \ref{lemma: DWtS} would follow, the same fundamental ingredients are used in the proof and the reader may find it helpful to keep this parallel in mind.  To illustrate the parallels, let us give without proof the following result, which follows from Lemma \ref{lemma: DWtS} and standard facts about the Skorokhod space $\mathbb{D}$, see Ethier and Kurtz \cite[Theorem 6.3]{ethier1987markov}, and is comparable to Theorem 1 of the work by Moussa \cite{moussa2016variants}. \begin{proposition}\label{prop: ALS parallel}
	 	For each $N$, let $\mathcal{K}_N \subset \mathbb{D}_N$ satisfy\begin{equation}\label{eq: prototype ALS entropy}\sup_N \sup_{\eta^N_\bullet \in \mathcal{K}_N} \sup_{t\le \tf} \cH(\eta^N_t)<\infty\end{equation} with the spatial regularity \begin{equation}\label{eq: prototype ALS spatial}\sup_N\sup_{\eta^N_\bullet \in \mathcal{K}_N} \int_0^\tf \cD_{\alpha,N}(\eta^N_s)ds<\infty \end{equation} and the temporal regularity \begin{equation}
	 		\label{eq: prototype ALS no jump} \sup_{\eta^N_\bullet \in \mathcal{K}_N} \sup\left\{  d(\eta^N_t, \eta^N_{t-}): t\le \tf\right\} \to 0; 
	 	\end{equation} \begin{equation}
	 		\label{eq: prototype ALS temporal} \lim_{\delta\downarrow 0}\h \sup_N \sup_{\eta^N_\bullet \in \mathcal{K}_N} \inf_{\{t_i\}} \max_i \sup_{s,t \in [t_i, t_{i+1})} d(\eta^N_s, \eta^N_t) \to 0
	 	\end{equation} where the infimum runs over all partitions $\{t_0=0<t_1<\dots<t_m=\tf\}$ of minimal spacing $\min_i (t_i-t_{i-1})>\delta$. Then  $\cup_N \mathcal{K}_N$ is precompact in the norm topology of $L^\alpha([0,\tf]\times\TTd)$. 
	 \end{proposition}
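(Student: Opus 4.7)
The plan is to combine a standard compactness criterion in the Skorokhod space $\mathbb{D}$ with the deterministic weak-to-strong principle of Lemma \ref{lemma: DWtS}. Given any sequence $(\eta^{(n)}_\bullet)_{n\ge 1}$ with $\eta^{(n)}_\bullet\in\mathcal{K}_{N_n}$, I aim to extract a subsequence converging in the norm topology of $L^\alpha([0,\tf]\times\TTd)$.

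The first step is to establish relative compactness in $\mathbb{D}$. The entropy bound (\ref{eq: prototype ALS entropy}) yields, via $u\log u\ge -e^{-1}$, a uniform control on $\sup_t\|\eta^{(n)}_t\|_{L^1(\TTd)}$; de la Vall\'ee Poussin's criterion then upgrades this to uniform equi-integrability of the family $\{\eta^{(n)}_t: n\ge 1,\, 0\le t\le \tf\}$, and hence, for each fixed $t$, relative compactness of $\{\eta^{(n)}_t\}_n$ in the weak topology of $(L^1_{\ge 0}(\TTd),d)$. Combined with the vanishing-jump condition (\ref{eq: prototype ALS no jump}) and the temporal modulus-of-continuity estimate (\ref{eq: prototype ALS temporal}), the standard compactness criterion for $\mathbb{D}$, see \cite[Theorem 6.3]{ethier1987markov}, provides a subsequence (not relabelled) with $\eta^{(n)}_\bullet\to u_\bullet$ in the topology of $\mathbb{D}$. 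Moreover, since the jump sizes tend to $0$ uniformly by (\ref{eq: prototype ALS no jump}), the limit $u_\bullet$ lies in $\mathcal{C}$.

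The second step is to upgrade this convergence. The hypotheses (\ref{eq: prototype ALS entropy}) and (\ref{eq: prototype ALS spatial}), together with the definition (\ref{eq: FNA}) of $\cF_{\alpha,N}$, combine to yield a uniform estimate $\cF_{\alpha,N_n}(\eta^{(n)}_\bullet)\le z$ for some $z<\infty$. Lemma \ref{lemma: DWtS} is then directly applicable along the subsequence produced in the first step, and its first conclusion upgrades $\mathbb{D}$-convergence to strong convergence in $L^\alpha([0,\tf]\times\TTd)$. Since every sequence in $\cup_N\mathcal{K}_N$ thus admits an $L^\alpha$-convergent subsequence, the family is precompact in this norm topology.

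The only potential obstacle is the verification of the Skorokhod compactness criterion in the present topological setup; but this reduces to the pointwise-in-time compactness (handled by the entropy bound and Dunford--Pettis) together with the hypotheses (\ref{eq: prototype ALS no jump})--(\ref{eq: prototype ALS temporal}), and is therefore routine. The substantive work has already been performed in Lemma \ref{lemma: DWtS}, which packages the Aubin--Lions--Simon-type interpolation on the discrete lattice.
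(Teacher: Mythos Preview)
Your argument is essentially the paper's own (commented-out) proof, but it has one genuine gap: you implicitly assume that along the extracted subsequence the lattice parameters satisfy $N_n\to\infty$. Nothing in the setup forces this; an arbitrary sequence in $\cup_N\mathcal{K}_N$ may well live entirely in a single $\mathcal{K}_{N_0}$. In that situation both of your invocations break down. First, the vanishing-jump hypothesis (\ref{eq: prototype ALS no jump}) is an asymptotic statement as $N\to\infty$ and says nothing about a sequence with constant $N=N_0$, so you cannot conclude $u_\bullet\in\mathcal{C}$. Second, Lemma~\ref{lemma: DWtS} is stated for a sequence indexed by $N$ with $\eta^N_\bullet\in\mathbb{D}_N$, and its proof (via Lemma~\ref{lemma: convergence of convolution}, where the normalising constants $\kappa_{r,N}\to 1$ are used) genuinely needs $N\to\infty$; it is not applicable on a subsequence with fixed lattice size.

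The paper's proof handles this by splitting into two cases after extracting a $\mathbb{D}$-convergent subsequence. If $\liminf N_n=\infty$ one passes to a further subsequence with $N_n\to\infty$ and your argument goes through verbatim. If $\liminf N_n<\infty$ one passes to a further subsequence with $N_n=N_0$ constant; then all $\eta^{(n)}_\bullet$ and the limit $u_\bullet$ take values in the \emph{finite} set $X_{N_0,a'}$ (for $a'$ coming from the entropy bound), whence the $L^\alpha(\TND)$-norms are uniformly bounded and $\mathbb{D}$-convergence forces $|\{t:\eta^{(n)}_t\neq u_t\}|\to 0$, which gives $L^\alpha([0,\tf]\times\TTd)$-convergence directly. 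Adding this short case distinction repairs your proof.
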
 As in \cite[Theorem 1]{moussa2016variants}, the bound on $\cF_{\alpha,N}$  or (\ref{eq: prototype ALS spatial}) is a sort of $L^2_tH^1_x$ regularity for the nonlinear transformation $\Phi(\eta^N(x))=(\eta^N(x))^{\alpha/2}$, and the time regularity (\ref{eq: prototype ALS temporal}) is slightly stronger than what is assumed in \cite[Theorem 1]{moussa2016variants} and produces compactness in $\mathbb{D}$ by \cite[Theorem 6.3]{ethier1987markov}. This is the ingredient that is deferred to probabilistic considerations in Lemma \ref{lemma: ET}. In Lemma \ref{lemma: DWtS} above, this and (\ref{eq: prototype ALS no jump}) are eliminated by assuming convergence to $u_\bullet \in \mathcal{C}$ relative to the topology of $\mathbb{D}$.\bigskip \\  We now turn to the proof of Lemma \ref{lemma: DWtS} by the following sequence of lemmata. We first prove how the estimate on $\cF_{\alpha,N}$ implies some additional integrability. \begin{lemma}[Integrability via discrete $\alpha$-entropy dissipation] \label{prop: interpolation} For some $\beta>\alpha$ and some $C=C(d,\alpha)$, \begin{equation}
		\left\|\eta^N\right\|_{L^\beta(\TND)}^\beta\le C\left(1+\langle 1, \eta^N\rangle\right)\left(1+\cD_{\alpha,N}(\eta^N)\right)
	\end{equation} for all $\eta^N\in L^1_{\ge 0}(\TND)$. \end{lemma}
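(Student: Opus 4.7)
The strategy is to identify $\cD_{\alpha,N}(\eta^N)$, up to a dimensional constant, with the squared discrete $\dot{H}^1$-seminorm of $f^N := (\eta^N)^{\alpha/2}$, and then apply a Gagliardo--Nirenberg interpolation. A direct expansion of $\cD_{\alpha,N}$ using the nearest-neighbour form of $p^N$ shows
\begin{equation*} \cD_{\alpha,N}(\eta^N) = c_d \,\|\nabla_N f^N\|_{L^2(\TND)}^2, \end{equation*}
where $\nabla_N$ is the natural discrete gradient across nearest-neighbour bonds. Under the identification of $L^p(\TND)$ with piecewise-constant functions on $\TTd$, the discrete seminorm on the right is equivalent (up to $d$-dependent constants) to the continuum $\dot{H}^1$-seminorm of a piecewise-linear interpolant of $f^N$, so the continuum Gagliardo--Nirenberg inequality on $\TTd$ transfers to the discrete setting uniformly in $N$.

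I would then invoke Gagliardo--Nirenberg for exponents $p, q, \theta$ satisfying $\tfrac{1}{q} = \theta(\tfrac{1}{2} - \tfrac{1}{d}) + \tfrac{1-\theta}{p}$:
\begin{equation*} \|f^N\|_{L^q(\TND)}^q \le C \,\|\nabla_N f^N\|_{L^2(\TND)}^{q\theta}\,\|f^N\|_{L^p(\TND)}^{q(1-\theta)} + C \,\|f^N\|_{L^p(\TND)}^q, \end{equation*}
with $C$ independent of $N$. The key choice is $p = 2/\alpha$ (valid when $\alpha \le 2$), since then $\|f^N\|_{L^p(\TND)}^p = \int_{\TTd} \eta^N =: M$. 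Taking $\theta = 2/q$ makes the gradient enter squared, and the scaling identity forces $q = 2 + 4/(d\alpha)$, giving $\beta := q\alpha/2 = \alpha + 2/d > \alpha$. Substituting and translating back to $\eta^N$ yields
\begin{equation*} \|\eta^N\|_{L^\beta(\TND)}^\beta \le C\,\cD_{\alpha,N}(\eta^N)\cdot M^{2/d} + C\, M^\beta. \end{equation*}

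For $\alpha > 2$, the choice $p = 2/\alpha$ falls below $1$ and is no longer a valid Lebesgue exponent; in that range I would instead take $p = 1$ and bound $\|f^N\|_{L^1(\TND)}$ by a Jensen-type interpolation between $M$ and a higher $L^r$-norm of $f^N$, or else bootstrap from the $\alpha = 2$ result via an additional Gagliardo--Nirenberg step. Either way, the same structural bound above emerges, possibly with modified constants and a slightly different $\beta > \alpha$.

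The principal obstacle is matching this bound to the product form $(1+M)(1+\cD_{\alpha,N})$. The cross-term $\cD_{\alpha,N}\cdot M^{2/d}$ is tamed by $M^{2/d} \le 1 + M$ whenever $d \ge 2$, with $d=1$ covered via the stronger embedding $H^1 \hookrightarrow L^\infty$. The residual $M^\beta$ is the most delicate contribution --- it is saturated by near-constant configurations with vanishing dissipation, and bringing its exponent down requires a final Young-type interpolation with $\|\eta^N\|_{L^1} = M$, at the cost of a smaller (but still positive) margin $\beta - \alpha$. Verifying the discrete Gagliardo--Nirenberg inequality with $N$-independent constants, via the piecewise-linear extension described above, is routine but needs to be checked.
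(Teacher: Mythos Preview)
Your approach is essentially the paper's: set $\xi^N=(\eta^N)^{\alpha/2}$, identify $\cD_{\alpha,N}(\eta^N)$ with $\tfrac{2}{\alpha}\cD_{2,N}(\xi^N)$, and apply a discrete Sobolev--Gagliardo--Nirenberg inequality. The paper avoids your case split by taking $p=1$ from the outset: it uses $\|\xi^N\|_{L^p}^2 \le C(\|\xi^N\|_{L^1}^2 + \cD_{2,N}(\xi^N))$ for some $p>2$, then controls $\|\xi^N\|_{L^1}$ via the H\"older interpolation $\|\xi^N\|_{L^1} \le \|\xi^N\|_{L^p}^{1-\gamma}\|\eta^N\|_{L^1}^{\alpha\gamma/2}$ with a well-chosen $\gamma\in(0,2/\alpha)$, and absorbs the $L^p$ piece by Young. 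This is precisely the ``Jensen-type interpolation between $M$ and a higher $L^r$-norm'' that you propose as your $\alpha>2$ fallback, and it works uniformly for all $\alpha\ge 1$ without requiring $2/\alpha\ge 1$.

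Your diagnosis of the $M^\beta$ term is correct --- constant configurations with $\eta^N\equiv M$ give $\|\eta^N\|_{L^\beta}^\beta = M^\beta$ and $\cD_{\alpha,N}=0$, so the bound \emph{must} contain $M^\beta$ --- but your proposed remedy (a Young-type interpolation shrinking $\beta-\alpha$) cannot convert $M^\beta$ into $1+M$. In fact the paper's own proof concludes with $(1+\langle 1,\eta^N\rangle^\beta)(1+\cD_{\alpha,N})$ rather than the stated $(1+\langle 1,\eta^N\rangle)$; the discrepancy is harmless since every application in the paper works under a uniform mass bound $\langle 1,\eta^N\rangle\le a$. So you should not attempt to reduce the exponent, but simply record the bound with $M^\beta$ and note that this suffices for the intended use.
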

	\begin{proof} Let us define functions by $\xi^N:\TND\to [0,\infty)$ by $\xi^N(x):=\eta^N(x)^{\alpha/2}$ so we have the equality $ \cD_{\alpha,N}(\eta^N_t)=\frac{2}{\alpha}\cD_{2,N}(\xi^N)$.  By discrete versions of the Sobolev-Garaglido-Nirenberg inequality, se Porretta \cite[Section 4]{porretta2020note} and the Poincar\'e inequality, there exists $p>2$ and an absolute constant $C$ such that \begin{equation}\label{eq: SNG on xi}
	\|\xi^N\|_{L^p(\TND)}^2 \le C\left(\|\xi^N\|_{L^1(\TND)}^2+ \cD_{2,N}(\xi^N)\right).
\end{equation} Let us now choose $0<\gamma<\frac{2}{\alpha}$ such that $\frac{2(1-\gamma)}{2-\alpha \gamma}<p$, and observe that by H\"older's inequality,\begin{equation}\begin{split}
	\|\xi^N\|_{L^1(\TND)} & \le \left\|\xi^N\right\|_{L^p(\TND)}^{(1-\gamma)} \|\eta^N\|_{L^1(\TND)}^{\alpha \gamma/2} \\[1ex] & \le \frac{1}{2\sqrt{C}}\|\xi^N\|_{L^p(\TND)}+C' \|\eta^N_t\|_{L^1(\TND)}^{\alpha/2} \end{split}
\end{equation} where $C$ is the constant in (\ref{eq: SNG on xi}) and $C'$ is a new constant, still depending only on $\alpha, d$. Substituting everything back, we have proven that for a new constant $C$, \begin{equation}\begin{split}
	\left\|\eta^N\right\|_{L^{\alpha p/2}(\TND)}^{\alpha}=\|\xi^N\|_{L^p(\TND)}^2&\le   C\left(\|\eta^N\|_{L^1(\TND)}^{\alpha}+ \cD_{2,N}(\xi^N)\right)\\& = C\left(\|\eta^N\|_{L^1(\TND)}^{\alpha}+ \cD_{\alpha,N}(\eta^N)\right). \end{split}
\end{equation} We finally use the interpolation of $L^p$ norms \begin{equation}
	\left\|\eta^N\right\|_{L^{\alpha+1-(2/p)}(\TND)}^{\alpha+1-(2/p)} \le \left\|\eta^N\right\|_{L^1(\TND)}^{1-(2/p)}\left\|\eta^N\right\|^{\alpha}_{L^{\alpha p/2}(\TND)}
\end{equation} to conclude that, for $\beta=\alpha+1-2/p$, that \begin{equation}
	\frac1{N^d}\sum_{x\in \TND}(\eta^N(x))^\beta \le C\left(1+\langle 1, \eta^N\rangle^\beta\right)\left(1+\cD_{\alpha,N}(\eta^N)\right) \end{equation} as claimed. \end{proof}
	
	The next two lemmata concern local averages of discrete and continuous functions. Let us fix a smooth, even approximation of the unit $\varrho:\RRd\to [0,\infty)$ with support in $[-\frac{1}{2},\frac{1}{2}]^d$, and for $r<1$, we define the associated mollifiers $\varrho_r(x):=r^{-d}\varrho(x/r)$ on $x\in \TTd$.  For $\eta^N \in L^1(\TND)$, we now define the local averages $\overline{\eta}^{N,r}\in L^1(\TND)$ by specifying, for $x\in \TND$,\begin{equation}\label{eq: convolution} 
		\overline{\eta}^{N,r}(x):=\frac{1}{\kappa_{r,N} N^d} \sum_{y\in \TND} \eta^N(y)\varrho_r(x-y)=\frac{1}{\kappa_{r,N} N^d r^d}\sum_{y\in \TND} \varrho((x-y)/r)\eta^N(y)
	\end{equation} where $\kappa_{r,N}$ is the normalising factor $$ \kappa_{r,N}:=\frac{1}{N^d}\sum_{y\in \TND} \varrho_r(y) \to 1.$$ We choose this normalisation in order to avoid the appearance of $r$-dependent constants when exploiting Jensens' inequality to transfer estimates from $\eta^N$ to $\overline{\eta}^{N,r}$. \bigskip \\   We still consider these as elements of $L^1(\TND)$, so that the values at points in $\TTd\setminus \TND$ are given by the extension of (\ref{eq: convolution}) which is given by the value (\ref{eq: convolution}) for all $z$ in the small box $z\in c^N_x$. For continuum functions $u\in L^1(\TTd)$, we define analagously $\overline{u}^r(x):=\varrho_r\star u(x)$. 	
	\begin{lemma}\label{lemma: nonstandard block convolution via dirichlet} Fix $r>0$ and $a\in (0,\infty)$. Then for any  $\eta^N\in L^1_{\ge 0}(\TND)$ with $\langle 1, \eta^N\rangle \le a$,  \begin{equation}\label{eq: nonstandard block convolution via dirichlet 1}
	\left\|\eta^N-\overline{\eta}^{N,r}\right\|_{L^\alpha(\TND)}^\alpha \le C(1+a)r^\gamma(1+\cD_{\alpha,N}(\eta^N))
	\end{equation} for some absolute constant $C$ and an exponent $\gamma=\gamma(\alpha)>0$. 
		
	\end{lemma}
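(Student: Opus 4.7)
The plan is to convert differences with the mollified average into pairwise differences, then combine a pointwise inequality relating $|a-b|^\alpha$ to $(a^{\alpha/2}-b^{\alpha/2})^2$ with a discrete Poincar\'e-type estimate producing the small factor $r^\gamma$. Using the normalisation $\frac{1}{\kappa_{r,N}N^d}\sum_y\varrho_r(x-y)=1$ for $x\in\TND$, one writes
\[
\eta^N(x)-\overline{\eta}^{N,r}(x)=\frac{1}{\kappa_{r,N}N^d}\sum_{y\in\TND}\varrho_r(x-y)\bigl(\eta^N(x)-\eta^N(y)\bigr),
\]
so Jensen's inequality (with $\alpha\ge 1$) gives
\[
\|\eta^N-\overline{\eta}^{N,r}\|_{L^\alpha(\TND)}^\alpha\le \frac{1}{\kappa_{r,N}N^{2d}}\sum_{x,y\in\TND}\varrho_r(x-y)\,|\eta^N(x)-\eta^N(y)|^\alpha.
\]
Writing $\xi^N:=(\eta^N)^{\alpha/2}$, the plan is then to control the right-hand side by the Dirichlet-type form $\cD_{\alpha,N}(\eta^N)=\frac{2}{\alpha N^{d-2}}\sum_{u\sim v}(\xi^N(u)-\xi^N(v))^2/2$ together with an $L^\alpha$-norm that Lemma~\ref{prop: interpolation} will absorb.

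For the pointwise step I would split on $\alpha$. When $\alpha\ge 2$, subadditivity of the concave map $t\mapsto t^{2/\alpha}$ applied to $A=a^{\alpha/2},B=b^{\alpha/2}$ yields $|a-b|^\alpha \le (a^{\alpha/2}-b^{\alpha/2})^2$, so no $L^\alpha$ factor is needed. When $1\le\alpha<2$, the mean value theorem and the fact that $t^{\alpha/2-1}$ is decreasing give the one-sided bound $|a-b|\le C_\alpha(a+b)^{1-\alpha/2}|a^{\alpha/2}-b^{\alpha/2}|$; combining with $|a-b|^{\alpha-1}\le(a+b)^{\alpha-1}$ produces
\[
|a-b|^\alpha\le C_\alpha(a+b)^{\alpha/2}\,|a^{\alpha/2}-b^{\alpha/2}|.
\]
Applied to the sum and followed by Cauchy--Schwarz in $(x,y)$ with weight $\varrho_r(x-y)$, this bounds the right-hand side of the Jensen inequality by $C\sqrt{S_1 S_2}$, where $S_1=\frac{1}{N^{2d}}\sum_{x,y}\varrho_r(x-y)(\eta^N(x)+\eta^N(y))^\alpha\le C\|\eta^N\|_{L^\alpha(\TND)}^\alpha$ by translation-invariance of the kernel, and $S_2=\frac{1}{N^{2d}}\sum_{x,y}\varrho_r(x-y)(\xi^N(x)-\xi^N(y))^2$. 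The factor $\|\eta^N\|_{L^\alpha}^\alpha$ is in turn bounded by $C(1+a)(1+\cD_{\alpha,N}(\eta^N))$ by interpolating the $L^\beta$-estimate of Lemma~\ref{prop: interpolation} with the $L^1$-mass.

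The main remaining task is the discrete Poincar\'e estimate
\[
S_2\;\le\; C\,r^2\,\cD_{\alpha,N}(\eta^N),
\]
which I would prove by path chaining: for each ordered pair $(x,y)$ with $\varrho_r(x-y)\neq 0$, so $|x-y|\lesssim r$, fix an axis-aligned lattice geodesic $x=z_0,\dots,z_M=y$ of length $M\le CrN$, apply Cauchy--Schwarz to the telescoping sum to obtain $(\xi^N(x)-\xi^N(y))^2\le M\sum_k(\xi^N(z_k)-\xi^N(z_{k+1}))^2$, and then exchange the order of summation in $\sum_{x,y}$. The essential combinatorial bookkeeping is to count, for each nearest-neighbour bond $(u,v)$, how many pairs $(x,y)$ at distance $\lesssim r$ have their canonical path traversing $(u,v)$: this count is of order $r^d N^d \cdot rN$, which after dividing by $N^{2d}$ and multiplying by $M\lesssim rN$ exactly converts the integration against $\varrho_r$ into the $r^2 N^{-(d-2)}$ scaling matching $\cD_{\alpha,N}$. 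Combining the three estimates gives $\|\eta^N-\overline{\eta}^{N,r}\|_{L^\alpha}^\alpha\le Cr^2\cD_{\alpha,N}$ in the case $\alpha\ge 2$, and $Cr\sqrt{(1+a)(1+\cD_{\alpha,N})\cD_{\alpha,N}}\le Cr(1+a)(1+\cD_{\alpha,N})$ in the case $\alpha\in[1,2)$, via $\sqrt{\cD(1+\cD)}\le 1+\cD$; either way the conclusion holds with $\gamma=\gamma(\alpha)\in\{1,2\}$. The principal obstacle is the combinatorial count in the path-chaining step, where one must choose the paths canonically enough (axis-sweep order) that the edge-multiplicity estimate is sharp, and keep the dependence on $r$ no larger than $r^{d+1}$ per edge, since any loss there would destroy the positive power of $r$ in the final bound.
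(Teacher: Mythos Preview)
Your argument is correct and takes a genuinely different route from the paper. The paper does not work directly with $\xi^N=(\eta^N)^{\alpha/2}$; instead it \emph{truncates} $\eta^N$ to $\eta^{N,\delta}=\delta\vee\eta^N\wedge\delta^{-1}$, uses the pointwise bound $(w^{\alpha/2}-z^{\alpha/2})^2\ge(\alpha/2)^2\delta^{|\alpha-2|}(w-z)^2$ on $[\delta,\delta^{-1}]$ to reduce to the $\alpha=2$ case (which is assumed as folklore), converts the resulting $L^2$ estimate into $L^\alpha$ by monotonicity ($\alpha\le2$) or by interpolation with the $L^\beta$ bound of Lemma~\ref{prop: interpolation} ($\alpha>2$), controls the truncation error $\|\eta^N-\eta^{N,\delta}\|_{L^\alpha}$ via Chebyshev and the same $L^\beta$ bound, and finally optimises over $\delta$ to extract a positive power of $r$. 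Your approach avoids truncation entirely: the pointwise inequalities $|a-b|^\alpha\le(a^{\alpha/2}-b^{\alpha/2})^2$ for $\alpha\ge2$ and $|a-b|^\alpha\le C(a+b)^{\alpha/2}|a^{\alpha/2}-b^{\alpha/2}|$ for $\alpha<2$ let you apply the discrete Poincar\'e (path-chaining) estimate directly to $\xi^N$. This is more elementary and self-contained, since you carry out the path-chaining explicitly rather than invoking the $\alpha=2$ case as known, and it yields the explicit exponents $\gamma=2$ (for $\alpha\ge2$) and $\gamma=1$ (for $\alpha<2$), which are sharper than the implicit $\gamma$ the paper obtains from the $\delta$-optimisation. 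The paper's approach, on the other hand, is more modular: once the $\alpha=2$ case is in hand, everything reduces to it without any case-splitting on the pointwise inequality. Both proofs ultimately rely on Lemma~\ref{prop: interpolation}, though for different purposes (truncation error control vs.\ bounding $\|\eta^N\|_{L^\alpha}^\alpha$ in the $\alpha<2$ branch of your argument).
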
\begin{proof}In the case $\alpha=2$, the $\alpha$-entropy dissipation $\cD_{\alpha,N}$ coincides with a discrete Dirichlet form, and the conclusion (\ref{eq: nonstandard block convolution via dirichlet 1}) is a piece of folklore, which we will assume. We will now show how to extend this to any $\alpha\ge 1$. \bigskip \\ Let us fix $\eta^N \in L^1_{\ge 0}(\TND)$ and define, for $\delta\in(0,1)$, the function\begin{equation}
		\eta^{N, \delta}(x):=\delta\lor \eta^N(x)\land \delta^{-1} 
	\end{equation} so that $\eta^{N,\delta}\in L^1_{\ge 0}(\TND)$ with $\langle 1, \eta^{N,\delta}\rangle \le a+\delta \le a+1$.We define $\overline{\eta}^{N,\delta,r}$ in the same way as $\overline{\eta}^{N,r}$,  with $\eta^{N,\delta}$ in place of $\eta^N$. By applying Lemma \ref{prop: interpolation}, we have \begin{equation} \label{eq: beta norm} \|\eta^{N,\delta}\|_{L^\beta(\TTd)}^\beta \le C(1+a)(1+\cD_{\alpha,N}(\eta^N)) \end{equation} for some $\beta>\alpha$, and the same extends to $\|\overline{\eta}^{N,\delta,r}\|_{L^\beta(\TND)}^\beta$ by Jensen's inequality.  \paragraph{\textbf{Step 1. From $\eta^{N,\delta}$ to $\overline{\eta}^{N,\delta,r}$ in $L^2(\TTd)$.}}  We begin with the observation that, for any $\alpha\ge 1$ and for all $w,z\in [\delta, \delta^{-1}]$, \begin{equation}
		(w^{\alpha/2}-z^{\alpha/2})^2 \ge \left(\frac{\alpha}{2}\right)^2 \delta^{|\alpha-2|}(z-w)^2	\end{equation} which implies that \begin{equation}
			\cD_{2,N}(\eta^{N,\delta})\le C\d^{-|\alpha-2|}\cD_{\alpha,N}(\eta^N)
		\end{equation} for some $C=C(\alpha)$ independent of $N, \delta,r$. We now apply the case $\alpha=2$ to  $\eta^{N,\delta}$ to find \begin{equation}
			\label{eq: standard dirichlet to block} \left\|\eta^{N,\delta}-\overline{\eta}^{N,\delta,r}\right\|_{L^2(\TTd)}^2 \le C\delta^{-|\alpha-2|}(1+a)r^{\gamma_0} \left(1+\cD_{\alpha,N}(\eta^N)\right)
		\end{equation} where $\gamma_0$ is the exponent in the case $\alpha=2$. \paragraph{\textbf{Step 2. From $\eta^{N,\delta}$ to $\overline{\eta}^{N,\delta,r}$ in $L^\alpha(\TTd)$.}} We now convert (\ref{eq: standard dirichlet to block}) to an estimate in $L^\alpha(\TTd)$, differentiating between the cases $\alpha\le 2, \alpha>2$. In either case, we will prove that \begin{equation} \label{eq: alpha norm claim} \begin{split}
			\left\|\eta^{N,\delta}-\overline{\eta}^{N,\delta,r}\right\|_{L^\alpha(\TTd)}^\alpha &\le C(1+a)\delta^{-\lambda_1}r^{\lambda_2}(1+\cD_{\alpha,N}(\eta^N))
		\end{split}\end{equation} for some exponents $\lambda_1, \lambda_2>0$ depending only on $\alpha$. \paragraph{\textbf{2a. Case 1: $\alpha\le 2$.}} In the case $\alpha\le 2$, the monotonicity of $L^p$ norms implies that (\ref{eq: standard dirichlet to block}) still holds with the $\|\cdot\|_{L^\alpha(\TTd)}$ norm in place of $\|\cdot\|_{L^2(\TTd)}$. The claim (\ref{eq: alpha norm claim}) follows by raising both sides to the power of  $\frac\alpha 2 \le 1$. \paragraph{\textbf{2b. Case 2: $\alpha> 2$.}} In this case, we recall that $\eta^{N,\delta}$ and $\overline{\eta}^{N,\delta,r}$ are bounded in $L^\beta(\TTd)$ by (\ref{eq: beta norm}). The claim now follows by recalling that $\alpha \in (2, \beta)$ and using the interpolation \begin{equation} \label{eq: alpha norm after interpolation} \begin{split}
			\left\|f\right\|_{L^\alpha(\TTd)}^\alpha &\le \left\|f\right\|_{L^2(\TTd)}^\frac{2(\beta-\alpha)}{\beta-2}\left\|f\right\|_{L^\beta(\TTd)}^\frac{\beta(\alpha-2)}{\beta-2} 
		\end{split}\end{equation} for $f\in L^\beta(\TTd)$.  \paragraph{\textbf{Step 3. From $\eta^N$ to $\eta^{N,\delta}$ in $L^\alpha(\TTd)$.}} Finally, let us estimate the error induced by the truncation. Using a Chebychev estimate for the truncation at $\delta^{-1}$, we estimate \begin{equation} \label{eq: small truncation error}
			\|\eta^{N,\delta}-\eta^N\|^\alpha_{L^\alpha(\TTd)} \le \delta^\alpha+ \delta^{\beta-\alpha}\left\|\eta^N\right\|_{L^\beta(\TTd)}^\beta
		\end{equation} and by Jensen's inequality the same extends to $\|\overline{\eta}^{N,\delta,r}-\overline{\eta}^{N,r}\|_{L^\alpha(\TTd)}$. Recalling (\ref{eq: beta norm}) again, we find that \begin{equation} \label{eq: small truncation error'}\begin{split}& \left\|\eta^{N,\delta}-\eta^N\right\|_{L^\alpha(\TTd)}^\alpha + \left\|\overline{\eta}^{N,\delta,r}-\overline{\eta}^{N,r}\right\|_{L^\alpha(\TTd)}^\alpha \\& \hspace{4cm} \le C(1+a)(\delta^\alpha+\delta^{\beta-\alpha})(1+\cD_{\alpha,N}(\eta^N)). \end{split}\end{equation} \paragraph{\textbf{Step 4. Reassembly and Conclusion.}} Gathering (\ref{eq: alpha norm claim}) and (\ref{eq: small truncation error'}), we find that for any $\delta<1$, \begin{equation}
			\left\|\eta^N-\overline{\eta}^{N,r}\right\|_{L^\alpha(\TTd)}^\alpha \le C(1+a)(1+\cD_{\alpha,N}(\eta^N))\left(\delta^\alpha +\delta^{\beta-\alpha}+ \delta^{-\lambda_1}r^{\lambda_2}\right).
		\end{equation} We now take $\delta=r^\frac{\lambda_2}{\min(\alpha, \beta-\alpha)+\lambda_1}\in (0,1)$ to conclude the claim for a new exponent $\gamma$. \end{proof}  The final lemma we will need before turning to the proof of Lemma \ref{lemma: DWtS} establishes a consistency between the averaging $\eta^N\mapsto \overline{\eta}^{N,r}$ on the torus $\TND$ and the continuum averaging $u\mapsto \overline{u}^r$. \begin{lemma}[Convergence of local averages] \label{lemma: convergence of convolution} Let $\eta^N_\bullet \in \mathbb{D}_N$ be a sequence converging, in the topology of $\mathbb{D}$, to $u_\bullet \in \mathcal{C}$. Then for any $r>0$, \begin{equation}\label{eq: convergence of convolution}
	\sup_{t\le \tf}\sup_{x\in \TTd}
\left|\overline{\eta}^{N,r}_{t}(x)- \overline{u}^r_t(x)\right| \to 0. \end{equation}
	
\end{lemma}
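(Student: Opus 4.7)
The plan is to split the estimate into three pieces: the Riemann-sum error between $\overline{\eta}^{N,r}$ and the standard continuum convolution of the step-function extension of $\eta^N$; the error between two continuum convolutions; and a small error accounting for the step-function nature of $\overline{\eta}^{N,r}$.

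First, set $\Phi^N_t:=\varrho_r\star\eta^N_t$ and $\Phi_t:=\varrho_r\star u_t = \overline{u}^r_t$, and let $x_*(x)\in\TND$ be the centre of the cube $c^N_x$ containing $x$, so that $\overline{\eta}^{N,r}_t(x)=\overline{\eta}^{N,r}_t(x_*)$. Since $\eta^N_t$ is constant on each $c^N_y$,
\begin{equation*}
\kappa_{r,N}\overline{\eta}^{N,r}_t(x_*) = \sum_{y\in\TND}\int_{c^N_y}\varrho_r(x_*-y)\,\eta^N_t(z)\,dz,\qquad \Phi^N_t(x_*) = \sum_{y}\int_{c^N_y}\varrho_r(x_*-z)\,\eta^N_t(z)\,dz,
\end{equation*}
so the identity $|\varrho_r(x_*-y)-\varrho_r(x_*-z)|\le \|\nabla\varrho_r\|_\infty/N$ for $z\in c^N_y$ gives
\begin{equation*}
|\kappa_{r,N}\overline{\eta}^{N,r}_t(x_*)-\Phi^N_t(x_*)|\le \|\nabla\varrho_r\|_\infty N^{-1}\|\eta^N_t\|_{L^1}.
\end{equation*}
Since $\Phi_t$ is Lipschitz in $x$ with constant $\|\nabla\varrho_r\|_\infty\|u_t\|_{L^1}$, $|\Phi_t(x)-\Phi_t(x_*)|\le c_d\|\nabla\varrho_r\|_\infty\|u_t\|_{L^1}N^{-1}$. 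Together with $\kappa_{r,N}\to 1$ this reduces the lemma to establishing uniform boundedness of $\|\eta^N_t\|_{L^1}$ in $(N,t)$ and $\sup_{t,x}|\Phi^N_t(x)-\Phi_t(x)|\to 0$.

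Next, since $u_\bullet\in\mathcal{C}$, Skorokhod convergence to $u_\bullet$ collapses to uniform-in-$t$ convergence in $d$, i.e.\ $\sup_t d(\eta^N_t,u_t)\to 0$. By a standard triangle-inequality argument exploiting continuity of $t\mapsto u_t$ on the compact interval $[0,\tf]$, this upgrades to $\sup_t|\langle f,\eta^N_t-u_t\rangle|\to 0$ for every $f\in C(\TTd)$. Testing against $f\equiv 1$ yields $M:=\sup_{N,t}\|\eta^N_t\|_{L^1}<\infty$, taking care of the $L^1$-bound needed above.

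Finally, for uniformity in $x$, write $\Phi^N_t(x)-\Phi_t(x)=\langle f_x,\eta^N_t-u_t\rangle$ with $f_x(\cdot):=\varrho_r(x-\cdot)$. The family $\{f_x:x\in\TTd\}\subset C(\TTd)$ is uniformly bounded by $\|\varrho_r\|_\infty$ and equicontinuous with modulus only depending on $r$, so by Arzel\`a--Ascoli it is totally bounded in $C(\TTd)$. Given $\varepsilon>0$, pick a finite $\varepsilon/(3M)$-net $\{f_{x_1},\dots,f_{x_k}\}$; by the previous paragraph, $\sup_t|\langle f_{x_i},\eta^N_t-u_t\rangle|<\varepsilon/3$ for all $i\le k$ once $N$ is large. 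The triangle inequality then gives $\sup_{t,x}|\langle f_x,\eta^N_t-u_t\rangle|<\varepsilon$ for large $N$, proving the required convergence. The only mildly delicate step is this passage from pointwise-in-$x$ weak convergence to uniform-in-$x$ convergence, and it is precisely the smoothness of $\varrho_r$ (encoded in the equicontinuity of $\{f_x\}$) that supplies it.
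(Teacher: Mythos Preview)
Your proof is correct and follows essentially the same strategy as the paper's: both reduce to showing $\sup_t|\langle \varrho_r(\cdot-x),\eta^N_t-u_t\rangle|\to 0$ uniformly in $x$, obtain uniform-in-$t$ weak convergence from Skorokhod convergence to a continuous limit, and then invoke Arzel\`a--Ascoli compactness of the family $\{\varrho_r(\cdot-x):x\in\TTd\}$ to upgrade to uniformity in $x$. The only substantive difference is that you explicitly separate out the Riemann-sum error between the discrete average $\kappa_{r,N}\overline{\eta}^{N,r}_t(x_*)$ and the continuum convolution $\Phi^N_t(x_*)=\langle\varrho_r(\cdot-x_*),\eta^N_t\rangle$, bounding it by $\|\nabla\varrho_r\|_\infty N^{-1}\|\eta^N_t\|_{L^1}$, whereas the paper writes these as equal; your extra care here is warranted, since the two expressions differ by exactly this midpoint-rule error.
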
 \begin{proof}
	Firstly, the assumption $u_\bullet \in \mathcal{C}$ and the convergence $\eta^N_\bullet\to u_\bullet$ in $\mathbb{D}$,  it follows from definition of the topology on $\mathbb{D}$ that \begin{equation} \label{eq: unf} \sup_{t\le \tf}\left|\langle \varphi, \eta^N_t-u_t\rangle\right| \to 0 \end{equation} for any $\varphi \in C(\TTd)$. It follows that the masses $\langle 1, \eta^N_t\rangle$, $\langle 1, u_t\rangle$ are bounded uniformly in $N$ and $t$. Since $\overline{\eta}^{N,r}$ is defined to be constant on the boxes $c^N_x, x\in \TND$ and $\overline{u}^r_t$ is uniformly continuous in $x$, it is now sufficient to prove (\ref{eq: convergence of convolution}) with the supremum restricted to $x\in \TND$. We next observe that, for $x\in \TND$, \begin{equation}\label{eq: representation of convolution} \overline{\eta}^{N,r}(x)=\frac{1}{\kappa_{r,N}}\left\langle \varrho_r(\cdot-x),\eta^N\right \rangle = \frac{1}{\kappa_{r,N}}\int_{\TTd}\varrho_r(y-x)\eta^N(y)dy. \end{equation}Let us clarify that the third expression is the definition of the second, but this is not the definition of $\overline{\eta}^{N,r}(z)$ for $z\in \TTd\setminus \TND$. Since the functions $\varrho_r(\cdot-x)$ are continuous, we apply (\ref{eq: unf}) for each $x$ to see that\begin{equation} \label{eq: lower bound conv of conv 1}
		\sup_{t\le \tf}\left|\langle \varrho_r(\cdot-x), \eta^N_t -u_t\rangle \right|\to 0.
	\end{equation}  Next, observe that the family $(\varrho_r(\cdot-x))_{x\in \TTd}$ is compact in the uniform topology by Arzel\`a-Ascoli, and the aforementioned bound on the mass implies the continuity of $x\mapsto \langle \varrho_r(\cdot-x), \eta^N_t\rangle$, uniformly in $N$ and $t$. It therefore follows that the convergence in (\ref{eq: lower bound conv of conv 1}) is further uniform in $t\le \tf$ and $x\in \TTd$, that is,  \begin{equation}
		\sup_{t\le \tf}\sup_{x\in \TTd}\left|\langle \varrho_r(\cdot-x),\eta^N_t-u_t\rangle \right|\to 0. 
	\end{equation} Finally, the normalisation $\kappa_{r,N}\to 1$ is independent of $x$, and returning to (\ref{eq: representation of convolution}) we conclude that \begin{equation}
		\sup_{t\le \tf} \sup_{x\in \TND} \left|\overline{\eta}^{N,r}_t(x)-\overline{u}^r_t(x)\right|\to 0
	\end{equation} and the proof is complete.  \end{proof}
 We may now assemble the various parts to prove Lemma \ref{lemma: DWtS}.
\begin{proof}[Proof of Lemma \ref{lemma: DWtS}] First of all, we observe that $u_\bullet \in \mathcal{C}$ and the convergence in $\mathbb{D}$ imply that $a=\sup_N \sup_t \langle 1, \eta^N_t+u_t\rangle<\infty$, and the boundedness of $\eta^N_\bullet$ in $L^\beta([0,\tf]\times\TTd)$ follows immediately from Proposition \ref{prop: interpolation}. It is readily checked that this norm is lower semicontinuous for the topology of $\mathbb{D}$, which also implies that $u_\bullet\in L^\beta([0,\tf]\times\TTd)$.  \paragraph{\textbf{Step 1. Strong Convergence in $L^\alpha([0,\tf]\times\TTd)$.}} We next show how to improve the convergence to norm convergence in $L^\alpha([0,\tf]\times\TTd)$. Fix $\e>0$, and observe that for all $t\le \tf$ such that $u_t\in L^\alpha(\TTd)$,  \begin{equation}
	\left\|\overline{u}^r_t-u_t\right\|^\alpha _{L^\alpha(\TTd)}\to 0
\end{equation} by standard properties of convolutions. Thanks to the gain of integrability, this holds for almost all $t\le \tf$, and the left-hand side is dominated by $2^\alpha\|u_t\|_{L^\alpha(\TTd)}^\alpha \in L^1([0,\tf])$, so by dominated convergence it holds for all $r>0$ small enough that \begin{equation}\label{eq: u to local avg}
	\int_0^\tf \left\|\overline{u}^r_t-u_t\right\|_{L^\alpha(\TTd)}^\alpha dt<\e^\alpha. 
\end{equation} We now apply Lemma \ref{lemma: nonstandard block convolution via dirichlet} to see that, for all $t$, \begin{equation}
	\|\overline{\eta}^{N,r}_t-\eta^{N}_t\|_{L^\alpha(\TTd)}^\alpha \le C(1+a)r^\gamma(1+\cD_{\alpha,N}(\eta^{N}_t)) \end{equation} for some $\gamma>0$. Integrating in time, we conclude that \begin{equation} \label{eq: etan to local avg} \int_0^\tf \left\|\overline{\eta}^{N,r}_t-\eta^N_t\right\|_{L^\alpha(\TTd)}^\alpha dt \le C(1+a)r^\gamma\left(\tf+\int_0^\tf \cD_{\alpha,N}(\eta^N_s)ds\right) \end{equation} and the last factor is at most $\tf+\cF_{\alpha,N}(\eta^N_\bullet)$, which is bounded in $N$. In particular, we may choose $r>0$ small enough that (\ref{eq: u to local avg}) holds and that the right-hand side of (\ref{eq: etan to local avg}) is at most $\e^\alpha$ for all $N$. Since $u_\bullet$ was assumed to be in $\mathcal{C}$, it follows from \ref{lemma: convergence of convolution} that  \begin{equation} \label{eq: discrete local avg to continuous local avg 2} \|\overline{\eta}^{N,r}_\bullet - \overline{u}^r_\bullet\|_{L^\alpha([0,\tf]\times\TTd)}\to 0.\end{equation} Combining with (\ref{eq: u to local avg}, \ref{eq: etan to local avg}), we finally conclude that \begin{equation}
	\limsup_N \left\|\eta^{N}_\bullet-u_\bullet\right\|_{L^\alpha([0,\tf]\times\TTd)} \le 2\e.
\end{equation} and since $\e>0$ was arbitrary, the step is complete. \paragraph{\textbf{Step 2. $\Gamma$-lower bound.}} We next prove the $\Gamma$-lower bound (\ref{eq: Gamma lower bound}). First, because $u_\bullet \in \mathcal{C}$, $\eta^N_t\to u_t$ in the weak-$\star$ topology of $L^1(\TTd)$ for all $t$, and by lower semicontinuity of $\cH$ in this topology, \begin{equation} \label{eq: gamma conv 1}
	\sup_t \cH(u_t)\le \liminf_N \sup_t \cH(\eta^N_t).
\end{equation} For the other term, set $\xi^N_t(x):=(\eta^N_t(x))^{\alpha/2}$ and $v_t(x):=u_t(x)^{\alpha/2}$. From step 1, $\xi^N \to v$ strongly in $L^1([0,\tf]\times\TTd)$, and hence strongly in $L^1(\TTd)$ for almost all $t\in [0,\tf]$. By the well-known $\Gamma$-convergence $\Gamma-\lim \cD_{2,N}=\frac12\left\|\h\cdot\h\right\|_{\dot{H}^1(\TTd)}^2$ in this topology, it follows that for almost all $t$, \begin{equation} \label{eq: pw liminf} \cD_{\alpha}(u_t)=\frac{2}{\alpha}\cD_2(v_t)\le \frac2\alpha\h\liminf_N \cD_{2,N}(\xi^N_t) = \liminf_N \cD_{\alpha,N}(\eta^N_t). \end{equation} Integrating in time and using Fatou's lemma, it follows that \begin{equation}\label{eq: integraged liminf}
	\int_0^\tf \cD_\alpha(u_t) dt \le \liminf_N \int_0^\tf \cD_{\alpha,N}(\eta^N_t)dt
\end{equation} and combining with (\ref{eq: gamma conv 1}), the claim (\ref{eq: Gamma lower bound}) follows. \paragraph{\textbf{Step 3. Construction of $\cDD$-Open Neighbourhoods.}}  We now deduce (\ref{eq: ALS claim 1} - \ref{eq: ALS claim 2}) by contradiction from steps 1-2.  \paragraph{\textbf{3a. Proof of (\ref{eq: ALS claim 1}).}} Let us fix $u_\bullet, \lambda, \cV$ as in the claim, and suppose for a contradiction that no such $\mathcal{U}$ exists. Recalling that the topology of $\mathbb{D}$ is induced by a metric \cite{jakubowski1986skorokhod}, there exists a decreasing sequence $\cU_k$ of $\cDD$-open sets whose intersection is the singleton $\{u_\bullet\}$. By the contradiction assumption applied to $\cU_k$, we can choose an increasing sequence $N_k\to \infty$ and $\eta^{N_k}_\bullet \in (\cU_k\setminus \cV)\cap \cDD_{N_k}$ enjoying the uniform bound $\cF_{\alpha, N_k}(\eta^{N_k}_\bullet)\le \lambda$. By the choice of $\cU_k$, we have $\eta^{N_k}_\bullet \to u_\bullet$ in the topology of $\cDD$, and it follows by step 1  that $\eta^{N_k}_\bullet \to u_\bullet$ in the topology of $L^\alpha([0,\tf]\times\TTd)$. On the other hand, this is absurd, since $\eta^{N_k}_\bullet \not \in \cV$ for any $k$, and we conclude by contradiction that (\ref{eq: ALS claim 1}) holds.  \\ \paragraph{\textbf{3b. Proof of (\ref{eq: ALS claim 2}).}} The other case is similar. If we assume for a contradiction that (\ref{eq: ALS claim 2}) is false, the same choice of $\cU_k$ produces $N_k\to \infty$, $\eta^{N_k}_\bullet \in \mathbb{D}_{N_k}$ with $\eta^{N_k}_\bullet \to u_\bullet$ in the topology of $\mathbb{D}$, while $\cF_{\alpha,N_k}(\eta^{N_k}_\bullet)\le \lambda$. This is the setting of step 2, and we conclude that $\cF_\alpha(u_\bullet)<\infty$, in contradiction to the hypothesis of the claim. \end{proof} \begin{remark}\label{rmk: continnum nonstandard convolution} Exactly the same arguments also establish a continuum version of all the same results. Replacing Lemma \ref{lemma: nonstandard block convolution via dirichlet}, for $u\in L^1_{\ge 0}(\TTd)$ with $u^{\alpha/2}\in \dot{H}^1(\TTd)$ and bounded mean $\langle 1, u\rangle \le a$,  \begin{equation}
			\|\overline{u}^r-u\|_{L^\alpha(\TTd)}^\alpha \le C(1+a)r^\gamma(1+\cD_\alpha(u)) 
		\end{equation} for the same $\gamma>0$. Overall, if $u^{(n)}_\bullet \to u_\bullet \in \mathcal{C}$ relative to the topology of $\mathbb{D}$ and $\limsup_n \int_0^\tf \cD_\alpha(u^{(n)}_s)ds<\infty$, then $u^{(n)}_\bullet, u_\bullet$ are bounded in $L^\beta([0,\tf]\times\TTd)$, for the same $\beta>\alpha$, that $\|u^{(n)}_\bullet-u_\bullet\|_{L^\alpha([0,\tf]\times\TTd)}\to 0$ and that \begin{equation}\label{eq: lsc of D}
			\int_0^\tf \cD_\alpha(u_s)ds \le \liminf_n \int_0^\tf \cD_\alpha(u^{(n)}_s)ds.
		\end{equation}\end{remark} 
\section{Fundamental \emph{a priori} Large Deviations Estimate} \label{sec: a priori}  In this section we will prove an \emph{a priori} estimate for the particle process, which we will extensively use in the sequel.  We will see below that the scaling hypothesis (\ref{eq: scaling hypothesis}) allows us to find a genuinely probabilistic estimate on the discrete entropy dissipation at the level of realisations $\eta^N_\bullet(\omega)$, rather than the analytic cotravellers given in terms of $\text{Law}(\eta^N_t)$. Together with Lemma \ref{lemma: DWtS}, we will then be able to prove Theorem \ref{thrm: WtS}. \bigskip \\  The strategy will be to consider the exponential martingale $\mathcal{Z}^N_t$ associated to $\cH(\eta^N_t)$ according to the general construction (\ref{eq: exponential entropy martingale}). The key computation, which we make precise in Lemma \ref{lemma: computation of entropy dissipation} below, relates $\cG_N(\frac\alpha 2\cH)$, for $\cG_N$ the nonlinear generator (\ref{eq: nonlinear generator}), to the discrete entropy dissipation $\cD_{\alpha,N}$ defined at (\ref{eq: DNA}). This will allow us to find large-deviation estimates on $\cF_{\alpha,N}(\eta^N_\bullet)$ in Lemma \ref{lemma: QY1}, which we use to show a time-continuity property and exponential tightness in Lemma \ref{lemma: ET}. \bigskip \\ We work under the standing assumption (\ref{eq: entropy finite hyp}), which immediately implies that \begin{equation} \label{eq: bound mass hyp}\limsup_{a\to \infty} \limsup_N \frac{\chi_N}{N^d}\log \PP\left(\eta^N_0\not \in X_{N,a}\right)=-\infty\end{equation} where $X_{N,a}$ is the space of configurations with mass bounded by $a$, defined in (\ref{eq: XNA}).\bigskip \\  The key estimate is as follows: \begin{lemma}\label{lemma: computation of entropy dissipation}There exists an absolute constant $C$ such that, for all $N<\infty$ and all $\eta^N\in X_N$, \begin{equation}\label{eq: central estimate}\cG_N\left(\frac{\alpha}{2}\cH\right)(\eta^N)\le - \frac{\alpha}{2} \cD_{\alpha,N}(\eta^N)+ C\left(N^2 \chi_N^{\min(\alpha/2,1)}\right)\left(1+\left\|\eta^N\right\|^\alpha_{L^\alpha(\TND)}\right).
\end{equation} In particular, if (\ref{eq: scaling hypothesis}) holds, then for any $\vartheta\in (0, \alpha/2)$ and $a\ge 1$, we can find $C=C(\vartheta,a,\alpha)$ such that, whenever $\eta^N\in X_{N,a}$,   \begin{equation}\label{eq: central estimate 2}\cG_N\left(\frac{\alpha}{2}\cH\right)(\eta^N)\le  -\left(\frac{\alpha}{2}-\vartheta\right) \cD_{\alpha,N}(\eta^N)+ C.
\end{equation}  \end{lemma}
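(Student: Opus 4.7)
The plan is to compute $\cG_N(\frac{\alpha}{2}\cH)$ explicitly jump-by-jump, symmetrize over pairs of opposite transitions, and reduce the target inequality (\ref{eq: central estimate}) to a single pointwise logarithmic bound. Writing $a_x := \eta^N(x)/\chi_N \in \NN$ and using $\psi(u) = u\log u - u$, a direct calculation gives
\[\tfrac{\alpha N^d}{2\chi_N}\bigl(\cH(\eta^{N,x,y}) - \cH(\eta^N)\bigr) = \tfrac{\alpha}{2}\bigl\{(a_x-1)\log(a_x-1) - a_x\log a_x + (a_y+1)\log(a_y+1) - a_y\log a_y\bigr\}\]
with the convention $0\log 0 = 0$. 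The key computational observation is that exponentiating and multiplying by $\eta^N(x)^\alpha = \chi_N^\alpha a_x^\alpha$ yields, for $a_x, a_y \geq 1$, the clean identity
\[\eta^N(x)^\alpha \exp\!\bigl(\tfrac{\alpha N^d}{2\chi_N}(\cH(\eta^{N,x,y}) - \cH(\eta^N))\bigr) = \chi_N^\alpha a_x^{\alpha/2} a_y^{\alpha/2}\,\tilde\Phi(a_x,a_y)^{\alpha/2},\]
where $\tilde\Phi(a_x,a_y) := (1-1/a_x)^{a_x-1}(1+1/a_y)^{a_y+1}$. Crucially, the prefactor equals $\eta^N(x)^{\alpha/2}\eta^N(y)^{\alpha/2}$, which is the cross-term needed to reconstruct $-(\eta^N(x)^{\alpha/2} - \eta^N(y)^{\alpha/2})^2$ after symmetrization.

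Symmetrising the ordered-pair sum via the reversibility $p^N(x,y) = p^N(y,x)$ and substituting, the claim (\ref{eq: central estimate}) reduces to the per-pair inequality
\[\chi_N^\alpha a_x^{\alpha/2}a_y^{\alpha/2}\bigl[\tilde\Phi(a_x,a_y)^{\alpha/2} + \tilde\Phi(a_y,a_x)^{\alpha/2} - 2\bigr] \leq \mathrm{err}(x,y),\]
which will be established from the pointwise bound
\[\log\tilde\Phi(a_x,a_y) = (a_x-1)\log(1-1/a_x) + (a_y+1)\log(1+1/a_y) \leq 1/a_x + 1/a_y,\]
obtained from $\log(1\pm t) \leq \pm t$. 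Since $(1+1/n)^{n+1} \leq 4$ makes $\tilde\Phi$ universally bounded, the inequality $e^t - 1 \leq t e^{t_+}$ upgrades this to $\tilde\Phi^{\alpha/2} - 1 \leq C(\alpha)(1/a_x + 1/a_y)$, producing a per-pair error of order $\chi_N\bigl[\eta^N(x)^{\alpha/2-1}\eta^N(y)^{\alpha/2} + \eta^N(x)^{\alpha/2}\eta^N(y)^{\alpha/2-1}\bigr]$. The boundary contributions $a_x a_y = 0$ (where the pair contributes in only one direction) are handled separately and give terms of order $\chi_N^{\alpha/2}(1 + \eta^N(\cdot)^\alpha)$.

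To convert these per-pair errors into the claimed form $CN^2\chi_N^{\min(\alpha/2,1)}(1 + \|\eta^N\|_{L^\alpha(\TND)}^\alpha)$, I distinguish the regimes $\alpha \geq 2$ and $1 \leq \alpha < 2$. For $\alpha \geq 2$ the exponent $\alpha/2 - 1$ is nonnegative and Young's inequality gives $\eta^N(x)^{\alpha/2-1}\eta^N(y)^{\alpha/2} \leq C(1 + \eta^N(x)^\alpha + \eta^N(y)^\alpha)$, keeping the outer $\chi_N$ intact. For $\alpha < 2$ the same power is negative but is bounded by $\chi_N^{\alpha/2-1}$ using $\eta^N(x) \geq \chi_N$ whenever positive; this combines with the outer $\chi_N$ to give $\chi_N^{\alpha/2}\eta^N(y)^{\alpha/2} \leq \chi_N^{\alpha/2}(1 + \eta^N(y)^\alpha)$. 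In either case, summing over ordered pairs with the prefactor $\tfrac{d}{N^{d-2}}$ and using $\sum_y p^N(x,y) = 1$ yields (\ref{eq: central estimate}).

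The improved estimate (\ref{eq: central estimate 2}) follows from (\ref{eq: central estimate}) routinely: under the scaling hypothesis $N^2\chi_N^{\min(\alpha/2,1)} \leq K$, and Lemma \ref{prop: interpolation} together with interpolation of $\|\eta^N\|_{L^\alpha}$ between $\|\eta^N\|_{L^1} \leq a$ and $\|\eta^N\|_{L^\beta}$ (with $\beta > \alpha$) gives $\|\eta^N\|_{L^\alpha}^\alpha \leq C(a,\alpha,\beta)(1 + \cD_{\alpha,N}(\eta^N))^{(\alpha-1)/(\beta-1)}$, with exponent strictly less than $1$; Young's inequality converts this into $C_\vartheta + \vartheta'\cdot \cD_{\alpha,N}(\eta^N)$, which absorbs into the main $-\frac{\alpha}{2}\cD_{\alpha,N}$ term. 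The chief obstacle is carrying out the algebraic reduction of $\eta^N(x)^\alpha e^{D_{xy}}$ to the symmetric form involving $\tilde\Phi$ and verifying the logarithmic inequality at the boundary $a_x = 1$ where $0\log 0$ must be interpreted consistently.
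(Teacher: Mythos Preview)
Your proof is correct and follows essentially the same route as the paper: both show that the per-jump gain term $\eta^N(x)^\alpha e^{D_{xy}}$ equals $\eta^N(x)^{\alpha/2}\eta^N(y)^{\alpha/2}$ up to an error of order $1/a_x+1/a_y$, then symmetrise and split the remainder into the cases $\alpha\gtrless 2$. The only cosmetic difference is that the paper factors the product into $g^N_{x,-}\cdot g^N_{y,+}$ and Taylor-expands each separately to $e^{\mp\alpha/2}(\eta^N(\cdot))^{\alpha/2}$, whereas you keep the product together as $\tilde\Phi^{\alpha/2}$ and use $\log(1\pm t)\le\pm t$ directly; the interpolation step for (\ref{eq: central estimate 2}) is likewise identical up to the choice of sublinear exponent.
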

\begin{proof} We start by applying the definition (\ref{eq: nonlinear generator}) to find \begin{equation} \label{eq: start GN}\begin{split}& \cG\left(\frac\alpha2 \cH\right)(\eta^N)\\& \hspace{1cm}=\frac{\chi_N}{N^d}\cdot \frac{dN^2}{\chi_N}\sum_{x,y\in \TND} p^N(x,y)(\eta^N(x))^\alpha\left(e^{N^d \alpha (\cH(\eta^{N,x,y})-\cH(\eta^N))/2\chi_N}-1\right)\end{split}\end{equation} where we understand the summand to be $0$ if $\eta^N(x)=0$. Recalling the notation $\Psi(u):=u\log u$, the difference appearing in the exponent is \begin{equation}\begin{split}&\frac{N^d}{\chi_N}\left(\cH(\eta^{N,x,y})-\cH(\eta^N)\right)\\&\hspace{1cm} =\frac{\Psi(\eta^N(x)-\chi_N)-\Psi(\eta^N(x))+\Psi(\eta^N(y)+\chi_N)-\Psi(\eta^N(y))}{\chi_N}. \end{split}\end{equation}Substituting everything back into (\ref{eq: start GN}) and simplifying,  \begin{equation}\label{eq: sum gnxy}
	\cG_N\left(\frac\alpha 2\cH\right)(\eta^N)=\frac{d}{N^{d-2}}\sum_{x, y\in \TND} p^N(x,y)g^N_{x,y}[\eta^N]\end{equation} where we define $g^N_{x,y}[\eta^N]$ to be the contribution from jumps from $x$ to $y$:\begin{equation} \label{eq: gnxy} g^N_{x,y}:=g^N_{x,-}g^N_{y,+}-(\eta^N(x))^\alpha \end{equation} where we factorise the gain term as \begin{equation}
		g^N_{x,-}:=(\eta^N(x))^\alpha \exp\left(\frac{\alpha}{2}\left(\frac{\Psi(\eta^N(x)-\chi_N)-\Psi(\eta^N(x))}{\chi_N}\right)\right)
	\end{equation} which we understand to be $0$ whenever $\eta^N(x)=0$, and \begin{equation}
		g^N_{y,+}:= \exp\left(\frac{\alpha}{2}\left(\frac{\Psi(\eta^N(y)+\chi_N)-\Psi(\eta^N(y))}{\chi_N}\right)\right).
	\end{equation}  The first goal is to prove that, for some absolute constant $C$, \begin{equation} \label{eq: claim for QY}\begin{split} g^N_{x,y}\le &  (\eta^N(x))^{\alpha/2}\left((\eta^N(y))^{\alpha/2}-(\eta^N(x))^{\alpha/2}\right) \\& \hspace{2cm}+ C\chi_N^{\min(\alpha/2,1)}(1+(\eta^N(x))^\alpha+(\eta^N(y))^\alpha).\end{split}\end{equation} This will be the goal of steps 1-3. Once we have obtained this, we will post-process the inequality into the claim (\ref{eq: central estimate}) in step 4. \paragraph{\textbf{Step 1. Analysis of $g^N_{x,-}$.}} Recalling that $\eta^N(x)$ is always a multiple of $\chi_N$, we split the argument into cases. If $\eta^N(x)=0$, then (\ref{eq: claim for QY}) is immediate and there is nothing to prove. In the case $\eta^N(x)\ge 2\chi_N$, we write \begin{equation} \begin{split} \label{eq: exponential change x}
		\exp\left(\frac\alpha 2\left(\frac{\Psi(\eta^N(x)-\chi_N)-\Psi(\eta^N(x))}{\chi_N}\right)\right)&=\frac{(\eta^N(x)-\chi_N)^{\alpha(\eta^N(x)-\chi_N)/2\chi_N}}{(\eta^N(x))^{\alpha\eta^N(x)/2\chi_N}}\\& =(\eta^N(x))^{-\alpha/2}\left(1-\frac{\chi_N}{\eta^N(x)}\right)^{\frac\alpha 2\left(\frac{\eta^N(x)}{\chi_N}-1\right)}. \end{split}
	\end{equation} Still assuming $\eta^N(x)\ge 2\chi_N$, we rewrite the second factor as \begin{equation}\begin{split}
		\left(1-\frac{\chi_N}{\eta^N(x)}\right)^{\frac\alpha 2\left(\frac{\eta^N(x)}{\chi_N}-1\right)} & =\exp\left(\frac\alpha 2\left(\frac{\eta^N(x)}{\chi_N}-1\right)\ln\left(1-\frac{\chi_N}{\eta^N(x)}\right)\right). \end{split}
	\end{equation} By Taylor expanding $z\mapsto \ln (1-z), z\in [0,\frac{1}{2}]$ about $z=0$, we write \begin{equation} \begin{split}
		\left(\frac{\eta^N(x)}{\chi_N}-1\right)\ln\left(1-\frac{\chi_N}{\eta^N(x)}\right) &=-1+\frac{\lambda^N_{x,-}\chi_N}{\eta^N(x)}\end{split} 
	\end{equation} for some $\lambda^N_{x,-}$ enjoying a bound $0\le \lambda^N_{x,-}\le C$ uniform in $N, x, \eta^N$. It follows that, for $\eta^N(x)\ge 2\chi_N$, \begin{equation}\label{eq: bound gnxminus below} \begin{split} 
		g^N_{x,-}-e^{-\alpha/2}(\eta^N(x))^{\alpha/2}&= (\eta^N(x))^{\alpha/2}e^{-\alpha/2}\left(\exp\left(\frac{\chi_N \lambda^N_{x,-}}{\eta^N(x)}\right)-1\right) \\ & =: r^N_{x,-}e^{-\alpha/2}.\end{split}
	\end{equation} The same conclusion (\ref{eq: bound gnxminus below}) also holds for the case $\eta^N(x)=\chi_N$, now taking $\lambda^N_{x,-}=\frac\alpha2$. Finally, for suitable constants $C$, we can bound the remainder above in the case $\alpha\in [1,2)$ by\begin{equation}\label{eq: rnx1}
		r^N_{x,-} \le \chi_N^{\alpha/2}C\sup_{t\ge 1}\left(t^{\alpha/2}(e^{t^{-1}}-1)\right)=C\chi_N^{\alpha/2} 
	\end{equation} and if $\alpha\ge 2$ then \begin{equation} \label{eq: rnx2}
		r^N_{x,-}\le C\chi_N(\eta^N(x))^{\frac\alpha2-1}	.\end{equation}\paragraph{\textbf{Step 2. Analysis of $g^N_{y,+}$.}} We follow the same argument for $g^N_{y,+}$, distinguishing between the cases $\eta^N(y)\ge \chi_N, \eta^N(y)=0$. For the case $\eta^N(y)\ge \chi_N$, the estimate $\ln(1+x)\le x$ produces \begin{equation} \label{eq: exponential change y}\begin{split} g^N_{y,+}&=(\eta^N(y))^{\alpha/2}\exp\left(\frac{\alpha}{2}\left(\frac{\eta^N(y)}{\chi_N}+1\right)\log\left(1+\frac{\chi_N}{\eta^N(y)}\right)\right) \\ & \le (\eta^N(y))^{\alpha/2} e^{\alpha/2}\exp\left(\frac{\alpha}{2}\frac{\chi_N}{\eta^N(y)}\right) =: e^{\alpha/2}\left((\eta^N(y))^{\alpha/2}+r^N_{y,+}\right).
		\end{split}	\end{equation} As in step 1, if $\alpha\in [1,2)$ we bound the remainder by \begin{equation}\label{eq: rny1} r^N_{y,+}\le C\chi_N^{\alpha/2}\sup_{t\ge 1}\left(t^{\alpha/2}(e^{t^{-1}}-1)\right)\le C\chi_N^{\alpha/2} \le C\chi_N^{\alpha/2}\end{equation} while for $\alpha\ge 2$\begin{equation}\label{eq: rny2}
		r^N_{y,+}\le C\chi_N(1+(\eta^N(y))^{\frac\alpha2-1}).
	\end{equation} For the excluded case $\eta^N(y)=0$, we have the direct computation \begin{equation}
	r^N_{y,+}=e^{-\alpha/2}g^N_{y,+}=e^{-\alpha/2}\chi_N^{\alpha/2}	\end{equation} so that  (\ref{eq: rny1}, \ref{eq: rny2}) apply unconditionally, up to increasing the constant $C$. \paragraph{\textbf{Step 3. Reassembly}} We now multiply the bounds found in the previous steps. In either case $\{1\le \alpha<2\}$ or $\{\alpha\ge 2\}$, we find \begin{equation}
		\begin{split}
			g^N_{x,y}&=e^{-\alpha/2}((\eta^N(x))^{\alpha/2}+r^N_{x,-}))\times e^{\alpha/2}((\eta^N(y))^{\alpha/2}+r^N_{y_+}))-(\eta^N(x))^\alpha \\ & =:(\eta^N(x))^{\alpha/2}\left((\eta^N(y))^{\alpha/2}-(\eta^N(x))^{\alpha/2}\right)+r^N_{x,y}
		\end{split} 
	\end{equation} where we consolidate all terms involving at least one of $r^N_{x,-}, r^N_{y,+}$ into a single error $$r^N_{x,y}=(\eta^N(y))^{\alpha/2}r^N_{x,-}+(\eta^N(x))^{\alpha/2}r^N_{y-+}+r^N_{x,-}r^N_{y,+}$$ and the claim (\ref{eq: claim for QY}) now reduces to finding a suitable bound for $r^N_{x,y}$. For $1\le \alpha<2$, we gather (\ref{eq: rnx1}, \ref{eq: rny1}) to find \begin{equation}\begin{split}
		r^N_{x,y}&\le C\chi_N^{\alpha/2}\left((\eta^N(y))^{\alpha/2}+(\eta^N(x))^{\alpha/2}+C\chi_N^{\alpha/2}\right) \\ & \le C\chi_N^{\alpha/2}\left(1+(\eta^N(x))^{\alpha}+(\eta^N(y))^\alpha\right). \end{split}
	\end{equation} In the case $\alpha\ge 2$, we combine (\ref{eq: rnx2}, \ref{eq: rny2}) to obtain \begin{equation} \begin{split} r^N_{x,y}&\le C\chi_N\left((\eta^N(x)^{\frac\alpha2-1}(\eta^N(y))^\frac\alpha2+\left(1+(\eta^N(y))^{\frac\alpha2-1}\right) \eta^N(x))^\frac\alpha2\right) \\ & \hspace{1.5cm}+C^2\chi_N^2\left((\eta^N(x)^{\frac\alpha2-1}\left(1+(\eta^N(y))^{\frac{\alpha}{2}-1}\right)\right)\\ & \le C\chi_N\left((1+((\eta^N(x))^{\alpha}+(\eta^N(y))^\alpha\right)\end{split} \end{equation} possibly up to a new choice of $C$. In either case the claim (\ref{eq: claim for QY}) is proven. \paragraph{\textbf{Step 4. Summing over $x,y$.}} We now substitute (\ref{eq: claim for QY}) into (\ref{eq: sum gnxy}) and perform the sum over $x,y$. In the main term, we recall that $p^N(x,y)=p^N(y,x)$ and symmetrise to obtain \begin{equation} \begin{split}
		&  \frac{d}{N^{d-2}}\sum_{x,y\in \TND} p^N(x,y)(\eta^N(x))^{\alpha/2}((\eta^N(y))^{\alpha/2}-\eta^N(x))^{\alpha/2})  \\ &\hspace{3cm} =-\frac{d}{2N^{d-2}}\sum_{x,y\in \TND} p^N(x,y)\left((\eta^N(x))^{\alpha/2}-(\eta^N(y))^{\alpha/2}\right)^2 \\[1ex]&\hspace{3cm} =:-\frac{\alpha}{2}\cD_{\alpha,N}(\eta^N). \end{split} \end{equation} Meanwhile, the sum in the error term produces \begin{equation}\begin{split}
			&\frac{dC\chi_N^{\min(1,\alpha/2)}}{N^{d-2}}\sum_{x,y\in \TND} p^N(x,y)(1+(\eta^N(x))^\alpha+ (\eta^N(y))^\alpha) \\ &\hspace{3.5cm} \le C\chi_N^{\min(1,\alpha/2)}N^2\left(1+\left\|\eta^N\right\|_{L^\alpha(\TND)}^\alpha\right)\end{split}
		\end{equation} and combining everything produces (\ref{eq: central estimate}). \paragraph{\textbf{Step 5. Interpolation.}} We now post-process (\ref{eq: central estimate}) into the more useful form (\ref{eq: central estimate 2}) under the scaling relation (\ref{eq: scaling hypothesis}) on $N, \chi_N$. We first absorb the factor $N^2\chi_N^{\max(\alpha/2,1)}$, which we assume to be bounded in $N$, into the factor $C$. We next use Lemma \ref{prop: interpolation} to write, for some $\beta>\alpha$, $\eta^N\in X_{N,a}$ and absolute constants $C$ varying from line to line, \begin{equation}\begin{split}
			 C\left(1+\left\|\eta^N\right\|_{L^\alpha(\TND)}^\alpha\right) &\le C\left(1+\left\|\eta^N\right\|_{L^\beta(\TND)}^\beta\right)^{\alpha/\beta} \\& \le C\left((1+a)\left[1+\cD_{\alpha,N}(\eta^N)\right]\right)^{\alpha/\beta}.
	\end{split}	\end{equation} For any $\vartheta\in(0,\frac\alpha2)$, we use the fact that $0<\frac\alpha\beta<1$ to choose $C=C(\vartheta, a,\alpha)$  such that the previous expression is at most $$ \vartheta\cD_{\alpha,N}(\eta^N)+C $$ and the proof is complete.\end{proof}

We thus find the following estimate. \begin{lemma}
			\label{lemma: QY1} Suppose that (\ref{eq: scaling hypothesis}) holds, and suppose the initial data $\eta^N_0$ satisfy (\ref{eq: entropy finite hyp}). Then \begin{equation}
				\label{eq: QY1.1} \limsup_{M\to \infty} \limsup_N \frac{\chi_N}{N^d}\log \PP\left(\cF_{\alpha,N}(\eta^N_\bullet) > M \right)=-\infty
			\end{equation} where $\cF_{\alpha,N}$ is the functional (\ref{eq: FNA}). Furthermore, for $\beta>\alpha$ as in Lemma \ref{prop: interpolation}, \begin{equation}\label{eq: LD integrability}
				\limsup_{M\to \infty}\limsup_N\frac{\chi_N}{N^d}\log \PP\left(\int_0^\tf \left\|\eta^N_t\right\|_{L^\beta(\TTd)}^\beta dt > M\right)=-\infty 
			\end{equation}and for all $\lambda>0$ and $a<\infty$, \begin{equation} \label{eq: exponential gaussian moments} \limsup_N \frac{\chi_N}{N^d}\log \EE\left[ \indiq[\eta^N_0\in X_{N,a}]\exp\left(\frac{N^d\lambda}{\chi_N}\int_0^\tf \left\|\eta^N_t\right\|_{L^\alpha(\TTd)}^\alpha dt \right)\right]<\infty.\end{equation}  
		\end{lemma}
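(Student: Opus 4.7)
The proof rests on the exponential supermartingale associated to $F_N=\tfrac{\alpha}{2}\cH$ via (\ref{eq: exponential entropy martingale}),
$$Z^N_t:=\exp\!\left(\tfrac{N^d}{\chi_N}\!\left[\tfrac{\alpha}{2}\cH(\eta^N_t)-\int_0^t\cG_N(\tfrac{\alpha}{2}\cH)(\eta^N_s)\,ds\right]\right),$$
combined with the pointwise estimate (\ref{eq: central estimate 2}) on $\cG_N(\tfrac{\alpha}{2}\cH)$. Once (\ref{eq: QY1.1}) is established, the other two claims will follow by soft arguments using Proposition \ref{prop: interpolation}.

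I would first restrict to configurations of bounded mass. The jump rule (\ref{eq: def change at jump}) preserves $\langle 1,\eta^N_\bullet\rangle$, so $\{\eta^N_0\in X_{N,a}\}$ implies $\eta^N_t\in X_{N,a}$ for all $t\le\tf$; hypothesis (\ref{eq: entropy finite hyp}) yields (\ref{eq: bound mass hyp}), which renders the complementary event negligible at the large-deviations speed $N^d/\chi_N$ once $a$ is taken large. On $\{\eta^N_0\in X_{N,a}\}$, fixing $\vartheta\in(0,\alpha/2)$ and invoking (\ref{eq: central estimate 2}) yields
$$Z^N_t\ge\exp\!\left(\tfrac{N^d}{\chi_N}\!\left[\tfrac{\alpha}{2}\cH(\eta^N_t)+(\tfrac{\alpha}{2}-\vartheta)\int_0^t\cD_{\alpha,N}(\eta^N_s)\,ds-C\tf\right]\right).$$
On $\{\cF_{\alpha,N}(\eta^N_\bullet)>M\}$ at least one of $\sup_{t\le\tf}\cH(\eta^N_t)>M/2$ or $\int_0^\tf\cD_{\alpha,N}(\eta^N_s)\,ds>M/2$ must hold; evaluating the above at a time realising the supremum (and discarding the nonnegative integral) in the first case and at $t=\tf$ (discarding the nonnegative entropy) in the second, we obtain $\sup_{t\le\tf}Z^N_t\ge\exp(\tfrac{N^d}{\chi_N}[cM-C\tf])$ with $c:=\min(\alpha/4,(\alpha/2-\vartheta)/2)>0$. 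Doob's maximal inequality gives $\PP(\sup_{t\le\tf}Z^N_t>M')\le M'^{-1}\EE[Z^N_0]$, and by (\ref{eq: entropy finite hyp}) with $\gamma=\alpha/2<\alpha$, the quantity $\tfrac{\chi_N}{N^d}\log\EE[Z^N_0]$ is bounded uniformly in $N$. Combining and sending $M\to\infty$ proves (\ref{eq: QY1.1}).

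For (\ref{eq: LD integrability}), applying Proposition \ref{prop: interpolation} at each time $t$ and integrating gives, on $\{\eta^N_0\in X_{N,a}\}$, the pathwise bound $\int_0^\tf\|\eta^N_t\|^\beta_{L^\beta(\TTd)}\,dt\le C(1+a)(\tf+\cF_{\alpha,N}(\eta^N_\bullet))$, so the large-deviations control transfers from (\ref{eq: QY1.1}). For (\ref{eq: exponential gaussian moments}), the same interpolation combined with Young's inequality (as in step 5 of the proof of Lemma \ref{lemma: computation of entropy dissipation}) shows that on $X_{N,a}$, for any $\vartheta'>0$ one has $\lambda\|\eta^N\|^\alpha_{L^\alpha(\TTd)}\le\vartheta'\cD_{\alpha,N}(\eta^N)+C(\vartheta',a,\alpha,\lambda)$. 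Choosing $\vartheta'\in(0,\alpha/2)$ and then $\vartheta\in(0,\alpha/2-\vartheta')$ in (\ref{eq: central estimate 2}), the supermartingale relation $\EE[Z^N_\tf]\le\EE[Z^N_0]$ combined with $\cH\ge 0$ bounds $\EE[\indiq[\eta^N_0\in X_{N,a}]\exp(\tfrac{N^d\vartheta'}{\chi_N}\int_0^\tf\cD_{\alpha,N}(\eta^N_s)\,ds)]$ at the desired speed by (\ref{eq: entropy finite hyp}), and the Young estimate then delivers (\ref{eq: exponential gaussian moments}).

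The main subtlety is the conditional validity of (\ref{eq: central estimate 2}) only on $X_{N,a}$; this is handled cleanly by the conservation of mass, so that a single initial event controls the trajectory throughout $[0,\tf]$. A secondary technical point is extracting the two distinct contributions $\sup_t\cH$ and $\int\cD_{\alpha,N}$ in $\cF_{\alpha,N}$ from the single supermartingale $Z^N_\bullet$, which is resolved by exploiting the nonnegativity of the omitted term at an appropriately chosen evaluation time.
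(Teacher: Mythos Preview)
Your proof is correct and follows essentially the same approach as the paper: both use the exponential supermartingale for $\tfrac{\alpha}{2}\cH$, restrict to bounded mass via conservation, apply (\ref{eq: central estimate 2}) to obtain a pathwise lower bound on $Z^N_t$, and conclude with Doob's maximal inequality together with (\ref{eq: entropy finite hyp}); the integrability claims are likewise deduced from Proposition \ref{prop: interpolation} and the supermartingale relation $\EE[Z^N_\tf]\le\EE[Z^N_0]$. Your explicit case-split on whether $\sup_t\cH$ or $\int\cD_{\alpha,N}$ exceeds $M/2$ is equivalent to the paper's direct bound $\cF_{\alpha,N}\le\tfrac{8}{\alpha}(\tfrac{\chi_N}{N^d}\log\sup_t Z^N_t+C\tf)$, which encodes the same nonnegativity observations.
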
  \begin{proof} We deal with the two items separately. \paragraph{\textbf{Step 1. Entropy and Entropy Dissipation. }}Let us fix $z<\infty$, and fix $M>0$ to be chosen later. Thanks to (\ref{eq: bound mass hyp}), we can find $a<\infty$ such that, for all but finitely many $N$, \begin{equation}\label{eq: choice of a}
			\PP\left(\eta^N_0\not \in X^a_N\right)\le \exp\left(-\frac{N^d}{\chi_N} z\right). 
		\end{equation} By the definition \eqref{eq: nonlinear generator}, \begin{equation}\label{eq: entropy mg}
			 \mathcal{Z}^N_t:= \indiq\left[\eta^N_0\not \in X_{N,a} \right]\exp\left(\frac{N^d}{\chi_N}\left(\frac\alpha2\cH(\eta^N_t)-\int_0^t\cG_N\left(\frac\alpha2\cH\right)(\eta^N_s)ds\right)\right)
		\end{equation} is a nonnegative local martingale, and hence a supermartingale. Using the final part of Lemma \ref{lemma: computation of entropy dissipation}, we can find $C$ depending on $a$ such that \begin{equation} \label{eq: lower bound a priori mg}
			\mathcal{Z}^N_t\ge \exp\left(\frac{N^d}{\chi_N}\left(\frac\alpha2\cH(\eta^N_t)+\frac\alpha4\int_0^t \cD_{\alpha,N}(\eta^N_s)ds-Ct\right)\right)\indiq[\eta^N_0\in X_{N,a}]
		\end{equation} where we have used the fact that if $\eta^N_0 \in X_{N,a}$ then $\eta^N_t\in X_{N,a}$ almost surely for all $t\ge 0$ by conservation of mass, and so \begin{equation} \label{eq: break up prob} \begin{split} & \cF_{\alpha, N}(\eta^N_\bullet)\indiq[\eta^N_0\in X_{N,a}] \le \frac{8}{\alpha}\left(\frac{\chi_N}{N^d}\log \sup_{t\le \tf} \mathcal{Z}^N_t + C\tf\right).		\end{split}\end{equation} Using Doob's inequality, for any $M$, \begin{equation} \begin{split} \label{eq: chebychev est}
		& \PP\left(\cF_{\alpha,N}(\eta^N_\bullet)>M, \eta^N_0\in X_{N,a}\right) \\&\hspace{2cm}  \le \PP\left(\sup_{t\le \tf} \mathcal{Z}^N_t > \exp\left\{\frac{N^d}{\chi_N}\left(\frac{\alpha M}{8}-C\tf\right)\right\}\right) \\ & \hspace{2cm} \le \exp\left(-\frac{N^d}{\chi_N}\left(\frac{\alpha M}{8}-C\tf \right)\right)\mathbb{E}\left[\mathcal{Z}^N_0\right]. 
		\end{split} \end{equation}Thanks to the hypothesis (\ref{eq: entropy finite hyp}), it follows that \begin{equation}\label{eq: ZN0 controlled} \limsup_N \frac{\chi_N}{N^d}\log \EE\left[\mathcal{Z}^N_0\right]<\infty. \end{equation} and we may now choose $M<\infty$ large enough to make the right-hand side of (\ref{eq: chebychev est}) at most $e^{-N^d z/\chi_N}$ for all $N$. Using the choice of $a$, it follows that $\PP(\cF_{\alpha, N}(\eta^N_\bullet)>M)\le 2e^{-N^d z/\chi_N}$ for all but finitely many $N$, and since $z$ was arbitrary, the proof of the first item is complete. \paragraph{\textbf{Step 2. Integrability Estimates}} We will now deduce the integrability estimates from the previous part. We obtain both conclusions by returning to the nonnegative supermartingale $\mathcal{Z}^N_t$ defined in (\ref{eq: entropy mg}). On the event $\{\eta^N_0\in X_{N,a}\}$, Lemma \ref{prop: interpolation} implies that \begin{equation}
			\int_0^\tf \cD_{N,\alpha}(\eta^N_s)ds \ge C\left(\int_0^\tf \left\|\eta^N_t\right\|_{L^\beta(\TTd)}^\beta dt -1\right)
		\end{equation} for some constant $C=C(a)$ allowed to depend on $a$. It follows from (\ref{eq: lower bound a priori mg}) that, for some $c=c(a)>0$ and $C=C(a)<\infty$, \begin{equation}\label{eq: put integration into mg}
			\indiq[\eta^N_0\in X_{N,a}]\exp\left(\frac{cN^d}{\chi_N}\int_0^\tf \left\|\eta^N_t\right\|_{L^\beta(\TTd)}^\beta dt\right) \le \mathcal{Z}^N_\tf e^{CN^d(1+\tf)/\chi_N}.
		\end{equation}The first point (\ref{eq: LD integrability}) follows immediately by breaking up the probability as above, using the same Chebychev estimate (\ref{eq: chebychev est}) and (\ref{eq: ZN0 controlled}) to estimate $$\limsup_N \frac{\chi_N}{N^d}\log \PP\left(\int_0^\tf \left\|\eta^N_t\right\|_{L^\beta(\TTd)}^\beta dt >M, \eta^N_0\in X_{N,a}\right) $$ and finally choosing $a$ suitably large. For (\ref{eq: exponential gaussian moments}), fix $a<\infty$ and observe that by Young's inequality, for any $\lambda<\infty$, there exists $M=M(a, \lambda)<\infty$ such that \begin{equation}\lambda \int_0^\tf \|\eta^N_t\|_{L^\alpha(\TTd)}^\alpha dt \le  M+c\int_0^\tf \left\|\eta^N_t\right\|_{L^\beta(\TTd)}^\beta dt \end{equation} for the same constant $c$ as in (\ref{eq: put integration into mg}), whence \begin{equation}
			\indiq[\eta^N_0\in X_{N,a}]\exp\left(\frac{N^d\lambda}{\chi_N}\int_0^\tf \left\|\eta^N_t\right\|_{L^\alpha(\TTd)}^\alpha dt\right) \le e^{(M+C(1+\tf))N^d/\chi_N}\mathcal{Z}^N_{\tf}
		\end{equation} and the conclusion follows from (\ref{eq: ZN0 controlled}). \end{proof}
		
	We conclude this section by using the \emph{a priori} estimate  to prove exponential tightness, and in particular an equicontinuity estimate with exponentially small failure probability. \begin{lemma} \label{lemma: ET} Under the assumptions of the previous lemma: \begin{enumerate}[label=\roman*).] 
			\item The processes $\eta^N_\bullet$ are exponentially tight in $\mathbb{D}$: for any $z<\infty$, there exists a compact set $\mathcal{K} \subset \mathbb{D}$ such that \begin{equation}
				\limsup_N \frac{\chi_N}{N^d}\log \PP\left(\eta^N_\bullet \not \in \mathcal{K}\right)\le -z. 
			\end{equation} \item For any $u_\bullet \not \in \mathcal{C}$ and any $z<\infty$, there exists a $\cDD$-open set $\cU \ni u_\bullet$ such that \begin{equation}
			\label{eq: continuity} \limsup_N \frac{\chi_N}{N^d}\log \PP\left(\eta^N_\bullet \in \cU \right)\le -z.			\end{equation} 
		\end{enumerate} \end{lemma}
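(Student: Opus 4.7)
The plan is to deduce both parts from a single enhanced exponential tightness statement: for every $z<\infty$, we produce a compact $\mathcal{K}\subset\mathcal{C}\subset\mathbb{D}$ with $\limsup_N\tfrac{\chi_N}{N^d}\log\PP(\eta^N_\bullet\notin\mathcal{K})\le -z$. Granted this, (i) is immediate, and for (ii) we take $\mathcal{U}:=\mathbb{D}\setminus\mathcal{K}$, which is $\mathbb{D}$-open, contains every $u_\bullet\notin\mathcal{C}\supseteq\mathcal{K}$, and inherits the required exponential bound.

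Pointwise compact containment is furnished by Lemma \ref{lemma: QY1}: choosing $a,M$ large, the event $A_N=\{\eta^N_0\in X_{N,a}\}\cap\{\cF_{\alpha,N}(\eta^N_\bullet)\le M\}$ has complement of probability below $e^{-zN^d/\chi_N}$, using also (\ref{eq: bound mass hyp}); on $A_N$, conservation of mass gives $\langle 1,\eta^N_t\rangle\le a$ and $\cH(\eta^N_t)\le M$ for all $t\le\tf$, and by Dunford--Pettis the sub-level set $K_0:=\{u\in L^1_{\ge 0}(\TTd):\langle 1,u\rangle\le a,\ \cH(u)\le M\}$ is weak-$\star$ compact. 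For temporal regularity, fix a countable family $\{\varphi_k\}\subset C^2(\TTd)$ whose evaluations $\langle\varphi_k,\cdot\rangle$ metrise $d$ on $K_0$; the key remaining task is the equicontinuity estimate, for each $k$ and $\epsilon>0$,
\[
\lim_{\delta\downarrow 0}\limsup_N\frac{\chi_N}{N^d}\log\PP\!\Bigl(\sup_{|s-t|\le\delta}|\langle\varphi_k,\eta^N_t-\eta^N_s\rangle|>\epsilon,\ A_N\Bigr)=-\infty.
\]

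To establish this, apply the exponential supermartingale (\ref{eq: exponential entropy martingale}) to $\pm\theta F^{\varphi_k}_N$: Lemma \ref{lemma: computation of generators} supplies the pointwise bound $|\cG_N(\pm\theta F^{\varphi_k}_N)(\eta^N)|\le C_{\varphi_k}(|\theta|+\theta^2)(1+\|\eta^N\|^\alpha_{L^\alpha(\TTd)})$, and Doob's maximal inequality applied on each of $\lfloor\tf/\delta\rfloor+1$ subintervals of length $\delta$, together with a union bound and the standard splitting $|\langle\varphi_k,\eta^N_t-\eta^N_s\rangle|\le 2\sup_{r\in [t_i,t_{i+1}]}|\langle\varphi_k,\eta^N_r-\eta^N_{t_i}\rangle|$, yields an estimate of the form
\[
\PP\!\Bigl(\sup_{|s-t|\le\delta}|\langle\varphi_k,\eta^N_t-\eta^N_s\rangle|>\epsilon,\,A_N\Bigr)\le \frac{C\tf}{\delta}\,e^{-\theta\epsilon N^d/(2\chi_N)}\,\EE\!\left[\indiq[A_N]\exp\!\Bigl(\frac{N^d\lambda}{\chi_N}\!\int_0^\tf\!(1+\|\eta^N_u\|^\alpha_{L^\alpha})du\Bigr)\right]
\]
with $\lambda=C_{\varphi_k}(|\theta|+\theta^2)$. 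By the exponential moment bound (\ref{eq: exponential gaussian moments}) of Lemma \ref{lemma: QY1}, this expectation is subexponential in $\tfrac{N^d}{\chi_N}$ for any fixed $\theta$; sending first $\theta\to\infty$ and then $\delta\to 0$ drives the logarithmic rate to $-\infty$.

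To conclude, choose $\delta_k\downarrow 0$ and $\epsilon_k\downarrow 0$ so that each event $B_{k,N}:=\{\sup_{|s-t|\le\delta_k}|\langle\varphi_k,\eta^N_t-\eta^N_s\rangle|>\epsilon_k\}\cap A_N$ has probability at most $e^{-(z+k)N^d/\chi_N}$ for all large $N$, and set
\[
\mathcal{K}:=\{v_\bullet\in\mathbb{D}:v_t\in K_0\ \forall t\in[0,\tf],\ \sup_{|s-t|\le\delta_k}|\langle\varphi_k,v_t-v_s\rangle|\le\epsilon_k\ \forall k\}.
\]
Uniform equicontinuity of the separating family $\{\varphi_k\}$ forces every $v_\bullet\in\mathcal{K}$ to be $d$-continuous, so $\mathcal{K}\subset\mathcal{C}$; combined with pointwise relative compactness in $K_0$, an Arzel\`a--Ascoli argument produces compactness in $\mathbb{D}$. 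The complement $\{\eta^N_\bullet\notin\mathcal{K}\}$ is covered by $A_N^c$ and the countable union of the $B_{k,N}$, whose exponential rates sum below $-z$. The main obstacle in this strategy is the dependence of $\cG_N(\theta F^\varphi_N)$ on the uncontrolled quantity $\|\eta^N\|^\alpha_{L^\alpha}$; it is precisely the pathwise energy-dissipation estimate encoded in $\cF_{\alpha,N}$, distilled through Proposition \ref{prop: interpolation} into the exponential moment bound (\ref{eq: exponential gaussian moments}), that tames this nonlinearity at the large-deviations scale.
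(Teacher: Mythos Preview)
Your overall strategy of producing a single compact $\mathcal{K}\subset\mathcal{C}$ and deducing both (i) and (ii) at once is clean and essentially sound, but the equicontinuity step has a genuine gap. In your displayed inequality you integrate the compensator over the whole of $[0,\tf]$ and then appeal to the exponential moment (\ref{eq: exponential gaussian moments}). The trouble is that the constant hidden in (\ref{eq: exponential gaussian moments}) depends on $\lambda$, and with $\lambda=C_{\varphi}(|\theta|+\theta^{2})$ this constant grows like $\lambda^{\beta/(\beta-\alpha)}\sim\theta^{2\beta/(\beta-\alpha)}$ (traceable through the Young-inequality step in its proof). Hence the logarithmic rate you obtain is of order $-\theta\epsilon/2+C\theta^{2\beta/(\beta-\alpha)}$, which diverges to $+\infty$ as $\theta\to\infty$; sending $\delta\to 0$ afterwards does nothing because $\delta$ enters only the polynomial prefactor $C\tf/\delta$. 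So the limit you assert does not follow from the inequality you wrote.

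The fix, which is exactly what the paper does, is to keep the compensator integral over the short subinterval $[t_i,t_{i+1}]$ and extract a power of $\delta$ \emph{before} invoking any probabilistic bound. On your event $A_N$ (or on the event $\{\int_0^\tf\|\eta^N_s\|_{L^\beta}^\beta\le M\}$ from (\ref{eq: LD integrability})) one has, by Proposition \ref{prop: interpolation} and H\"older in time with $\beta>\alpha$, the deterministic bound
\[
\int_{t_i}^{t_{i+1}}\!\!\bigl|\cG_NF^{\theta\varphi}_N(\eta^N_s)\bigr|\,ds \;\le\; C_\varphi(\theta+\theta^2)\,M^{\alpha/\beta}\,\delta^{1-\alpha/\beta}.
\]
Now the order of choices is: given $z$, pick $\theta$ large so that $\theta\epsilon/6>z$; then pick $M$ via (\ref{eq: LD integrability}); finally pick $\delta$ small so that the compensator is below $\theta\epsilon/6$. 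With the compensator deterministically small, Doob's inequality on the exponential martingale gives rate $-z$ directly, and the union over $\lceil\tf/\delta\rceil$ subintervals is harmless. In short, the missing idea is that the gain of integrability $\beta>\alpha$ is what produces a genuine power of $\delta$; replacing $\int_{t_i}^{t_{i+1}}$ by $\int_0^\tf$ throws this away and makes the $\theta\to\infty$ step fail.
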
  \begin{proof}[Proof of Lemma \ref{lemma: ET}] Let us define the sets \begin{equation}
			\label{eq: compact sets} K_{M}:=\left\{u\in L^1(\TTd,[0,\infty)): \cH(u)\le M\right\}.
		\end{equation} By the Dunford-Pettis Theorem \cite[Theorem 4.7.18]{bogachev2007measure}, $K_M$ are compact in the weak topology of $L^1(\TTd)$, and hence in the metric $d$, and we proved in Lemma \ref{lemma: QY1} that \begin{equation}
			\limsup_{M\to \infty} \limsup_N \frac{\chi_N}{N^d} \log \PP\left(\exists t\le \tf: \eta^N_t\not \in K_M\right)=-\infty.
		\end{equation} We will now prove, for every $\varphi \in C^2(\TTd)$, that  \begin{equation}
			\label{eq: verification of ET 2'} \limsup_{\delta\downarrow 0} \limsup_N \frac{\chi_N}{N^d}\log \PP\left(q^N_\varphi(\delta)>\e\right)=-\infty
		\end{equation} where $q^N_\varphi(\delta)$ is given by \begin{equation} q^N_\varphi(\delta):=\sup\left\{\langle \varphi, \eta^N_t-\eta^N_s\rangle: s, t\le \tf, |s-t|\le \delta\right\}.\end{equation} Point i) will then follow from general considerations, see Feng and Kurtz \cite[Theorems 4.1, 4.4]{feng2006large}. Using Lemma \ref{lemma: computation of generators}, there exists a finite constant $C=C(\varphi)$ such that, for all $N$, the functional $F^\varphi_N(\eta^N):=\langle \varphi, \eta^N\rangle$ satisfies  \begin{equation}
			\left|\cG_N F_N^\varphi\right|\le C(\varphi)\left\|\eta^N\right\|_{L^\alpha(\TTd)}^\alpha. 
		\end{equation} Let us fix $z<\infty$, $\e>0$ and $\varphi\in C^2(\TTd)$, and choose $\lambda>0$ such that $\frac{\lambda \e}{6}>z$. By Lemma \ref{lemma: QY1}, we can find $\beta>\alpha$ and $M\in [1,\infty)$ such that \begin{equation}
			\label{eq: choice of M} \limsup_N \frac{\chi_N}{N^d}\log\PP\left(\int_0^\tf \left\|\eta^N_s\right\|_{L^\beta(\TTd)}^\beta ds > M\right)\le -z. 
		\end{equation}On the complement of this event, it follows that for all $0\le s\le t\le \tf$, \begin{equation}
			\begin{split}
				\left|\int_s^t \cG_N F^{\lambda\varphi}_N(\eta^N_u) du\right| &\le C(\lambda \varphi)\int_s^t\left\|\eta^N_u\right\|_{L^\alpha(\TTd)}^\alpha du \\ & \le C(\lambda \varphi)\left(\int_s^t \left\|\eta^N_u\right\|_{L^\beta(\TTd)}^\beta\right)^{\alpha/\beta}(t-s)^\gamma \\ & \le C(\lambda \varphi)M(t-s)^\gamma,
			\end{split}
		\end{equation} for $\gamma=\frac{\beta-\alpha}{\beta}>0$. We now choose $\delta>0$ such that  $			C(\lambda \varphi)M\delta^\gamma < \frac{\lambda \e}{6}$ so that, on a fixed interval $[t_0, t_1]$ of length at most $\delta$, we bound \begin{equation}\begin{split}
			\PP\left(\sup_{s\in [t_0, t_1]} \langle \varphi, \eta^N_s-\eta^N_{t_0}\rangle > \frac{\e}{3}\right) & \le \PP\left( \sup_{s\in [t_0, t_1]} \left(\langle \lambda \varphi, \eta^N_s-\eta^N_{t_0}\rangle-\int_{t_0}^s \cG_N F^{\lambda \varphi}_N(\eta^N_u) du\right)> \frac{\lambda \e}{6}\right) \\& + \PP\left(\int_0^\tf \left\|\eta^N_t\right\|_{L^\beta(\TTd)}^\beta dt >M\right). \end{split} 
		\end{equation} For the first term, we observe that $$ \mathcal{Z}^{N, \lambda\varphi}_{t_0,s}:=\exp\left(\frac{N^d}{\chi_N} \left(\langle \lambda \varphi, \eta^N_s-\eta^N_{t_0}\rangle-\int_{t_0}^s \cG_N F^{\lambda \varphi}_N(\eta^N_u) du\right)\right), \hspace{0.5cm} s\ge t_0 $$ is a c{\`a}dl{\`a}g martingale with $\mathcal{Z}^{N, \lambda \varphi}_{t_0, t_0}=1$. By Doob's maximal inequality and a Markov inequality, we find \begin{equation}
			\begin{split} 
				& \PP\left( \sup_{s\in [t_0, t_1]} 
				\left(\langle \lambda \varphi, \eta^N_s-\eta^N_{t_0}\rangle-\int_{t_0}^s \cG_N F^{\lambda \varphi}_N(\eta^N_u) du\right)>\frac{\lambda \e}{6}\right) \\&\hspace{3cm} = \PP\left(\sup_{s\in [t_0,t_1]} \mathcal{Z}^{N,\lambda\varphi}_{t_0,s}>e^{N^d \lambda \epsilon/ 6\chi_N}\right)  \le e^{-N^d \lambda \epsilon/ 6\chi_N}.
			\end{split}
		\end{equation} By the choice of $\lambda$, the upper bound is at most $e^{-N^d z/\chi_N}$, and by the choice of $M$ (\ref{eq: choice of M}) to control the second term, we conclude that \begin{equation} \label{eq: equicontinuity conclusion}
			\limsup_N \frac{\chi_N}{N^d}\log \PP\left(\sup_{s\in [t_0, t_1]} \langle \varphi, \eta^N_s-\eta^N_{t_0}\rangle > \frac{\e}{3}\right) \le -z. 
		\end{equation} We now cover $[0,\tf]$ with $\lceil \tf /\delta\rceil$ such intervals $[t_i, t_{i+1}]$, $0\le i< \lceil \tf /\delta\rceil$, and (\ref{eq: equicontinuity conclusion}) applies to each interval, so\begin{equation}
			\limsup_N \frac{\chi_N}{N^d}\log \PP\left(\max_{i\le \lceil \tf/\delta\rceil} \sup_{s\in [t_i, t_{i+1}]} \langle \varphi, \eta^N_s-\eta^N_{t_0}\rangle > \frac{\e}{3}\right) \le -z. 
		\end{equation} On the compliment of this event, $q^N_\varphi(\delta)\le \epsilon$, and so we conclude (\ref{eq: verification of ET 2'}). The proof of item i). is therefore complete. \bigskip \\ The second item can be verified in the same way. If $u_\bullet \not \in \mathcal{C}$, then there exists $\varphi\in C^2(\TTd)$ and $\e>0$ such that, for all $\delta>0$, $$\sup\{|\langle \varphi, u_s-u_t\rangle|: s, t\le \tf, |s-t|<\delta\}>\e.$$ Using the definition of the Skorokhod topology, for all $\delta>0$ there is a $\cDD$-open set $\cU(\delta)\ni u_\bullet$ such that, for all $v_\bullet \in \cU(\delta)$, \begin{equation}
			\sup\left\{|\langle \varphi, v_s-v_t\rangle|: s,t\le \tf, |s-t|<3\delta\right\}>\frac{\e}{3}
		\end{equation} whence \begin{equation}\begin{split}
			&\limsup_N \frac{\chi_N}{N^d}\log \PP\left(\eta^N_\bullet \in \cU(\delta)\right) \\& \hspace{2cm} \le \limsup_N \frac{\chi_N}{N^d}\log \PP\left(
		\exists s,t\le \tf: |s-t|<3\delta, |\langle \varphi, \eta^N_s-\eta^N_t\rangle|>\frac{\e}{3} \right). \end{split}
		\end{equation} Given $z<\infty$, the argument of item i) shows that we may choose $\delta>0$ such that the right-hand side is at most $-z$, and the desired conclusion (\ref{eq: continuity}) holds for $\cU=\cU(\delta)$.  \end{proof} 

	 \section{Large Deviations Aubin-Lions-Simon Theorem} \label{sec: WTS}

We now assemble Lemma \ref{lemma: DWtS} and the probabilistic considerations in Section \ref{sec: a priori} to prove Theorem \ref{thrm: WtS}.

	\begin{proof}[Proof of Theorem \ref{thrm: WtS}] Let us begin with item i). We deal with the cases $u_\bullet \in \cR, u_\bullet \not \in \cR$ separately. \paragraph{\textbf{Case 1. $u_\bullet\in \cR$.}} Fix $u_\bullet$, $\cV$ and $z$ as in the theorem. Thanks to Lemma \ref{lemma: QY1}, there exists $\lambda<\infty$ such that \begin{equation}\label{eq: choice of lambda} \limsup_N \frac{\chi_N}{N^d} \log \PP\left(\cF_{\alpha, N}(\eta^N_\bullet)>\lambda\right)\le -z. \end{equation} 
	With this choice of $\lambda$, we choose $\cU\ni u_\bullet$ open in the topology of $\cDD$ according to Lemma \ref{lemma: DWtS}i), so that (\ref{eq: ALS claim 1}) gives, for all sufficiently large $N$, \begin{equation} \label{eq: ALS claim 1 repeat} \cU\cap \{\eta^N_\bullet \in \cDD_N: \cF_{\alpha,N}(\eta^N_\bullet)\le \lambda\} \subset \cV. \end{equation} Taking the complement yields, for sufficiently large $N$, \begin{equation}(\cU\setminus \cV)\cap \mathbb{D}_N\subset \left\{\eta^N_\bullet\in \mathbb{D}_N: \cF_{\alpha, N}(\eta^N_\bullet)> \lambda\right\}\end{equation} 
	and by the choice of $\lambda$ in (\ref{eq: choice of lambda}), \begin{equation} \limsup_N \frac{\chi_N}{N^d}\log \PP\left(\eta^N_\bullet \in \cU\setminus \cV\right)\le -z. \end{equation}\paragraph{\textbf{Case 2. $u_\bullet \not \in \cR$.}} For the case $u_\bullet \not \in \cR$, we must further subdivide into cases to find, for all $z<\infty$, $\cU\ni u_\bullet$, open in the topology of $\cDD$, such that \begin{equation} \limsup_N \frac{\chi_N}{N^d}\log \PP\left(\eta^N_\bullet \in \cU\right)\le -z. \end{equation} We observe that $$\cDD\setminus \cR \subset (\cDD\setminus \mathcal{C})\cup \{\cF_\alpha(u_\bullet)=\infty\}.$$ In the case $\cF_\alpha(u_\bullet)=\infty$, we argue as above by picking $\lambda$ according to (\ref{eq: choice of lambda}) and then $\cU$ according to Lemma \ref{lemma: DWtS}ii), so that (\ref{eq: ALS claim 2}) produces \begin{equation}
		\limsup_N \frac{\chi_N}{N^d}\log \PP\left(\eta^N_\bullet \in \cU\right)\le -z.
	\end{equation} If $u_\bullet \not \in \mathcal{C}$, the existence of such a $\cU$ was already established by the second part of Lemma \ref{lemma: ET} and the proof of item i) is complete. \bigskip \\ We now turn to item ii), which allows us to gain almost sure convergence in $L^\alpha([0,\tf]\times\TTd)$ from convergence in $\cDD$, after a change of measures. Using convex duality for functions $\theta(e^{x/\theta}-1)$ and its dual $\theta(y\log y-y+1)$ yields, for $A_N\in \sigma(\eta^N_\bullet)$ and any $z>0$, \begin{equation}
		\label{eq: orlicz ineq} \mathbb{Q}(A_N)\le \frac{\chi_N}{N^d z} H\left(\Law_{\QQ}[\eta^N_\bullet]\right|\left.\Law_{\PP}[\eta^N_\bullet]\right) + \left(e^{N^d z/\chi_N}-1\right)\mathbb{P}(A_N).
	\end{equation} Using the hypothesis (\ref{eq: entropy bound hypothesis}) to control the first term, we may thus transfer Lemma \ref{lemma: QY1} to $\mathbb{Q}$ to find  \begin{equation}
		\label{eq: superexponential estimate QN} \limsup_{M\to\infty} \limsup_{N\to\infty}  \QQ\left(\cF_{\alpha,N}(\eta^N_\bullet)>M\right)=0. 
	\end{equation} Recalling that $\eta_\bullet$ is the $\QQ$-almost sure limit in the topology of $\cDD$ on a subsequence $N_k\to \infty$, we now argue that $\QQ(\eta_\bullet \in \cR)=1$. To see this, it follows from (\ref{eq: superexponential estimate QN}) that there exists an increasing function $\vartheta: [0,\infty)\to [0,\infty)$, with $\vartheta(x)\to \infty$ as $x\to \infty$, such that $$\sup_N \EE_{\QQ}\left[\vartheta\left(\cF_{\alpha,N}(\eta^N_\bullet)\right)\right]<\infty. $$ Extending $\vartheta$ by $\vartheta(\infty):=\infty$, it follows from monotonicity and (\ref{eq: Gamma lower bound}) in Lemma \ref{lemma: DWtS} that, $\QQ$-almost surely, $$ \vartheta\left(\cF_\alpha(\eta_\bullet)\right)\le \liminf_k \vartheta\left(\cF_{\alpha,N_k}(\eta^{N_k}_\bullet)\right) $$ and the right-hand side has finite expectation by Fatou. It follows that the left-hand side is almost surely finite, whence $\QQ(\cF_\alpha(\eta_\bullet)<\infty)=1$ and the claim follows. \medskip \\ We now prove convergence in probability in $L^\alpha([0,\tf]\times\TTd)$ on the subsequence $N_k$.  Fix $\e, \d>0$, and choose $M,a$ such that \begin{equation}
		\limsup_N \QQ\left(\cF_{\alpha,N}(\eta^N_\bullet)>M\right)<\d; 
	\qquad	\limsup_N \QQ\left(\eta^N_0\not \in X_{N,a}\right)<\d. 
	\end{equation} Thanks to Lemma \ref{lemma: nonstandard block convolution via dirichlet}, we may now choose $r\in(0,1)$ small enough that, for the constant $C$ in (\ref{eq: nonstandard block convolution via dirichlet 1}), $C(1+a)r^\gamma(\tf+M)<\e^\alpha$, so that \begin{equation}
		\limsup_N \QQ\left(\left\|\eta^N_\bullet -\overline{\eta}^{N,r}_\bullet\right\|_{L^\alpha([0,\tf]\times\TTd)}> \e\right)<2\delta.
	\end{equation} By Remark \ref{rmk: continnum nonstandard convolution}, $\overline{\eta}^r \to \eta$ in $L^\alpha([0,\tf]\times\TTd)$, $\QQ$-almost surely, so by making $r>0$ smaller if necessary, we can further arrange that \begin{equation}
		\QQ(\|\overline{\eta}^r_\bullet-\eta_\bullet\|_{L^\alpha([0,\tf]\times\TTd)}>\e)<\delta. 
	\end{equation} By Lemma \ref{lemma: convergence of convolution}, it holds $\QQ$-almost surely that $\overline{\eta}^{N_k,r}_\bullet \to \overline{\eta}^r_\bullet$ in $L^\infty([0,\tf]\times\TTd)$, and hence also in $L^\alpha([0,\tf]\times\TTd)$. Combining everything, it follows that $$ \limsup_{N_k} \mathbb{Q}\left(\left\|\eta^{N_k}_\bullet-\eta_\bullet\right\|_{L^\alpha([0,\tf]\times\TTd)}>3\e\right)<4\delta$$ and we conclude the convergence $\eta^{N_k}_\bullet \to \eta_\bullet$ in $\QQ$-probability in the topology of $L^\alpha([0,\tf]\times\TTd)$. The conclusion of $\QQ$-almost sure convergence on a subsequence is elementary.\end{proof} 
\section{Hydrodynamic Limit}\label{sec: hydrodynamic}

We now prove Theorem \ref{th: hydrodynamic limit}, based on the general tools we have already developed. We fix $u_0 \in L^1_{\ge 0}(\TTd)$ and a probability measure $\PP$ on the underlying probability space as in the statement of the theorem, and let $u_\bullet$ be the unique solution to the PME (\ref{eq: PME}) starting at $u_0$.
\begin{proof}[Proof of Theorem \ref{th: hydrodynamic limit}] Let $\PP$ be as given, and for all $M, a$, let us define new measures $\PP^N_{M,a}$ by conditioning on the event \begin{equation}\label{eq: conditioning for hydrodynamic limit}
	A^N_{M,a}:=\left\{\cH(\eta^N_0)\le M, \langle 1, \eta^N_0\rangle \le a\right\}.
\end{equation} We will now prove that, under these conditioned measures, $\eta^N_\bullet\to u_\bullet$ in the topology of $\mathbb{D}$ in $\PP$-probability, using the well-known technique of proving compactness and identifying all possible limit points. Finally, we will remove the conditioning, to recover statements in terms of $\PP$. For now, let us fix $M,a$. \paragraph{\textbf{Step 1. Tightness.}} For each $M,a$, the measures $\PP^N_{M,a}$ trivially satisfy the hypothesis (\ref{eq: entropy finite hyp}), and so Lemma \ref{lemma: ET} shows that $\Law_{\PP^N_{M,a}}[\eta^N_\bullet]$ are exponentially tight, and hence tight, relative to the topology of $\mathbb{D}$. \bigskip \\ \paragraph{\textbf{Step 2. Identification of Limits.}} Let us now prove that, if a subsequence $\eta^{N_k}_\bullet$ converges in distribution under $\PP^N_{M,a}$ to $\eta_\bullet$ relative to the topology of $\mathbb{D}$, then $\eta_\bullet$ takes values in the set $\cS(u_0)$ of solutions to the PME (\ref{eq: PME}) starting at $u_0$. Since we know from Proposition \ref{prop: uniqueness FP} that $\cS(u_0)=\{u_\bullet\}$ is a singleton, we will then conclude that $$\Law_{\PP^N_{M,a}}[\eta^N_\bullet]\to \delta_{u_\bullet}$$ weakly as measures on $\mathbb{D}$. \bigskip \\ Thanks to Skorokhod's representation theorem, we may realise all $\eta^{N_k}_\bullet, \eta_\bullet$ on a common probability space $\QQ_{M,a}$ such that $\eta^N_\bullet\to \eta_\bullet$ in the topology of $\mathbb{D}$, $\QQ_{M,a}$-almost surely. By Theorem \ref{thrm: WtS}ii), $\QQ_{M,a}(\eta_\bullet \in \cR)=1$ and, passing to a further subsequence, $\eta^{N'_k}_\bullet \to \eta_\bullet$ in $L^\alpha([0,\tf]\times\TTd)$, $\QQ_{M,a}$-almost surely; to ease notation, we will continue to denote the subsequence by $\eta^N_\bullet$. For each $N$ in the subsequence, we write \begin{equation}\label{eq: prelimit equation}
\langle \varphi, \eta^{N}_t\rangle =\langle \varphi, \eta^N_0\rangle + \int_0^t \cL_{N}F^\varphi_{N}(\eta^N_s)ds+ M^{{N},\varphi}_t
\end{equation} where $M^{{N},\varphi}$ is a $\QQ_{M,a}$-martingale, starting at zero. We now deal with the terms one by one; the left-hand side, and the first term of the right-hand side, converge to $\langle \varphi, \eta_t\rangle, \langle \varphi, \eta_0\rangle$ respectively, uniformly in $t\le \tf$, thanks to convergence in the topology of $\mathbb{D}$ and the fact that $\eta_\bullet \in \mathcal{C}$ almost surely. The jumps of $M^{{N}, \varphi}$ are of size at most $\lesssim \frac{\chi_{N} \|\nabla \varphi\|_\infty}{{N}^{d+1}}$ and occur at rate at most $\lesssim \frac{{N}^2}{\chi_{N}}\sum_{x\in \TND} (\eta^{N}_{t}(x))^\alpha$, which gives the elementary bound on the quadratic variation \begin{equation}
	\label{eq: QV} \left[M^{N,\varphi}\right]_t \le C\frac{\chi_N N^2}{N^{2d+2}} \int_0^t \sum_{x\in \TND} (\eta^N_{s}(x))^\alpha ds = C\frac{\chi_N}{N^d}\int_0^t \left\|\eta^N_s\right\|^\alpha_{L^\alpha(\TTd)} ds 
\end{equation} for some $C=C(\varphi)$. Thanks to the second part of Lemma \ref{lemma: QY1}, it follows that, for all $\delta>0$, \begin{equation}
	\QQ_{M,a}\left(\left[M^{N,\varphi}\right]_{\tf}>\delta\right) \to 0
\end{equation} and hence $M^{N, \varphi}_\bullet$ converges in $\QQ_{M,a}$-probability to $0$. Concerning the integral term, we use Lemma \ref{lemma: computation of generators} and the second part of Lemma \ref{lemma: QY1} again to find the $\QQ_{M,a}$-almost sure convergence \begin{equation}\int_0^\tf \left|\cL_N F_N(\eta^N_s)-\frac{1}{2}\int_{\TTd} \Delta \varphi(x)(\eta^N_s(x))^\alpha  dx \right| ds \to 0
	.\end{equation} Meanwhile, by the almost sure convergence in $L^\alpha([0,\tf]\times\TTd)$ obtained from Theorem \ref{thrm: WtS}ii), we have $\QQ_{M,a}$-almost surely,\begin{equation}\int_0^\tf \left|\frac{1}{2}\int_{\TTd} \Delta \varphi(x)(\eta^N_s(x))^\alpha dx -\frac{1}{2}\int_{\TTd} \Delta\varphi(x)(\eta_s(x))^\alpha dx \right| ds \to 0.\end{equation} Combining the previous two displays and returning to (\ref{eq: prelimit equation}), we conclude that, $\QQ_{M,a}$-almost surely, for all $t\le \tf$, \begin{equation} 	\langle \varphi, \eta_t\rangle =\langle \varphi, \eta_0\rangle + \frac12\int_0^t \int_{\TTd} (\Delta \varphi)(x)(\eta_s(x))^\alpha dx ds. 
\end{equation} By taking an intersection over a countable dense set $\varphi_n\in C^2(\TTd)$, we may extend this to holding, $\QQ_{M,a}$-almost surely, for all $\varphi \in C^2(\TTd)$ simultaneously. We have now checked that $\eta_\bullet$ satisfies Definition \ref{def: solutions} with $g=0$, and hence conclude that, $\QQ_{M,a}$-almost surely, $\eta_\bullet \in \mathcal{S}(u_0)$, which completes the step. \bigskip \\ \paragraph{\textbf{Step 3. Removal of Conditioning}} From steps 1-2 it follows that, for any $\cU\ni u_\bullet$ open in the topology of $\mathbb{D}$ and $M,a$ sufficiently large,\begin{equation}
	\PP^N_{M,a}\left(\eta^N_\bullet \not \in \cU\right)\to  0. 
\end{equation} To finish the theorem we now need to replace $\PP^N_{M,a}$ with $\PP$ in the above statement. Let us fix $\e>0$. Thanks to the hypotheses (\ref{eq: entropy UI hyp}) and the observation that this implies (\ref{eq: bound mass hyp}), we can find $M,a$ sufficiently large that the conditioning in (\ref{eq: conditioning for hydrodynamic limit}) satisfies \begin{equation}
	\limsup_N \PP\left(\left(A^N_{M,a}\right)^\mathrm{c}\right)=\limsup_N \PP\left(\cH(\eta^N_0)>M \text{ or } \langle 1, \eta^N_0\rangle > a\right)<\e. 
\end{equation} It now follows that \begin{equation}\begin{split}
	\limsup_N \PP\left(\eta^N_\bullet \not \in \cU\right) &\le \limsup_N\left[\PP\left(\left(A^N_{M,a}\right)^\mathrm{c}\right) + \PP^N_{M,a}\left(\eta^N_\bullet \not \in \cU\right) \right] <\e \end{split}
\end{equation} and, since $\e>0$ was arbitrary, the proof is complete.  \end{proof}

	\section{Proof of the Upper Bound} \label{sec: UB} Equipped with the variational formulation Lemma \ref{lemma: variational form} and Theorem \ref{thrm: WtS}, we are now in a position to prove the upper bound (\ref{eq: UB statement}) in Theorem \ref{thrm: LDP}. We begin by proving a local version for paths in $\cR$ in Lemma \ref{lemma: local upper bound} below; since the upper bound is already proven for paths not belonging to $\cR$ by Theorem \ref{thrm: WtS}i), the full upper bound (\ref{eq: UB statement}) will follow by the usual covering argument. \begin{lemma}\label{lemma: local upper bound} Assume (\ref{eq: scaling hypothesis}), fix $\rho\in C(\TTd,(0,\infty))$ and $\mathbb{P}$ as in Theorem \ref{thrm: LDP}, and let $u_\bullet \in \cR$. For every $z<\cI_\rho(u_\bullet) \in [0,\infty]$, there exists a $\cDD$-open neighbourhood $\cU=\cU(u_\bullet, z)\ni u_\bullet$ such that  \begin{equation}\label{eq: local upper bound}
	\limsup_N \frac{\chi_N}{N^d}\log \PP\left(\eta^N_\bullet \in \cU \right) \le - z.
\end{equation} \end{lemma}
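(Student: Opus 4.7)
The plan is to exploit the dual variational formula (\ref{eq: whole rate function variational}) from Lemma \ref{lemma: variational form} and perform an exponential change of measure via the supermartingale construction (\ref{eq: exponential entropy martingale}). Given $z < \cI_\rho(u_\bullet)$, I would first fix $z'' \in (z, \cI_\rho(u_\bullet))$ and pick test functions $\psi \in C(\TTd)$, $\varphi \in C^{1,2}([0,\tf]\times\TTd)$ with $\Xi_0(\psi, u_0, \rho) + \Xi_1(\varphi, u_\bullet) > z''$. To the time-dependent evaluation $(t, \eta^N) \mapsto \langle \varphi_t, \eta^N\rangle$ I attach the nonnegative supermartingale
\[\mathcal{Z}^N_t := \exp\left(\frac{N^d}{\chi_N}\left[\langle \varphi_t, \eta^N_t\rangle - \langle \varphi_0, \eta^N_0\rangle - \int_0^t\big(\langle \partial_s\varphi_s, \eta^N_s\rangle + \cG_N F^{\varphi_s}_N(\eta^N_s)\big)ds\right]\right),\]
and Lemma \ref{lemma: computation of generators} lets me replace $\cG_N F^{\varphi_s}_N(\eta^N_s)$ by $\tfrac{1}{2}\int_{\TTd}(\Delta\varphi_s + |\nabla\varphi_s|^2)(\eta^N_s)^\alpha dx$ up to an error $\theta_N\|\eta^N_s\|_{L^\alpha(\TTd)}^\alpha$ with $\theta_N \to 0$.

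I would then build the neighborhood $\cU$ in two stages. Since $u_\bullet \in \cR \subset L^\alpha([0,\tf]\times\TTd)$, fix a small $L^\alpha$-open neighborhood $\cV \ni u_\bullet$ on which both $\int_0^\tf\int_{\TTd}(\Delta \varphi_s + |\nabla\varphi_s|^2)(\eta^N_s)^\alpha\,dx\,ds$ is within $\epsilon$ of the corresponding $u$-integral and $\|\eta^N_\bullet\|_{L^\alpha([0,\tf]\times\TTd)}$ is uniformly bounded (the latter by $\|u_\bullet\|_{L^\alpha} + 1$). Applying Theorem \ref{thrm: WtS}i) to this $\cV$ at level $z$ furnishes a $\cDD$-open neighborhood $\cU_0 \ni u_\bullet$ with $\limsup_N (\chi_N/N^d)\log \PP(\eta^N_\bullet \in \cU_0 \setminus \cV) \le -z$. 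Since $u_\bullet \in \mathcal{C}$, convergence in $\cDD$ to $u_\bullet$ is uniform in the weak-$\star$ topology on $[0,\tf]$, so I can shrink to a $\cDD$-open $\cU \subset \cU_0$ on which $\langle\varphi_0, \eta^N_0\rangle$, $\langle\varphi_\tf, \eta^N_\tf\rangle$, $\langle\psi, \eta^N_0\rangle$ and the time-integral $\int_0^\tf\langle \partial_s\varphi_s, \eta^N_s\rangle ds$ are all within $\epsilon$ of their $u_\bullet$-values; the superexponential bound on $\cU \setminus \cV$ transfers since $\cU \subset \cU_0$.

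On $\cU \cap \cV$ and for $N$ large, these approximations assemble to give $(\chi_N/N^d)\log \mathcal{Z}^N_\tf + \langle\psi, \eta^N_0\rangle \ge \langle\psi, u_0\rangle + \Xi_1(\varphi, u_\bullet) - C\epsilon$, so that
\[\indiq_{\{\eta^N_\bullet \in \cU \cap \cV\}} \le e^{-N^d(\langle\psi, u_0\rangle + \Xi_1(\varphi,u_\bullet) - C\epsilon)/\chi_N} \cdot \mathcal{Z}^N_\tf \cdot e^{N^d\langle\psi, \eta^N_0\rangle/\chi_N}.\]
Taking $\PP$-expectation, conditioning on $\eta^N_0$ and using the supermartingale bound $\EE[\mathcal{Z}^N_\tf \mid \eta^N_0] \le 1$ reduces the right-hand side to that prefactor times $\EE_{\Pi^N_\rho}[\exp(N^d\langle\psi, \eta^N_0\rangle/\chi_N)]$, which by Lemma \ref{lemma: cgf} has exponential rate $\alpha\int\rho(x)(e^{\psi(x)/\alpha}-1)dx$. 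Collecting everything, $\limsup_N(\chi_N/N^d)\log\PP(\eta^N_\bullet \in \cU \cap \cV) \le -\Xi_0(\psi,u_0,\rho) - \Xi_1(\varphi,u_\bullet) + C\epsilon$; picking $\epsilon$ small enough that this is $\le -z$ and combining with the estimate on $\cU \setminus \cV$ yields (\ref{eq: local upper bound}).

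The main obstacle is that the nonlinear quantity $\int(\Delta \varphi + |\nabla\varphi|^2)(\eta^N)^\alpha$ appearing in $\cG_N F^\varphi_N$ cannot be passed to its $u$-counterpart using only $\cDD$-proximity; strong $L^\alpha$ convergence is essential. This is precisely supplied by Theorem \ref{thrm: WtS}i), which converts a pure $\cDD$-control into an $L^\alpha$-control at the exponentially small cost $e^{-N^d z/\chi_N}$ on $\cU \setminus \cV$, thereby substituting for the superexponential replacement lemma used in the classical treatment of lattice gases \cite{kipnis1989hydrodynamics,kipnis1998scaling}, which is unavailable here because of the superlinear jump rates. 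A secondary technical point is absorbing the error $\theta_N \int_0^\tf \|\eta^N_s\|_{L^\alpha}^\alpha ds$ from Lemma \ref{lemma: computation of generators}, which is rendered $o(1)$ by the uniform $L^\alpha$-bound automatically enforced on $\cV$.
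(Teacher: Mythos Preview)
Your proposal is correct and follows essentially the same strategy as the paper: choose test functions $\psi,\varphi$ via the variational formula~(\ref{eq: whole rate function variational}), build the exponential (super)martingale associated to $\varphi$, invoke Theorem~\ref{thrm: WtS}i) to pass from a $\cDD$-neighbourhood to an $L^\alpha$-neighbourhood for the nonlinear term, and use Lemma~\ref{lemma: cgf} for the initial-data contribution.

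There are two small organizational differences worth noting. First, the paper folds the $\psi$-term directly into $\mathcal{Z}^N_0$ (including the compensator $\alpha\int\rho(e^{\psi/\alpha}-1)$), so that Lemma~\ref{lemma: cgf} appears as the statement $\frac{\chi_N}{N^d}\log\EE[\mathcal{Z}^N_0]\to 0$; you instead keep $\psi$ outside the martingale and bring it in at the end via conditioning on $\eta^N_0$ and the supermartingale property $\EE[\mathcal{Z}^N_\tf\mid\eta^N_0]\le 1$. These are equivalent. Second, the paper disposes of the generator error $\theta_N\int_0^\tf\|\eta^N_s\|_{L^\alpha}^\alpha ds$ by introducing a separate error event $A^1_{N,\e}$ and invoking Lemma~\ref{lemma: QY1} to bound its probability; you instead absorb this error by noting that the $L^\alpha$-norm is automatically bounded on $\cV$, so the error is deterministically $o(1)$ there. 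Your route is slightly more economical, as it avoids the extra appeal to Lemma~\ref{lemma: QY1}.
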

\begin{proof}Fix $u_\bullet \in \cR$ and $z<\cI_\rho(u_\bullet)$; choose $\e>0$ so that $z+6\e<\cI_\rho(u_\bullet)$. Thanks to Lemma \ref{lemma: variational form}, we may find $\psi\in C(\TTd), \varphi\in C^{1,2}([0,\tf]\times\TTd)$ such that \begin{equation}\label{eq: choose psi phi}
	\Xi_0(\psi, u_0, \rho)+\Xi_1(\varphi, u_\bullet)>z+6\e. 
\end{equation} With this $\psi, \varphi$ fixed, we consider the nonnegative martingale \begin{equation} \label{eq: defn Z UB}\begin{split}  \mathcal{Z}^N_t:& =\exp\bigg(\frac{N^d}{\chi_N}\bigg(\langle \psi, \eta^N_0\rangle - \alpha \int_{\TTd}\rho(x)(e^{\psi(x)/\alpha}-1)dx + \langle \varphi_t, \eta^N_t\rangle - \langle \varphi_0, \eta^N_0\rangle \\ & \hspace{4.5cm} - \int_0^t \langle \partial_s\varphi_s, \eta^N_s\rangle ds   -\int_0^t \cG_N F_N^{\varphi_s}(\eta^N_s) ds \bigg) \bigg) \end{split}\end{equation} where, for each $s$, $F_N^{\varphi_s}$ is the evaluation $\eta^N\mapsto \langle \varphi_s, \eta^N \rangle$. Thanks to Lemma \ref{lemma: computation of generators}, we have \begin{equation}
	\left|\cG_N F_N^{\varphi_t}(\eta^N_t)-\frac{1}{2}\int_{\TTd} (\Delta \varphi_t(x)+|\nabla \varphi_t(x)|^2)(\eta^N_t(x))^\alpha dx\right| \le \theta_N\left\|\eta^N_t\right\|_{L^\alpha(\TND)}^\alpha
\end{equation} for some deterministic $\theta_N\to 0$; applying Lemma \ref{lemma: QY1}, it follows that the events \begin{equation}\label{eq: first error event}
	A^1_{N, \e}=\left\{\int_0^\tf \left| \cG_N F_N^{\varphi_t}(\eta^N_t)-\frac{1}{2}\int_{\TTd} (\Delta \varphi_t(x)+|\nabla \varphi_t(x)|^2)({\eta}^{N}_t(x))^\alpha dx \right| dt >\e  \right\} 
\end{equation} have asymptotic probability \begin{equation} \label{eq: first error event probability} \limsup_N \frac{\chi_N}{N^d}\log \PP(A^1_{N, \e})\le -z.
\end{equation} Next, noting that the map $\cDD\to (L^1_{\ge 0}(\TTd), d), v_\bullet \mapsto v_0$ is continuous, we can find a $\mathbb{D}$-open neighbourhood $\cU_1$ of $u_\bullet$ contained in \begin{equation}\begin{split}
	& \left\{v_\bullet \in \mathbb{D}: \h \sup_{t\le \tf} |\langle \varphi, u_t-v_t\rangle|<\e, \h \sup_{t\le \tf} \left|\int_0^t \langle \partial_s \varphi_s, u_s-v_s\rangle ds\right|<\e \right\} \\ & \hspace{7cm} \cap \left\{v_\bullet \in \mathbb{D}: \h |\langle \psi, u_0-v_0\rangle|<\e \right\} \end{split}
\end{equation} and consider the $L^{\alpha}([0,\tf]\times\TTd)$-open neighbourhood \begin{equation}
	\cV_2=\left\{v_\bullet \in L^{\alpha}([0,\tf]\times\TTd): \int_0^\tf \int_{\TTd}|u^\alpha_t(x)-v_t^\alpha(x)|dx dt<\e' \right\}
\end{equation} where $\e'>0$ is defined by $$ \e':= \frac{\e}{\|\Delta \varphi + |\nabla \varphi|^2\hspace{0.1cm} \|_{L^\infty([0,\tf]\times\TTd)}}.$$By applying Theorem \ref{thrm: WtS}, we can find $\cU_2\ni u_\bullet$, open in the topology of $\mathbb{D}$, such that \begin{equation}\label{eq: second error event}
	\limsup_N \frac{\chi_N}{N^d}\log \PP\left(\eta^N_\bullet \in \cU_2\setminus \cV_2\right)\le -z. 
\end{equation} We now take $\cU:=\cU_1\cap \cU_2$, and consider the event $$ A^2_{N,z,\e}:=\{\eta^N_\bullet \in \cU\}\setminus (A^1_{N,\e}\cup \{\eta^N_\bullet \in \cU_2\setminus \cV_2\}) $$ so that $$A^2_{N,z,\e}=\{\eta^N_\bullet \in \cU_1\cap \cV_2\}\setminus A^1_{N,z,\e} $$ and \begin{equation} \label{eq: UB containments} \{\eta^N_\bullet \in \cU\}\subset A^1_{N,\e}\cup \{\eta^N_\bullet \in \cU_2\setminus \cV_2\}\cup A^2_{N,\e,z}.  \end{equation} We will now bound $\mathcal{Z}^N_\tf$ below on the event $A^2_{N,z,\e}$, which will imply an upper bound on the probability of this event. We first observe that, since $A^2_{N,z,\e}$ is contained in the complement of the event in (\ref{eq: first error event probability}), on $A^2_{N,z,\e}$ i \begin{equation}
	\begin{split}
		\frac{\chi_N}{N^d}\log \mathcal{Z}^N_{\tf} \ge &\langle \psi, \eta^N_0\rangle -\alpha \int_{\TTd}\rho(x)(e^{\psi(x)/\alpha}-1)dx\\& +\langle \varphi_\tf, \eta^N_{\tf}\rangle - \langle \varphi_0, \eta^N_0\rangle - \int_0^\tf \langle \partial_s \varphi_s, \eta^N_s\rangle ds \\& -\frac{1}{2} \int_0^\tf \int_{\TTd} (\Delta \varphi_s +|\nabla \varphi_s(x)|^2)(\eta^N_s(x))^\alpha dxds \\ &-\e. 
	\end{split}
\end{equation} Next, we observe that on the event $A^2_{N,z,\e}$, we have the $\cDD$-closeness of $\eta^N_\bullet$ to $u_\bullet$ from $\eta^N_\bullet \in \cU_1$ and the $L^\alpha([0,\tf]\times\TTd)$-closeness from $\eta^N_\bullet \in \cV_2$. We use the definition of $\cU_1$ to replace all instances of $\eta^N_\bullet$ with $u_\bullet$ in the first two lines of the right-hand side, incurring a further error of at most $4\e$, and use the definitions of $\cV_2$ and $\e'$ to make the same replacement in the third line, incurring another error of at most $\e$. All together, on the event $A^2_{N,z,\e}$, 
\begin{equation}
	\begin{split}
		\frac{\chi_N}{N^d}\log \mathcal{Z}^N_{\tf} \ge &\langle \psi, u_0\rangle -\alpha \int_{\TTd}\rho(x)(e^{\psi(x)/\alpha}-1)dx\\& +\langle \varphi_\tf, u_{\tf}\rangle - \langle \varphi_0, u_0\rangle - \int_0^\tf \langle \partial_s \varphi_s, \eta^N_s\rangle ds \\& -\frac{1}{2} \int_0^\tf \int_{\TTd} (\Delta \varphi_s +|\nabla \varphi_s(x)|^2)(u_s(x))^\alpha dxds \\[1ex] &-6\e. 
	\end{split}
\end{equation} The first line of the right-hand side is nothing other than $\Xi_0(\psi, u_0, \rho)$, and the second and third lines together are $\Xi_1(\varphi, u_\bullet)$. Recalling the choice of $\psi$ and $\varphi$ in (\ref{eq: choose psi phi}), the right-hand side of the previous display is at least $z$, and we conclude that

\begin{equation} \label{eq: good event for Z} \begin{split} \mathcal{Z}^N_{\tf}\indiq[A^2_{N,z,\e}] &  \ge \exp\left(\frac{N^dz}{\chi_N}\right)\indiq[A^2_{N,z,\e}] \end{split}	
 \end{equation} whence \begin{equation} \label{eq: good event for Z 2} \begin{split} \PP(A^2_{N,z,\e}) &  \le  \exp\left(-\frac{N^dz}{\chi_N}\right)\EE \mathcal{Z}^N_{\tf}.\end{split}	
 \end{equation} Returning to the definition (\ref{eq: defn Z UB}) of $\mathcal{Z}^N_0$, the only terms contributing at $t=0$ are those coming from $\Xi_0(\psi, \eta^N_0, \rho)$, and using  Lemma \ref{lemma: cgf}, $\frac{\chi_N}{N^d}\log \EE[\mathcal{Z}^N_0]\to 0$. Recalling that $\mathcal{Z}^N_t$ is a martingale, Doob's inequality yields \begin{equation} \label{eq: good event for Z 3}
 	\limsup_N \frac{\chi_N}{N^d}\log \PP\left(A^2_{N,z,\e}\right)\le -z +\limsup_N \frac{\chi_N}{N^d}\log \EE\left[\mathcal{Z}^N_0\right] =-z. 
 \end{equation}
Gathering (\ref{eq: first error event probability}, \ref{eq: second error event}, \ref{eq: good event for Z 3}), we conclude that $$ \limsup_N \frac{\chi_N}{N^d}\log \PP\left(\eta^N_\bullet \in \cU\right)\le -z$$ as desired. \end{proof} Once we have this, the global upper bound follows easily by standard arguments.
\begin{proof}[Proof of the global upper bound (\ref{eq: UB statement})]  Fix $z<\inf_\mathcal{E} \cI_\rho$. Thanks to Lemma \ref{lemma: ET}, there exists a compact $\mathcal{K}\subset \mathbb{D}$ such that \begin{equation}
	\limsup_N \frac{\chi_N}{N^d}\log \PP\left(\eta^N_\bullet \not \in \mathcal{K}\right)<-z.
\end{equation} For every $u_\bullet \in \mathcal{E}\cap \mathcal{K}$, the hypotheses of Lemma \ref{lemma: local upper bound} hold and we find $\cU(u_\bullet,z) \ni u_\bullet$ open in the topology of $\cDD$ with \begin{equation}
	\limsup_N \frac{\chi_N}{N^d}\log \PP\left(\eta^N_\bullet \in \cU(u_\bullet,z)\right)\le -z. 
\end{equation} Since $\mathcal{K}$ is compact and $\mathcal{E}$ is closed in the (Hausdorff) topology of $\cDD$, we can find $n<\infty$ and $u_\bullet^{(i)}, 1\le i\le n$ such that \begin{equation}
	\mathcal{E}\cap \mathcal{K} \subset \bigcup_{i=1}^n \cU(u_\bullet^{(i)},z)
\end{equation} whence we find \begin{equation} \begin{split}
	&\limsup_N \frac{\chi_N}{N^d}\log \PP\left(\eta^N_\bullet \in \mathcal{E} \right) \\ & \hs \le \limsup_N \frac{\chi_N}{N^d} \log\left( \PP(\eta^N_\bullet \not \in \mathcal{K}) + \sum_{i=1}^n \PP(\eta^N_\bullet \in \cU(u^{(i)}_\bullet,z)) \right) \\ & \hs \le \max\left(\limsup_N \frac{\chi_N}{N^d}\log \PP\left(\eta^N_\bullet \in \cV \right): \cV= \mathbb{D}\setminus \mathcal{K}, \h \cU(u^{(1)}_\bullet,z), \dots \cU(u^{(n)}_\bullet,z)\right) \\[1ex] & \hspace{3cm} \le -z \end{split} 
\end{equation} and since $z< \inf_\mathcal{E} \cI_\rho$ was arbitrary, we are done. \end{proof} 

\section{Proof of Lower Bound}\label{sec: LB} We now prove the lower bound (\ref{eq: LB statement}). In order to obtain the \emph{true} lower bound, i.e. without restriction to a class of regular paths, it is sufficient to prove a following \emph{restricted} lower bound in the following Lemma. The improvement to a true lower bound then follows thanks to the properties of the dynamic cost $\mathcal{J}$ proven in \cite{fehrman2019large} and recalled in Proposition \ref{prop: lsc envelope}.   \begin{lemma}\label{lemma: local lower bound} Let $\PP, \rho$ be as in Theorem \ref{thrm: LDP}, let $\cX$ be as in Proposition \ref{prop: lsc envelope}, and let $u_\bullet \in \cX$. Then for any $\cDD$-open set $\cU\ni u_\bullet$,  \begin{equation}
	\liminf_N \frac{\chi_N}{N^d}\log \PP\left(\eta^N_\bullet \in \cU\right)\ge -\cI_\rho(u_\bullet).
\end{equation}
	
\end{lemma}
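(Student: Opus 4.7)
The strategy is the classical change-of-measure argument, adapted to the non-Lipschitz ZRP. Since $u_\bullet\in\cX$, there exists $h\in C^{1,3}([0,\tf]\times\TTd)$ such that $u_\bullet$ is the weak solution to the Fokker-Planck-type equation (\ref{eq: FP}) with initial datum $u_0\in C^\infty(\TTd)$, and $u_\bullet$ is uniformly bounded above and uniformly bounded away from zero. I will construct a tilted probability measure $\QQ_N$ by replacing the initial marginal $\Pi^N_\rho$ by the slowly-varying local equilibrium $\Pi^N_{u_0}$, and by applying a Girsanov-type tilt using the mean-one exponential martingale
\begin{equation*}
\mathcal{M}^N_t := \exp\left(\frac{N^d}{\chi_N}\left[\langle h_t,\eta^N_t\rangle - \langle h_0,\eta^N_0\rangle - \int_0^t \langle \partial_s h_s, \eta^N_s\rangle ds - \int_0^t \cG_N F^{h_s}_N(\eta^N_s) ds\right]\right)
\end{equation*}
from (\ref{eq: exponential entropy martingale}) with $F^{h_t}_N(\eta^N):=\langle h_t,\eta^N\rangle$, so that $d\QQ_N = \mathcal{M}^N_\tf \cdot (d\Pi^N_{u_0}/d\Pi^N_\rho)(\eta^N_0)\cdot d\PP$.

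The argument then reduces to two estimates. First, the relative entropy decomposes as $H(\QQ_N\mid\PP) = H(\Pi^N_{u_0}\mid\Pi^N_\rho) + \EE_{\QQ_N}[\log\mathcal{M}^N_\tf]$. A direct computation with the product distributions and the single-site asymptotics of Section \ref{sec: equilibrium} (specifically, the Legendre-dual of Lemma \ref{lemma: cgf}) gives $\tfrac{\chi_N}{N^d}H(\Pi^N_{u_0}\mid\Pi^N_\rho)\to \alpha\cH_\rho(u_0)$. For the dynamic term, Lemma \ref{lemma: computation of generators} replaces $\cG_N F^{h_s}_N(\eta^N_s)$ by $\tfrac12\int (\eta^N_s)^\alpha(\Delta h_s+|\nabla h_s|^2)dx$ up to an $L^1$-controllable error, the control transferring from $\PP$ to $\QQ_N$ via an Orlicz inequality, the bound just obtained, and (\ref{eq: exponential gaussian moments}). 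Second, to identify the limit under $\QQ_N$, the prelimit identity for $\langle\varphi,\eta^N_t\rangle$ under $\QQ_N$ picks up an additional drift from the tilt and reads, up to martingale and vanishing error terms, as the weak formulation of (\ref{eq: FP}). The relative-entropy bound places us in the scope of Theorem \ref{thrm: WtS}ii), which yields exponential tightness, $\QQ_N$-almost sure $\cR$-valued subsequential limits, and $L^\alpha$-convergence along a further subsequence; Proposition \ref{prop: uniqueness FP} identifies the unique possible limit as $u_\bullet$, so $\QQ_N(\eta^N_\bullet\in\cU)\to 1$ for every $\cDD$-open $\cU\ni u_\bullet$. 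Passing to the limit in $\EE_{\QQ_N}[\log\mathcal{M}^N_\tf]$ using this $L^\alpha$-convergence and integrating by parts along (\ref{eq: FP}), one obtains $\tfrac{\chi_N}{N^d}\EE_{\QQ_N}[\log\mathcal{M}^N_\tf]\to \tfrac12\int_0^\tf\int u_t^\alpha|\nabla h_t|^2\,dxdt = \mathcal{J}(u_\bullet)$.

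The lemma now follows from the classical entropy inequality
\begin{equation*}
\log\PP(\eta^N_\bullet\in\cU)\ge -\frac{1}{\QQ_N(\eta^N_\bullet\in\cU)}\left(H(\QQ_N\mid\PP)+e^{-1}\right),
\end{equation*}
the convergence $\QQ_N(\eta^N_\bullet\in\cU)\to 1$, and the convergence $\tfrac{\chi_N}{N^d}H(\QQ_N\mid\PP)\to \alpha\cH_\rho(u_0)+\mathcal{J}(u_\bullet) = \cI_\rho(u_\bullet)$. The main technical obstacle is the dynamic Girsanov term: the integrand $\cG_N F^{h_s}_N$ is only approximately quadratic in $\eta^N$, so passing to the limit both in the relative entropy and in the drift of the prelimit equation for $\eta_\bullet$ requires the full strength of Theorem \ref{thrm: WtS}ii) (for $L^\alpha$-strong convergence on a $\QQ_N$-almost sure subsequence), combined with the exponential integrability of Lemma \ref{lemma: QY1} to guarantee that the nonlinear functionals $\|\eta^N_s\|_{L^\alpha}^\alpha$ are uniformly integrable under $\QQ_N$ once the $\PP$-estimates have been transferred through the relative entropy bound.
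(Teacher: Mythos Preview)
Your overall strategy---tilt the initial distribution to a slowly-varying local equilibrium around $u_0$, apply an exponential-martingale tilt to the dynamics via $h$, prove convergence under the tilted law using Theorem~\ref{thrm: WtS}ii) and Proposition~\ref{prop: uniqueness FP}, then extract the rate---is exactly the paper's. The dynamic martingale you write coincides with the paper's $\mathcal{Z}^N_t$ once one unwinds the two-level formulation, cf.~(\ref{eq: reformulate Fhat}) and (\ref{eq: Ghat generator 1}); and concluding via the entropy inequality rather than the paper's pointwise Radon--Nikodym estimate in Step~4 is an equivalent, standard variant.

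There is, however, a genuine circularity in your transfer step. To invoke Theorem~\ref{thrm: WtS}ii) you need the crude bound $\limsup_N\frac{\chi_N}{N^d}H(\QQ_N\mid\PP)<\infty$, whose dynamic part $\EE_{\QQ_N}[\log\mathcal{M}^N_\tf]$ is bounded by $C\,\EE_{\QQ_N}\int_0^\tf\|\eta^N_s\|^\alpha_{L^\alpha}ds$. You propose to transfer this from $\PP$ via Orlicz and (\ref{eq: exponential gaussian moments}). But the Orlicz inequality $\EE_{\QQ_N}[X]\le \frac{\chi_N}{N^d}H(\QQ_N\mid\PP)+\frac{\chi_N}{N^d}\log\EE_\PP[e^{N^dX/\chi_N}]$ already requires the very entropy bound you are trying to establish; moreover (\ref{eq: exponential gaussian moments}) is only stated on the bounded-mass event $\{\eta^N_0\in X_{N,a}\}$, which is not automatic under your untruncated $\Pi^N_{u_0}$. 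The paper breaks this loop by tilting instead to the \emph{truncated} local equilibrium $\Pi^N_{u_0,M}$ of Remark~\ref{rmk: slowly varying local equilibrium}: the truncation forces $\eta^N_0\in X_{N,M}$ deterministically, makes $Y^N_0$ bounded in $L^\infty(\PP)$ (see (\ref{eq: change of measure at time 0 weak est})), and then H\"older with the $\PP$-side exponential moment (\ref{eq: exponential gaussian moments})---whose indicator is now for free---yields the $L^p$ bound (\ref{eq: change of measure weak est}) on the full Radon--Nikodym derivative, from which both the entropy bound and the transfer of all superexponential estimates follow. An alternative fix, closer in spirit to your sketch, is to rerun Lemma~\ref{lemma: computation of entropy dissipation} for the \emph{tilted} nonlinear generator (the factor $e^{\bar h(y)-\bar h(x)}=1+O(N^{-1})$ affects only lower-order terms) and obtain Lemma~\ref{lemma: QY1} directly under $\QQ_N$, using that $\Pi^N_{u_0}$ satisfies (\ref{eq: entropy finite hyp}) by Lemma~\ref{lemma: exponential integrability of entropy}; but this is not what you wrote.
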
 Once this is established, (\ref{eq: LB statement}) will follow. 
\begin{proof}[Proof of Lemma \ref{lemma: local lower bound}] Let us fix $u_\bullet \in \cX$, which we recall means that $u_0\in C^\infty(\TTd, (0,\infty))$, that $u_t$ is bounded and bounded away from $0$, and a weak solution in the sense discussed above Proposition \ref{prop: uniqueness FP} to the Fokker-Planck equation \begin{equation} \label{eq: sk for cX} \partial_t u_t=\frac12 \Delta(u_t^\alpha)- \nabla\cdot\left(u_t^\alpha \nabla h_t(x)\right)\end{equation} for some $h\in C^{1,3}([0,\tf]\times\TTd)$, which we recall means that we take $g:=u^{\alpha/2}\nabla h$ in Definition \ref{def: solutions}. Let us also fix a $\cDD$-open set $\cU\ni u_\bullet$. We follow the standard pattern of explicitly constructing a change of measure, proving convergence under the `tilted' measures, and estimating the Radon-Nikodym derivative. \paragraph{\textbf{Step 1. Construction of a change of measure.}} For the initial data, let us pick $M>\sup_x u_0(x)$ and set \begin{equation}
	Y^N_0:=\prod_{x\in \TND} \frac{\pi^N_{u_0(x), M}(\eta^N_0(x))}{\pi^N_{\rho(x)}(\eta^N_0(x))}
\end{equation} where $\pi^N_{u_0(x),M}$ is the probability measure defined in Remark \ref{rmk: slowly varying local equilibrium}. Since the distribution of $\eta^N_0$ under $\PP$ is $\Law_{\PP}[\eta^N_0]=\Pi^N_\rho$, it follows that $\EE[Y^N_0]=1$, and moreover \begin{equation} \label{eq: initial distribution under Q}
	\EE\left[\indiq[\eta^N_0=\eta^N]Y^N_0\right] = {\Pi}^N_{u_0,M}(\eta^N)
\end{equation} is the conditioned slowly-varying local equilibrium given in Remark \ref{rmk: slowly varying local equilibrium}. For $N$ large enough, the conditioning factor $\pi^N_{u_0(x)}([0,M])>\frac{1}{2}$ for all $x$, and using the form of the single-site distributions $\pi^N_{\theta}(\cdot)$, we find the uniform bound \begin{equation}
	Y^N_0 \le \left(\frac{\|u_0\|_{L^\infty(\TTd)}}{\inf \rho}+1\right)^{\alpha MN^d/\chi_N}\left(\frac{2Z_\alpha(\|\rho\|_\infty/\chi_N)}{Z_\alpha(\inf u_0/\chi_N)}\right)^{N^d}
\end{equation} whence, using Lemma \ref{lemma: useful asymptotics} to control the terms involving $Z_\alpha$, \begin{equation} \label{eq: change of measure at time 0 weak est} 
	\limsup_N \frac{\chi_N}{N^d}\log \left\|Y^N_0\right\|_{L^\infty(\PP)}<\infty.
\end{equation}
	In order to modify the dynamics, we need to introduce some auxiliary objects. For $N<\infty$ and $x,y\in \TND$, let $J^N_{x,y,t}$ be the empirical measure on times $s\le t$ at which a particle jumps from site $x$ to $y$, and let $J^N_t=(J^N_{x,y,t}: x,y\in \TND)$ be the collection of all such processes. We observe that the pair $(\eta^N_t, J^N_t)$ is a time-inhomogeneous Markov process; we will denote objects associated to this `two-level' process with a $\widehat\cdot$ in order to avoid confusion. Let us write $\widehat\cL_{N,t}$ for its generator and $\widehat\cG_{N,t}$ for the corresponding rescaled nonlinear generator, defined in the same way as (\ref{eq: nonlinear generator}). We now set $\widehat{F}_N$ to be the functional given by, for $\eta^N\in X_N$ and empirical measures $(J^N_{x,y})_{x,y\in \TND}$ on $[0,\tf]$, \begin{equation} 
		\widehat{F}_N(\eta^N, J^N):=\frac{\chi_N}{N^d}\sum_{x, y\in \TND} \int_0^\tf \left(\overline{h}^N_s(y)-\overline{h}^N_s(x)(x)\right)J^N_{x,y}(ds)
	\end{equation} with $\overline{h}^N_s(x)$ being the local averages over cubes $c^N_x, x\in \TND$ as in (\ref{eq: representation of evaluation'}). The process \begin{equation}\label{eq: define Z}
		\mathcal{Z}^N_t:=\exp\left(\frac{N^d}{\chi_N}\left(\widehat F_N(\eta^N_t, J^N_t)-\int_0^t \widehat\cG_{N,s}\widehat F_N(\eta^N_s, J^N_s) ds\right)\right)
	\end{equation} is a nonnegative local martingale, and hence so is $Y^N_0\mathcal{Z}^N_t$. On the event $\{Y^N_0\neq 0\}$, we have an $N$-uniform bound on the mass $\langle 1, \eta^N_0\rangle$, and observing that \begin{equation}\label{eq: reformulate Fhat}
		\langle h_t, \eta^N_t\rangle - \langle h_0, \eta^N_0\rangle = \int_0^t \left\langle \partial_s h_s, \eta^N_s\right \rangle ds + \widehat{F}_N(\eta^N_t, J^N_t)
	\end{equation} which implies the existance of a finite $C$ with \begin{equation}\label{eq: change of measure dynamic 1 weak estimate}\sup_N \left\|\indiq_{\{Y^N_0\neq 0\}}\sup_{t\le \tf}|\widehat{F}_N(\eta^N_t, J^N_t)|\right\|_{L^\infty(\PP)}\le C.\end{equation} Meanwhile, a calculation analagous to Lemma \ref{lemma: computation of generators} (c.f. (\ref{eq: Ghat generator 1}, \ref{eq: Ghat generator 1}) below) shows that $|\widehat\cG_{N,s}\widehat F_N|\le \lambda \|\eta^N\|_{L^\alpha(\TTd)}^\alpha$. Using the first moment estimate in Lemma \ref{lemma: QY1} and combining with (\ref{eq: change of measure at time 0 weak est}, \ref{eq: change of measure dynamic 1 weak estimate}) we finally conclude that \begin{equation} \label{eq: change of measure weak est}
		\limsup_N \frac{\chi_N}{N^d}\log \EE\left[(Y^N_0\h \sup_{t\le \tf} \mathcal{Z}^N_{t})^p\right] <\infty
	\end{equation} for all $1\le p<\infty$. In particular, $Y^N_0\mathcal{Z}^N_t, t\le \tf$ is a uniformly integrable, nonnegative and mean-1 martingale, so we may now define the probability measure, on the same underlying probability space \begin{equation}\label{eq: define com}
		\mathbb{Q}^N(A):=\EE\left[\indiq_A Y^N_0\mathcal{Z}^N_\tf\right].
	\end{equation} Thanks to (\ref{eq: change of measure weak est}), it follows that all superexponential estimates transfer to $\mathbb{Q}^N$ in the sense that, if $A_{N,z}, N\ge 1, z<\infty$ are a parametrised family of events such that \begin{equation}
		\limsup_{z\to \infty} \limsup_N \frac{\chi_N}{N^d}\log \PP(A_{N,z}) =-\infty
	\end{equation} then the same is true with $\mathbb{Q}^N$ in place of $\PP$, see \cite[Theorem 3.2]{kipnis1989hydrodynamics}, and in particular the conclusions of Lemmata \ref{lemma: QY1}, \ref{lemma: ET} still hold with $\QQ^N$ in place of $\PP$. Furthermore, by taking any $p>1$ in (\ref{eq: change of measure weak est}), it also follows that $$\limsup_N \frac{\chi_N}{N^d}H\left(\Law_{\QQ^N}[\eta^N_\bullet]|\Law_{\PP}[\eta^N_\bullet]\right)<\infty $$ for $H(\cdot|\cdot)$ denoting the relative entropy of probability measures on $\cDD$, so Theorem \ref{thrm: WtS}ii) applies.  By the general change-of-measure formula \cite[Appendix 1, Proposition 7.3]{kipnis1998scaling}, the process $(\eta^N_t, J^N_t)$ is a time-inhomogeneous Markov process under this measure, and on functions depending only on $\eta^N$, the generator is given by \begin{equation}\begin{split} \label{eq: h generator}
		\widetilde\cL_{N,t}F_N(\eta^N):=\frac{dN^2}{\chi_N}& \sum_{x,y\in \TND} p^N(x,y)(\eta^N(x))^\alpha(F_N(\eta^{N,x,y})-F_N(\eta^N))\\ & \hspace{4.5cm}\dots \times \exp\left(\overline{h}^N_t(y)-\overline{h}^N_t(x)\right).\end{split}
	\end{equation}
	\paragraph{\textbf{Step 2. Convergence under the measures $\mathbb{Q}^N$.}} We now argue that, for any $\cDD$-open $\cU\ni u_\bullet$, \begin{equation}\label{eq: wk convergence under QN}\liminf_N \mathbb{Q}^N\left(\eta^N_\bullet \in \cU\right)=1. 	\end{equation}  As remarked above, Lemma \ref{lemma: ET} shows that $\Law_{\QQ^N}[\eta^N_\bullet]$ are tight in $\mathbb{D}$, so it is sufficient to show that the all possible subsequential limits in law almost surely take values in the solution set $\mathcal{S}^\mathrm{FP}_{h}(u_0)$ to the Fokker-Planck equation (\ref{eq: sk for cX}). By Proposition \ref{prop: uniqueness FP}, this is the singleton $\{u_\bullet\}$. Since this step is very similar to the hydrodynamic limit argued in Section \ref{sec: hydrodynamic}, we will only describe the most important steps of the argument.  \bigskip \\ Let us fix an infinite subsequence $S\subset \mathbb{N}$ under which the laws of $\eta^N_\bullet$ converge to the law of a (potentially random) $\eta_\bullet$; as before, we may realise all $\eta^N_\bullet, \eta_\bullet$ on a common probability measure $\mathbb{Q}$ which makes the converge almost sure in the topology of $\mathbb{D}$. By Theorem \ref{thrm: WtS}ii), we pass to a subsequence which makes the convergence almost sure in the norm topology of $L^\alpha([0,\tf]\times\TTd)$, and we get $\mathbb{Q}(\eta_\bullet \not \in \cR)=0$. \bigskip \\ For the initial data, we saw at (\ref{eq: initial distribution under Q}) that under $\mathbb{Q}$, $\eta^N_0\sim {\Pi}^N_{u_0,M}$ and by Remark \ref{rmk: slowly varying local equilibrium}, it follows that $d(\eta^N_0, u_0)\to 0$ in $\mathbb{Q}$-probability, so we must have $\mathbb{Q}(\eta_0=u_0)=1.$    If we repeat the Taylor expansion of Lemma \ref{lemma: computation of generators} on both $$ \overline{\varphi}^N(x+l)-\overline{\varphi}^N(x), \qquad e^{\overline{h}_t^N(x+l)-\overline{h}_t^N(x)} $$ we find that for any $\varphi\in C^2(\TTd)$,  the evaluation function $F^\varphi_N(\eta^N):=\langle \varphi, \eta^N\rangle$ satisfies \begin{equation} \label{eq: approximate generator COM}
		\left|\widetilde\cL_{N,t} F^\varphi_N(\eta^N)-\frac1{2}\int_{\TTd}(\eta^N(x))^\alpha\left(2\nabla \varphi(x)\cdot \nabla h_t(x) +\Delta \varphi(x)\right) \right| \le \theta_N\|\eta^N\|_{L^\alpha(\TTd)}^\alpha
	\end{equation} where $\widetilde\cL_{N,t}$ is the generator (\ref{eq: h generator}) under $\mathbb{Q}$, and where $\theta_N \to 0$ depends only on $\varphi$, and in particular is independent of $t$. As in the proof of Theorem \ref{th: hydrodynamic limit} in Section \ref{sec: hydrodynamic}, we may therefore write \begin{equation}\label{eq: prelimit COM}
		\langle \varphi, \eta^N_t\rangle = \langle \varphi, \eta^N_0\rangle + M^{N,\varphi}_t+\int_0^t \widetilde{\cL}_{N,t}F^\varphi_N(\eta^N_s)ds 
	\end{equation} with $M^{N,\varphi}_\bullet$ a $\QQ$-martingale with the same quadratic variation bound (\ref{eq: QV}). Since the subsequence was chosen to have $\QQ$-almost sure convergence in $L^\alpha([0,\tf]\times\TTd)$, we use (\ref{eq: approximate generator COM}) and take the limit of each term of (\ref{eq: prelimit COM}), exactly as in Section \ref{sec: hydrodynamic}, to find \begin{equation}
		\mathbb{Q}\left(\sup_{t\le \tf} \left|\langle \varphi, \eta_t-u_0\rangle -\int_0^t \int_{\TTd}\left(\frac12\Delta \varphi(x)+ \nabla \varphi (x) \cdot \nabla h_t(x)\right) (\eta_t(x))^\alpha \right|=0 \right) =1. 
	\end{equation} Taking an intersection over a countable dense set of $\varphi_n \in  C^2(\TTd)$, we see that, $\QQ$-almost surely, $\eta_\bullet$ is a solution to the skeleton equation (\ref{eq: Sk}) with $g=\eta^{\alpha/2}\nabla h$ starting at $\eta_0=u_0$, which is the definition of weak solutions to (\ref{eq: sk for cX}). In particular, $\eta_\bullet \in \cS^{\mathrm{FP}}_{h}(u_0)$ almost surely, as claimed, and the step is complete. \paragraph{\textbf{Step 3. Estimate of the Radon-Nikodym Derivative}.} To conclude, we must estimate the Radon-Nikodym derivative $\frac{d\QQ^N}{d\PP}$. We will now show that, for all $\e>0$,  \begin{equation}\label{eq: estimate RN derivative}
		\mathbb{Q}^N\left(\left|\frac{\chi_N}{N^d}\log (Y^N_0\mathcal{Z}^N_{\tf})-\cI_\rho(u_\bullet)\right|>\e\right)\to 0
	\end{equation} and, in conjunction with the previous step, the conclusion of the lemma will follow quickly in step 4. \paragraph{\textbf{Step 3a. Control of $Y^N_0$.}}   We start by writing, $\QQ^N$-almost surely,  \begin{equation}\begin{split} \label{eq: log initial density}
		\frac{\chi_N}{N^d}\log Y^N_0&=\frac{1}{N^d}\sum_{x\in \TND} \biggr(\chi_N \log \left(\frac{\pi^N_{u_0(x)}(\eta^N_0(x))}{\pi^N_{\rho(x)}(\eta^N_0(x))}\right)  -\chi_N \log \pi^N_{u_0(x)}([0,M])\biggr) \end{split}
	\end{equation} where the last term results from the conditioning of $\pi^N_{u_0(x)}$ to $\{1, ..., M/\chi_N\}$. In the first two terms, we use the explicit form of the single-site distribution $\pi^N_{\rho}$ to find \begin{equation}\label{eq: first term of initial density}
		\chi_N \log \left(\frac{\pi^N_{u_0(x)}(\eta^N_0(x))}{\pi^N_{\rho(x)}(\eta^N_0(x))}\right) =\alpha\eta^N_0(x)\log \left(\frac{u_0(x)}{\rho}\right) -\chi_N  \log \frac{Z(u_0(x)/\chi_N)}{Z(\rho(x)/\chi_N)}.
	\end{equation} Since $\log(u_0(x)/\rho(x))$ is bounded uniformly in $x\in \TND, N\ge 1$ and since $\eta^N_0(x)\sim \pi^N_{u_0(x),M}$ under $\QQ^N$,  Remark \ref{rmk: slowly varying local equilibrium} shows that \begin{equation}\label{eq: initial entropy 1}
		\sup_{x\in \TND} \EE_{\QQ^N}\left[ \left|\alpha\eta^N_0(x)\log\left(\frac{u_0(x)}{\rho(x)}\right) -\alpha u_0(x)\log \left(\frac{u_0(x)}{\rho(x)}\right)\right|^2\right]\to 0
	\end{equation} where we recall that we chose $M>\sup_x u_0(x)$. The second term of (\ref{eq: first term of initial density}) does not depend on $\eta^N_0$, and we apply Lemma \ref{lemma: useful asymptotics} and boundedness and positivity of $\rho(x), u_0(x)$ to see that \begin{equation}\label{eq: initial entropy 2}
		\sup_{x\in \TND}\left|\chi_N \log \frac{Z_\alpha(u_0(x)/\chi_N)}{Z_\alpha(\rho(x)/\chi_N)} - \alpha \rho(x)\left(\left(\frac{u_0(x)}{\rho(x)}\right)-1\right)\right|\to 0. 
	\end{equation} For the final term in (\ref{eq: log initial density}), Remark \ref{rmk: slowly varying local equilibrium} shows that the conditioning factor is at least $\frac{1}{2}$ for all $x$ as soon as $N$ is large enough, and so the contribution to the sum is at most $\chi_N \log 2\to 0$. Gathering (\ref{eq: initial entropy 1}, \ref{eq: initial entropy 2}) and returning to (\ref{eq: log initial density}), we have proven that \begin{equation}
		\mathbb{E}_{\QQ^N}\left[\left|\frac{\chi_N}{N^d} \log Y^N_0-\frac1{N^d}\sum_{x\in \TND} \alpha \rho(x)\left(\frac{u_0(x)}{\rho(x)}\log \frac{u_0(x)}{\rho(x)} - \frac{u_0(x)}{\rho(x)} +1\right)\right|\right]\to 0
	\end{equation} Meanwhile, by continuity and positivity of $\frac{u_0(x)}{\rho(x)}$, the sum converges to the integral of the same quantity, which is exactly $\alpha \cH_\rho(u_0)$. To conclude, \begin{equation}
		\label{eq: initial com conclusion} \EE_{\QQ^N}\left[\left|\frac{\chi_N}{N^d}\log Y^N_0 - \alpha \cH_\rho(u_0)\right|\right]\to 0. 
	\end{equation}\paragraph{\textbf{Step 3b. Control of $\mathcal{Z}^N_{\tf}$.}} We next turn to the dynamical change of measure induced by $\mathcal{Z}^N_{\tf}$. Using the definition (\ref{eq: define Z}) and the observation (\ref{eq: reformulate Fhat}), we write \begin{equation}
		\begin{split} \label{eq: break up dynamic cost}
			\frac{\chi_N}{N^d}\log \mathcal{Z}^N_{\tf} =\langle h_{\tf}, \eta^N_{\tf}\rangle - \langle h_0, \eta^N_0\rangle - \int_0^\tf \langle \partial_s h_s, \eta^N_s\rangle ds -\int_0^\tf \widehat{\cG}_{N,s}\widehat{F}_N(\eta^N_s, J^N_s)ds. 
		\end{split}
	\end{equation} The first three terms define a continuous function on $\mathbb{D}$, which therefore converges to its value at $u_\bullet$ in $\mathbb{Q}^N$-probability, by step 2, so for any $\e>0$, \begin{equation} \label{eq: dynamic change of ms 1}
		\QQ^N\left(\left|\langle h_{\tf}, \eta^N_{\tf}-u_{\tf} \rangle - \langle h_0, \eta^N_0-u_0 \rangle - \int_0^\tf \langle \partial_s h_s, \eta^N_s-u_s \rangle ds\right|>\e\right)\to 0. 
	\end{equation} We must now compute the nonlinear generator in order to deal with the final term in (\ref{eq: break up dynamic cost}). Using the definition of $\widehat{F}_N$, we write \begin{equation}
		\label{eq: Ghat generator 1} \begin{split}
			\widehat{\cG}_{N,t}\widehat{F}_N(\eta^N, J^N) & = \frac{d}{N^{d-2}} \sum_{x,y\in \TND} (\eta^N(x))^\alpha \left(e^{\overline{h}^N_t(y)-\overline{h}^N_t(x)}-1\right)p^N(x,y) \\ &=\cG_N F^{h_t}_N(\eta^N)
		\end{split}
	\end{equation} where $F^{h_t}_N: X_N\to \RR$ is the evaluation $\langle h_t, \eta^N\rangle$. This is exactly one of the quantities computed in Lemma \ref{lemma: computation of generators}, and in particular \begin{equation}\label{eq: Ghat generator 2}
		\left|\widehat{\cG}_{N,t}\widehat{F}_N(\eta^N, J^N) - \frac{1}{2}\int_{\TTd}(\eta^N(x))^\alpha \left(\Delta h_t(x)+|\nabla h_t(x)|^2\right)\right|\le \theta_N\left\|\eta^N\right\|_{L^\alpha(\TTd)}^\alpha
	\end{equation} for some nonrandom $\theta_N\to 0$. Using the convergence in $L^\alpha([0,\tf]\times\TTd)$ obtained in step 2, for any $\e>0$,  \begin{equation}\label{eq: dynamic change of ms 2}
		\mathbb{Q}^N\left(\left|\int_0^\tf \widehat{\cG}_{N,t}\widehat{F}_N(\eta^N_t, J^N_t)dt-\frac12\int_0^\tf \int_{\TTd} u_t(x)^\alpha(\Delta h_t(x)+|\nabla h_t(x)|^2) dx dt\right|>\e\right)\to 0. 
	\end{equation} Combining (\ref{eq: dynamic change of ms 1}, \ref{eq: dynamic change of ms 2}), returning to (\ref{eq: break up dynamic cost}) and recalling the functionals $\Xi_1$ defined at (\ref{eq: def xi1}), we have proven that \begin{equation}
		\mathbb{Q}^N\left(\left|\frac{\chi_N}{N^d}\log \mathcal{Z}^N_\tf - \Xi_1(h, u_\bullet) \right|>\e\right)\to 0. 
	\end{equation} Using (\ref{eq: sk for cX}) and taking the test function $h$ in Definition \ref{def: solutions}, a simple computation\footnote{See also the proof of Lemma \ref{lemma: variational form} in Appendix \ref{sec: misc proofs}.}  shows that \begin{equation}
		\Xi_1(h, u_\bullet)=\frac12\int_0^\tf \int_{\TTd} u_t(x)^\alpha |\nabla h_t(x)|^2 dtdx = \frac{1}{2}\left\|u^{\alpha/2}\nabla h\right\|_{{L^2_{t,x}}}^2. 
	\end{equation} Using the final part of Lemma \ref{lemma: variational form}, it is clear that $g=u^{\alpha/2}\nabla h$ has the property (\ref{eq: characterisation}) characterising the unique optimiser for $\mathcal{J}$, and is in particular the unique $g$ minimising the cost subject to $u_\bullet$ solving the skeleton equation (\ref{eq: Sk}). The right-hand side of the last display is therefore the dynamical cost $\mathcal{J}(u_\bullet)$ and we conclude that \begin{equation}
		\mathbb{Q}^N\left(\left|\frac{\chi_N}{N^d}\log \mathcal{Z}^N_\tf - \mathcal{J}(u_\bullet) \right|>\e\right)\to 0. 
	\end{equation} Combining with (\ref{eq: initial com conclusion}), we have proven (\ref{eq: estimate RN derivative}) as claimed.\paragraph{\textbf{Step 4. Conclusion.}} We now show how the previous steps imply the conclusion of Lemma \ref{lemma: local lower bound}.  Fixing $u_\bullet\in \cX$ and a $\cDD$-open set $\cU\ni u_\bullet$,  let $\mathbb{Q}^N$ be the changes of measure constructed in step 1. Thanks to step 2, $\QQ^N(\eta^N_\bullet\in \cU)\to 1$ and by step 3, for all $\e>0$, $$\QQ^N\left(\frac{\chi_N}{N^d}\log \frac{d\QQ^N}{d\PP}\le \cI_\rho(u_\bullet)+\e \right) \to 1. $$ In particular, for $N$ large enough, the event \begin{equation}
		\label{eq: Q prob final} A^N_{\cU,\e}:=\left\{\eta^N_\bullet\in \cU, \frac{\chi_N}{N^d}\log \frac{d\QQ^N}{d\PP}\le \cI_\rho(u_\bullet)+\e\right\}
	\end{equation} has $\QQ^N$-probability at least $\frac12$. We now estimate, for all such $N$, \begin{equation}
		\begin{split}
			\PP\left(\eta^N_\bullet \in \cU\right) &\ge \PP\left(A^N_{\cU,\e}\right)=\EE_{\QQ^N}\left[\left(\frac{d\QQ^N}{d\PP}\right)^{-1}\indiq[A^N_{\cU,\e}]\right] \\& \ge \exp\left(-\frac{N^d(\cI_\rho(u_\bullet)+\e)}{\chi_N}\right)\QQ^N\left(A^N_{\cU,\e}\right) \\ & \ge \frac{1}{2} \exp\left(-\frac{N^d(\cI_\rho(u_\bullet)+\e)}{\chi_N}\right).
		\end{split}
	\end{equation} Taking the logarithm and extracting the speed $N^d/\chi_N$, we find the conclusion of the lemma, now with $\cI_\rho(u_\bullet)$ replaced by $\cI_\rho(u_\bullet)+\e$ on the right-hand side. Since $\e>0$ was arbitrary, we now take $\e\to 0$ to conclude the proof.  \end{proof}
	
	We now conclude the lower bound (\ref{eq: LB statement}), which is the final part of Theorem \ref{thrm: LDP}. \begin{proof}[Proof of (\ref{eq: LB statement})] Let us fix $\PP, \rho$ as in Theorem \ref{thrm: LDP} and consider any $\cDD$-open set $\cU$. For any $u_\bullet \in \cU$ and $\e>0$, Proposition \ref{prop: lsc envelope} shows that there exists $v_\bullet \in \cU\cap \cX$ with $\cI_\rho(v_\bullet)<\cI_\rho(u_\bullet)+\e$, and Lemma \ref{lemma: local lower bound} shows that \begin{equation}
	\liminf_N \frac{\chi_N}{N^d}\log \PP\left(\eta^N_\bullet\in \cU\right)\ge -\cI_\rho(v_\bullet)> -\cI_\rho\left(u_\bullet\right) -\e. 
\end{equation} Since $u_\bullet\in \cU$ and $\e>0$ were arbitrary, taking $\e\downarrow 0$ and optimising over $u_\bullet \in \cU$ proves that $$ \liminf_N \frac{\chi_N}{N^d}\log \PP \left(\eta^N_\bullet \in \cU\right)\ge -\inf\left\{\cI_\rho(u_\bullet): u_\bullet \in \cU\right\} $$which is the claim (\ref{eq: LB statement}), and the proof is complete. \end{proof} 

\section{From Large Deviations to Gradient Flow} \label{sec: gradient flow} We now prove Theorem \ref{thrm: gradient flow} as  a consequence of the large deviations principle and the microscopic reversibility. This proof was inspired by a similar proof of the Boltzmann $H$-Theorem in kinetic theory by the second author \cite[Section 6.5]{heydecker2021kac}. We recall the notation $L^2_{t,x}:=L^2([0,\tf]\times\TTd,\RRd)$ and $\|\cdot\|_{L^2_{t,x}}$ from the introduction.   \begin{proof} We divide into steps. Firstly, the equality is trivial if $u_\bullet \not \in \cR$, since both sides are infinite. On the left-hand side, the property that $\mathcal{J}=\infty$ outside $\cR$ follows from the definition of $\mathcal{J}$ as already remarked in the discussion under (\ref{eq: energy class}-\ref{eq: energy class'}). On the right-hand side, if $\cA(u_\bullet)<\infty$, then $u_\bullet \in \mathcal{C}$, so the finiteness of the right-hand side implies that $u_\bullet \in \cR$ according to the definition (\ref{eq: energy class}). Next, assuming that $u_\bullet \in \cR$, we observe that $\cA(u_\bullet)<\infty$ if and only if $\mathcal{J}(u_\bullet)<\infty$. Indeed, the assumption $u_\bullet \in \cR$ implies $\nabla u^{\alpha/2} \in L^2_{t,x}$, and it is straightforward to verify that the map $$ \theta\mapsto g=\nabla u^{\alpha/2}+\frac\theta2 $$ is a bijection of $L^2_{t,x}$ onto itself, and from the set of competitors $\theta$ for $\cA$ onto the set of competitors $g$ for $\mathcal{J}$. In particular, the existence of a competitor in $L^2_{t,x}$ for either problem implies the same for the other problem, and hence the finiteness of the corresponding infimum. We may therefore restrict attention to the case $$u_\bullet\in \cR, \qquad \cA(u_\bullet)<\infty, \qquad \mathcal{J}(u_\bullet)<\infty.$$  \paragraph{\textbf{Step 1. Definition of Time-Reversal}} Let us define $\mathcal{T}:\mathbb{D}\to \mathbb{D}$ by the time reversal \begin{equation}
	(\mathcal{T}\eta)_t=\eta_{(\tf-t)-}.
\end{equation} The detailed balance condition for $\Pi^N_\rho, \rho\in (0,\infty)$ implies that \begin{equation}
	\Law_{\Pi^N_\rho}[\eta^N_\bullet]=\Law_{\Pi^N_\rho}[\mathcal{T}\eta^N_\bullet]
\end{equation} and hence that $\mathcal{T}\eta^N_\bullet$ satisfy the same large deviation principle as $\eta^N_\bullet$. On the other hand, $\mathcal{T}$ is a continuous involution, and so the contraction principle (see Theorem 4.2.1 in the book of Dembo and Zeioutini \cite{dembo2009large} or \cite[Lemma 3.11]{feng2006large}) implies that $\mathcal{T}\eta^N_\bullet$ have a large deviation principle with rate function \begin{equation}\begin{split}
	\mathcal{I}^\mathcal{T}_\rho(u_\bullet)&=\inf\left\{\mathcal{I}_\rho(v_\bullet): \mathcal{T}v_\bullet = u_\bullet \right\} =\mathcal{I}_\rho(\mathcal{T}u_\bullet).  \end{split}
\end{equation} Since rate functions are unique \cite[Remark 3.1]{feng2006large}, we conclude that, for all $u_\bullet \in \mathbb{D}$,  \begin{equation}\label{eq: TIME REVERSE RATE FN} \mathcal{I}_\rho(u_\bullet)=\mathcal{I}_\rho(\mathcal{T}u_\bullet).\end{equation} Let us stress that this is the key equality on which the proof rests; although the objects involved in the statement of the theorem do not involve probability, this equality has a natural probabilistic meaning, and indeed we are not aware of any proof of it which does not involve the large deviations.  \bigskip \\  To ease notation, it will be helpful to also have a time reversal operation $\mathcal{T}$ on $L^2_{t,x}$; given $g\in L^2_{t,x}$, let us define $$ (\mathcal{T}g)_t(x):=g_{\tf-t}(x) $$ so that $\mathcal{T}$ is readily seen to be an isometry of $L^2_{t,x}$. \medskip \\ \paragraph{\textbf{Step 2. Orthogonal Projection.}} The other key ingredient which we will use in step 3 below is the orthogonal projection onto a $u_\bullet$-dependent linear subspace of $L^2_{t,x}$. For $u_\bullet \in \cR$, let $\Lambda_{u_\bullet}$ be the linear subspace of $\Lambda$ defined in (\ref{eq: characterisation}), which characterises the minimisers for $\cA, \mathcal{J}$, and let $\Pi[u_\bullet]:L^2_{t,x}\to L^2_{t,x}$ be the orthogonal projection onto $\Lambda_{u_\bullet}$. \medskip \\ Let us make a few remarks which follow easily from the definition, and will be used later. Firstly, given $u_\bullet$ solving (\ref{eq: Sk}) for a control $g$, it also holds that (\ref{eq: Sk}) holds with the new control $\Pi[u_\bullet]g$. Indeed, whenever we test a function $\varphi \in C^{1,2}([0,\tf]\times\TTd)$ in Definition \ref{def: solutions}, the difference $$ \int_0^\tf \int_{\TTd}u_t^{\alpha/2}\nabla \varphi_t(x)\cdot\left(g-\Pi[u_\bullet]g\right) dt dx = 0$$ by definition of $\Pi[u_\bullet]$. Secondly, let us give some interactions between time reversal and projection. For $u_\bullet \in \cR$,  \begin{equation} \label{eq: time reverse tangent} \mathcal{T}\Lambda_{u_\bullet} =\{\mathcal{T}g: g\in \Lambda_{u_\bullet}\} = \Lambda_{\mathcal{T}u_\bullet} \end{equation}which follows by reversing $\varphi_t(x)\mapsto \varphi_{\tf-t}(x)$ in an approximating sequence. Since $\mathcal{T}$ is an isometry on $L^2_{t,x}$,  $$ \Pi[\mathcal{T}u_\bullet]\mathcal{T}g=\mathcal{T}\left(\Pi[u_\bullet]g\right), \qquad g\in L^2_{t,x}.$$ \paragraph{\textbf{Step 3. Equality for Projected Entropy Dissipation and Upper Bound.}}  We will now prove that, for $u_\bullet \in \cR$, we have the equality \begin{equation}
	\label{eq: LD to GF w Projection} \mathcal{J}(u_\bullet)=\frac12\left(\alpha\cH(u_\tf)-\alpha\cH(u_0)+\left\|\Pi[u_\bullet]\nabla u^{\alpha/2}\right\|^2_{L^2_{t,x}} + \cA(u_\bullet)\right)
\end{equation}  where $\Pi[u_\bullet]$ is the orthogonal projection defined in step 2. It follows from the definition (\ref{eq: entropy dissipation}) and the fact that $\Pi[u_\bullet]$ is nonexpansive in $\Lambda$ that \begin{equation} \label{eq: projection decreases entropy dissipation} 
	\left\|\Pi[u_\bullet]\nabla u^{\alpha/2}\right\|^2_{L^2_{t,x}} \le \left\|\nabla u^{\alpha/2}\right\|^2_{L^2_{t,x}}=\frac{\alpha}{2}\int_0^\tf \cD_\alpha(u_s)ds 
\end{equation} and so it follows from (\ref{eq: LD to GF w Projection}) that the claim (\ref{eq: conclusion of gf}) in the theorem holds with an inequality:\begin{equation}
	\label{eq: conclusion of gf with inequality} \mathcal{J}(u_\bullet)\le \frac12\left(\alpha\cH(u_\tf)-\alpha\cH(u_0)+\frac{\alpha}{2}\int_0^\tf \cD_\alpha(u_s)ds + \cA(u_\bullet)\right).
\end{equation} Let us fix $u_\bullet \in \cR$ and set $v_\bullet:=\mathcal{T}u_\bullet$. As remarked in Lemma \ref{lemma: variational form}, \ref{lemma: variational A}, membership in $\Lambda_{u_\bullet}$ characterises the unique optimisers in the definitions (\ref{eq: dynamic cost}, \ref{eq: action 2}) of $\mathcal{J}(u_\bullet), \cA(u_\bullet)$. Letting $\theta$ be the optimal choice attaining $\mathcal{A}(u_\bullet)$, we remarked above that (\ref{eq: Sk}) holds for the control $\frac\theta2-\nabla u^{\alpha/2}$, and by the first observation in step 2, the same is true of \begin{equation}\label{eq: find g from theta}g:=\Pi[u_\bullet]\left(\frac{\theta}{2}-\nabla u^{\alpha/2}\right)=\frac\theta2-\Pi[u_\bullet]\nabla u^{\alpha/2}\end{equation}  where we have recalled from Lemma \ref{lemma: variational A} that $\theta\in \Lambda_{u_\bullet}$. Since $\Pi[u_\bullet]$ maps into $\Lambda_{u_\bullet}$ by definition, this construction produces a candidate control $g$ with $g\in \Lambda_{u_\bullet}$, which is therefore the optimiser by Lemma \ref{lemma: variational form}.  \bigskip \\ Using the time-reversal, we next observe that $v_\bullet$ solves, in the sense of Definition \ref{def: solutions}, \begin{equation}
	\begin{split} \partial_t v_t& =-\frac{1}{2}\Delta(v_t^\alpha)+\nabla\cdot(v^{\alpha/2}_t (\mathcal{T}{g})_t)\\& = \frac{1}{2}\Delta(v_t^\alpha)-\nabla\cdot(2v^{\alpha/2}_t\nabla v^{\alpha/2}_t-v^{\alpha/2}_t(\mathcal{T}g)_t).\end{split} 
\end{equation} Using the first observation in step 2, a candidate control for the reversed path is given by \begin{equation}
	g_\mathrm{r}:=2\Pi[v_\bullet]\nabla v^{\alpha/2}-\mathcal{T}{g}.
\end{equation} We know from the second observation of step 2 that $\mathcal{T}{g}\in \Lambda_{v_\bullet}$, so $g_\mathrm{r}\in \Lambda_{v_\bullet}$ and hence attains the dynamic cost for $v_\bullet$. It follows that $$\cI_\rho(v_\bullet)= \alpha \cH_\rho(u_\tf)+\frac{1}{2}\left\|g_\mathrm{r}\right\|_{L^2_{t,x}}^2.$$ We now observe that \begin{equation}
	\|g_\mathrm{r}+\mathcal{T}{g}\|_{L^2_{t,x}}^2=4\left\|\Pi[v_\bullet]\nabla v^{\alpha/2}\right\|^2_{L^2_{t,x}} = 4\left\|\Pi[u_\bullet]\nabla u^{\alpha/2}\right\|^2_{L^2_{t,x}}
\end{equation} while \begin{equation}\begin{split}
	\|g_\mathrm{r}-\mathcal{T}{g}\|_{L^2_{t,x}}^2&=4\left\|\Pi[v_\bullet]\nabla v^{\alpha/2}-\mathcal{T}{g}\right\|^2_{L^2_{t,x}} \\ &=4\left\|\Pi[u_\bullet]\nabla u^{\alpha/2}-g\right\|^2_{\Lambda} =\|\theta\|^2_{L^2_{t,x}}= 2\cA(u_\bullet)\end{split} 
\end{equation} where in the second equality we use the final observation of step 2, the third equality follows from the definition (\ref{eq: find g from theta}) of $g$ from $\theta$ and the final equality follows because $\theta$ was chosen to be the unique minimiser of the action.  Recalling the key equality (\ref{eq: TIME REVERSE RATE FN}), the finiteness of $\mathcal{J}(u_\bullet)$ and the hypothesis that $\cH_\rho(u_0)<\infty$ implies $\cI_\rho(u_\bullet)<\infty$, and hence that $$\alpha\cH(u_\tf)\le \cI_\rho(v_\bullet)=\cI_\rho(u_\bullet)<\infty.$$  Using (\ref{eq: TIME REVERSE RATE FN}), we now write the chain of equalities \begin{equation}
	\begin{split}
		0& =\cI_\rho(v_\bullet)-\cI_\rho(u_\bullet) \\ & = \alpha\cH_\rho(u_\tf)-\alpha\cH_\rho(u_0) + \frac{1}{2}\left(\|g_\mathrm{r}\|_{L^2_{t,x}}^2+\|\mathcal{T}g\|_{L^2_{t,x}}^2\right)- \|g\|_{L^2_{t,x}}^2 \\ & = \alpha\cH_\rho(u_\tf)-\alpha\cH_\rho(u_0) + \frac{1}{4}\left(\|g_\mathrm{r}+\mathcal{T}g\|_{L^2_{t,x}}^2+\|g_\mathrm{r}-\mathcal{T}g\|_{L^2_{t,x}}^2\right)- \|g\|_{L^2_{t,x}}^2  \\ & =\alpha\cH_\rho(u_\tf)-\alpha \cH_\rho(u_0)+\left\|\Pi[u_\bullet]\nabla u^{\alpha/2}\right\|^2_{L^2_{t,x}} + \frac{1}{2}\cA(u_\bullet) - \|g\|_{L^2_{t,x}}^2
	\end{split}
\end{equation} whence \begin{equation}\label{eq: GF step 2}
	\alpha \cH_\rho(u_\tf)-\alpha \cH_\rho(u_0)+\left\|\Pi[u_\bullet]\nabla u^{\alpha/2}\right\|^2_{L^2_{t,x}}+ \frac{1}{2}\cA(u_\bullet) = \|g\|_{L^2_{t,x}}^2
\end{equation} and the claim (\ref{eq: LD to GF w Projection}) is proven. \paragraph{\textbf{Step 4. Lower Bound by approximation.}} We now prove a matching lower bound for $\mathcal{J}$ to complement the previous step. For any given $u_\bullet \in \cR$, we use Proposition \ref{prop: lsc envelope} to find $u^{(n)}_\bullet\in \mathcal{X}$, such that each $u^{(n)}_\bullet \in C^{1,2}([0,\tf]\times\TTd)$ is bounded away from $0$, such that $u^{(n)}_\bullet\to u_\bullet$ in the topology of $\mathbb{D}$, with \begin{equation} \label{eq: control cost gf approximation} \mathcal{J}(u^{(n)}_\bullet)\to \mathcal{J}(u_\bullet); \qquad \cH_\rho(u^{(n)}_0)\to \cH_\rho(u_0). \end{equation} For each $n$, $\inf_{t,x}u^{(n)}_t(x)>0$, whence $\log u^{(n)}\in C^{1,2}([0,\tf]\times\TTd)$ and $$\nabla (u^{(n)})^{\alpha/2}=\frac\alpha 2 (u^{(n)})^{\alpha/2}\nabla \log u^{(n)} \in \Lambda_{u^{(n)}_\bullet}.$$ It follows that $\Pi[u^{(n)}_\bullet]$ has no effect on $\nabla(u^{(n)})^{\alpha/2}$, so $$ \left\|\Pi[u^{(n)}_\bullet]\nabla (u^{(n)}_\bullet)^{\alpha/2}\right\|^2_{L^2_{t,x}}=\left\|\nabla (u^{(n)}_\bullet)^{\alpha/2}\right\|^2_{L^2_{t,x}}=:\frac{\alpha}{2}\int_0^\tf\cD_\alpha(u^{(n)}_t)dt. $$ For each $n$, the previous step now yields \begin{equation}\label{eq: GF equality prelimit}
	\mathcal{J}(u^{(n)}_\bullet) = \frac{1}{2}\left(\alpha \cH_\rho(u^{(n)}_\tf)-\alpha \cH_\rho(u^{(n)}_0)+\frac{\alpha}{2}\int_0^\tf \cD_\alpha(u^{(n)}_s)ds +\cA(u^{(n)}_\bullet)\right).
\end{equation} We now take the limit of (\ref{eq: GF equality prelimit}). Thanks to (\ref{eq: control cost gf approximation}) and the previous estimate, it follows that $\limsup_n \int_0^\tf \cD_\alpha(u^{(n)}_s) ds <\infty$ and so may apply the approximation results in Remark \ref{rmk: continnum nonstandard convolution} and Lemma \ref{lemma: variational A}. It is standard that \begin{equation}\label{eq: limit of entropy}
	\liminf_n \cH_\rho(u^{(n)}_\tf) \ge \cH_\rho(u_\tf) 
\end{equation} while by the final part of Remark \ref{rmk: continnum nonstandard convolution}, \begin{equation}\label{eq: limit of entropy diss} \liminf_n \int_0^\tf \cD_\alpha(u^{(n)}_s)ds\ge \int_0^\tf \cD_\alpha(u_s)ds
\end{equation} and by Lemma \ref{lemma: variational A}, \begin{equation}
	\label{eq: limit action} \liminf_n \cA(u^{(n)}_\bullet) \ge \cA(u_\bullet).
\end{equation} Meanwhile, the only negative term $-\cH_\rho(u^{(n)}_0) \to -\cH_\rho(u_0)$ is explicitly controlled by (\ref{eq: control cost gf approximation}). Taking the limit inferior of both sides of (\ref{eq: GF equality prelimit}) and recalling the first part of (\ref{eq: control cost gf approximation}), \begin{equation}
	\begin{split}
		\mathcal{J}(u_\bullet)& =\liminf_n \mathcal{J}(u^{(n)}_\bullet) \\&  \ge \frac12 \left(\alpha \cH_\rho(u_\tf)-\alpha \cH_\rho(u_0)+\frac{\alpha}{2}\int_0^\tf \cD_\alpha(u_s)ds + \frac12 \cA(u_\bullet)\right). 
	\end{split}
\end{equation} In light of the matching upper bound (\ref{eq: conclusion of gf with inequality}), we have the equality (\ref{eq: conclusion of gf}) claimed in the theorem. \medskip \\ \paragraph{\textbf{Step 5. Geometric Consequences.}} Thanks to the equality proven in steps 3-4, the inequality (\ref{eq: projection decreases entropy dissipation}) must also be an equality as soon as $\mathcal{J}(u_\bullet)<\infty$, which implies that $\nabla u^{\alpha/2} \in \Lambda_{u_\bullet}$. Using the definition (\ref{eq: characterisation}), it follows that, for almost all $t\ge 0$, $\nabla u_t^{\alpha/2}$ is the $L^2(\TTd,\RRd)$-limit of functions $u_t^{\alpha/2}\nabla \varphi^{(n)}_t, n\to \infty$, with $\varphi^{(n)}\in C^2(\TTd)$. This is exactly the definition (\ref{eq: tangent space useful def}) of $T_{u_t}\cM_{\text{ac},\lambda}(\TTd)$, and the claim is proven for $\nabla u_t^{\alpha/2}$. The equivalent claim for $g$ was already noted in Remark \ref{rmk: g tangent space}, which proves the claims of the theorem.  \bigskip \\ \paragraph{\textbf{Step 6. $H$-Theorem for the Porous Medium Equation.}} Finally, let us show how the previous steps imply the equality (\ref{eq: entropy dissipation equality}) for solutions $u_\bullet \in \cR$ to the PME (\ref{eq: PME}), continuing in the notation of step 3. Since $g=0$ for $u_\bullet$, the candidate control $g_\mathrm{r}$ for the reversed path $v_\bullet$ is given by \begin{equation}
	\label{eq: H thrm control} g_\mathrm{r}=2\Pi[v_\bullet]\nabla v^{\alpha/2} = 2\nabla v^{\alpha/2}
\end{equation} where the second equality holds thanks to step 5 above, and this is indeed the optimal control thanks to Lemma \ref{lemma: variational form}. Substituting this into the definition of $\mathcal{I}_\rho(v_\bullet)$ in (\ref{eq: TIME REVERSE RATE FN}) produces $\cI_\rho(u_\bullet)=\alpha \cH_\rho(u_0)$ and \begin{equation}
	\cI_\rho(v_\bullet)=\alpha \cH_\rho(u_\tf)+\frac{1}{2}\|g_\mathrm{r}\|_{L^2_{t,x}}^2 = \alpha \cH_\rho(u_\tf)+\alpha\int_0^\tf \cD_\alpha(u_s)ds
\end{equation} and the equality result follows from (\ref{eq: TIME REVERSE RATE FN}). This proves the equality of the $H$-theorem at $\tf$: \begin{equation}\label{eq: entropy dissipation equality tf} \cH_\rho(u_\tf)+\int_0^\tf \cD_\alpha(u_s)ds = \cH_\rho(u_0) \end{equation} which is the special case $t=\tf$ of the claim (\ref{eq: entropy dissipation equality}) in the theorem. To extend this to all intermediate times, note that (\ref{eq: basic entropy estimate}) shows that, for all $t\in (0, \tf)$, \begin{equation} \label{eq: entropy dissipation inequality} \cH_\rho(u_0)\ge \cH_\rho(u_t)+\int_0^t \cD_\alpha(u_s)ds\end{equation} and, applying the same on the time interval $[t, \tf]$,  \begin{equation} \cH_\rho(u_t)\ge \cH_\rho(u_t)+\int_t^\tf \cD_\alpha(u_s)ds.\end{equation} If we assume a strict inequality in (\ref{eq: entropy dissipation inequality}) for any $t\in (0,\tf)$, we add the previous two displays together to find a strict inequality at $t=\tf$. This contradicts (\ref{eq: entropy dissipation equality tf}), and so (\ref{eq: entropy dissipation inequality}) must hold with equality for all $0\le t\le \tf$, as claimed.  \end{proof}

\section*{Acknowledgments} 

We express our gratitude to the anonymous reviewers, whose feedback and suggestions have helped to improve the presentation. The first author acknowledges support by the Max Planck Society through the Research Group ``Stochastic Analysis in the Sciences (SAiS)''. This work was co-funded by the European Union (ERC, FluCo, grant agreement No. 101088488). Views and opinions expressed are however those of the author(s) only and do not necessarily reflect those of the European Union or of the European Research Council. Neither the European Union nor the granting authority can be held responsible for them. The second author is supported by the Royal Commission for the Exhibition of 1851.

\appendix
\section{Wasserstein Geometry of the Gradient Flow Formulation}\label{sec: formal GF}

In order to give context to Theorem \ref{thrm: gradient flow}, we now give a formal description of a Riemannian geometry on the space $\cM_{\text{ac},\lambda}(\TTd)$ of all absolutely continuous measures on $\TTd$ with a prescribed mass $\lambda$. The Wasserstein-type geometry goes back to the work of Jordan, Kinderlehrer and Otto \cite{jordan1998variational} and Otto \cite{otto2001geometry}; the metric which arises naturally from the large deviations was introduced in \cite{dirr2016entropic} and called the \emph{thermodynamic metric}. \bigskip \\ The tangent space ${T}_u\cM_{\mathrm{ac},\lambda}(\TTd)$ is given as the set of all derivatives $\partial_t u_t|_{t=0}$ for smooth curves in $u_t$ taking values in $\cM_{\mathrm{ac},\lambda}(\TTd)$ with $u_0=u$. Given $\zeta \in {T}_u\cM_{\mathrm{ac},\lambda}(\TTd)$, we associate a vector field $v$ via the continuity equation $\zeta+ \nabla\cdot(uv)=0$. \bigskip \\ We now argue formally for $u\in L^\alpha(\TTd)\cap \cM_{\text{ac},\lambda}(\TTd)$. Following \cite{dirr2016entropic}, the nonlinearity of the diffusion leads one to seek such a $v$ with the minimal \emph{weighted} kinetic energy $\int_{\TTd} 2u^{2-\alpha}|v|^2$, which takes the form $v=\frac{1}{2}u^{\alpha-1}\nabla\xi$ for a function $\xi$, and the continuity equation becomes \begin{equation} \label{eq: CE0}
		\zeta+\nabla\cdot(\frac{1}{2}u^{\alpha}\nabla \xi)=0.
	\end{equation} The metric tensor is then given by, for $\zeta_1, \zeta_2$ and $\xi_1, \xi_2$ the associated functions, \begin{equation} \label{eq: metric tensor}\begin{split} g_u(\zeta_1, \zeta_2) & =:\frac{1}{2}\int_{\TTd} u(x)^\alpha\nabla \xi_1\cdot \nabla \xi_2 dx \end{split}\end{equation} which modifies the Wasserstein gradient flow structure of Otto \cite{otto2001geometry} by replacing the factor $u\mapsto u^\alpha$. This allows the immersion of the tangent space into $L^2(\TTd,\RRd)$, and we identify, see also Ambrosio \cite[Section 8.4]{ambrosio2005gradient} \begin{equation} \label{eq: tangent space useful def'}
		{T}_u \cM_{\text{ac},\lambda}(\TTd):=\overline{\left\{u^{\alpha/2}\nabla \varphi: \varphi\in C^2(\TTd)\right\}}^{L^2(\TTd,\RRd)}
	\end{equation} which is the definition (\ref{eq: tangent space useful def}) used in the text. The continuity equation (\ref{eq: CE}) becomes \begin{equation}\label{eq: CE'} \zeta+\frac12 \nabla\cdot(u^{\alpha/2}\theta), \qquad \theta\in T_u\mathcal{M}_{\text{ac},\lambda}(\TTd). \end{equation} This suggests how we see the term $\frac12\Delta u^\alpha$ appearing as a time-derivative in the PME (\ref{eq: PME}) or skeleton equation (\ref{eq: Sk}) in this framework. The functional derivative of the entropy $\cH$ along the directions tangential to $\mathcal{M}_{\text{ac},\lambda}$ is $\frac{\delta \cH}{\delta u}= \log u$, and taking $$\xi=-\alpha \log u =- \alpha \frac{\delta \cH}{\delta u}$$ exactly produces $\zeta=\alpha \nabla \cH(u)=-\frac12\Delta u^\alpha$. This therefore suggests that (\ref{eq: PME}) should be the gradient flow in this geometry of the functional $\alpha\cH(u)$, and using the definition (\ref{eq: metric tensor}), norm squared of the gradient is \begin{equation}\label{eq: gradient term}
		g_u(\alpha \nabla \cH, \alpha \nabla \cH)=\frac12 \int_{\TTd} \left|u^{\alpha/2} (\alpha \nabla \log u)\right|^2=2\int_{\TTd}|\nabla u^{\alpha/2}|^2 =: \alpha \cD_\alpha(u)
	\end{equation} while for curves $u_\bullet$ in $\mathcal{M}_{\text{ac},\lambda}(\TTd)$, the action (\ref{eq: action 2}) coincides with $\int_0^\tf g_{u_t}(\dot u_t, \dot u_t) dt$.\bigskip \\   Let us highlight one immediate issue in making the previous discussion rigorous. In order to identify $\frac{1}{2}\Delta u^\alpha$ as a time derivative, we need apply (\ref{eq: CE'}) to $\theta=\nabla u^{\alpha/2}$, while it is not \emph{a priori} clear that this is an element of the tangent space in the definition (\ref{eq: tangent space useful def}, \ref{eq: tangent space useful def'}); this is not obviously implied even by the finiteness of the entropy dissipation. Indeed, the situation is even worse, because we wish to express a condition between the large deviation rate functional $\cI_\rho$ and the gradient flow, so we deal with the case where $u_\bullet$ is, in general, a solution to (\ref{eq: Sk}), and we cannot appeal to any properties of (\ref{eq: PME}) to restrict attention to `better behaved'\footnote{For example, the inclusion $\nabla u^{\alpha/2}\in T_u\mathcal{M}_{\text{ac},\lambda}$ is not difficult to prove if one assumes that $u\in L^\infty(\TTd,[0,\infty))\cap \mathcal{M}_{\text{ac},\lambda}(\TTd))$ and $\cD_\alpha(u)<\infty$. However, the boundedness is much too strong a property for typical large deviations paths, as remarked in Section \ref{sec: prelim}. This is related to the supercriticality of (\ref{eq: Sk}) in $L^p(\TTd)$ for any $p>1$, see \cite[Section 2.2]{fehrman2019large}.} $u$. In particular, we must address the question of whether \begin{equation} 
		\nabla u_t^{\alpha/2} \in T_u\cM_{\text{ac},\lambda}(\TTd)\quad \text{ for almost all } t\ge 0 \label{eq: dubious tangent inclusion} 
	\end{equation} assuming only that $u_\bullet \in \mathcal{R}$ is a solution to (\ref{eq: Sk}) for some $g$. A direct approach to this problem from the definition (\ref{eq: tangent space useful def}, \ref{eq: tangent space useful def'}) appears to be highly nontrivial. Indeed, this same issue appears in the proof of in Theorem \ref{thrm: gradient flow} in Section \ref{sec: gradient flow}: if it were clear, in the notation of Section \ref{sec: gradient flow}, that $\nabla u^{\alpha/2} \in \Lambda_{u_\bullet}$, then step 2 would already establish the desired equality (\ref{eq: conclusion of gf}), and step 3 would be unnecessary. We obtain the inclusion, which implies (\ref{eq: dubious tangent inclusion}), in step 5 of the cited proof, only thanks to the approximation argument, benefiting from the previous knowledge of (\ref{eq: Sk}) thanks to \cite{fehrman2019large}. 
	
	\section{Deferred Proofs}\label{sec: misc proofs} \begin{proof}[Proof of Lemma \ref{lemma: computation of generators}] For $x\in \TND$, let us set $\overline{\varphi}^N(x)$ to be the average of $\varphi$ over the cube $c^N_x$, so that $F^\varphi_N$ has the representation \begin{equation}\label{eq: representation of evaluation}
	F^\varphi_N(\eta^N)=\frac{1}{N^d}\sum_{x\in \TND} \eta^N(x)\overline{\varphi}^N(x).
\end{equation}
	We compute \begin{equation}\begin{split}
			\label{eq: linear generator on evaluation} \cL_N F^\varphi_N(\eta^N)&=\frac{dN^2}{N^d}\sum_{x,y\in \TND}\frac{(\eta^N(x))^\alpha}{\chi_N}p^N(x,y)\left(\chi_N\overline{\varphi}^N(y)-\chi_N\overline{\varphi}^N(x)\right) \\ &=\frac{d}{N^{d-2}}\sum_{x,y\in \TND}(\eta^N(x))^\alpha p^N(x,y)\left(\overline{\varphi}^N(y)-\overline{\varphi}^N(x)\right)\\& =\frac{1}{2N^{d-2}}\sum_{x\in \TND, \h l\in \{\pm e_i/N, i\le d\}}(\eta^N(x))^\alpha  \left(\overline{\varphi}^N(x+l)-\overline{\varphi}^N(x)\right) \end{split} 
		\end{equation}and \begin{equation}\begin{split}
			\label{eq: nonlinear generator on evaluation} \cG_N F^\varphi_N(\eta^N)&=\frac{d\chi_N N^2}{N^d}\sum_{x,y\in \TND}\frac{(\eta^N(x))^\alpha }{\chi_N}p^N(x,y)\left(e^{\overline{\varphi}^N(y)-\overline{\varphi}^N(x)}-1\right) \\ &=\frac{d}{N^{d-2}}\sum_{x,y\in \TND}(\eta^N(x))^\alpha  p^N(x,y)\left(e^{\overline{\varphi}^N(y)-\overline{\varphi}^N(x)}-1\right) \\ &=\frac{1}{2N^{d-2}}\sum_{x\in \TND, \h l\in \{\pm e_i/N, i\le d\}}(\eta^N(x))^\alpha  \left(e^{\overline{\varphi}^N(x+l)-\overline{\varphi}^N(x)}-1\right)\end{split} 
		\end{equation} where in both of the final lines we have recalled the fact that $p^N$ is the nearest neighbours transition matrix to turn the sum over $y$ with respect to $dp^N(x,\cdot)$ into a sum over tangent vectors. We observe that\begin{equation}
			\overline{\varphi}^N(x+l)-\overline{\varphi}^N(x)=N^d\int_{c^N_0} (\varphi(x+l+z)-\varphi(x+z))dz
		\end{equation} and recalling that $\varphi\in C^2(\TTd)$, we Taylor expand to second order. We write $\theta_{N}$ for errors, potentially with additional arguments $x, (x,l), (x,l,z)$, which are $\mathfrak{o}(1)$ (uniformly in all arguments) and which may change from line to line. We find \begin{equation}\begin{split}
			\overline{\varphi}^N(x+l)-\overline{\varphi}^N(x) &= N^d\int_{c^N_0}\left(\nabla \varphi(x+z)\cdot l +\frac{1}{2}l\cdot \nabla^2 \varphi(x+z) l+ \theta_N(x,l,z)N^{-2}\right)dz \\[1ex] & =\nabla \varphi(x)\cdot l+ \frac{1}{2} l\cdot \nabla^2 \varphi(x) l + \theta_N(x,l)N^{-2}\end{split}
		\end{equation}  whence  \begin{equation}\begin{split}
			e^{\overline{\varphi}^N(x+l)-\overline{\varphi}^N(x)}-1 &= \varphi(x+l)-\varphi(x)+\frac{1}{2}(\varphi(x+l)-\varphi(x))^2 +\theta_{N}(x,l)N^{-2} \\[1ex] & = \nabla \varphi(x)\cdot l +\frac{1}{2}l\cdot \nabla^2 \varphi(x)l + \frac{1}{2}\left(\nabla \varphi(x)\cdot l\right)^2 + \theta_N(x,l)N^{-2}. \end{split}
		\end{equation} For fixed $x$, we sum over $l\in \{\pm e_i/N, 1\le i\le d\}$. By symmetry, $\sum_l l=0$ and the first-order terms vanish, and the remaining terms produce \begin{equation}\label{eq: taylor expansion of linear generator}
			\begin{split}
				\sum_{l\in \{\pm e_i/N\}} (\overline{\varphi}^N(x+l)-\overline{\varphi}^N(x))=N^{-2}\left(\Delta \varphi(x) + \theta_N(x)\right).
			\end{split} \end{equation} and  \begin{equation}\label{eq: taylor expansion of nonlinear generator}
			\begin{split}
				\sum_{l\in \{\pm e_i/N\}}(e^{\overline{\varphi}^N(x+l)-\overline{\varphi}^N(x)}-1)=N^{-2}\left(\Delta \varphi(x)+|\nabla \varphi(x)|^2 + \theta_N(x)\right).
			\end{split}
		\end{equation} Substituting into (\ref{eq: linear generator on evaluation}) and (\ref{eq: nonlinear generator on evaluation}) respectively, we find the claimed result, with $\Delta \varphi, \nabla \varphi$ replaced by versions which are piecewise constant on each box $c^N_x$. Using continuity to control the difference, both claims follow.
\end{proof}

				\begin{proof}[Sketch Proof of Proposition \ref{prop: uniqueness FP}] We will sketch the main ideas of the proof. The first step of the argument is to define a renormalised solution, in analogy to \cite[Definition 2.4]{fehrman2019large}, based on the \emph{kinetic function} $\chi:[0,\tf]\times \TTd\times\RR\to \RR$ of $u_\bullet$ given by \begin{equation}
					 \chi(t,x,\xi):=1(0<\xi<u_t(x))-1(u_t(x)<\xi<0)
				\end{equation} and imposing the relation between $g$ and $u$ in \cite[Equation (2.12)]{fehrman2019large}. The additional relation between $g$ and $u$ does not change the argument of \cite[Theorem 4.3]{fehrman2019large}, which establishes that the weak formulation is equivalent to the renormalised formulation, and hence it is sufficient to prove the uniqueness in the renormalised formulation, for which we follow \cite[Theorem 3.3]{fehrman2019large}. Following the same argument, we find that for two solutions $u_\bullet, v_\bullet$ to (\ref{eq: FP}), the $L^1(\TTd)$ norm can be written \begin{equation}\begin{split}
					\|u_t-v_t\|_{L^1(\TTd)}& =\int_{\RR\times\TTd}\left|\chi^u_t-\chi^v_t\right|^2 dy d\xi\\& =\lim_{M\to \infty}\lim_{\e, \delta\downarrow 0} \int_{\RR\times\TTd}\left|\chi^{u,\e,\delta}_t-\chi^{v,\e,\delta}_t\right|^2 \zeta^M(\xi) dy d\xi
				\end{split}\end{equation} where $\zeta^M$ is a cutoff in the velocity, supported on $\{M^{-1}\le \xi\le M+1\}$, and $\chi^{\{u,v\},\e,\delta}$ are given by convolving the kinetic functions of $u, v$ respectively with an approximation to the identity $\kappa^{\e,\delta}:(\TTd)^2\times(\RR)^2\to [0,\infty)$ of scale $\e$ in the spatial variables and $\delta$ in the velocity, such that the original function $\kappa$ has support contained in $\{|x-y|\le 1, |\xi-\xi'|\le 1\}$. It is now sufficient to show that \begin{equation}\label{eq: conclusion uniqueness FP}
					\limsup_{M\to \infty}\limsup_{\delta\downarrow 0}\limsup_{\e\downarrow 0} \partial_t \left(\int_{\RR\times\TTd}\left|\chi^{u,\e,\delta}_t-\chi^{v,\e,\delta}_t\right|^2 \zeta^M(\xi) dy d\xi\right) \le 0
				\end{equation} for almost every $t\le \tf$. Identical arguments to those of \cite{fehrman2019large} decompose the derivative into the same four terms \cite[Equations (3.10 - 3.14)]{fehrman2019large}. To make the appropriate modifications, note that the use of distinct variables $x,x'$ for the convolutions defining $\chi^{u,\e,\delta}, \chi^{v,\e,\delta}$ ensures that the control  $g^u_t(x)=u^{\alpha/2}_t(x)\nabla_x h$ associated to $u$ always appears with the variable $x$, while $g^v_t(x')=v^{\alpha/2}_t(x)\nabla_{x'}h$ always appears with the variable $x'$. Of these four terms, the only one where the difference of the controls is important is the term  \cite[Equation (3.13)]{fehrman2019large}
					\begin{equation}\label{eq: control term}
						\begin{split}
							\partial_t\mathbb{I}^{\e,\delta,M}_{t,\mathrm{con}} &= \frac2\alpha\int_{\RR\times(\TTd)^3} \Theta_t(x,x')\cdot u_t(x)^{1-\alpha/2}v_t(x')^{\alpha/2} \nabla_x u_t^{\alpha/2}(x)\overline{\kappa}^{\e,\delta}_{1,t}\overline{\kappa}^{\e,\delta}_{2,t}\zeta^M \h dx dx' dy d\xi
\\ & + \frac2\alpha\int_{\RR\times(\TTd)^3} \Theta_t(x,x')\cdot u_t^{\alpha/2}(x)v_t(x')^{1-\alpha/2} \nabla_{x'} v_t^{\alpha/2}(x')\overline{\kappa}^{\e,\delta}_{1,t}\overline{\kappa}^{\e,\delta}_{2,t}\zeta^M \h dx dx' dy d\xi
						\end{split}
					\end{equation} with the shorthand $$\overline{\kappa}^{\e,\d}_{1,t}:=\overline{\kappa}^{\e,\d}_{1,t}(x,y,\xi)= \kappa^{\e,\d}(x,y,u_t(x),\xi);  $$ $$ \overline{\kappa}^{\e,\d}_{2,t}:=\overline{\kappa}^{\e,\d}_{2,t}(x',y,\xi)= \kappa^{\e,\d}(x',y,v_t(x'),\xi)$$ and $$ \Theta_t(x,x'):=g^u_t(x)-g^v_t(x').$$ We now note that, on the support of $\overline{\kappa}^{\e,\delta}_{1,t}\overline{\kappa}^{\e,\delta}_{2,t}\zeta^M $, we have the bounds $$|x-y|\le \e, |x'-y|\le \e, \quad |u_t(x)-\xi|, |v_t(x')-\xi|\le \delta,$$ $$ M^{-1}\le |\xi|\le M$$ and as soon as $\delta<\frac{1}{2M}$, these imply that $u_t(x), v_t(x')\in ((2M)^{-1}, 2M)$, allowing us to bound the powers of $u_t(x), v_t(x')$. On this region, we also bound the difference of the controls by \begin{equation}
						\begin{split}
							\left|\Theta_t(x,x')\right| & \le |u_t^{\alpha/2}(x)-\xi^{\alpha/2}||\nabla_x h_t(x)|+|\xi^{\alpha/2}||\nabla_x h_t(x)-\nabla_{x'}h_t(x')| \\ & \hspace{3cm} + |v_t^{\alpha/2}(x')-\xi^{\alpha/2}|\nabla_{x'}h_t(x')| \\[1ex]&\hspace{-2cm} \le CM^{|1-\alpha/2|}\delta \|\nabla h\|_{L^\infty([0,\tf]\times\TTd)}+(M+1)^{\alpha/2}\sup_{t,x,x': |x-x'|\le 2\e}|\nabla h_t(x)-\nabla h_t(x')|
						\end{split}
					\end{equation} where the factor of $CM^{|1-\alpha/2|}$ is the Lipschitz constant of $\xi\mapsto \xi^{\alpha/2}$ on the interval $((2M)^{-1}, (2M))$. We thus conclude that, on the support of $\overline{\kappa}^{\e,\delta}_{1,t}\overline{\kappa}^{\e,\delta}_{2,t}\zeta^M $, we have the uniform bound \begin{equation} |g^u_t(x)-g^v_t(x')|\le \vartheta(\e,\d,M) \end{equation} for some $\vartheta(\e,\d,M)$ with $\limsup_{\e,\d\downarrow 0} \vartheta(\e,\d,M) = 0$ for all $M<\infty$. We now return to (\ref{eq: control term}) and use Young's inequality and Cauchy-Schwarz to find the distributional inequality \begin{equation}
						|\partial_t \mathbb{I}^{\e,\d,M}_{t,\mathrm{con}}|\le c(M)\vartheta(\e,\delta,M)\|\nabla u^{\alpha/2}_t\|_{L^2(\TTd)}\|\nabla v^{\alpha/2}_t\|_{L^2(\TTd)}
					\end{equation} for some $c=c(M)$ depending only on the cutoff $M$ in velocity. By hypothesis on $u_\bullet, v_\bullet \in \cR$ and the Cauchy-Schwarz inequality, the right-hand side belongs to $L^1([0,\tf])$ and we conclude that, as distributions, \begin{equation}
						\limsup_{M\to \infty} \left(\limsup_{\e,\d\downarrow 0} \partial_t\mathbb{I}^{\e,\d,M}_{t,\mathrm{con}}\right)\le 0.
					\end{equation} The rest of the proof proceeds exactly as in the proof of \cite[Theorem 3.3]{fehrman2019large} to obtain (\ref{eq: conclusion uniqueness FP}) and we conclude that, for any two solutions $u_\bullet, v_\bullet$ to (\ref{eq: FP}) for the same $h$, \begin{equation}
						\sup_{t\le \tf} \|u_t-v_t\|_{L^1(\TTd)} \le \|u_0-v_0\|_{L^1(\TTd)}.
					\end{equation} It immediately follows that if $u_0=v_0$ then $u_\bullet=v_\bullet$, and we are done. 
				\end{proof}
				
				Before giving the proof of Lemmata \ref{lemma: variational form} and \ref{lemma: variational A}, let us give without proof the following reformulation of the skeleton equation (\ref{eq: Sk}).  \begin{lemma}[Alternative Formulation of weak solutions] \label{lemma: reformulation} Fix $g\in {L^2_{t,x}}$ and $u_\bullet \in \cR$. Then $u_\bullet$ is a weak solution to the skeleton equation (\ref{eq: Sk}) if, and only if, for all $\varphi\in C^{1,2}([0,\tf]\times\TTd)$, \begin{equation}\label{eq: reformulation} \begin{split} \langle \varphi_\tf, u_{\tf}\rangle =\langle \varphi_0, u_0\rangle & +\int_0^{\tf} \langle \partial_t \varphi_t, u_t\rangle + \frac12\int_0^\tf \int_{\TTd} u_t(x)^\alpha  \Delta \varphi_t(x)dx dt \\& +\int_0^\tf \int_{\TTd} g_t(x)u^{\alpha/2}_t(x)\cdot \nabla \varphi_t(x) dt dx.  \end{split}
				\end{equation}
					
				\end{lemma}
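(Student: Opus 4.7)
The strategy is to view the reformulated identity \eqref{eq: reformulation} as the formal product rule $\frac{d}{dt}\langle \varphi_t, u_t\rangle = \langle \partial_t \varphi_t, u_t\rangle + \langle \varphi_t, \partial_t u_t\rangle$ integrated over $[0,\tf]$, with $\partial_t u_t$ substituted using the skeleton equation \eqref{eq: Sk}. The two spatial terms on the right of \eqref{eq: reformulation} are obtained by integrating by parts twice in space against $\varphi_t$: since $u_\bullet \in \mathcal{R}$ gives $u^{\alpha/2} \in L^2([0,\tf], H^1(\TTd))$ and (by the chain rule for Sobolev functions) $\nabla u_t^\alpha = 2 u_t^{\alpha/2}\nabla u_t^{\alpha/2}$ a.e.\ in $t$, this integration by parts gives the equivalence
\begin{equation*}
-\int_0^t\int_{\TTd} u_s^{\alpha/2}\nabla u_s^{\alpha/2}\cdot \nabla \varphi\, dx\, ds = \frac{1}{2}\int_0^t \int_{\TTd} u_s^\alpha \Delta \varphi\, dx\, ds
\end{equation*}
for any smooth $\varphi\in C^2(\TTd)$, while the $g$-term is already in the required form.

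For the forward implication, I would partition $[0,\tf]$ as $0=t_0<t_1<\dots<t_n=\tf$ and apply \eqref{eq: weak form} on each subinterval $[t_i, t_{i+1}]$ with the time-independent test function $\varphi_{t_i}$, after using the integration by parts just described. Summing over $i$ and applying Abel summation to the left-hand side gives
\begin{equation*}
\langle \varphi_{t_{n-1}}, u_\tf\rangle - \langle \varphi_{t_0}, u_0\rangle - \sum_{i=1}^{n-1}\langle \varphi_{t_i}-\varphi_{t_{i-1}}, u_{t_i}\rangle
\end{equation*}
on the left, while the right-hand side produces a Riemann sum for the spatial integrals. Refining the partition, the sum $\sum_{i}\langle \varphi_{t_i}-\varphi_{t_{i-1}}, u_{t_i}\rangle$ converges to $\int_0^\tf \langle \partial_t \varphi_t, u_t\rangle dt$ thanks to the regularity $\varphi \in C^{1,2}$ and the weak continuity $u_\bullet \in \mathcal{C}$ (which gives that $t\mapsto \langle \psi, u_t\rangle$ is continuous for any fixed $\psi\in L^\infty$); the spatial Riemann sums converge by dominated convergence using $u_\bullet \in L^\alpha_{t,x}$ (which follows from $u_\bullet\in\mathcal{R}$, c.f.\ \eqref{eq: energy class'}) and $g\in L^2_{t,x}$.

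For the reverse implication, given a time-independent test function $\varphi \in C^\infty(\TTd)$ and $t\in [0,\tf]$, I would apply the reformulated identity to $\varphi_s(x) := \varphi(x)\eta_\delta(s)$, where $\eta_\delta \in C^1([0,\tf])$ satisfies $\eta_\delta \equiv 1$ on $[0,t]$, $\eta_\delta \equiv 0$ on $[t+\delta, \tf]$ and interpolates smoothly. The boundary term $\langle \varphi_\tf, u_\tf\rangle$ vanishes, while the $\partial_t \varphi_s$ term contributes $\int_t^{t+\delta}\eta_\delta'(s)\langle \varphi, u_s\rangle ds$, which converges to $-\langle \varphi, u_t\rangle$ as $\delta\downarrow 0$, again by weak continuity of $u_\bullet$. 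The spatial integrals involving $\Delta\varphi$ and $\nabla\varphi$ restricted to $[0,t+\delta]$ converge to the corresponding integrals over $[0,t]$ by dominated convergence. Reversing the spatial integration by parts recovers \eqref{eq: weak form}.

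The main obstacle is the Abel-summation limit in the forward direction: the weak continuity provided by $u_\bullet \in \mathcal{C}$ is just enough to pass to the limit, but requires treating the endpoint terms carefully so that $\langle \varphi_{t_{n-1}}, u_\tf\rangle\to \langle \varphi_\tf, u_\tf\rangle$ and the Riemann-type sum for $\langle \partial_t \varphi_t, u_t\rangle$ converges; this is straightforward using uniform continuity of $(t,x)\mapsto \varphi_t(x)$ and $(t,x)\mapsto \partial_t\varphi_t(x)$, combined with the bound $\sup_t\langle 1, u_t\rangle<\infty$ that follows from $u_\bullet \in \mathcal{C}$. Beyond this, the proof is a routine justification of the formal product rule.
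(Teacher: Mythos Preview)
The paper states this lemma \emph{without proof}, treating it as a routine reformulation, so there is no proof to compare against. Your argument is correct and fills in the omitted details in a standard way: the partition/Abel-summation step for the forward implication and the time-cutoff $\eta_\delta$ for the reverse implication are exactly the usual devices for passing between time-independent and time-dependent test functions in weak formulations.

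Two minor points worth noting. First, Definition~\ref{def: solutions} is phrased with test functions $\varphi\in C^\infty(\TTd)$, while you apply it with $\varphi_{t_i}\in C^2(\TTd)$; a one-line density argument (approximate $\varphi_{t_i}$ in $C^2$ by $C^\infty$ functions and pass to the limit using $u_\bullet\in L^\alpha_{t,x}$ and $g\in L^2_{t,x}$) closes this gap. Second, your justification of the Riemann-sum limit for $\sum_i\langle\varphi_{t_i}-\varphi_{t_{i-1}},u_{t_i}\rangle$ implicitly uses that $(s,\psi)\mapsto\langle\psi,u_s\rangle$ is jointly continuous on $[0,\tf]\times K$ for the compact set $K=\{\partial_t\varphi_t:t\in[0,\tf]\}\subset C(\TTd)$; this follows from weak-$\star$ continuity of $u_\bullet$ together with the uniform mass bound $\sup_t\langle 1,u_t\rangle<\infty$, exactly as you indicate, and gives the uniform equicontinuity needed to control all subintervals simultaneously.

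A slightly cleaner alternative for the forward direction, which you may wish to adopt, is to observe directly from \eqref{eq: weak form} that for each fixed $\psi\in C^2(\TTd)$ the map $t\mapsto\langle\psi,u_t\rangle$ is absolutely continuous with an $L^1$ derivative, and then apply the product rule for absolutely continuous functions to $t\mapsto\langle\varphi_t,u_t\rangle$; this avoids the explicit partition but is equivalent in content.
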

				
				Let us observe that the hypothesis $u_\bullet \in \cR$ implies that $u_\bullet \in L^\alpha([0,\tf]\times \TTd)$ as in (\ref{eq: energy class'}), and in particular all terms appearing in (\ref{eq: reformulation}) are well-defined.
\begin{proof}[Proof of Lemmata \ref{lemma: variational form} and \ref{lemma: variational A}] We divide the proof into steps for clarity. \paragraph{\textbf{Step 1. Lower Bound of (\ref{eq: variational}).}} We will first show that \begin{equation}
	\begin{split}
			\label{eq: variational LB} & \frac{1}{2}\inf\left\{\left\|g\right\|_{{L^2_{t,x}}}^2: u_\bullet \text{ solves (\ref{eq: Sk})}\right\}\\&\hspace{4cm} \ge \sup_{\varphi \in C^{1,2}([0,\tf]\times \TTd)}\Xi_1(\varphi, u_\bullet).\end{split} \end{equation} If the set of $g\in {L^2_{t,x}}$ such that $u_\bullet$ solves (\ref{eq: Sk}) is empty, the infimum is infinite, and there is nothing to prove. Otherwise, let us fix such a $g$. Using the reformulation in Lemma \ref{lemma: reformulation}, it holds for all $\varphi \in C^{1,2}([0,\tf]\times\TTd)$ that \begin{equation}\begin{split}
				\langle \varphi_{\tf},u_{\tf}\rangle =\langle \varphi_0, u_0\rangle + \int_0^\tf \langle \partial_t\varphi_t, u_t\rangle & + \frac12\int_0^\tf \int_{\TTd} \Delta \varphi_t(x) u_t(x)^\alpha \\& + \int_0^\tf \int_{\TTd} g_t(x)u^{\alpha/2}_t(x)\cdot \nabla \varphi_t(x) \end{split}
			\end{equation} so that $\Xi_1$ simplifies to  \begin{equation}\begin{split}
				\Xi_1(\varphi, u_\bullet) =&\int_0^\tf \int_{\TTd} \left(g_t(x)u^{\alpha/2}_t(x)\cdot \nabla \varphi_t(x) -\frac12 u_t(x)^\alpha |\nabla \varphi_t(x)|^2\right)dt dx \\ &\qquad \qquad  \le \frac{1}{2}\int_0^\tf \int_{\TTd} |g_t(x)|^2 dt dx:=\frac{1}{2}\|g\|_{L^2_{t,x}}^2  \end{split}
			\end{equation} by Cauchy-Schwarz. Optimising over $\varphi \in C^{1,2}([0,\tf]\times\TTd)$ and minimising over $g$ subject to $u_\bullet \in \cS_g(u_0)$ proves (\ref{eq: variational LB}) as desired. \bigskip \\ \paragraph{\textbf{Step 2. Upper Bound of (\ref{eq: variational}) by Hilbert-space analysis.}} To complement the previous step, we now fix $u\in \cR$. If the right-hand side of (\ref{eq: variational LB}) is infinite then both expressions in (\ref{eq: variational LB}) are infinite and there is nothing to prove. We may therefore assume that it is finite; in this case, we will construct $g\in {L^2_{t,x}}$ such that (\ref{eq: Sk}) holds and with \begin{equation}
				\label{eq: variational UB'} \frac12\left\|g\right\|_{{L^2_{t,x}}}^2 \le \sup_{\varphi\in C^{1,2}([0,\tf]\times\TTd)} \Xi_1(\varphi, u)
			\end{equation} which proves that the bound in (\ref{eq: variational LB}) is an equality. We define a nonnegative-semidefinite bilinear form $\langle \langle \cdot, \cdot \rangle \rangle _{u_\bullet}$ on $C^{1,2}([0,\tf]\times\TTd)$ by\begin{equation}
				\label{eq: inner product} \lllangle \varphi, \psi \rrrangle_{u_\bullet} :=\int_0^\tf \int_\TTd u_t(x)^\alpha \nabla \varphi_t(x)\cdot \nabla \psi_t(x) dtdx. 
			\end{equation}Let us briefly emphasise that this is well-defined, since we assumed $u_\bullet \in \cR$ and $\cR \subset L^\alpha([0,\tf]\times\TTd)$ by (\ref{eq: energy class'}). We now define a Hilbert space $H^1_{u_\bullet}$ as the completion of $C^{1,2}([0,\tf]\times \TTd)$ with respect to this bilinear form, quotiented by the relation $$ \lllangle \varphi-\psi, \varphi-\psi\rrrangle_{u_\bullet}=0$$ in order to restore positive definiteness. We now observe that for $\varphi\in C^{1,2}([0,\tf]\times\TTd)$, we can write $\Xi_1(\varphi, u_\bullet)$ as \begin{equation}\label{eq: hilbert form}
				\Xi_1(\varphi, u_\bullet)=L_{u_\bullet} \varphi - \frac{1}{2}\lllangle \varphi, \varphi\rrrangle_{u_\bullet}
			\end{equation} with $L_{u_\bullet}$ collecting the terms that are linear in $\varphi$: \begin{equation}\begin{split}
				L_{u_\bullet}\varphi:=\langle \varphi_{\tf}, u_{\tf}\rangle & - \langle \varphi_0, u_0\rangle - \int_0^\tf \langle \partial_t \varphi_t, u_t\rangle \\& \hspace{1cm} - \frac12\int_0^\tf \int_{\TTd} \Delta \varphi_t(x)	u_t(x)^\alpha dtdx.	\end{split}	\end{equation} Since we assumed the finiteness of the supremum, it follows that $$ \sup\left\{L_{u_\bullet}\varphi:\varphi \in C^{1,2}([0,\tf]\times\TTd), \lllangle \varphi, \varphi\rrrangle_{u_\bullet} \le 1 \right\}<\infty$$ which implies that $L_{u_\bullet}$ extends to a bounded linear map on $H^1_{u_\bullet}$; the extension is unique because the equivalence classes of $C^{1,2}([0,\tf]\times\TTd)$ functions are dense in $H^1_{u_\bullet}$ by construction. Since $H^1_{u_\bullet}$ is a Hilbert space, it follows by the Riesz representation theorem that there exists $\psi \in H^1_{u_\bullet}$ such that $$ L_{u_\bullet} \varphi=\lllangle \varphi, \psi\rrrangle_{u_\bullet}$$ for all $\varphi\in H^1_{u_\bullet}$. It follows from (\ref{eq: hilbert form}) that \begin{equation}\begin{split}
					\sup_{\varphi\in C^{1,2}([0,\tf]\times\TTd)}\Xi_1(\varphi,u_\bullet) & =\sup_{\varphi \in C^{1,2}([0,\tf]\times\TTd)} \left\{L_{u_\bullet} \varphi-\frac{1}{2}\lllangle \varphi, \varphi\rrrangle_{u_\bullet}\right\} \\ &= \sup_{\varphi \in H^1_{u_\bullet}} \left\{\lllangle \varphi, \psi\rrrangle_{u_\bullet} -\frac{1}{2}\lllangle \varphi, \varphi\rrrangle_{u_\bullet}\right\} \end{split}
				\end{equation} which is readily computed to be $\frac{1}{2} \|\psi\|^2_{H^1_{u_\bullet}}$. By construction of $H^1_{u_\bullet}$, there exist $\psi_n\in C^{1,2}([0,\tf]\times\TTd)$ such that $\psi_n\to \psi$ in $H^1_{u_\bullet}$, and in particular, for all $\varphi\in C^{1,2}([0,\tf]\times\TTd)$, \begin{equation}\label{eq: wk formulation 1} \begin{split} L_{u_\bullet}\varphi  &= \lim_n \int_0^\tf \int_\TTd u_t(x)^\alpha \nabla \varphi_t(x)\cdot \nabla \psi_{n,t}(x) dt dx \\& =\lim_n \int_0^\tf \int_\TTd u^{\alpha/2}_t(x)\nabla \varphi_t(x)\cdot g_{n,t}(x)\end{split}				\end{equation} with $g_n(t,x):=u^{\alpha/2}_t(x)\nabla \psi_{n,t}(x)$, and \begin{equation} \label{eq: boundedness}
					 \lllangle \psi_n, \psi_n\rrrangle_{u_\bullet} \to \lllangle \psi, \psi\rrrangle_{u_\bullet}. 
				\end{equation} Using the definition, \begin{equation}
					\lllangle \psi_n, \psi_n\rrrangle_{u_\bullet}= \int_0^\tf \int_\TTd u_t(x)^\alpha|\nabla \psi_{n,t}(x)|^2 dt dx =  \left\|g_n\right\|_{{L^2_{t,x}}}^2.
				\end{equation} We can therefore find $g\in {L^2_{t,x}}$ and a subsequence $g_n\to g$ in the weak topology of ${L^2_{t,x}}$. Using the lower semicontinuity of the norm, we have that \begin{equation} \label{eq: norm of g} \begin{split}\left\|g\right\|_{{L^2_{t,x}}}^2 &\le \liminf_n \left\|g_n\right\|_{{L^2_{t,x}}}^2  \\[1ex] & =  \liminf_n \lllangle \psi_n, \psi_n\rrrangle_{u_\bullet} = \lllangle \psi, \psi\rrrangle_{u_\bullet} \\[1ex]  &= 2\sup_{\varphi \in C^{1,2}([0,\tf]\times\TTd)} \Xi_1(\varphi, u_\bullet). \end{split} \end{equation} As already remarked, $u\in  L^\alpha([0,\tf]\times\TTd)$, and in particular the test function $u^{\alpha/2}_t(x)\nabla \varphi_t(x)$ appearing in (\ref{eq: wk formulation 1}) belongs to ${L^2_{t,x}}$ for any $\varphi\in C^{1,2}([0,\tf]\times\TTd)$. Taking the limit, for all $\varphi\in C^{1,2}([0,\tf]\times\TTd)$, \begin{equation}
					L_{u_\bullet} \varphi = \int_0^\tf \int_{\TTd} u^{\alpha/2}_t(x)\nabla \varphi_t(x)\cdot g_t(x). 
				\end{equation} Using the definition of $L_{u_\bullet}\varphi$, this is exactly the statement that (\ref{eq: reformulation}) holds, and since $u_\bullet \in \cR$ by hypothesis, Lemma \ref{lemma: reformulation} shows that (\ref{eq: Sk}) holds for this choice of $g$. Returning to (\ref{eq: norm of g}), we have proven (\ref{eq: variational UB'}) and the step is complete; in conjunction with the previous step we have proven the assertion (\ref{eq: variational}) in Lemma \ref{lemma: variational form}.\bigskip \\ \paragraph{\textbf{Step 3. Characterisation of the minimiser.}}In light of (\ref{eq: variational LB}), and (\ref{eq: norm of g}), the second step in fact constructs $g$ attaining this minimum, and belonging to the set $\Lambda_{u_\bullet}$ defined in (\ref{eq: characterisation}). Uniqueness follows because  the feasible set  $\{g\in {L^2_{t,x}}: u_\bullet \in \cS_g(u_0)\}$ is an intersection of the affine, and hence convex, sets defined by (\ref{eq: reformulation}) for $\varphi$ running over $C^{1,2}([0,\tf]\times\TTd)$ and the function $g\mapsto \|g\|^2_{L^2_{t,x}}$ is strictly convex. Finally, let $g$ be the unique minimiser, and let $g'$ be any other control with $u_\bullet \in \cS_{g'}(u_0)$ and $g'\in \Lambda_{u_\bullet}$. It follows that, for any $\varphi\in C^{1,2}([0,\tf]\times\TTd)$, $$ \int_{[0,\tf]\times\TTd} \nabla \varphi_t(x)u_t(x)^{\alpha/2}\cdot (g'_t(x)-g_t(x)) dt dx=0$$ which implies that $g'-g$ belongs to the orthogonal complement $\Lambda_{u_\bullet}^\bot$ of $\Lambda_{u_\bullet}$ in ${L^2_{t,x}}$. On the other hand, $\Lambda_{u_\bullet}$ is a linear subspace, and so $g'-g\in \Lambda_{u_\bullet}$, and hence $g'-g=0$ in ${L^2_{t,x}}=L^2([0,\tf]\times \TTd, \RRd)$ as claimed.   \paragraph{\textbf{Step 4. Characterisation of the whole rate function.}} The final statement (\ref{eq: whole rate function variational}) follows from (\ref{eq: variational}) using the well-known Fenchel-Legendre transform \begin{equation}\begin{split}
					\alpha \cH_\rho(u_0)&=\sup\left\{\langle \psi, u_0\rangle - \alpha \int\rho(x)(e^{\psi(x)/\alpha}-1) dx: \psi \in C(\TTd)\right\} \\ & =\sup\left\{\Xi_0(\psi, u_0, \rho): \psi\in C(\TTd)\right\} \end{split}
				\end{equation} which can be verified in the same way as \cite[Lemma 6.2.13]{dembo2009large}.\bigskip \\ \paragraph{\textbf{Step 5. Properties of $\cA$.}}
				We now turn to some properties of the functional $\cA$ given in (\ref{eq: action 2}).	Let us remark that nowhere in the above does the diffusive term $\Delta u^{\alpha}$ appearing in (\ref{eq: Sk}) play a special role, and so we can repeat steps 1-2 to see that, for $u_\bullet \in \cR$,\begin{equation}
					\label{eq: dual A}\begin{split}\cA(u_\bullet)&=\frac{1}{2}\inf\left\{\left\|\theta\right\|_{{L^2_{t,x}}}^2: \text{(\ref{eq: CE}) holds}\right\}=\sup_{\varphi \in C^{1,2}([0,\tf]\times\TTd)} \left\{\widetilde{\Xi}_1(\varphi, u_\bullet) \right\}\end{split}
				\end{equation} for the functionals \begin{equation} \label{eq: def txi1}\begin{split}\widetilde{\Xi}_1(\varphi, u_\bullet):= \langle \varphi_{\tf}, u_{\tf}\rangle &- \langle \varphi_0, u_0\rangle -\int_0^\tf \langle \partial_t\varphi_t, u_t\rangle dt \\ &-\frac12\int_0^\tf \int_{\TTd} |\nabla \varphi_t(x)|^2 u_t(x)^\alpha dt dx. \end{split} \end{equation} Moreover, the same argument as in step 2 also constructs $\theta$ attaining the minimum and satisfying (\ref{eq: characterisation}). The argument for uniqueness of the minimiser and unique characterisation by membership of (\ref{eq: characterisation}) follow exactly as in step 3. \bigskip \\ We now turn to the assertion that $\cA$ is lower semicontinuous on sets of bounded $\cI_\rho$, $\rho\in (0,\infty)$, which will follow from the dual formulation (\ref{eq: dual A}). If we take a convergent sequence $u^{(n)}_\bullet \to u_\bullet$ in $\cDD$ with $\sup_n \cI_\rho(u^{(n)}_\bullet)<\infty$, then we can apply the \emph{a priori} estimate (\ref{eq: basic entropy estimate}) to see that $$\sup_n \int_0^\tf \cD_\alpha(u^{(n)}_t) dt<\infty.$$ Since we also assume the convergence $u^{(n)}_\bullet \to u_\bullet$ in the topology of $\cDD$, we are in the setting of Remark \ref{rmk: continnum nonstandard convolution} and it follows that $\|u^{(n)}_\bullet\to u_\bullet\|_{L^\alpha([0,\tf]\times\TTd)}\to 0$. It follows that $\widetilde{\Xi}_1(\varphi, u^{(n)}_\bullet)\to \widetilde\Xi_1(\varphi, u_\bullet)$ for all $\varphi\in C^{1,2}([0,\tf]\times\TTd)$, which immediately implies that \begin{equation} \begin{split} \cA(u_\bullet)&=\sup_{\varphi \in C^{1,2}([0,\tf]\times\TTd)} \widetilde{\Xi}_1(\varphi, u_\bullet) \\& \le \liminf_n \sup_{\varphi \in C^{1,2}([0,\tf]\times\TTd)} \widetilde{\Xi}_1(\varphi, u^{(n)}_\bullet)\\& =\liminf_n \cA(u^{(n)})  \end{split} \end{equation} which was the claim (\ref{eq: LSC of A}). 
				\end{proof}
  \fi
\fi


\begin{thebibliography}{99}

\bibitem{adams2011large}{\sc Adams, Stefan; Dirr, Nicolas; Peletier, Mark; Zimmer, Johannes.}
\newblock From a large-deviations principle to the Wasserstein gradient flow: a new micro-macro passage.
\newblock {\em Comm. Math. Phys.} 307 (2011), no. 3, 791-815.

\bibitem{adams2013large} {\sc Adams, Stefan; Dirr, Nicolas; Peletier, Mark; Zimmer, Johannes.}
\newblock Large deviations and gradient flows.
\newblock {\em Philos. Trans. R. Soc. Lond. Ser. A Math. Phys. Eng. Sci.} 371 (2013), no. 2005, 20120341, 17 pp.



\bibitem{ambrosio2005gradient}
{\sc Ambrosio, Luigi; Gigli, Nicola; Savar\'e, Giuseppe.}
\newblock Gradient flows in metric spaces and in the space of probability measures.
\newblock Second edition. {\em Lectures in Mathematics ETH Z\"urich.} Birkh\"auser Verlag, Basel, 2008. x+334 pp. ISBN: 978-3-7643-8721-1 

\bibitem{aubin1963theoreme}
{\sc Aubin, Jean-Pierre.}
\newblock Un th{\'e}or{\`e}me de compacit{\'e}.
\newblock {\em C. R. Acad. Sci. Paris} 256 (1963), 5042-5044.


\bibitem{basile2015donsker}
{\sc Basile, Giada; Bertini, Lorenzo.}
\newblock Donsker-Varadhan asymptotics for degenerate jump Markov processes.
\newblock {\em ALEA Lat. Am. J. Probab. Math. Stat.} 12 (2015), no. 1, 1-23.

\bibitem{benois1995large}
{\sc Benois, O.; Kipnis, C.; Landim, C.}
\newblock Large deviations from the hydrodynamical limit of mean zero asymmetric zero range processes.
\newblock{\em Stochastic Process. Appl.} 55 (1995), no. 1, 65-89.

\bibitem{bernardin2022non}
{\sc Bernardin, C\'{e}dric; Gon\c{c}alves, Patr\'{\i}cia; Jim\'{e}nez-Oviedo,
              Byron; Scotta, Stefano.}
\newblock Non-equilibrium Stationary Properties of the boundary-driven zero-range process with long jumps
\newblock {\em J. Stat. Phys.} 189 (2022), paper 32.

\bibitem{bertini2009non}
{\sc Bertini, L.; De Sole, A.; Gabrielli, D.; Jona-Lasinio, G.; Landim, C.}
\newblock Non equilibrium current fluctuations in stochastic lattice gases. 
\newblock {\em J. Stat. Phys.} 123 (2006), no. 2, 237-276.

\bibitem{bertini2015macroscopic}
{\sc Bertini, Lorenzo; De Sole, Alberto; Gabrielli, Davide; Jona-Lasinio, Giovanni; Landim, Claudio.}
\newblock Macroscopic fluctuation theory.
\newblock {\em Rev. Modern Phys.} 87 (2015), no. 2, 593-636



\bibitem{blondel2018convergence}
{\sc Blondel, Oriane; Canc{\`e}s, Cl{\'e}ment; Sasada, Makiko; Simon, Marielle.}
\newblock Convergence of a degenerate microscopic dynamics to the porous medium equation.
\newblock {\em arXiv preprint arXiv:1802.05912}, 2018.

\bibitem{bodineau2012}
{\sc Bodineau, T.; Lagouge, M.} 
\newblock Large deviations of the empirical currents for a boundary-driven reaction diffusion model.
\newblock{ \em The Annals of Applied Probability, pp.2282-2319. 2012}


\bibitem{brezis1971monotonicity}
{\sc Br\'{e}zis, Ha\"{\i}m}
\newblock Monotonicity methods in {H}ilbert spaces and some applications
              to nonlinear partial differential equations
\newblock {\em Contributions to nonlinear functional analysis (Proc. Sympos., Math. Res. Center, Univ. Wisconsin, Madison, Wis., 1971)}, pp. 101-156.
Publ. Math. Res. Center Univ. Wisconsin, No. 27, Academic Press, New York, 1971.

\bibitem{bris2008existence}
{\sc Bris, C.L. and Lions, P.L} 
\newblock Existence and uniqueness of solutions to Fokker-Planck type equations with irregular coefficients. 
\newblock{\em Communications in Partial Differential Equations, 33(7), 2008}, pp.1272-1317.

\bibitem{bogachev2007measure}
{\sc Bogachev, V. I.}
\newblock Measure Theory
\newblock {\em Springer-Verlag, Berlin}, 2007. Vol. I: xviii+500 pp., Vol. II: xiv+575 pp. ISBN: 978-3-540-34513-8; 3-540-34513-2.




\bibitem{cerrai2011approximation}
{\sc Cerrai, Sandra; Freidlin, Mark.}
\newblock Approximation of quasi-potentials and exit problems for multidimensional RDE's with noise. 
\newblock {\em Trans. Amer. Math. Soc.} 363 (2011), no. 7, 3853-3892.

\bibitem{dawson1987large}
{\sc Dawson, Donald A.; G\"artner, J\"urgen.}
\newblock Large deviations from the McKean-Vlasov limit for weakly interacting diffusions.
\newblock {\em Stochastics} 20 (1987), no. 4, 247-308.



\bibitem{dean1996langevin}
{\sc Dean, David S.}
\newblock Langevin equation for the density of a system of interacting Langevin processes.
\newblock {\em Ann. Probab.} 44 (2016), 3581--3660.

\bibitem{dembo2009large} 
{\sc Dembo, Amir; Zeitouni, Ofer.}
\newblock Large deviations techniques and applications.
\newblock  Stochastic Modelling and Applied Probability, 38. {\em Springer-Verlag, Berlin}, 2010. xvi+396 pp. ISBN: 978-3-642-03310-0

\bibitem{dembo2016large} 
{\sc Dembo, Amir; Shkolnikov, Mykhaylo; Varadhan, S. R. Srinivasa;  Zeitouni, Ofer}.
\newblock Large deviations for diffusions interacting through their ranks. 
\newblock Communications on Pure and Applied Mathematics, 69(7) (2016), pp.1259-1313.



\bibitem{derrida2007non}
{\sc Derrida, Bernard.}
\newblock  Non-equilibrium steady states: fluctuations and large deviations of the density and of the current.
\newblock {\em J. Stat. Mech. Theory Exp.} 2007, no. 7, P07023, 45 pp.  


\bibitem{dirr2016entropic}
{\sc Dirr, Nicolas; Stamatakis, Marios; Zimmer, Johannes.}
\newblock Entropic and gradient flow formulations for nonlinear diffusion.
\newblock {\em J. Math. Phys.} 57 (2016), no. 8, 081505, 13 pp.

\bibitem{dirr2020conservative}
{\sc Dirr, Nicolas; Fehrman, Benjamin; Gess, Benjamin.}, 
\newblock Conservative stochastic pde and fluctuations of the symmetric simple exclusion process.
\newblock {\em arXiv preprint arXiv:2012.02126}, 2020.

\bibitem{donev2014reversible}
{\sc Donev, Aleksandar; Fai, Thomas; Vanden-Eijnden, Eric}
\newblock A reversible mesoscopic model of diffusion in liquids: from giant fluctuations to Fick's law.
\newblock {\em J. Stat. Mech. Theory Exp.} 2014, no. 4, P04004.

\bibitem{donev2014dynamic}
{\sc Donev, Aleksandar and Vanden-Eijnden, Eric} 
\newblock Dynamic density functional theory with hydrodynamic interactions and fluctuations. \newblock {\em J. Chem. Phys.} 140 (2014), 234115. 

\bibitem{ethier1987markov}
{\sc Ethier, Stewart N.; Kurt, Thomas G.}
\newblock Markov processes: Characterization and convergence.
\newblock {\em John Wiley \& Sons}, 2009.


\bibitem{ekhaus1995stochastic}
{\sc Ekhaus, Michael; Sepp\"al\"ainen, Timo.}
\newblock Stochastic dynamics macroscopically governed by the porous medium equation for isothermal flow.
\newblock {\em Ann. Acad. Sci. Fenn. Math.} 21 (1996), no. 2, 309-352.


\bibitem{erbar2016gradient}
{\sc Erbar, Matthias.}
\newblock A gradient flow approach to the Boltzmann equation.
\newblock {\em Journal of the European Mathematical Society}, 26(11), pp.4441-4490, 2023.

\bibitem{farfan2019} 
{\sc Farfan, J.; Landim, C.; Tsunoda, K.} 
 \newblock Static large deviations for a reaction-diffusion model. 
\newblock {\em Probability Theory and Related Fields, 174, pp.49-101., 2019}

\bibitem{faris1982large}
{\sc Faris, William G.; Jona-Lasinio, Giovanni}
\newblock Large fluctuations for a nonlinear heat equation with noise.
\newblock {\em J. Phys. A} 15 (1982), no. 10, 3025-3055.

\bibitem{fathi2016gradient}
{\sc Fathi, Max}
\newblock A gradient flow approach to large deviations for diffusion processes. \newblock {\em Journal de Mathematiques Pures et Appliquees}, 106(5) (2016), pp.957-993.

\bibitem{fehrman2019large}
{\sc Fehrman, Benjamin; Gess, Benjamin}
\newblock Non-equilibrium large deviations and parabolic-hyperbolic PDE with irregular drift.
\newblock {\em Inventiones mathematicae}, 234(2), pp.573-636, 2023.

\bibitem{fehrman2021well}
{\sc Fehrman, Benjamin; Gess, Benjamin}
\newblock Well-posedness of the Dean-Kawasaki and the nonlinear Dawson-Watanabe equation with correlated noise.
\newblock {\em{Archive for Rational Mechanics and Analysis}}, 248(2), p.20, (2024).

\bibitem{feng1997microscopic}
{\sc Feng, Shui; Iscoe, Ian; Sepp\"al\"ainen, Timo}
\newblock A microscopic mechanism for the porous medium equation.
\newblock {\em Stochastic Process. Appl.} 66 (1997), no. 2, 147-182.

\bibitem{feng2006large}
{\sc Feng, Jin; Kurtz, Thomas G.}
\newblock Large deviations for stochastic processes. 
\newblock Mathematical Surveys and Monographs, 131. \emph{American Mathematical Society, Providence, RI,} 2006. xii+410 pp. ISBN: 978-0-8218-4145-7; 0-8218-4145-9

\bibitem{giacomin1999deterministic} 
{\sc Giacomin, Giambattista; Lebowitz, Joel L.; Presutti, Errico.}
\newblock Deterministic and stochastic hydrodynamic equations arising from simple microscopic model systems.
\newblock{\em Stochastic partial differential equations: six perspectives}, 107-152,
Math. Surveys Monogr., 64, {\em Amer. Math. Soc., Providence, RI}, 1999.

\bibitem{goncalves2009hydrodynamic}
{\sc Gon\c{c}alves, P.; Landim, C.; Toninelli, C.}
\newblock Hydrodynamic limit for a particle system with degenerate rates.
\newblock{\em Ann. Inst. Henri Poincar\'e Probab. Stat.} 45 (2009), no. 4, 887-909.

\bibitem{goncalves2023exclusion} {\sc Gon\c{c}alves, P.; Nahum, G.;  Simon, M.} 
\newblock From exclusion to slow and fast diffusion. \newblock arXiv preprint arXiv:2301.06585, 2023.

\bibitem{hairer2015large}{\sc Hairer, Martin; Weber, Hendrik.}
\newblock Large deviations for white-noise driven, nonlinear stochastic PDEs in two and three dimensions.
\newblock {\em Ann. Fac. Sci. Toulouse Math. (6)} 24 (2015), no. 1, 55-92.

\bibitem{heydecker2021large}
{\sc Heydecker, Daniel}
\newblock Large deviations of Kac's conservative particle system and energy nonconserving solutions to the Boltzmann equation: A counterexample to the predicted rate function. 
\newblock{\em The Annals of Applied Probability}, 33(3), pp.1758-1826, 2023.

\bibitem{heydecker2021kac}
{\sc Heydecker, Daniel}
\newblock Kac's process and some probabilistic aspects of the Boltzmann equation 
\newblock{\em Doctoral Thesis, University of Cambridge}, 2021.


\bibitem{hohenberg1977theory}
{\sc Hohenberg, Pierre C.; Halperin, Bertrand I.}
\newblock Theory of dynamic critical phenomena.
\newblock {\em Rev. Mod. Phys.} 49.3 (1977): 435.

\bibitem{jakubowski1986skorokhod}
{\sc Povzner, A. Ja.}
\newblock On the Skorokhod topology.
\newblock {\em Ann. Inst. H. Poincar\'e Probab. Statist.} 22 (1986), no. 3, 263-285.

\bibitem{jona-lasinio1990large}
{\sc Jona-Lasinio, G.; Mitter, P. K.}, 
\newblock Large deviation estimates in the stochastic quantization of
              {$\phi^4_2$} 
\newblock {\em Comm. Math. Phys.} 130 (1990), no. 1, 111-121.

\bibitem{jona-lasino1993large}
{\sc Jona-Lasinio, G.; Landim, C.; Vares, M.E.} 
\newblock Large deviations for a reaction diffusion model. 
\newblock{\em Probability theory and related fields, 97, pp.339-361., 1993}

\bibitem{jordan1998variational}
{\sc Jordan, Richard; Kinderlehrer, David; Otto, Felix}
\newblock The variational formulation of the Fokker-Planck equation.
\newblock {\em SIAM J. Math. Anal.} 29 (1998), no. 1, 1-17.

\bibitem{kipnis1990large}
{\sc Kipnis, Claude; Olla, Stefano}, 
\newblock Large deviations from the hydrodynamical limit for a system of
              independent {B}rownian particles.
\newblock {\em Stochastics Stochastics Rep. 33} (1990), no. 1-2, 17-25.

\bibitem{kipnis1989hydrodynamics}
{\sc Kipnis, Claude; Olla, Stefano; Varadhan, SRS.} 
\newblock Hydrodynamics and large deviation for simple exclusion processes. 
\newblock {\em Comm. Pure Appl. Math.} 42 (1989), no. 2, 115-137.

\bibitem{kipnis1998scaling}
{\sc Kipnis, Claude; Landim, Claudio}  
\newblock Scaling limits of interacting particle systems.
\newblock Grundlehren der mathematischen Wissenschaften, 320. {\em Springer-Verlag}, Berlin, 1999. xvi+442 pp. ISBN: 3-540-64913-1,
265-437, 1986.


\bibitem{konarovskyi2019dean}
{\sc Konarovskyi, Vitalii; Lehmann, Tobias; von Renesse, Max-K.}  
\newblock Dean-Kawasaki dynamics: ill-posedness vs. triviality.
\newblock {\em Electron. Commun. Probab.} 24 (2019), Paper No. 8, 9 pp.

\bibitem{landim1995large}
{\sc Landim, Claudio; Yau, Horng-Tzer.}  
\newblock Large deviations of interacting particle systems in infinite
              volume
\newblock {\em Comm. Pure Appl. Math.} 48 (1995), no. 4, 339-379.

\bibitem{landim1997hydrodynamic}
{\sc Landim, C.; Mourragui, M.}  
\newblock Hydrodynamic limit of mean zero asymmetric zero range
              processes in infinite volume
\newblock {\em Ann. Inst. H. Poincar\'e Probab. Statist.} 33 (1997), no. 1, 65-82.

\bibitem{landim2018large} 
{\sc Landim, C. and Tsunoda, K.}
\newblock Hydrostatics and dynamical large deviations for a reaction-diffusion model.
\newblock{Annales de l'Institut Henri Poincar\'e Probabiliti\'es et statistiques, Vol. 54, No. 1 (2018) 1-568, 54(1), pp.51-74.}

  
  \bibitem{lions1969quelques}
  {\sc Lions, Jacques-Louis.}  
\newblock Quelques m{\'e}thodes de r{\'e}solution de problemes aux limites non lin{\'e}aires.
\newblock {\em Dunod, Paris; Gauthier-Villars, Paris}, 1969 xx+554 pp.

\bibitem{menegaki2021quantitative}
{\sc Menegaki, Angeliki}
\newblock Quantitative studies and hydrodynamical limits for interacting particle systems
\newblock {\em Doctoral Thesis, University of Cambridge}, 2021.

\bibitem{menegaki2022consistence}
{\sc Menegaki, Angeliki; Mouhot, Cl{\'e}ment.}
\newblock A consistence-stability approach to hydrodynamic limit of interacting particle systems on lattices. 
\newblock {\em S\'eminaire Laurent Schwartz-EDP et applications}, pp.1-15, 2022.

\bibitem{mielke2014relation}
{\sc Mielke, A.; Peletier, M. A.; Renger, D. R. M.}
\newblock On the relation between gradient flows and the large-deviation principle, with applications to Markov chains and diffusion. 
\newblock {\em Potential Anal.} 41 (2014), no. 4, 1293-1327.

\bibitem{moussa2016variants}
{\sc Moussa, A.}
\newblock  Some variants of the classical Aubin-Lions lemma.
\newblock {\em J. Evol. Equ.} 16 (2016), no. 1, 65-93.

\bibitem{onsager1931reciprocali}
{\sc Onsager, Lars.}
\newblock Reciprocal relations in irreversible processes. I.
\newblock {\em  Phys. Rev.}, 37(4), p.405.

\bibitem{onsager1931reciprocalii} 
{\sc Onsager, Lars}
\newblock Reciprocal relations in irreversible processes. II.
\newblock{\em Phys. Rev.}, 38(12), p.2265.


\bibitem{otto2001geometry}
{\sc Otto, Felix.}
\newblock The geometry of dissipative evolution equations: the porous medium equation.
\newblock{\em Comm. Partial Diff. Eq.} 26 (2001), no. 1-2, 101-174.

\bibitem{patterson2016dynamical}{\sc Patterson, Robert IA; Renger, D. R.}
\newblock  Dynamical large deviations of countable reaction networks under a weak reversibility condition
\newblock (Vol. 2273, No. 2273). Berlin: Weierstra{\ss}-Institut f\"ur Angewandte Analysis und Stochastik.


\bibitem{porretta2020note}
{\sc Porretta, Alessio.}
\newblock A note on the {S}obolev and {G}agliardo-{N}irenberg inequality
              when {$p>N$}
\newblock{\em  Adv. Nonlinear Stud.} 20 (2020), no. 2, 361-371.

\bibitem{quastel1998lattice}
{\sc Quastel, J.; Yau, H.-T.}
\newblock Lattice gases, large deviations, and the incompressible Navier-Stokes equations.
\newblock {\em Ann. of Math.} (2) 148 (1998), no. 1, 51-108.

\bibitem{quastel1999large}
{\sc Quastel, J.; Rezakhanlou, F; Varadhan, S.R.S.}
\newblock Large deviations for the symmetric simple exclusion process in dimensions $d\ge 3$.
\newblock {\em Probab. Theory Related Fields} 113 (1999), no. 1, 1-84.

\bibitem{sandier2004gamma}
{\sc Sandier, Etienne; Serfaty, Sylvia.}
\newblock Gamma-convergence of gradient flows with applications to Ginzburg-Landau.
\newblock {\em Comm. Pure Appl. Math.} 57 (2004), no. 12, 1627-1672.

\bibitem{serfaty2011gamma}
{\sc Serfaty, Sylvia.}
\newblock Gamma-convergence of gradient flows on {H}ilbert and metric
              spaces and applications
\newblock {\em Discrete Contin. Dyn. Syst.} 31 (2011), no. 4, 1427-1451.

\bibitem{seo2017large} {\sc Seo, Insuk}.
\newblock Large Deviation Principle for Interacting Brownian Motions. 
\newblock {\em Communications on Pure and Applied Mathematics, 70(2) (2017), pp.203-288.}

\bibitem{simon1986compact}
{\sc Simon, Jacques.}
\newblock Compact sets in the space $L^p(0,T;B)$. 
\newblock {\em Ann. Mat. Pura Appl.} (4) 146 (1987), 65-96.



\bibitem{spohn2012large}
{\sc Spohn, Herbert.}, 
\newblock Large scale dynamics of interacting particles.
\newblock {\em Springer Science \& Business Media}, 2012.

\bibitem{suzuki1993hydrodynamic}
{\sc Suzuki, Yuki; Uchiyama, K\^{o}hei} 
\newblock Hydrodynamic limit for a spin system on a multidimensional
              lattice. 
\newblock {\em Probab. Theory Related Fields} 95 (1993), no. 1, 47-74.

\bibitem{varadhan1991scaling}
{\sc Varadhan, S. R. S.}
\newblock Scaling limits for interacting diffusions. \newblock {\em Comm. Math. Phys.} 135 (1991), no. 2, 313-353.

\bibitem{vazquez2007porous}
{\sc V{\'a}zquez, Juan Luis}
\newblock The porous medium equation: mathematical theory.
\newblock {\em Oxford University Press on Demand}, 2007. 

\bibitem{yau1991relative}
{\sc Yau, Horng-Tzer.}
\newblock Relative entropy and hydrodynamics of {G}inzburg-{L}andau
              models.
\newblock {\em Lett. Math. Phys.} 22 (1991), no. 1, 63-80.

\bibitem{zel1950towards}
{\sc Zel'dovich, Y{\'a}kov B; Kompaneets, AS.}
\newblock Towards a theory of heat conduction with thermal conductivity depending on the temperature. 
\newblock {\em Collection of Papers Dedicated to 70th Birthday of Academician AF Ioffe, Izd. Akad. Nauk SSSR, Moscow}, (1950), 61-71.

\bibitem{yau1991relative}
{\sc Yau, Horng-Tzer.}
\newblock Relative entropy and hydrodynamics of {G}inzburg-{L}andau
              models.
\newblock {\em Lett. Math. Phys.} 22 (1991), no. 1, 63-80.

\bibitem{zel1950towards}
{\sc Zel'dovich, Y{\'a}kov B; Kompaneets, AS.}
\newblock Towards a theory of heat conduction with thermal conductivity depending on the temperature. 
\newblock {\em Collection of Papers Dedicated to 70th Birthday of Academician AF Ioffe, Izd. Akad. Nauk SSSR, Moscow}, (1950), 61-71.

\end{thebibliography}
\end{document}